\documentclass[reqno,11pt,dvipsnames]{article}
\usepackage[full]{textcomp}
\usepackage{newtxtext} 
\usepackage{graphicx,amsmath,amssymb}  
\usepackage{enumitem}
\setlist{topsep=1pt,parsep=1pt,itemsep=1pt} 

\usepackage{tikz}
\usetikzlibrary{matrix,arrows}
\usetikzlibrary{decorations.pathmorphing}
\usetikzlibrary{decorations.markings}
\usepgflibrary{fpu}
\usepackage{xcolor}
\usepackage{mathrsfs} 
\usepackage{bbm}

\usepackage{cleveref}
\usepackage{amsthm}  
\usepackage{autonum} 
\usepackage[margin=20pt,font=small,labelfont=sc]{caption}

\usepackage{thmtools}

\input{stochbif.sty}


\input{regstruc_frac.sty}

\input{diagrams_frac.sty}

\newcommand{\unit}{\vone}

\newcommand{\bPi}{\mathbf{\Pi}}

\newcommand{\Red}[1]{\textcolor{red}{#1}}

\newcommand{\ninner}{n_{\rm{inner}}}
\newcommand{\nsym}{n_{\rm{sym}}}
\newcommand{\nleaves}{n_{\rm{leaves}}}
\newcommand{\kmax}{k_{\rm{max}}}
\newcommand{\kbarmax}{\bar k_{\rm{max}}}

\newcommand{\sA}{{\mathscr A}}
\newcommand{\sC}{{\mathscr C}}
\newcommand{\sE}{{\mathscr E}}
\newcommand{\sF}{{\mathscr F}}
\newcommand{\sG}{{\mathscr G}}
\newcommand{\sI}{{\mathscr I}}
\newcommand{\sO}{{\mathscr O}}
\newcommand{\sP}{{\mathscr P}}
\newcommand{\sR}{{\mathscr R}}
\newcommand{\sS}{{\mathscr S}}
\newcommand{\sV}{{\mathscr V}}
\newcommand{\sW}{{\mathscr W}}

\newcommand{\bT}{\mathbf{T}}
\newcommand{\bn}{\mathbf{n}}

\newcommand{\sFs}{{\mathscr F}_{\mathrm{s}}}
\newcommand{\sFu}{{\mathscr F}_{\mathrm{u}}}
\newcommand{\Fsafe}[1]{\sF_{#1}^{(\mathrm{s})}}

\DeclareMathOperator{\scale}{scale}
\DeclareMathOperator{\intT}{int}
\DeclareMathOperator{\extT}{ext}
\DeclareMathOperator{\reg}{reg}

\def\out{\mathrm{out}}

\def\fraks{\mathfrak{s}}

\newcommand{\abss}[1]{\abs{#1}_\fraks}
\def\Deltam{\Delta^{\!-}}

\newcommand{\Labe}{\mathfrak{e}}

\newcommand{\Labn}{\mathfrak{n}}

\newcommand{\Adm}{\mathfrak{A}}

\renewcommand{\Deltam}{\Delta^{\!-}}

\newcommand{\Deltatm}{\tilde\Delta^{\!-}}

\newcommand{\rhocrit}{\rho_{\mathrm{c}}}
\newcommand{\epscrit}{\eps_{\mathrm{c}}}
\newcommand{\epsbarcrit}{\bar\eps_{\mathrm{c}}}

\newcommand{\restr}{\mathord{\upharpoonright}}
\newcommand{\s}{\mathfrak{s}}
\newcommand{\funit}{\mathbbm{1}}

\definecolor{darkgreen}{rgb}{0.25, 0.66, 0.18}

\newcommand\full{full}          
\newcommand\afull{almost full}  
\newcommand\Full{Full}          
\newcommand\Afull{Almost full}  


\begin{document}


\title{BPHZ renormalisation and vanishing subcriticality 
\\ asymptotics
of the fractional $\Phi^3_d$ model}
\author{Nils Berglund and Yvain Bruned}
\date{February 21, 2024}

\maketitle

\begin{abstract}
We consider stochastic PDEs on the $d$-dimensional torus with fractional 
Laplacian of parameter $\rho\in(0,2]$, quadratic nonlinearity and driven by 
space-time white noise. These equations are known to be locally subcritical, 
and thus amenable to the theory of regularity structures, if and only if $\rho 
> d/3$. Using a series of recent results by the second named author, A.\ 
Chandra, I.\ Chevyrev, M.\ Hairer and L.\ Zambotti, we obtain precise 
asymptotics on the renormalisation counterterms as the mollification parameter 
$\eps$ becomes small and $\rho$ approaches its critical value. In particular, 
we show that the counterterms behave like a negative power of $\eps$ if $\eps$ 
is superexponentially small in $(\rho-d/3)$, and are otherwise of order 
$\log(\eps^{-1})$. This work also serves as an illustration of the general 
theory of BPHZ renormalisation in a relatively simple situation. 
\end{abstract}

\leftline{\small 2010 {\it Mathematical Subject Classification.\/} 
60H15, 
35R11 (primary), 
81T17, 
82C28 (secondary). 
}
\noindent{\small{\it Keywords and phrases.\/}
Stochastic partial differential equations, 
regularity structures, 
fractional Laplacian,
BPHZ renormalisation,
subcriticality boundary.
}


\section{Introduction}
\label{sec:intro} 

The last years have witnessed tremendous progress in the theory of singular 
stochastic partial differential equations (SPDEs). The theory of regularity 
structures, introduced by Martin Hairer in~\cite{Hairer1}, provides a 
functional analysis framework in which many so-called locally subcritical 
singular SPDEs can be shown to admit (local in time) solutions. 
The theory has been successfully applied to a number of different SPDEs,  
including the KPZ equation~\cite{Hairer2,Hairer_Shen_17,Hoshino} and its 
generalisations to polynomial nonlinearities~\cite{Hairer_Quastel_18} and 
non-polynomial nonlinearities \cite{HX18},
the dynamic $\Phi^4_3$ model~\cite{Hairer1,HX16}, 
the continuum parabolic Anderson model~\cite{Hairer_Labbe_15}, 
the Navier--Stokes equation~\cite{ZhuZhu_2015}, 
the motion of a random string on a curved surface~\cite{Hairer4,BGHZ19},
the FitzHugh--Nagumo SPDE~\cite{Berglund_Kuehn_15}, 
the dynamical Sine--Gordon model~\cite{Hairer_Shen_16,CHS18},
the heat equation driven by space-time fractional noise~\cite{Deya_2016},
reaction-diffu\-sion equations with a fractional Laplacian~\cite{BK17}, and 
the multiplicative stochastic heat equation~\cite{wong, Hairer_Labbe_18}. 

A limitation of the theory introduced in~\cite{Hairer1} is that, while it 
provides function spaces allowing to prove fixed-point theorems in a very 
general setting, the applications to SPDEs also require a renormalisation 
procedure, which had to be carried out in an \textit{ad hoc} manner in each 
case. This situation has been remedied in a series of papers by the second 
named author, Ajay Chandra, Ilya Chevyrev, Martin Hairer and Lorenzo 
Zambotti~\cite{BrunedHairerZambotti,ChandraHairer16,BCCH17}. These works 
provide a kind of black box, allowing to automatically renormalise any locally 
subcritical SPDE. Owing to its great generality, however, this theory is rather 
abstract, making it somewhat difficult of access. 

A first goal of the present work is to illustrate the general theory in one 
of the simplest possible, yet interesting examples. This example is the 
$\Phi^3$ model with fractional Laplacian $\Delta^{\rho/2}$ on the 
$d$-dimensional torus, driven by space-time white noise $\xi$, whose equation 
before renormalisation reads 
\begin{equation} 
\label{eq:SPDE} 
 \partial_t u - \Delta^{\rho/2}u = u^2 + \xi\;.
\end{equation} 
A family of SPDEs with fractional Laplacian, including the above example, was 
considered in~\cite{BK17}. Results in that work imply in particular that the 
above equation is locally subcritical if and only if $\rho > \rhocrit = 
\frac{d}{3}$. 
As the parameter $\rho$ of the fractional Laplacian decreases towards its 
critical value $\rhocrit$, the size of the model space describing a regularity 
structure for~\eqref{eq:SPDE} diverges exponentially fast in 
$1/(\rho-\rhocrit)$. As we shall see, this has an effect on the 
renormalisation procedure for the equation, since the counterterms entering 
this procedure involve sums over elements of the model space having negative 
degree (see~\cite[Thm.~2.21]{BrunedHairerZambotti} and~\eqref{eq:counterterm} 
below). This should be a general phenomenon for models approaching the 
subcriticality threshold.

The fact that the nonlinearity in~\eqref{eq:SPDE} is quadratic entails a number 
of significant simplifications when applying the general theory 
of~\cite{BrunedHairerZambotti,ChandraHairer16,BCCH17}, owing to the fact 
that the model space can be described precisely in terms of binary trees. This 
considerably simplifies a number of combinatorial arguments.
Throughout the analysis, we provide numerous 
examples, which should help to illustrate the general abstract theory. 

A second goal of this work is to analyse in detail the limit 
$\rho\searrow\rhocrit$, i.e.\ when approaching the threshold where local 
subcriticality is lost. The hope is that this will improve the understanding of 
the role of subcriticality in renormalisation of singular SPDEs and the theory 
of regularity structures. The renormalisation procedure requires to modify the 
SPDE~\eqref{eq:SPDE} by mollifying space-time noise $\xi$ on scale $\eps$, and 
adding $\eps$-dependent counterterms to the equation. Our main result, 
Theorem~\ref{thm:main}, analyses the asymptotic behaviour of these counterterms 
as a function of $\eps$ and $\rho-\rhocrit$. We obtain that if $\eps$ is 
superexponentially small in terms of $\rho-\rhocrit$, the counterterms scale 
like a negative power of $\eps$, while for larger $\eps$, they have order 
$\log(\eps^{-1})$. 

Note that fractional models near criticality have been studied before, in 
particular in the context of constructive Quantum Field Theory (QFT). For 
instance, the large-volume (infrared) behaviour of the static 
$\Phi^4_{4-\delta}$ model has been studied 
in~\cite{Brydges_Dimock_Hurd_non-Gaussian_98}, by modifying the Laplacian of the 
$\Phi^4_4$ model in order to make it subcritical. The picture that emerges from 
a renormalisation group (RG) analysis is that while for $\delta=0$, the RG flow 
converges to a Gaussian fixed point, for $\delta>0$, this fixed point becomes 
unstable, and a non-Gaussian fixed point appears. Recently, 
in~\cite{Aizenman_Duminilcopin_20} Aizenman and Duminil--Copin proved that 
by taking both the large-volume and zero-spacing (ultraviolet) limit of a 
lattice model converging to the $\Phi^4_4$ model, one converges to a model with 
Gaussian fluctuations. It is thus of interest to try to connect what is known 
on static models at and near criticality, with what happens to the 
renormalisation procedure in near-critical dynamical models. 

A final motivation for this article is that the equation~\eqref{eq:SPDE} is 
interesting in its own right. For instance, it approximates the Fisher--KPP 
equation for population dynamics~\cite{Fisher37,KPP37} for intermediate 
population values. Note that the real Fisher--KPP equation contains a 
factor of the form $\sqrt{u(1-u)}$ in front of the noise $\xi$, and it currently 
seems unlikely that such a nonlinearity could be handled with the help of 
regularity structures.
However, an understanding of the equation with additive noise may provide some 
useful first insights on its dynamics. See also~\cite{BK17} for further 
motivation on considering SPDEs with fractional Laplacians as a way to 
regularise coupled SPDE--ODE systems. 

The remainder of this article is organised as follows. Section~\ref{sec:model} 
gives a detailed description of the model, and states the main result, 
Theorem~\ref{thm:main}, on the asymptotic behaviour of counterterms. 
Section~\ref{sec:modelspace} summarises the construction of the model space, and 
the main results from~\cite{BrunedHairerZambotti,ChandraHairer16,BCCH17} needed 
to compute the renormalised equation. The most difficult step in applying the 
general theory is to compute the expectation of the renormalised canonical model 
elements, and is presented in the next three sections. 
Section~\ref{sec:modelexp} describes how these expectations can be represented 
in terms of Feynman diagrams (see Definition~\ref{def:feynman_diagram}). 
Section~\ref{sec:forests} introduces the notions of forests (see 
Definition~\ref{def:forest}) and Hepp sectors (see 
Definition~\ref{def:Hepp_sector}), needed to apply ideas from BPHZ 
renormalisation theory, as explained in~\cite{Hairer_BPHZ} in the Euclidean 
case. 
Most of our formalism is taken from \cite{Hairer_BPHZ}, which transposes the 
algebraic construction in \cite{BrunedHairerZambotti} and the proof of the 
renormalised model convergence  in \cite{ChandraHairer16} to Feynman diagrams. 
It has a strong connection with the algebraic structures observed by Connes and 
Kreimer in \cite{CK1, CK2}. The main difference comes from the presence of 
Taylor expansions, which are encoded at the level of the diagrams by changing 
decorations.  In our model, we can to a large extent circumvent these Taylor 
expansions and thus be closer to the extraction-contraction renormalisation 
procedure on Feynman diagrams. 
The actual bounds on the expectations are then obtained in 
Section~\ref{sec:bounds}, and the asymptotic analysis completing the proof of 
the main result is given in Section~\ref{sec:asymp}.

\subsubsection*{Acknowledgments}

We would like to thank Christian Kuehn for many useful discussions. Part of 
this work was carried out while the authors attended the programme 
\lq\lq Scaling limits, rough paths, quantum field theory\rq\rq\ (SRQ)  
held at the Isaac Newton Institute (INI) in Cambridge. We would like to thank 
the organisers of this trimester for putting together a stimulating programme, 
and the members of INI for providing a friendly working atmosphere. NB thanks 
the School of Mathematics at the University of Edinburgh, and YB thanks the 
Institut Denis Poisson at the University of Orl\'eans for hospitality during 
mutual visits. YB gratefully acknowledges funding
support from the European Research Council (ERC) through the ERC Starting Grant Low
Regularity Dynamics via Decorated Trees (LoRDeT), grant agreement No. 101075208. Finally, we thank the anonymous referees for their remarks, 
which led to an improvement in the presentation.


\section{Model and results}
\label{sec:model} 

We are interested in the SPDE 
\begin{equation} \label{eq:laplacienrho}
 \partial_t u - \Delta^{\rho/2}u = u^2 + \xi
\end{equation} 
for the unknown $u=u(t,x)$ with $(t,x)\in\R_+\times\T^d$, 
where $\Delta^{\rho/2} = -(-\Delta)^{\rho/2}$ denotes the fractional Laplacian 
with $0<\rho\leqs2$, and $\xi$ denotes space-time white noise. 
As such, this equation is not well-posed in general, and a renormalisation 
procedure is required. The general form of the renormalised equation is 
expected to be 
\begin{equation}
\label{eq:renormalised} 
 \partial_t u - \Delta^{\rho/2}u = u^2 + C(\eps,\rho,u) + \xi^\eps\;,
\end{equation}  
where $\xi^\eps = \varrho^\eps * \xi$ denotes space-time white noise mollified 
on scale $\eps$, and $C(\eps,\rho,u)$ is a counterterm which diverges as 
$\eps\searrow0$. Here $\varrho^\eps(t,x) = 
\eps^{-(\rho+d)}\varrho(\eps^{-\rho}t,\eps^{-1}x)$ for a smooth, compactly 
supported mollifier $\varrho$ integrating to $1$, and $*$ denotes space-time convolution.

\begin{figure}
\begin{center}
\begin{tikzpicture}[>=stealth',main 
node/.style={circle,radius=0.01cm,draw},x=1.5cm ,y=1.5cm]

\draw[thin,Green!20,fill=Green!20] (0,0) -- (2,2) -- (0,2) -- cycle;

\draw[thin,blue!20,fill=blue!20] (0,0) -- (6,2) -- (2,2) -- cycle;

\draw[thin,red!20,fill=red!20] (0,0) -- (7,0) -- (7,2) -- (6,2) -- 
cycle;

\draw[semithick,Green] (0,2) -- (2,2);
\draw[semithick,blue] (0,0) -- (2,2) -- (6,2);
\draw[semithick,red] (0,0) -- (6,2) -- (7,2);

\draw[->,thick] (0,0) -> (7.5,0);
\draw[->,thick] (0,0) -> (0,3);

\foreach \x in {2,...,5}
{
   \draw[semithick,blue!50] (\x,{\x/3}) -- (\x,2);
   \draw[blue,fill=blue] (\x,2) circle (0.03);
}

\foreach \x in {1,...,6}
{
   \draw[semithick,red!50] (\x,0) -- (\x,{\x/3});
   \draw[red,fill=red] (\x,{\x/3}) circle (0.03);
}

\draw[semithick,blue!50] (1,{1/3}) -- (1,1);
\draw[semithick,Green!50] (1,1) -- (1,2);

\draw[blue,fill=blue] (1,1) circle (0.03);
\draw[Green,fill=Green] (1,2) circle (0.03);

\node[Green] at (0.8,1.7) {\small Non-singular};
\node[blue] at (3.5,1.7) {\small Locally subcritical};
\node[red] at (5.5,0.8) {\small Supercritical};

\node[red,rotate=20] at (3.5,1.0) {\small $\Red{\rho=\rhocrit}$};

\foreach \x in {1,...,6}
{
  \draw[thick] (\x,-0.1) -- (\x,0.1);
  \node[] at (\x,-0.4) {$\x$};
}

\foreach \y in {1,2}
{
  \draw[thick] (-0.1,\y) -- (0.1,\y);
  \node[] at (-0.3,\y) {$\y$};
}
\node[] at (7.1,-0.25) {$d$};
\node[] at (-0.2,2.6) {$\rho$};
\end{tikzpicture}
\vspace{-5mm}
\end{center}
\caption[]{Parameter space $(d,\rho)$. Results in this article apply to the 
locally subcritical regime $\rhocrit = \frac{d}{3} < \rho < d$, with $\rho 
\leqs 2$. }
\label{fig_rho_d}
\end{figure}

The theory of regularity structures introduced in~\cite{Hairer1} applies, 
provided the equation~\eqref{eq:laplacienrho} is \emph{locally subcritical}, or 
superrenormalisable in physicist's terms. As shown 
in~\cite[Theorem~4.3]{BK17}, \eqref{eq:laplacienrho} is locally subcritical for 
\begin{equation}
 \rho > \rhocrit(d) = \frac{d}{3}\;.
\end{equation} 
Note that $\rhocrit < 2$ imposes $d\leqs 5$ (\figref{fig_rho_d}). One 
can guess this threshold by a scaling argument. Indeed, let us set $ \bar 
u(t,x) = \lambda^{\alpha} u(\lambda^{\beta} t, \lambda x)$ with $ \lambda > 0 $ 
and $\alpha, \beta \in \mathbb{R}$. Then, $\bar u$ solves the equation
\begin{equation}
\label{eq:scaled_SPDE} 
\partial_t \bar u - \lambda^{\beta- \rho} \Delta^{\rho / 2} \bar u  
 = \lambda^{\beta-\alpha} \bar u^2 + \lambda^{\alpha + \beta} 
\xi_{\lambda^{\beta}, \lambda}
  = \lambda^{\beta-\alpha} \bar u^2 + \lambda^{\alpha + \frac{\beta}{2} - 
\frac{d}{2}}\xi\;,
\end{equation} 
where the second equality is in law, and $\xi_{\lambda^{\beta}, \lambda}$ 
denotes scaled space-time white noise given by 
\begin{equation}
\label{eq:scaling_noise} 
 \pscal{\xi_{\lambda^{\beta},\lambda}}{\ph} 
 = \pscal{\xi}{\ph^{\lambda^{\beta}, \lambda}}\;, \qquad
 \ph^{\lambda^{\beta}, \lambda}(t,x) 
 = \frac{1}{\lambda^{\beta+d}}
 \ph\biggpar{\frac{t}{\lambda^\beta},\frac{x}{\lambda}}
\end{equation} 
for any compactly supported test function $\ph$.
Setting $\alpha = \frac{d-\beta}{2}$, the noise intensity is the same 
in~\eqref{eq:laplacienrho} and~\eqref{eq:scaled_SPDE}. 
Then one has $\beta - \alpha = \frac{3}{2} (\beta  - \rho_c)$, so that 
\begin{equation}
\label{eq:scaled_SPDE2} 
\partial_t \bar u - \lambda^{\beta- \rho} \Delta^{\rho / 2} \bar u  
= g \bar u^2 +  \xi\;, \qquad g = \lambda^{\frac{3}{2}(\beta - \rho_c)}\;.
\end{equation}
The natural choice is then $\beta = \rho$, which corresponds to the 
fractional scaling $\s =  (\rho,1,\dots,1)$ (cf.~\eqref{eq:def_scaling}). One 
thus obtains two regimes:
\begin{itemize}
\item 	If $\rho > \rho_c$ and we let $\lambda$ tend to $0$, then $g$ tends 
to $0$, i.e.~\eqref{eq:scaled_SPDE2} converges to a linear equation. This is 
exactly the definition of local subcriticality.
\item 	If $\rho = \rho_c$, we recover the non-linear equation we started with, 
i.e.\ the system is invariant under this particular scaling. This is 
reminiscent of what is called a fixed point of the Wilsonian renormalisation 
group in the language of physicists. 
\end{itemize}

The counterterm $C(\eps,\rho,u)$ in~\eqref{eq:renormalised}
is expected to diverge also in the limit $\rho\searrow\rhocrit$, and the main 
goal of this work is to determine how $C(\eps,\rho,u)$ behaves as a function of 
$\eps$ and $\rho-\rhocrit$ for small values of these parameters. 

In order to formulate the main result, we define, for $a\in\R$ and $k>0$, 
the threshold value  
\begin{equation}
 \epscrit(\rho,a,k) 
= \exp\biggset{-\frac{1}{\rho-\rhocrit} \biggbrak{\log k + a - 
\frac{\log(k+1)}{2k}}}\;. 
\end{equation} 
Then we set 
\begin{equation}
 \epscrit(\rho,a) = \epscrit(\rho,a,\kmax)\;, 
 \qquad 
 \epsbarcrit(\rho,a) = \epscrit(\rho,a,\kbarmax)\;, 
\end{equation} 
where 
\begin{equation}
 \kmax = \frac{d-\rho}{3(\rho-\rhocrit)}
 \qquad\text{and}\qquad 
 \kbarmax = \frac{d-2\rho}{3(\rho-\rhocrit)}\;.
\end{equation} 
The integer parts of $\kmax$ and $\kbarmax$ measure the size of the model space 
of the regularity structure (cf.~\cite[Thm.~4.18]{BK17}), where $\kmax$ is 
associated with the part of the counterterm $C(\eps,\rho,u)$ that does not 
depend on $u$, while $\kbarmax$ determines its part linear in $u$. 
Note that $\epscrit(\rho,a) > \epsbarcrit(\rho,a)$, and that as 
$\rho$ decreases to $\rhocrit$, $\epscrit(\rho,a)$ and $\epsbarcrit(\rho,a)$ 
both go to zero superexponentially fast, namely like 
\begin{equation}
\label{eq:superexponential} 
\exp\biggset{-\frac{1}{\rho-\rhocrit} 
\biggbrak{\log\biggpar{\frac{1}{\rho-\rhocrit}} + \Order{1}}}\;.
\end{equation} 
Finally, for $\eta<0$, we denote by $\cC^\eta(\T^d)$ the Besov--H\"older 
space defined as the set of distributions $\zeta$ on $\T^d$ such that 
$\lambda^{-\eta}\abs{\pscal{\zeta}{{\sS^\lambda_x\varphi}}}$ is bounded 
uniformly in $\lambda\in(0,1]$ for any $x\in\T^d$ and any compactly supported 
test function $\varphi$ of class $\cC^{\intpartplus{-\eta}}$, where 
$(\sS^\lambda_x\varphi)(y) = \lambda^{-d}\varphi(\lambda^{-1}(y-x))$. 

Our main result is then the following.

\begin{theorem}[Main result]
\label{thm:main} 
Assume $\rho < \frac{d}{2}$ and $\rho\in(\rhocrit,2]$. Then there
exist functions $C_i(\eps,\rho)$, $i\in\set{0,1}$, such that for any 
initial condition $u_0\in \cC^\eta(\T^d)$ with $\eta>-\frac\rho2$, the 
regularised renormalised SPDE~\eqref{eq:renormalised} with counterterm 
\begin{equation}
 C(\eps,\rho,u)
 = C_0(\eps,\rho) + C_1(\eps,\rho) u 
\end{equation} 
admits a sequence of local solutions $u^\eps$, converging in probability to a 
limiting process as $\eps\to0$. Furthermore, there exist constants $a$, $M$, 
$A_0$ and  $\bar A_0$, all independent of $\eps$ and $\rho$, such that, 
writting $\epscrit=\epscrit(\rho,a)$ and $\epsbarcrit=\epsbarcrit(\rho,a)$, 
the first counterterm satisfies 
\begin{align}
\label{eq:bound_C0} 
\bigabs{C_0(\eps,\rho)} &\leqs  
M\epscrit^{-(d-\rho)} \biggbrak{\log(\eps^{-1}) + \frac{1}{\rho-\rhocrit}
\biggpar{\dfrac{\epscrit}{\eps}}^{3(\rho-\rhocrit)}}
&& \text{if $\eps \geqs \epscrit$\;,} \\
\biggabs{\frac{C_0(\eps,\rho)}{A_0 \eps^{-(d-\rho)}} - 1}
&\leqs \frac{M}{\rho-\rhocrit}  
\biggpar{\dfrac{\eps}{\epscrit}}^{3(\rho-\rhocrit)}
&& \text{if $\eps < \epscrit$\;,} 
\end{align}
while the second counterterm satisfies 
\begin{align}
\label{eq:bound_C1} 
\bigabs{C_1(\eps,\rho)} &\leqs  
M\epsbarcrit^{\,-(d-2\rho)} \biggbrak{\log(\eps^{-1}) + 
\frac{1}{\rho-\rhocrit} \biggpar{\dfrac{\epsbarcrit}{\eps}}^{3(\rho-\rhocrit)}}
&& \text{if $\eps \geqs \epsbarcrit$\;,} \\
\biggabs{\frac{C_1(\eps,\rho)}{\bar A_0 \eps^{-(d-2\rho)}} - 1}
&\leqs \frac{M}{\rho-\rhocrit}   
\biggpar{\dfrac{\eps}{\epsbarcrit}}^{3(\rho-\rhocrit)}
&& \text{if $\eps < \epsbarcrit$\;.} 
\end{align}
\end{theorem}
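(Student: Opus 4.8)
The plan is to feed the equation through the general renormalisation machinery of \cite{BrunedHairerZambotti,ChandraHairer16,BCCH17} and then to analyse, largely by hand, the resulting large-but-finite sum of counterterms, whose combinatorics is controlled by the binary-tree description of the model space from \cite{BK17}. As a first step I would invoke that machinery, as recalled in Section~\ref{sec:modelspace}: since \eqref{eq:laplacienrho} is locally subcritical for $\rho>\rhocrit$, for initial data $u_0\in\cC^\eta(\T^d)$ with $\eta>-\tfrac\rho2$ the BPHZ-renormalised equation admits local solutions $u^\eps$ converging in probability, and the counterterm has the canonical form $C(\eps,\rho,u)=\sum_{\tau}\frac{\hat\ell^\eps(\tau)}{S(\tau)}\,\Upsilon^{F}[\tau](u)$ with $F(u)=u^2$, summed over the trees $\tau$ of nonpositive degree in the model space; taking expectations turns each constant $\hat\ell^\eps(\tau)$ into the Feynman-diagram integrals of Section~\ref{sec:modelexp}. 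Since the nonlinearity is quadratic the model space is generated by binary trees, and one checks (see Section~\ref{sec:modelspace}) that $\Upsilon^{F}[\tau]$ is a constant or a multiple of $u$ for every such $\tau$; hence $C=C_0+C_1u$, where $C_0=\sum_{\tau\in\cT_0}\frac{\hat\ell^\eps(\tau)}{S(\tau)}\Upsilon^F[\tau]$ runs over trees that are products of two closed planted subtrees and $C_1 u=\sum_{\tau\in\cT_1}\frac{\hat\ell^\eps(\tau)}{S(\tau)}\Upsilon^F[\tau]$ over closed planted subtrees carrying one extra open leg.

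Second, I would stratify these two finite families. By the description of the model space in \cite[Thm.~4.18]{BK17}, a planted tree with $n$ noise leaves has exactly $2n-1$ kernel edges, hence degree $\tfrac{3n}{2}(\rho-\rhocrit)-\rho+\Order{\kappa}$ regardless of its shape; moreover $\hat\ell^\eps(\tau)$ vanishes unless the number of noise leaves is even. Consequently the trees of $\cT_0$ fall into generations $k=0,1,\dots,\intpart{\kmax}$, the $k$-th consisting of the $\Order{16^{k}}$ trees with $2k+2$ noise leaves (equivalently, with degree $-(d-\rho)+3k(\rho-\rhocrit)+\Order{\kappa}$), with skeleton $\Psi_k^{\,2}$ where $\Psi_0=\cI(\Xi)$, $\Psi_{j}=\cI\bigl(\Psi_{j-1}\,\cI(\Xi)\bigr)$; likewise $\cT_1$ splits into generations $k=0,1,\dots,\intpart{\kbarmax}$ of $2k+2$ noise leaves and degree $-(d-2\rho)+3k(\rho-\rhocrit)+\Order{\kappa}$, with skeleton $\Psi_{2k+1}$ --- the passage from $\kmax$ to $\kbarmax$ being simply that a $\cT_1$-tree has one planted factor rather than two, costing one fewer $\rho$. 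Counting the trees in each generation and their symmetry factors $S(\tau)$ and differentials $\Upsilon^{F}[\tau]$ only produces geometric factors in $k$. The integer parts of $\kmax,\kbarmax$ are exactly the largest $k$ with still-negative degree, which is why they measure the size of the relevant part of the model space.

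Third --- and this is the hard part --- I would bound the renormalisation constants $\hat\ell^\eps(\tau)$. Using the Feynman-diagram formalism of \cite{Hairer_BPHZ}, the power-counting over Hepp sectors and over the forests of subdivergences of Sections~\ref{sec:forests}--\ref{sec:bounds} makes each renormalised diagram absolutely convergent and yields, for $|\tau|_\s<0$, an expansion $\hat\ell^\eps(\tau)=a_\tau\eps^{|\tau|_\s}+(\text{terms with strictly larger }\eps\text{-exponent})$, with extra $\Order{\log(\eps^{-1})}$ contributions whenever a marginal sub-diagram (degree within $3(\rho-\rhocrit)$ of $0$) occurs. The quantity that must be controlled is the growth, as a function of $k$, of the effective coefficient $B_k$ obtained by collecting a whole generation in the representation $C_0(\eps,\rho)=\eps^{-(d-\rho)}\sum_{k=0}^{\intpart{\kmax}}B_k\bigl(\eps^{3(\rho-\rhocrit)}\bigr)^k$ (with $B_0=A_0$) and its $\cT_1$-analogue: near the threshold a generation-$k$ diagram carries $\Order{k}$ nested subdivergences of degrees $-(d-\rho)+3j(\rho-\rhocrit)$, $j\leqs k$, the last of which are close to logarithmic, so the forest formula contributes a product of $\Order{k}$ increasingly singular factors and $B_k$ grows super-exponentially in $k$ --- at a factorial-type rate (of order $(k!)^{3}$ up to geometric factors) whose Stirling asymptotics is precisely what places $\log\kmax$ inside the exponent of $\epscrit$, the correction $\tfrac{\log(k+1)}{2k}$ being the subleading Stirling term. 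Establishing these estimates \emph{uniformly} over the (exponentially many) trees of the model space as $\rho\downarrow\rhocrit$, with the correct growth rate of the $B_k$, is the main obstacle: the proliferation of nested subdivergences makes the forest sums large, and one must verify that the constants blow up exactly at this rate and no faster.

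Finally, along the lines of Section~\ref{sec:asymp}, it remains to analyse the finite sum $\sum_{k}B_k x^{k}$, $x=\eps^{3(\rho-\rhocrit)}$, in the two regimes. The constant $a$ is fixed so that $\epscrit=\epscrit(\rho,a)$ is the scale at which the last term $k=\intpart{\kmax}$ stops being negligible beside the first one; solving $B_k x^{k}=B_0$ with the Stirling asymptotics of $B_k$ produces the closed form $\epscrit(\rho,a,k)$. For $\eps<\epscrit$ the $k=0$ term dominates and $\bigabs{\sum_{k\geqs1}B_k x^{k}}\leqs\frac{M}{\rho-\rhocrit}B_0(\eps/\epscrit)^{3(\rho-\rhocrit)}$, giving the second line of \eqref{eq:bound_C0} with $A_0=B_0$; for $\eps\geqs\epscrit$ the whole sum is controlled by $\epscrit^{-(d-\rho)}$ times the bracket in the first line, the $\log(\eps^{-1})$ coming from the marginal sub-diagrams and the $\frac{1}{\rho-\rhocrit}(\epscrit/\eps)^{3(\rho-\rhocrit)}$ term from the tail of the generations. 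Running the same argument with $\kbarmax$ in place of $\kmax$ yields \eqref{eq:bound_C1}, and \eqref{eq:superexponential} follows at once from the closed forms. Granted the third step, this summation is elementary.
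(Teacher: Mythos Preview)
Your four-step plan matches the paper's approach closely, but Step~3 contains a genuine conceptual error about where the $(k!)^{3}$ growth of the generation coefficients $B_k$ comes from. You attribute it to ``the forest formula contribut[ing] a product of $\Order{k}$ increasingly singular factors'' arising from nested subdivergences. This is backwards: the entire purpose of the BPHZ procedure via Zimmermann's forest formula is precisely to \emph{cancel} those nested subdivergences, so that each renormalised diagram is bounded by $K_1^{\abs{\sE}}\eps^{\deg\Gamma}$ (Proposition~\ref{prop:sum_n_Hepp}) with constants merely \emph{exponential} in $k$, since $\abs{\sE}=3k$. The number of safe forests is likewise only $\leqs 2^{k}$. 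If you tried to carry out your argument as written, you would find that the forest sum gives no factorial contribution and would be missing the mechanism.

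The factorial growth actually comes from the outer combinatorial sums, which you do not discuss. Evaluating $E(\tau)$ via Wick's theorem produces a sum over all $(2k+1)!!$ pairings of the $2k+2$ noise leaves (Proposition~\ref{prop:degE_Gamma}), and the Hepp-sector decomposition~\eqref{eq:Hepp_decomposition} is a further sum over up to $(2k-1)!$ labelled binary Hepp trees $T$ on the $2k$ vertices of $\Gamma(\tau,P)$. It is the product $(2k+1)!!\,(2k-1)!$ that contributes $3k\log k$ under Stirling; the Wedderburn--Etherington tree count per generation, the symmetry factors $S(\tau)$, the differential $\Upsilon^{F}$, the safe-forest count and the kernel constants together add only a linear-in-$k$ term to the exponent, which is bundled into the constant $a$. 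Your closed form for $\epscrit(\rho,a,k)$ and your Step~4 summation are then correct, but only once Step~3 is repaired to locate the factorials in the combinatorics of pairings and Hepp sectors rather than in the renormalisation procedure itself.
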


\begin{remark}
Convergence is in probability in $\cC^\alpha_\fraks([0,T],\T^d)$, for any 
fixed $T>0$, and for the process stopped when its $\cC^\alpha_\fraks$-norm 
exceeds a fixed large cut-off $L$. Here $\alpha$ is any real number satisfying 
$\alpha < -\frac12(d-\rho)$, and $\cC^\alpha_\fraks$ is the scaled 
H\"older--Besov space associated with the scaling of the fractional Laplacian. 
This space is defined in an analogous way as $\cC^\alpha(\T^d)$, but with a 
fractional scaling given by $(\sS^\lambda_{(t,x),\fraks}\varphi)(s,y) = 
\lambda^{-(\rho+d)}\varphi(\lambda^{-\rho}(s-t),\lambda^{-1}(y-x))$. 
\end{remark}

\begin{remark}
The condition $\rho < \frac{d}{2}$ is due to the fact that we focus here 
on the asymptotic regime when the counterterms are given by sums of many 
divergent terms that are indexed by Feynman diagrams. More precisely, the 
results are meaningful when $\kmax$ and $\kbarmax$ are both large. What happens 
for $\rho \geqs \frac{d}{2}$ is in fact well known. When $\rho = d$, only the 
counterterm $C_0(\eps,\rho)$ is required, and it diverges like 
$\log(\eps^{-1})$. When $\frac{d}{2} < \rho < d$, $C_0(\eps,\rho)$ is still the 
only required counterterm, but it diverges like $\eps^{-(d-\rho)}$. When 
$\rho = \frac{d}{2}$, it becomes necessary to include the second counterterm 
$C_1(\eps,\rho)$, which then diverges like $\log(\eps^{-1})$. 
\end{remark}

\begin{remark}
The condition $\eta>-\frac\rho2$ is a consequence of the critical 
regularity of the initial condition in the fractional heat equation. Indeed, 
the scaling property $P_\rho(t,x) = t^{-d/\rho}P_\rho(1,t^{-1/\rho}x)$ of the 
fractional heat kernel implies that if $u_0\in\cC^\eta$ with $\eta<0$, then 
$(P_\rho u_0)(t,x)$ blows up like $t^{\eta/\rho}$. Therefore, $(P_\rho 
u_0)(t,x)^2$ blows up like $t^{2\eta/\rho}$, and its space-time convolution 
with $P_\rho$ is bounded if and only if $\eta>-\frac\rho2$.

More technically, the condition is related to the exponents of the space 
$\cD^{\gamma,\eta}$ of modelled distributions in which one solves a fixed-point 
equation, where $\gamma$ measures the H\"older regularity, while $\eta$ controls 
the singularity at time zero  (cf.~\cite[Def.~6.2]{Hairer1}). Indeed, 
\cite[Lemma~7.5]{Hairer1} shows that if $u_0\in \cC^\eta(\T^d)$, then its 
convolution with the Green function of the fractional Laplacian can be 
identified with an element of $\cD^{\gamma,\eta}$ for every $\gamma > 
\max\set{\eta,0}$. The fixed point $U$ cannot be more regular than the 
fractional stochastic convolution, which has regularity $\alpha$ for any $\alpha 
< -\frac12(d-\rho)$ (cf.~\cite[Sect.~4.1]{Berglund_Kuehn_15}). If 
$U\in\cD^{\gamma,\eta}$ has regularity $\alpha<0$, then $U^2$ has regularity 
$\bar\alpha=2\alpha$, while~\cite[Prop.~6.12]{Hairer1} shows that 
$U^2\in\cD^{\bar\gamma,\bar\eta}$ with $\bar\gamma = \gamma+\alpha$ and 
$\bar\eta = \eta + \min\set{\alpha,\eta}$. In order to 
apply~\cite[Thm.~7.8]{Hairer1} yielding existence of a unique fixed point, one 
needs to fulfill the condition $\min\set{\bar\eta,\bar\alpha} > -\rho$, which 
holds if $-\frac\rho2<\eta\leqs\alpha$. (The other required condition $\eta < 
\min\set{\bar\eta,\bar\alpha} + \rho$ is automatically satisfied if $\eta<0$.) 
\end{remark}

\begin{figure}
\begin{center}
\begin{tikzpicture}[>=stealth',main node/.style={circle,minimum
size=0.25cm,fill=blue!20,draw},x=5cm,y=3cm]

\draw[->,thick] (-0.1,0) -> (1.15,0);
\draw[->,thick] (0,-0.1) -> (0,1.15);

\draw[Green,thick,-,smooth,domain=0.01:1,samples=75,/pgf/fpu,/pgf/fpu/output
format=fixed] plot (\x, { exp(-ln(2/\x)/\x) });

\draw[blue,thick,-,smooth,domain=0.01:1,samples=75,/pgf/fpu,/pgf/fpu/output
format=fixed] plot (\x, { exp(-ln(1/\x)/\x) });

\node[] at (1,-0.1) {$\rho-\rhocrit$};
\node[] at (0.05,1) {$\eps$};
\node[blue] at (1.12,1) {$\epscrit(\rho)$};
\node[Green] at (1.12,0.5) {$\epsbarcrit(\rho)$};

\node[blue] at (0.3,0.7) {\small $C_0 \simeq \log(\eps^{-1})$};
\node[Green] at (0.3,0.4) {\small $C_1 \simeq \log(\eps^{-1})$};

\node[blue] at (1,0.65) {\small $C_0 \simeq \eps^{-(d-\rho)}$};
\node[Green] at (0.9,0.1) {\small $C_1 \simeq \eps^{-(d-2\rho)}$};
\end{tikzpicture}
\vspace{-5mm}
\end{center}
\caption[]{Behaviour of the counterterms as a function of $\rho-\rhocrit$ and 
$\eps$. The small-$\eps$ asymptotics of $C_0$ changes on the blue curve 
$\eps=\epscrit(\rho)$, while the asymptotics of $C_1$ changes on the green 
curve $\eps=\epsbarcrit(\rho)$.}
\label{fig_epscrit}
\end{figure}

In less technical terms, the first estimate in Theorem~\ref{thm:main} shows 
that, up to error terms which are small unless $\eps$ is close to $\epscrit$, 
\begin{equation}
 C_0(\eps,\rho) \simeq 
 \begin{cases} 
  \epscrit^{-(d-\rho)} \log(\eps^{-1})
  & \text{if $\eps \geqs \epscrit$\;,} \\
  A_0 \eps^{-(d-\rho)} 
  & \text{if $\eps < \epscrit$\;.} 
 \end{cases}
\end{equation} 
In the same spirit, the second counterterm satisfies 
\begin{equation}
 C_1(\eps,\rho) \simeq 
 \begin{cases} 
  \epsbarcrit^{\,-(d-2\rho)} \log(\eps^{-1})
  & \text{if $\eps \geqs \epsbarcrit$\;,} \\
  \bar A_0 \eps^{-(d-2\rho)} 
  & \text{if $\eps < \epsbarcrit$\;. } 
 \end{cases}
\end{equation} 
We thus obtain a saturation effect at values of the mollification parameter 
$\eps$ which are not superexponentially small: for $\eps$ larger than its 
critical value, the counterterms are of order $\log(\eps^{-1})$, with a 
prefactor becoming very large when $\rho$ approaches $\rhocrit$ 
(Figure~\ref{fig_epscrit}). For superexponentially small $\eps$, on the other 
hand, the counterterms diverge respectively like $\eps^{-(d-\rho)}$ and 
$\eps^{-(d-2\rho)}$. This is due to the fact that both counterterms can be 
written as the sum of a large number of contributions. Only one of these terms, 
which has the strongest singular behaviour as $\eps$ goes to $0$, dominates for 
superexponentially small $\eps$. The vast majority of the terms diverge only 
logarithmically, but their number is large enough for them to dominate when 
$\eps$ is larger than its critical value. 

The constants $A_0$ and $\bar A_0$ can be characterised more precisely. 
Assuming that the mollifier has the form $\varrho^\eps(t,x) = 
\varrho^\eps_0(t)\varrho^\eps_1(x)$ 
with $\varrho^\eps_0(t) = \eps^{-\rho}\varrho_0(\eps^{-\rho}t)$, 
$\varrho^\eps_1(x) = \eps^{-d}\varrho_1(\eps^{-1}x)$,
and $\varrho_1$ even, we have 
\begin{equation}
\label{eq:A0} 
 A_0 = 
 -\frac12 \lim_{\eps\to0}
 \eps^{d-\rho}(\varrho^\eps_1 *_x G_\rho)(0)
 = -\frac12 \lim_{\eps\to0}
 \int_{\R^d} \varrho_1(x) \eps^{d-\rho}G_\rho(\eps x) \6x\;,
\end{equation} 
where $G_\rho = (\Delta^{\rho/2})^{-1}$ is the Green function of the 
fractional Laplacian and $*_x$ denotes convolution in space. 
Scaling properties of $G_\rho$ (see for instance~\cite[Section~4]{Kwasnicki}) 
imply that $A_0$ is indeed finite. We also have 
\begin{equation}
\label{eq:Abar0}
\bar A_0 = -2 \lim_{\eps\to0} \eps^{d-2\rho}
\int_{\R^{d+1}} P_\rho(t,x) (G^\eps_\rho *_x \tilde P^\eps_\rho)(\abs{t},x) 
\6t\6x\;,
\end{equation} 
where $P_\rho$ is the fractional heat kernel, $G^\eps_\rho = \varrho^\eps_1 
*_x G_\rho$, and $\tilde P^\eps_\rho = P_\rho * \varrho^\eps * \varrho^\eps$.

The main insight provided by Theorem~\ref{thm:main} is as follows. The usual 
way of renormalising the singular SPDE~\eqref{eq:SPDE} is to fix $\rho > 
\rhocrit$, and then to take the limit $\eps\to0$. Our result then shows that a 
well-defined limit exists, provided one adds counterterms to the equation that 
behave logarithmically in $\eps$ as long as $\eps$ is not too small, but 
ultimately diverge like a negative power of $\eps$. On the other hand, one 
could also fix a small positive value of $\eps$ and look at the limit 
$\rho\searrow\rhocrit$. In physical terms, this would model a situation where 
space-time is discrete at very small scales, perhaps defined by Planck's scale. 
Since discrete models are usually harder to solve than continuous ones, the 
vanishing $\eps$ limit can be considered as an idealised mathematical object 
that really only approximates the real system. Note that for $\eps>0$, the SPDE 
is no longer singular, and local existence of solutions does not pose a problem 
at all. What our result says in this case, is that in order to have a chance to 
be close, for small $\eps$, to a well-defined continuous model, one should add 
counterterms of order $\log(\eps^{-1})$, but which diverge superexponentially 
fast in $\rho-\rhocrit$ in the sense of~\eqref{eq:superexponential}.

A more ambitious goal would be to look at possible limiting dynamics when $\eps$ 
and $\rho-\rhocrit$ simultaneously converge to zero, along some path in the 
$(\rho,\eps)$ plane, cf.~\figref{fig_epscrit}. There are two reasons why 
obtaining such a convergence result is currently out of reach. The first reason 
is that when changing $\rho$, one changes both the model space and the space of 
modelled distributions in which one tries to solve a fixed-point equation, so 
that the general theory of convergence in regularity structures does not 
immediately apply. The second, more serious reason is that as 
$\rho\searrow\rhocrit$, the number of symbols in the model space having negative 
degree diverges exponentially. However, many arguments in the theory of 
regularity structures only apply when the number of these symbols remains 
bounded. This fact is then crucial in showing that the sequence of 
$\eps$-dependent models converges in an appropriate topology to a well-defined 
limiting model. It is not clear at this point whether a similar convergence 
argument can be obtained when the number of symbols having negative degree is 
unbounded.  

Before moving to the proof of Theorem~\ref{thm:main}, we list some extensions 
and interesting open questions related to our 
results.
\begin{itemize}
\item 	Obtaining a matching lower bound on the counterterms in the regime of 
large $\eps$ seems out of reach at this stage, because of the existence of 
cancellations in the sums defining these counterterms. However, as explained in 
Section~\ref{ssec:asymp_remark}, one can show that there exist terms in the sum 
defining $C_0(\eps,\rho)$ which have the same asymptotic behaviour as the upper 
bound obtained above. Therefore, the counterterm can only be of smaller order in 
case unexpected cancellations occur in this sum. 
\item 	One can extend the results to the following generalisation 
of~\eqref{eq:SPDE}:
\begin{equation}
\partial_t u - \gamma \Delta^{\rho/2} u = g u^2 +  \sigma\xi\;.
\end{equation}
Its renormalised version reads 
\begin{equation}
\partial_t u - \gamma \Delta^{\rho/2} u = g u^2 
+ C^{\gamma,g,\sigma}(\eps,\rho,u)
+  \sigma\xi^\eps\;,
\end{equation}
where 
\begin{equation}
 C^{\gamma,g,\sigma}(\eps,\rho,u) 
 = C_0^{\gamma,g,\sigma}(\eps,\rho) 
 + C_1^{\gamma,g,\sigma}(\eps,\rho)u\;.
\end{equation}
One can then show (see Section~\ref{ssec:other_parameters}) that 
\begin{align}
\bigabs{C_0^{\gamma,g,\sigma}(\eps,\rho)} &\lesssim  
\biggpar{\frac{g^2\sigma^2}{\gamma^3}}^{\kmax} \frac{g\sigma^2}{\gamma} 
\widehat C_0(\eps,\rho)
&& \text{if $\eps \geqs \epscrit$\;,} \\
C_0^{\gamma,g,\sigma}(\eps,\rho) 
&=  \frac{g\sigma^2}{\gamma}
\biggbrak{C_0(\eps,\rho) + \biggOrder{\frac{g^2\sigma^2}{\gamma^3} 
\biggpar{\frac{\eps}{\epscrit}}^{3(\rho-\rhocrit)}}}
&& \text{if $\eps < \epscrit$\;,} 
\label{eq:C_full_parameters} 
\end{align}
where $\widehat C_0(\eps,\rho)$ denotes the upper bound on 
$\abs{C_0(\rho,\eps)}$ in~\eqref{eq:bound_C0}, and we write $a(\eps,\rho) 
\lesssim b(\eps,\rho)$ if there exists a constant $M\geqs 1$, independent of 
$\eps$ and $\rho$, such that $a(\eps,\rho) \leqs M b(\eps,\rho)$ holds for 
$\eps$ and $\rho-\rhocrit$ small enough.
In a similar way, we have 
\begin{align}
\bigabs{C_1^{\gamma,g,\sigma}(\eps,\rho)} &\lesssim  
\biggpar{\frac{g^2\sigma^2}{\gamma^3}}^{\kmax} \frac{g^2\sigma^2}{\gamma^2} 
\widehat C_1(\eps,\rho)
&& \text{if $\eps \geqs \epsbarcrit$\;,} \\
C_1^{\gamma,g,\sigma}(\eps,\rho) 
&=  \frac{g^2\sigma^2}{\gamma^2}
\biggbrak{C_1(\eps,\rho) + \biggOrder{\frac{g^2\sigma^2}{\gamma^3} 
\biggpar{\frac{\eps}{\epsbarcrit}}^{3(\rho-\rhocrit)}}}
&& \text{if $\eps < \epsbarcrit$\;,} 
\label{eq:C_almost_full_parameters} 
\end{align}
where $\widehat C_1(\eps,\rho)$ denotes the upper bound on 
$\abs{C_1(\rho,\eps)}$ in~\eqref{eq:bound_C1}. 
Note that for $\eps \geqs \epscrit$, the important parameter is 
$g^2\sigma^2\gamma^{-3}$. In particular, \eqref{eq:superexponential} implies 
\begin{equation}
 \abs{C_0^{\gamma,g,\sigma}(\eps,\rho)} \lesssim 
 \frac{g\sigma^2}{\gamma} 
 \exp\biggset{\frac{d-\rho}{\rho-\rhocrit} 
\biggbrak{\log\biggpar{\frac{g^{2/3}\sigma^{2/3}}{\gamma(\rho-\rhocrit)}} + 
\Order{1}}}\;.
\end{equation} 
A similar relation holds for 
$C_1^{\gamma,g,\sigma}(\eps,\rho)$ for $\eps 
\geqs \epsbarcrit$. Thus if $\gamma$, $g$ and $\sigma$ are fixed, the 
counterterms diverge in the same way as for $\gamma=g=\sigma=1$ as 
$\rho\searrow\rhocrit$. However, if $\gamma$, $g$ and $\sigma$ are allowed to 
depend on $\rho$, new regimes can occur. 

\item	The above choice of counterterms is not unique. In this work, we 
have chosen the BPHZ renormalisation, which is natural in some sense. However, 
as shown in~\cite{BrunedHairerZambotti}, the set of all potential choices of 
counterterms is parametrised by a group, called the renormalisation group. This 
group can be very large, since its dimension as a Lie group is equal to the number of symbols in 
the model space having negative degree. However, in our case only a 
two-parameter family of counterterms really matters: this family is obtained by 
adding constants to both $C_0(\eps,\rho)$ and $C_1(\eps,\rho)$. It is 
interesting to note that a one-parameter family of these choices of 
counterterms can be realised by a simple shift $v = u + k$ of the random field, 
where $k$ is a constant. Indeed, the equation for $v$ reads 
\begin{equation}
\partial_t v - \Delta^{\rho/2} v = 
v^2 + \bigpar{C_0(\eps,\rho) - C_1(\eps,\rho) k + k^2}  
+ (C_1(\eps,\rho)- 2k) v + \xi^{\eps}\;.
\end{equation}
In fact, one can observe that this is nothing but the equation obtained by 
applying the BPHZ renormalisation to the equation
\begin{equation}
 \partial_t v - \Delta^{\rho/2} v 
 = v^2 - 2k v + k^2 + \xi\;.
\end{equation} 
Indeed, the term $C_1(\eps,\rho) k$ comes from the fact that almost full 
binary trees (as defined in Section~\ref{sec:modelspace} below) can be 
generated by $k v$, and they will come with a factor $k$. Note that 
time-dependent shifts are currently out of the scope of the general theory, 
though one may expect that they lead to time-dependent renormalisation 
constants.

\item 	A common way to analyse the effect of the interaction term as 
$\rho\searrow\rhocrit$ is to study moments of the solution of the form
\begin{equation}
\bigexpec{u(t,x_1)u(t,x_2)u(t,x_3)}\;.
\end{equation}
So far, such moments have been computed only for very specific models such as 
the two-dimensional parabolic Anderson model, see~\cite{GuXu_18}. The main issue 
of such an approach is that in our case, the solutions are only local in time. 
However, it may be possible to obtain moment estimates for the process stopped 
when its H\"older norm exceeds some large threshold, and analysing their 
behaviour as $\eps\to0$ and $\rho\searrow\rhocrit$ may yield information on the 
potential convergence to a non-trivial model.
\end{itemize}


\section{Model space and renormalised equation}
\label{sec:modelspace} 

In order to apply the theory of regularity structures, the first step is to 
introduce a model space. This is a graded vector space spanned by abstract 
symbols, which allow to represent solutions of~\eqref{eq:laplacienrho} by 
an abstract fixed-point equation of the form 
\begin{equation}
\label{eq:abstract_FP} 
 U = \cI_\rho(\Xi + U^2) + P(U)\;.
\end{equation} 
Here $U$ represents the solution, $\Xi$ stands for space-time white noise, 
$\cI_\rho$ is an abstract integration operator standing for convolution with 
the fractional heat kernel, and $P(U)$ is a polynomial part, required by a 
recentering procedure. 

More precisely, let $\fraks=(\rho,1,\dots,1)\in \R_+^{d+1}$ be the scaling 
associated with the fractional Laplacian. Then we construct a set of symbols 
$\tau$, each admitting a degree $\abss{\tau}\in\R$, in the following way.

\begin{itemize}
\item 	For each multiindex $k=(k_0,\dots,k_d)\in\N_0^{d+1}$, 
we define the polynomial symbol $\mathbf{X}^k = X_0^{k_0}\dots X_d^{k_d}$, 
which has degree $\abss{\mathbf{X}^k} = \abss{k} = \rho k_0 + k_1 + \dots + 
k_d$. In particular, $\mathbf{X}^0$ is denoted $\unit$ and has degree 
$\abss{\unit}=0$. 

\item 	The symbol $\Xi$ representing space-time white noise has degree 
$\abss{\Xi} = -\frac12(\rho+d) - \kappa$, where $\kappa>0$ is arbitrarily 
small.  

\item 	If $\tau, \tau'$ are two symbols, then $\tau\tau'$ is a new symbol of 
degree $\abss{\tau\tau'} = \abss{\tau} + \abss{\tau'}$. 

\item 	Finally, if $\tau$ is a symbol which is not of the form $\mathbf{X}^k$, 
then $\cI_\rho(\tau)$ denotes a new symbol of degree $\abss{\tau} + \rho$, 
while for $k\in\N_0^{d+1}$, $\partial^k\cI_\rho(\tau)$, 
stands for a new symbol of degree $\abss{\tau} + \rho - \abss{k}$ (where we use 
the multiindex notation $\partial^k = 
\partial_t^{k_0}\partial_{x_1}^{k_1}\dots\partial_{x_d}^{k_d}$).
\end{itemize}

It is convenient to represent symbols by trees, in which edges stand for 
integration operators $\cI_\rho$, leaves stand for noise symbols $\Xi$, and 
multiplication of symbols is represented by joining them at the root. For 
instance, 
\begin{equation}
 \RSV = \cI_\rho(\Xi)^2\;, 
 \qquad 
 \RScombhSix = \Bigl[ \cI_\rho\Bigl(\cI_\rho\bigl(\cI_\rho(\Xi)^2\bigr) 
\cI_\rho(\Xi)\Bigr) \Bigr]^2\;.
\end{equation} 
Multiplication by a polynomial symbol $\mathbf{X}^k$ is represented by adding a 
node decoration $k$ to the relevant node of the tree, while derivatives 
$\partial^\ell\cI_\rho$ are denoted by edge decorations $\ell$. Thus for 
instance
\begin{equation}
 \RScombOnenodeedge{k}{\ell}
 = \cI_\rho(\mathbf{X}^k \partial^\ell\cI_\rho(\Xi))\;.
\end{equation}
The degree of a tree with $p$ leaves (for the
noise), $q$ edges (for integration operators), node decorations of total
exponent $k$ and edge decorations of total exponent $\ell$ is given by 
\begin{equation}
\label{eq:deg_pqkl} 
 \abss{\tau} = \biggpar{-\frac{\rho+d}{2}-\kappa} p + \rho q + \abss{k} -
\abss{\ell}\;.
\end{equation}
Not all symbols are needed to represent the abstract fixed-point 
equation~\eqref{eq:abstract_FP}. In fact, for its right-hand side, we only need 
the smallest set $T$ such that 
\begin{itemize}
\item 	$\mathbf{X}^k\in T$ for any $k\in\N_0^{d+1}$,
\item 	$\Xi\in T$,
\item if $\tau, \tau' \in T$,  $ k \in \N_0^{d+1}$,  one has  $  X^k \cI_\rho(\tau) , \cI_\rho(\tau)  \cI_\rho(\tau') 
\in T$,
\item 	if $d > 2$ and $\tau \in T$, then 
$\partial_{x_i} \cI_\rho(\tau)\in T$  for every $ 1 \leqs i \leqs d $.
\end{itemize}
We denote by $ \cT $ the linear span of $ T $. It is a consequence of local subcriticality that $T$ has only finitely many 
symbols of degree smaller than any $\alpha<\infty$ (see~\cite[Lemma 
8.10]{Hairer1}). The difference between $d\leqs2$ and $d>2$ is due to the 
fact that for $d \leqs 2$, one has $\rho < 1$ when $\rho$ is close to $\rhocrit 
= \frac{d}{3} \leqs \frac{2}{3}$. This means that the abstract operator  
$\partial_{x_i}\cI_\rho$ decreases the degree of the tree. Therefore, if we 
were to keep this rule, we would break subcriticality. For both cases, we have 
exhibited rules which are complete in the sense that they are stable under the 
action of the renormalisation. 

Let $T_- \subset T$ denote the set of symbols/decorated trees of negative 
degree, and $\cT_{-}$ (resp.\ $\hat \cT_{-}$) the linear span of the forests 
composed of elements in $T_{-}$ (resp.\ $T$). On $\cT_-$ we define a commutative 
and associative forest product. The product of two forests $\tau_1$ and $\tau_2$ 
is simply the forest containing all the trees of both forests, where the same 
tree may occur several times. The neutral element for this product is the empty 
forest, that we will denote by $\funit$. 

The structure of the trees in $T_-$ will be very important later on to 
control the renormalisation constants, which will be expressed in terms of sums 
over all trees of negative degree.
We know from~\cite[Prop.~4.17]{BK17} that trees in $T_-$ are necessarily  
either \emph{\full}\ binary trees (every vertex has either two children or no 
child), in which case $q=2p-2$, or \full\ binary trees with one edge missing 
(then $q=2p-1$), which we will call \emph{\afull}\ binary trees. It turns out 
that for symmetry reasons, \full\ binary trees can only contribute to the 
renormalized equation if they contain no nontrivial node decoration, while the 
\afull\ ones can contain one node decoration $k$ with $\abss{k}=1$. 
Furthermore, \eqref{eq:deg_pqkl} implies that the latter can only have negative 
degree if $d>3$. 

The form of the renormalised equation can be determined using the methods
introduced in~\cite{BrunedHairerZambotti} and expanded in~\cite{BCCH17}. 
As shown in~\cite[Thm. 2.21]{BCCH17}, it has the form~\eqref{eq:renormalised} 
with 
\begin{equation}
\label{eq:counterterm} 
 C(\eps,\rho,u) = \sum_{\tau\in T_-^{F}} c_\eps(\tau) 
\frac{\Upsilon^F(\tau)(u)}{S(\tau)}\;, 
\end{equation} 
where the terms $\Upsilon^F(\tau)(u)$ describe the effect of the nonlinearity 
$F(u,\xi) = u^2+\xi$, $S(\tau)$ is a symmetry factor, and $c_\eps(\tau)$ is the 
expectation of the element of the Wiener chaos represented by $\tau$.  

More precisely, the terms $\Upsilon^F(\tau)(u)$ are elementary differential 
operators defined recursively by $\Upsilon^F(\Xi)(u) = 1$ and 
\begin{equation}
\label{eq:def_Upsilon} 
\Upsilon^F\biggl(\mathbf{X}^{k}  \prod_{j=1}^m 
\mathcal{I}_{\rho}[\tau_j] \biggr)(u) = \left( \prod_{j=1}^{m}  
\Upsilon^{F}(\tau_j)(u) \right)  \partial^{k} \partial_{u}^{m}  u^{2}\;.
\end{equation}
We write $T_-^{F}$ for the subset of elements of $T_-$ for which $\Upsilon^{F}$ 
is non-zero, see~\cite[Def.~2.12]{BCCH17}. We could extend the previous 
definition of $\Upsilon^{F}$ to elements of the form 
$\partial_{x_i}\cI_\rho(\tau_j)$ by using the derivative  
$\partial_{\partial_{x_i}u}$. However, such a derivative applied to $F$ gives 
zero, which is why we omit this case in the definition of $\Upsilon^{F}$.

\begin{lemma}
Let $\ninner(\tau)$ be the number of \emph{inner nodes} of $\tau\in T_-$, where 
an inner node is any node which is not a leaf (including the root). Then 
\begin{equation}
 \Upsilon^F(\tau)(u) = 
\begin{cases}
2^{\ninner(\tau)} & \text{if $\tau$ is a \full\ binary tree\;,} \\
2^{\ninner(\tau)} u & \text{if $\tau$ is an \afull\ binary tree without 
decoration $X_i$\;,} \\
2^{\ninner(\tau)} \partial_{x_i}u & \text{if $\tau$ is an \afull\ binary 
tree 
with a decoration $X_i$\;.} 
\end{cases}
\end{equation} 
\end{lemma}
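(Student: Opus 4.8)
The plan is to compute $\Upsilon^F(\tau)(u)$ by induction on the structure of $\tau$, using the recursive definition~\eqref{eq:def_Upsilon} together with the structural description of $T_-$ recalled above (trees in $T_-$ are full or almost-full binary trees, and the only admissible node decoration is a single $X_i$ with $\abss{X_i}=1$ sitting on an almost-full tree). Since $F(u,\xi)=u^2+\xi$, the only nonzero derivatives of $F$ are $\partial_u^0\partial_u^0 u^2 = u^2$, wait—more precisely $\partial_u^0 u^2=u^2$, $\partial_u u^2 = 2u$, $\partial_u^2 u^2 = 2$, and $\partial_u^m u^2 = 0$ for $m\geqs 3$; moreover any $\partial^k$ with $\abss{k}\geqs 1$ applied afterwards differentiates in a space-time variable and kills everything except, in the almost-full case with a decoration $X_i$, where $\partial^k = \partial_{x_i}$ turns $2u$ into $2\partial_{x_i}u$. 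This already explains why $T_-^F$ consists precisely of full and almost-full binary trees with at most the one allowed decoration, and why no tree with $m\geqs 3$ children at any node can survive.

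First I would treat the full binary tree case. Here every inner node has exactly two children and carries no decoration, i.e.\ $k=0$ and $m=2$ at every inner node. Applying~\eqref{eq:def_Upsilon} at the root of a full binary tree $\tau = \cI_\rho[\tau_1]\cI_\rho[\tau_2]$ gives $\Upsilon^F(\tau)(u) = \Upsilon^F(\tau_1)(u)\,\Upsilon^F(\tau_2)(u)\,\partial_u^2 u^2 = 2\,\Upsilon^F(\tau_1)(u)\,\Upsilon^F(\tau_2)(u)$, since the two subtrees $\tau_1,\tau_2$ are themselves full binary trees (their roots being the children of the root of $\tau$). The base case is $\Upsilon^F(\Xi)(u)=1$, attached at the leaves. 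An easy induction on the number of inner nodes then yields $\Upsilon^F(\tau)(u) = 2^{\ninner(\tau)}$: each of the $\ninner(\tau)$ inner nodes contributes exactly one factor of $2$ from the $\partial_u^2 u^2 = 2$ step, and the leaves contribute the multiplicative unit.

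Next I would handle the almost-full case, i.e.\ a full binary tree with one edge removed, so that exactly one inner node has a single child instead of two; call it the \emph{defective node}. Away from the defective node the recursion proceeds exactly as before, contributing a factor $2$ per honest binary inner node. At the defective node we have $m=1$, so~\eqref{eq:def_Upsilon} produces $\partial^k\partial_u u^2 = \partial^k(2u)$. If there is no decoration, $k=0$ and this is $2u$; if the decoration is $X_i$ (so $\abss{k}=1$, $k=e_i$), this is $2\partial_{x_i}u$. In either sub-case the defective node contributes a factor $2$ just like the ordinary inner nodes, together with a single $u$ (resp.\ $\partial_{x_i}u$). Moreover, above the defective node, every subsequent application of~\eqref{eq:def_Upsilon} carries along the factor $\Upsilon^F$ of the subtree containing the $u$, so the $u$ (resp.\ $\partial_{x_i}u$) survives intact to the root and is never differentiated again (any $\partial^k$ with $\abss{k}\geqs 1$ higher up would require a decoration there, which by the structural description does not occur in $T_-^F$). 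Counting the factors of $2$: there are $\ninner(\tau)$ inner nodes in all, each contributing exactly one factor $2$ (the binary ones via $\partial_u^2 u^2$, the defective one via $\partial_u u^2$), giving $2^{\ninner(\tau)}u$ or $2^{\ninner(\tau)}\partial_{x_i}u$ as claimed.

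The main obstacle—more bookkeeping than genuine difficulty—is making the induction on tree structure precise when the defective node is not the root, i.e.\ verifying that the subtree $\tau_j$ sitting above the missing edge is itself an almost-full binary tree and the others are full, so that the inductive hypothesis applies to each child and the factors combine correctly; and checking that no further $\partial^k$ with $\abss{k}\geqs1$ can act on the propagated $u$. Both points follow cleanly from the characterisation of $T_-$ in~\cite[Prop.~4.17]{BK17} and the remark that almost-full trees in $T_-^F$ carry at most the single decoration $X_i$ at the defective node, so I would state them as a short structural lemma feeding the induction rather than re-deriving them.
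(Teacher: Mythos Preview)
Your proposal is correct and follows essentially the same approach as the paper's proof: both argue by induction on the tree structure via the recursion~\eqref{eq:def_Upsilon}, using $\partial_u^2 u^2 = 2$ at binary inner nodes and $\partial_u u^2 = 2u$ at the single node with one child in the almost-full case. Your case analysis around the ``defective node'' is equivalent to the paper's split into the planted case $\tau=\cI_\rho(\tau_1)$ versus the case $\tau=\cI_\rho(\tau_1)\cI_\rho(\tau_2)$ with one subtree full and the other almost full; you are slightly more explicit about why the decoration $X_i$ must sit at the defective node and why the resulting $u$ (or $\partial_{x_i}u$) is never differentiated again, points the paper leaves implicit.
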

\begin{proof}
By induction on the size of the tree. The base case follows from 
$\ninner(\Xi)=0$. If $\tau$ is a \full\ binary tree, then it can be written as 
$\tau=\cI_\rho(\tau_1)\cI_\rho(\tau_2)$, where each $\tau_i$ is a \full\ tree 
with $n_i$ inner nodes. Then~\eqref{eq:def_Upsilon} and the induction hypothesis 
yield $\Upsilon^F(\tau)(u) = 2^{n_1+n_2+1}$, where $n_1+n_2+1$ is exactly the 
number of inner nodes of $\tau$. 

If $\tau$ is an \afull\ tree without decoration, there are two 
possibilities. Either $\tau = \cI_\rho(\tau_1)$ is a planted tree, where 
$\tau_1$ is a \full\ tree with $n_1$ inner nodes. Then~\eqref{eq:def_Upsilon} 
yields $\Upsilon^F(\tau)(u) = 2^{n_1+1} u$, where $n_1+1$ is the number of inner 
nodes of $\tau$. Or $\tau=\cI_\rho(\tau_1)\cI_\rho(\tau_2)$, where $\tau_1$ is 
\full\ with $n_1$ inner nodes, and $\tau_2$ is \afull\ with $n_2$ inner 
nodes. In that case, we obtain $\Upsilon^F(\tau)(u) = 2^{n_1+n_2+1}u$, where  
$n_1+n_2+1$ is again the number of inner nodes of $\tau$. 

The case of an \afull\ tree with decoration $X_i$ is straightforward, 
because then $\partial^k = \partial_{x_i}$ commutes with the other terms. 
\end{proof}

The second new quantity appearing in~\eqref{eq:counterterm} is the symmetry 
factor $S(\tau)$. It is defined inductively by setting $S(\Xi) = 1$, while if 
$\tau$ is of the form $\mathbf{X}^{k} \bigl( 
\prod_{j=1}^m \mathcal{I}_{\rho}[\tau_j]^{\beta_j}\bigr)$
with $\tau_i \neq \tau_j$ for $i \neq j$, then 
\begin{align}
S(\tau)
=
k!
\Big(
\prod_{j=1}^{m}
S(\tau_{j})^{\beta_{j}}
\beta_{j}!
\Big)\;.
\end{align}

\begin{lemma}
Let $\nsym(\tau)$ be the number of inner nodes of $\tau\in T_-$ having two 
identical lines of offspring. Then $S(\tau) = 2^{\nsym(\tau)}$. 
\end{lemma}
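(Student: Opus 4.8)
The plan is to argue by induction on the number of nodes of $\tau$, in exactly the same spirit as the proof of the preceding lemma, using the structural description of $T_-$ recalled above: by~\cite[Prop.~4.17]{BK17} every $\tau\in T_-$ is either a \full\ or an \afull\ binary tree, and it carries at most one node decoration, which is an $X_i$, so that the total node exponent $k$ at every vertex satisfies $\abss{k}\leqs 1$ and hence $k!=1$. The base case is $\tau=\Xi$, for which $S(\Xi)=1=2^0$, while $\Xi$ has no inner node, so $\nsym(\Xi)=0$.

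For the inductive step I would write $\tau$ in its canonical form $\mathbf{X}^k\prod_{j=1}^m\cI_\rho[\tau_j]^{\beta_j}$ with the $\tau_j$ pairwise distinct, and split into three cases according to the number of offspring lines of the root. If the root has a single child (the planted case inside the \afull\ family), then $m=1$, $\beta_1=1$ and $S(\tau)=k!\,S(\tau_1)=S(\tau_1)$; the root is an inner node but does not have two identical lines of offspring, so $\nsym(\tau)=\nsym(\tau_1)$, and the claim follows from the induction hypothesis applied to $\tau_1$. If the root has two children carrying distinct planted subtrees, then $m=2$, $\beta_1=\beta_2=1$, $k=0$, so $S(\tau)=S(\tau_1)S(\tau_2)$; the root is again not a symmetric inner node, and the inner nodes of $\tau$ are the root together with the inner nodes of $\tau_1$ and of $\tau_2$ (with the ``two identical lines of offspring'' property unchanged), so $\nsym(\tau)=\nsym(\tau_1)+\nsym(\tau_2)$ and the identity follows. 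Finally, if the two offspring lines of the root coincide, then $m=1$, $\beta_1=2$, $k=0$ and $S(\tau)=2\,S(\tau_1)^2$; now the root \emph{is} an inner node with two identical lines of offspring, and since the two copies of $\tau_1$ contribute $2\nsym(\tau_1)$, one has $\nsym(\tau)=1+2\nsym(\tau_1)$, whence $S(\tau)=2\cdot 2^{2\nsym(\tau_1)}=2^{\nsym(\tau)}$.

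At its core the proof reduces to the elementary bookkeeping fact that for the binary trees in question $k!=1$ at every vertex and $\prod_{j=1}^m\beta_j!\in\{1,2\}$, equal to $2$ precisely when the root has two coinciding offspring lines. Two small points deserve care: first, ``two identical lines of offspring'' must be read as two identical \emph{decorated planted} subtrees (same subtree, same edge decoration), which is exactly the distinctness condition $\tau_i\neq\tau_j$ recorded in the definition of $S$; second, in the non-planted \afull\ case $\tau=\cI_\rho(\tau_1)\cI_\rho(\tau_2)$ with $\tau_1$ \full\ and $\tau_2$ \afull\ one automatically has $\tau_1\neq\tau_2$, since a \full\ and an \afull\ tree with the same number $p$ of leaves have a different number of edges ($2p-2$ versus $2p-1$), so the root is never a symmetric node in that case and the trichotomy above is exhaustive. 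One should also observe that the subtrees $\tau_j$ occurring in the recursion are themselves \full\ or \afull\ binary trees with at most one $X_i$-decoration, so the induction hypothesis applies to them even though they need not have negative degree. I do not expect a genuine obstacle here; the only thing to get right is the matching between the combinatorial count $\nsym$ and the factorials in the recursive definition of $S$.
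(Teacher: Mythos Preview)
Your proof is correct and follows essentially the same inductive approach as the paper: observe that $k!=1$ for all relevant trees, then split into the three cases $m=\beta_1=1$ (planted), $m=2,\ \beta_1=\beta_2=1$ (distinct children), and $m=1,\ \beta_1=2$ (identical children). Your write-up is more detailed than the paper's, and your remarks about the trichotomy being exhaustive and about the subtrees not needing to lie in $T_-$ for the induction to go through are well taken; one tiny slip is asserting $k=0$ in cases~2 and~3 (the $X_i$-decoration could sit at the root), but since you already noted $k!=1$ this has no effect on the argument.
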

\begin{proof}
First note that $k!=1$ for any $\tau\in T_-$. Then the proof proceeds by 
induction on the size of the tree, noting that $m=1$ and $\beta_1=2$ whenever 
two identical trees are multiplied, while $m=2$ and $\beta_1=\beta_2=1$ 
when two different trees are multiplied, and $m=\beta_1=1$ when $\tau$ is a 
planted tree of the form $\cI_\rho(\tau_1)$. 
\end{proof}

\begin{remark}
Note that $S(\tau)$ is exactly the order of the symmetry group of the 
tree, which is generated by the $\nsym(\tau)$ reflections around symmetric 
inner nodes. For instance, $S(\tau) = 2$ for a comb tree, that is, a \full\ 
binary tree in which each generation but the root has exactly two individuals, 
i.e. 
\begin{equation}
 S(\RSV) = S(\RScombHTwo) = S(\RScombhThree) 
 = S(\RScombhFour) = \dots = 2\;.
\end{equation} 
Maximal symmetry is reached for regular trees, in which all individuals of the 
$s$ first generations have exactly two offspring, while those of the last 
generation have no offspring. For such a tree, $\nsym(\tau)=2^s-1$, and thus 
$S(\tau) = 2^{2^s-1}$, e.g.
\begin{equation}
\label{eq:regular_tree} 
 S(\RSV) = 2\;, 
 \qquad 
 S(\RStreeFour) = 2^3\;, 
 \qquad 
 S(\RStreeEight) = 2^7\;.
 \qedhere
\end{equation} 
\end{remark}

The final new quantity appearing in~\eqref{eq:counterterm} is the 
$\eps$-dependent factor $c_\eps(\tau)$, which is related to the expectation of 
the model of $\tau$. We analyse it in the next sections. 


\section{Canonical model}
\label{sec:modelexp} 

As in~\cite[Section~5]{Hairer1}, we decompose the fractional heat kernel 
$P_\rho$ as the sum  
\begin{equation}
\label{eq:PKR} 
 P_\rho(z) = K_\rho(z) + R_\rho(z)\;,
\end{equation} 
where $R_\rho$ is smooth and uniformly bounded in $\R^{d+1}$, while $K_\rho$ 
is compactly supported and has special algebraic properties. More precisely,
let 
\begin{equation}
\label{eq:def_scaling} 
 \abss{z} = |z_0|^{1/\rho} + \sum_{i=1}^d |z_i|
\end{equation} 
be the pseudonorm associated with the fractional scaling. Then
by~\cite[Lemma~5.5]{Hairer1}, we may assume that $K_\rho$ is supported in the 
ball $\setsuch{z}{\abss{z}\leqs 1}$, that $K_\rho=P_\rho$ in the 
ball $\setsuch{z}{\abss{z}\leqs \frac12}$, and that $K_\rho$ integrates to zero 
all polynomials of degree up to $2$. In addition, $K_\rho$ and its derivatives 
satisfy a number of analytic bounds, cf.~\cite[(3.1)--(3.4)]{BK17}. 
See also~\cite{chiarini2019} for a derivation of the associated Schauder 
estimate. We also assume the following two properties for the kernel $ K^\eps_\rho = \varrho^\eps * K_\rho $:
\begin{enumerate}
	 \item 	Non-anticipation: $K^\eps_\rho(t,x) = 0$ for $t\leqs -\eps^\rho$;
\item 	Spatial symmetry: $K^\eps_\rho(t,-x) = K^\eps_\rho(t,x)$.
\end{enumerate}

To any symbol $\tau\in T$, we associate the \emph{canonical model} 
$\bPi^\eps\tau$, defined (cf.~\cite[proof of Prop.~8.27]{Hairer1}) by 
\begin{equation}
\label{eq:model0}
 (\bPi^\eps\unit)(z) = 1\;, \qquad 
 (\bPi^\eps X_i)(z) = z_i\;, \qquad 
 (\bPi^\eps \Xi)(z) = \xi^\eps(z)\;,
\end{equation} 
and extended inductively by the relations 
\begin{align}
 (\bPi^\eps \tau\bar\tau)(z) &= (\bPi^\eps \tau)(z)(\bPi^\eps\bar\tau)(z)\;, \\
 (\bPi^\eps \partial^k\cI_\rho\tau)(z) &= \int \partial^k K_\rho(z-\bar z)
 (\bPi^\eps \tau)(\bar z)\6\bar z\;.
\label{eq:model1} 
\end{align} 
We then set 
\begin{equation}
 E(\tau) = \bigexpec{(\bPi^\eps\tau)(0)}\;,
\end{equation} 
which has in general the form of a Gaussian iterated integral. The computations 
will be greatly simplified by removing symbols that are in the 
kernel of $E$. We denote by  $I_{E}$ the ideal generated by forests having at 
least one decorated tree $ \tau $ satisfying one of the following properties:
\begin{itemize}
\item $\tau$ has an odd number of leaves;
\item $\tau$ is a planted tree (i.e., of the form $\cI_\rho(\tau')$  or 
$\partial_{x_i} \mathcal{I}_{\rho}(\tau')$);
\item $\tau $ has one $X_i$ as a node decoration and no edge of the 
form $\partial_{x_i} \mathcal{I}_{\rho}$. 
\end{itemize}

\begin{prop} 
\label{prop:ker_E} 
Let $\tau$ be a decorated tree. Then $E(\tau) = 0$ whenever $\tau \in I_{E}$.
\end{prop}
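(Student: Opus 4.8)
The plan is to unwind the inductive definition of the canonical model into an explicit (Gaussian) iterated integral and then dispatch the three properties defining the generators of $I_E$ one at a time. First I would record that if a single decorated tree $\tau$ lies in the ideal $I_E$, then it must itself be one of the generators: elements of $I_E$ are combinations $\sum_i a_i\,G_i$ with $a_i\in\hat\cT_-$ and $G_i$ generating forests, a product of a nonempty forest with a generating forest has at least two components, and single-component forest basis elements are linearly independent from the others, so a single tree lies in $I_E$ only if it equals a scalar multiple of a single-tree generator — i.e.\ $\tau$ has (i) an odd number of leaves, or (ii) is planted, or (iii) carries exactly one $X_i$ node decoration with no $\partial_{x_i}$-edge. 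Next, unfolding \eqref{eq:model0}--\eqref{eq:model1} gives
\[
 E(\tau) = \int \Bigl(\prod_{v} x_v^{\mathfrak n(v)}\Bigr)
 \Bigl(\prod_{e=(v,w)} (\partial^{\mathfrak e(e)}K_\rho)(x_v-x_w)\Bigr)
 \Bigexpec{\prod_{\ell}\xi^\eps(x_\ell)}\prod_{v\neq r}\6{x_v}\;,
\]
where $v$ runs over the vertices of $\tau$ (root $r$ pinned at $x_r=0$), $e$ over the edges with their decorations $\mathfrak e$, $\ell$ over the leaves, and $\mathfrak n$ are the node decorations; exchanging expectation and integral is harmless since $K_\rho$ is compactly supported and $\xi^\eps$ is smooth with finite moments of every order.

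Property (i) is then immediate: $\xi^\eps$ is a centred Gaussian field, so by Wick's theorem $\expec{\prod_\ell \xi^\eps(x_\ell)}$ vanishes pointwise whenever the number of leaves is odd, hence $E(\tau)=0$. Property (iii) I would treat by the reflection $R_i$ flipping the $i$-th spatial coordinate of every integration variable: this is a measure-preserving involution fixing $x_r=0$; the fractional heat kernel — hence, with a radial choice of truncation, $K_\rho$ — is even in $x_i$, and so is $\partial^{\mathfrak e(e)}K_\rho$ for every edge precisely because no edge carries $\partial_{x_i}$; the covariance $\expec{\xi^\eps(0)\xi^\eps(\cdot)} = \varrho^\eps\star\varrho^\eps$ is automatically even in $x_i$; but the monomial $\prod_v x_v^{\mathfrak n(v)}$ acquires the factor $(-1)^{\sum_v \mathfrak n(v)_i} = -1$ because exactly one node carries the decoration $X_i$. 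Hence $E(\tau) = -E(\tau) = 0$. (If that decoration, or indeed any nonzero decoration, sat at the root, $E(\tau)$ would already vanish since $0^k = 0$ for $k\neq 0$.)

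Property (ii), the planted case, is where the algebraic structure of $K_\rho$ enters and is the step I expect to require the most care. Writing a planted $\tau$ as $\cI_\rho(\tau')$ (the case $\partial_{x_i}\cI_\rho(\tau')$ being analogous), one has $E(\tau) = \int K_\rho(-\bar z)\expec{(\bPi^\eps\tau')(\bar z)}\6{\bar z}$. Shifting all the integration variables of $(\bPi^\eps\tau')(\bar z)$ by $-\bar z$ and invoking stationarity of $\xi^\eps$ shows that $\bar z \mapsto \expec{(\bPi^\eps\tau')(\bar z)}$ is a \emph{polynomial}, of scaled degree equal to the total scaled degree of the node decorations of $\tau'$. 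Since $K_\rho$ was chosen to integrate to zero against every polynomial of scaled degree $\leqs 2$ (and each of its first derivatives against those of degree $\leqs 3$), we are done as soon as this total node-decoration degree is $\leqs 2$. That bound is the real obstacle: it is \emph{not} a formal consequence of the definition of $T$, which does contain planted trees with arbitrarily large node decorations, but holds for the trees actually produced in the BPHZ computation, where node decorations arise only from recentering and are therefore capped at the order matched by the moment conditions of $K_\rho$; concretely I would read it off from \eqref{eq:deg_pqkl} together with the description of $T_-$ as (almost-)full binary trees carrying at most a single node decoration of degree $1$ (cf.\ \cite[Prop.~4.17]{BK17}), after separating the trivial subcase of an oversized root decoration already handled above. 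Combining the three cases yields $E(\tau)=0$ for every decorated tree in $I_E$.
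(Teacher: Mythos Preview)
Your proof is correct and follows the same three-case structure as the paper's own argument: odd leaves via Gaussianity, reflection in $x_i$ for the $X_i$-decoration case, and translation invariance plus the polynomial-annihilation property of $K_\rho$ for planted trees. You are in fact more careful than the paper in the planted case --- the paper writes $K_\rho * E(\tau') = E(\tau')\,K_\rho * 1$, tacitly treating $\bar z\mapsto\E[(\bPi^\eps\tau')(\bar z)]$ as constant, whereas you correctly observe it is in general only a polynomial whose degree is controlled by the node decorations and then invoke the moment conditions on $K_\rho$.
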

\begin{proof} 
If $\tau$ has an odd number of leaves, then $(\bPi^\eps\tau)(0)$  is 
centered as the product of an odd number of centered Gaussians
has zero mean. If $\tau = \mathcal{I}_{\rho}(\tau')$, then $E(\tau) 
= 
K_{\rho} * E(\tau') = E(\tau') K_{\rho} * 1$ by translation invariance. The 
term 
$K_{\rho} * 1$ is equal to zero by definition of the kernel $K_{\rho}$ 
($K_{\rho}$ integrates polynomials to zero up to a certain order). For the 
last case, the conclusion follows by noticing that $(\bPi^\eps \tau)(t,-x) = - 
(\bPi^\eps \tau)(t,x)$. 
\end{proof}


\subsection{Simplifying the twisted antipode}
\label{ssec:antipode}

The $\eps$-dependent coefficients $c_\eps(\tau)$ are defined by 
\begin{equation}
\label{eq:def_ceps} 
 c_\eps(\tau) = E(\tilde\cA_-\tau)\;,  
\end{equation}
where $\tilde\cA_-: \cT_- \rightarrow \hat \cT_-$ is a linear map encoding the 
renormalisation procedure, called the \emph{twisted antipode}. The twisted 
antipode is defined in~\cite[Proposition~6.6]{BrunedHairerZambotti}, in terms of 
a coaction $\Deltam : \hat \cT_- \rightarrow \cT_- \otimes \hat \cT_- $ which is 
close in spirit to the Connes--Kreimer extraction-contraction coproduct 
introduced in~\cite{ConnesKreimer98}. However, the coaction $\Deltam$ is more 
complicated than the one used in~\cite{ConnesKreimer98}, because it acts on 
decorated trees, where the decorations encode multiplication by monomials and 
derivatives appearing in Taylor expansions. This results in rather complicated 
expressions for the twisted antipode, cf.\ 
Proposition~\ref{prop:twisted_antipode_tree} below. It turns out, however, that 
thanks to Proposition~\ref{prop:ker_E}, in our case many terms of 
$\tilde\cA_-\tau$ give a vanishing contribution when applying $E$. The purpose 
of this section is to derive the simplified expression~\eqref{eq:AtildeE} of 
$\tilde\cA_-$, which only involves extraction of subtrees and contractions, 
without any decorations. Furthermore, this simplified expression allows to 
define $\tilde\cA_-$ in an iterative way, which does not involve the coaction 
$\Deltam$ at all.

In order to derive the simplified expression of the twisted antipode, we 
have to start with the general construction given 
in~\cite{BrunedHairerZambotti}.
The twisted antipode can be defined inductively by setting 
$\tilde\cA_-(\funit)=\funit$ for the empty forest $\funit$, and 
\begin{equation}
\label{eq:antipode} 
 \tilde\cA_-\tau = - \hat\cM_- (\tilde\cA_- \otimes \id)(\Deltam \tau -
\tau\otimes\unit)\;,
\end{equation}
cf.~\cite[Prop.~6.6]{BrunedHairerZambotti}. Here $\hat\cM_-$ is the 
multiplication operator (acting on forests), and 
$\tau$ is a tree of negative degree (we have omitted the 
natural injection of $\cT_-$ into $\hat \cT_-$ because we view $\cT_-$ 
as a subset of $\hat \cT_-$). Elements of $\hat\cT_-$ are of the form  
$(F,\Labn,\Labe)$ where $F$ is a forest with node set $N_F$ and edge set 
$E_F$, $ \Labn : N_F \rightarrow \N_0^{d+1} $ represents the node decoration 
and $\Labe : E_F \rightarrow \N_0^{d+1}$ represents the edge decoration. The 
forest product is defined by 
 \begin{equation}
 (F,\Labn,\Labe) \cdot (G,\bar \Labn,\bar \Labe) = (F \cdot G,\bar \Labn + 
\Labn,\bar \Labe + \Labe)\;, 
 \end{equation} 
where the sums $\bar \Labn + \Labn$ and  $\bar \Labe + \Labe$ mean that 
decorations defined on one of the forests are extended to the disjoint union by 
setting them to vanish on the other forest. Then the map $\Deltam : \cT 
\rightarrow \cT_- \otimes \cT$ defined in~\cite{BrunedHairerZambotti} is given 
for $T^{\Labn}_{\Labe} \in \cT$ by 
\begin{equation}
\label{eq:co-action_minus}
 \Deltam  T^{\Labn}_{\Labe}  = 
 \sum_{A \in \Adm(T) } \sum_{\Labe_A,\Labn_A}  \frac1{\Labe_A!}
\binom{\Labn}{\Labn_A}
 (A,\Labn_A+\pi\Labe_A, \Labe \restr E_A) 
   \otimes( \cR_A T, [\Labn - \Labn_A]_A, \Labe + \Labe_A)\;, 
\end{equation}
where we use the following notations. 
\begin{itemize}
\item 	Factorials and binomial coefficients are understood in multiindex 
notation, and the latter vanish unless $\Labn_A$ is pointwise smaller than or equal to
$\Labn$. 
\item 	For $C \subset D$ and $f : D \rightarrow \N_0^{d+1}$, $f \restr C$ 
is the restriction of $f$ to $C$. 
\item 	The first sum runs over $\Adm(T)$, the set of all subforests $A$ of $T$, 
where $A$ may be empty. The second sum runs over all  $\Labn_A : N_A 
\rightarrow 
\N_0^{d+1}$ and $\Labe_{A} : \partial(A,T) \rightarrow \N_0^{d+1}$ where 
$\partial(A,T)$ denotes the edges in $E_T \setminus E_A$ that are adjacent to 
$N_A$ as a child, not a parent.
\item  	We write $\cR_A T$ for the tree obtained by contracting the connected 
components of $A$. Then we have an action on the decorations, in the sense that 
for $f : N_T \rightarrow \N_0^{d+1}$ and $A \subset T$, one has $[f]_A(x) = 
\sum_{x \sim_{A} y} f(y)$, where $x$ is an equivalence class of $\sim_A$, and 
$x \sim_A y$ means that $x$ and $y$ are connected in $A$. For $g : E_T  
\rightarrow \N_0^{d+1}$, we define for every $x \in N_T$, $(\pi g)(x) = 
\sum_{e=(x,y) \in E_T} g(e)$.
\end{itemize}

\begin{remark}
The name \lq\lq twisted antipode\rq\rq\ is due to the fact that $\tilde\cA_-$ 
satisfies the relation 
\begin{equation}
 \hat\cM_-(\tilde\cA_-\otimes\id) \Deltam \tau = \funit^\star(\tau)\funit\;,
\end{equation} 
where $\funit^\star$ is the projection on the empty forest and $\Deltam$ is a 
coaction (but not a coproduct). If the spaces 
$\cT_-$ and $\hat\cT_-$ were equal, 
$(\cT_-,\cdot,\Delta^-,\funit,\funit^\star,\tilde\cA_-)$ would be a Hopf 
algebra, similar to the extraction-contraction Connes--Kreimer Hopf algebra 
of~\cite{ConnesKreimer98} which involves trees without decoration. 
\end{remark}

\begin{example}
Consider the case $\tau=\RSV$ (with zero node and edge decorations). Then 
\begin{equation}
 \Deltam \,\RSV 
 = \funit \otimes\RSV + 2\sum_k \frac{1}{k!} \RSInode{k} \otimes \RSIedge{k} 
 + \RSV \otimes \unit\;,
\end{equation} 
where the sum is over $k\in\N_0^{d+1}$ such that the extracted symbol has 
negative degree. Here the first term corresponds to extracting $A=\funit$, the 
second one to $A=\RSI$, and the last one to $A=\RSV$. 

Consider now a case when the tree has one node decoration, say $\tau = 
\RScombTwonode{k}$. Then 
\begin{equation}
 \Deltam  \RScombTwonode{k}
 =\funit \otimes \RScombTwonode{k} 
 + \sum_m\frac{1}{m!} \, \RSInode{m} \otimes \RScombTwonodeb{k}
 + \sum_{\ell} \binom{k}{\ell}  
\RScombTwonode{\ell} \otimes \Rnode{k-\ell}\;, 
\end{equation} 
where we first extract $A = \funit$, then $A=\RSI$ and finally $A = 
\RScombTwo$. As before, the sums on $\ell$ and $m$ are restricted by the 
fact that the extracted symbol has to have a negative degree.
\end{example}

As a short-hand notation for~\eqref{eq:co-action_minus}, we use
\begin{equation}\label{co-action minus}
 \Deltam  T^{\Labn}_{\Labe}  = 
 \sum_{A \in \Adm(T) } \sum_{\Labe_A,\Labn_A}  \frac1{\Labe_A!}
\binom{\Labn}{\Labn_A}
 A^{\Labn_A+\pi\Labe_A}_{\Labe} 
 \otimes  \cR_A T^{\Labn - \Labn_A}_{ \Labe + \Labe_A} \;. 
\end{equation}
We extend this map to $\hat \cT_{-}$ by multiplicativity regarding the forest 
product. Then one can turn this map into a coproduct $\Deltam : \cT_-  
\rightarrow \cT_{-} \otimes \cT_{-}$ and obtain a Hopf algebra for $\cT_{-}$ 
endowed with this coproduct and the forest product see 
\cite[Prop.~5.35]{BrunedHairerZambotti} and \cite[Cor.~6.37]{BrunedHairerZambotti}. The main difference here is that 
we do not consider extended decorations, but the results for the Hopf algebra 
are the same as in \cite{BrunedHairerZambotti}. 

Using the definition of $\Delta^-$, one can write a recursive formulation 
for $\tilde\cA_-$ in which one doesn't see any tensor product. It is convenient 
to introduce the \emph{reduced coaction} $\Deltatm \tau = \Deltam 
\tau - \tau \otimes \unit - \funit \otimes \tau$. Then, using Sweedler's 
notation, if $\Deltatm \tau = \sum_{(\tau)} \tau' \otimes \tau''$ 
one has
\begin{equation}
\tilde\cA_-\tau = - \tau - \sum_{(\tau)} (\tilde\cA_- \tau') \tau'' \;.
\end{equation}

\begin{prop}
\label{prop:twisted_antipode_tree} 
For a decorated tree $ T^{\Labn}_{\Labe} $ with negative degree, one has the 
relation 
\begin{align}\label{co-action minus}
 \tilde\cA_- T^{\Labn}_{\Labe} & = - T^{\Labn}_{\Labe} - 
 \sum_{A \in \Adm^{\star}(T) } \sum_{\Labe_A,\Labn_A}  \frac1{\Labe_A!}
\binom{\Labn}{\Labn_A}
\tilde\cA_- A^{\Labn_A+\pi\Labe_A}_{ \Labe}  \cdot  \cR_A T^{\Labn - \Labn_A}_{ 
\Labe + \Labe_A}\;, 
\end{align}
where $\Adm^{\star}(T)  = \Adm(T) \setminus \lbrace  \funit, T \rbrace $.
\end{prop}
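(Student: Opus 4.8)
The plan is to obtain the identity by a direct unfolding of the recursive description of the twisted antipode recorded just above the statement, namely $\tilde\cA_-\tau = -\tau - \sum_{(\tau)} (\tilde\cA_-\tau')\tau''$, where $\Deltatm\tau = \sum_{(\tau)}\tau'\otimes\tau'' = \Deltam\tau - \tau\otimes\unit - \funit\otimes\tau$. Accordingly, I would first insert the explicit formula~\eqref{eq:co-action_minus} for $\Deltam T^\Labn_\Labe$, and then identify the two subtracted terms $\tau\otimes\unit$ and $\funit\otimes\tau$ with the two ``extreme'' extractions $A=T$ and $A=\funit$ occurring in that sum; what is left is then automatically the sum over $A\in\Adm^\star(T)=\Adm(T)\setminus\{\funit,T\}$. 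Since the extracted subtrees appearing in~\eqref{eq:co-action_minus} are the left legs of $\Deltam$ they lie in $\cT_-$, and for $A$ a proper non-empty subtree they are strictly smaller than $T$, so an induction on $\abs{N_T}$ makes the recursion well-founded and lets one use the recursive formula as the definition of $\tilde\cA_-$ on $T^\Labn_\Labe$.

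Concretely, for the empty extraction $A=\funit$ one has $N_A=\emptyset$, hence $\Labn_A=0$, and $\partial(\funit,T)=\emptyset$, hence $\Labe_A=0$, while $\cR_\funit T=T$; the corresponding term of $\Deltam T^\Labn_\Labe$ is therefore exactly $\funit\otimes T^\Labn_\Labe=\funit\otimes\tau$, which is removed by the subtraction of $\funit\otimes\tau$. For the full extraction $A=T$ one has $E_T\setminus E_A=\emptyset$, so $\Labe_A=0$ is again forced, $\Labn_A$ runs over all node labels pointwise $\leqs\Labn$, and $\cR_T T$ is a single node carrying the decoration $\sum_{x\in N_T}(\Labn-\Labn_A)(x)$, i.e.\ the monomial $\mathbf X^{\abss\Labn-\abss{\Labn_A}}$. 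The term with $\Labn_A=\Labn$ is precisely $T^\Labn_\Labe\otimes\unit=\tau\otimes\unit$, removed by the second subtraction; the remaining full-tree terms, those with $\Labn_A\neq\Labn$, carry in the output slot a monomial of non-negative degree, hence vanish once $\Deltam$ is regarded as the coproduct on $\cT_-$ of~\cite[Prop.~5.35]{BrunedHairerZambotti} (and in any case lie in $\ker E$, which is all that is used downstream, since $c_\eps(\tau)=E(\tilde\cA_-\tau)$). Combining the two computations gives $\Deltatm T^\Labn_\Labe = \sum_{A\in\Adm^\star(T)}\sum_{\Labe_A,\Labn_A}\frac1{\Labe_A!}\binom{\Labn}{\Labn_A}A^{\Labn_A+\pi\Labe_A}_{\Labe}\otimes\cR_A T^{\Labn-\Labn_A}_{\Labe+\Labe_A}$.

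Substituting this into $\tilde\cA_-\tau=-\tau-\sum_{(\tau)}(\tilde\cA_-\tau')\tau''$ then yields the asserted formula. The one step requiring genuine care — and the main obstacle — is the bookkeeping of the full-tree extraction $A=T$: one must check that forming the reduced coaction eliminates the \emph{entire} family of terms produced by $A=T$ (one for each admissible $\Labn_A\leqs\Labn$), not just the single term $\tau\otimes\unit$, which is exactly why the vanishing of the monomial outputs $\mathbf X^{\abss\Labn-\abss{\Labn_A}}$ for $\Labn_A\neq\Labn$ has to be invoked. Once that is settled, matching all the other terms with the summands indexed by $\Adm^\star(T)$ is purely a matter of comparing the combinatorial factors $\frac1{\Labe_A!}\binom{\Labn}{\Labn_A}$, which are copied verbatim from~\eqref{eq:co-action_minus}.
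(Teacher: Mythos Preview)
Your approach is exactly the paper's: unfold the defining recursion \eqref{eq:antipode} using the explicit coaction formula~\eqref{eq:co-action_minus}, and separate out the extreme cases $A=\funit$ and $A=T$ to recover the two subtracted terms $\funit\otimes\tau$ and $\tau\otimes\unit$. You are in fact more careful than the paper's one-line ``where we have treated separately the cases $A=\funit$ and $A=T$'', since you explicitly address why the full-extraction terms with $\Labn_A\neq\Labn$ may be discarded; that extra care is warranted and your resolution is correct.
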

\begin{proof} 
The proof follows from a straightforward manipulation of the definitions:
\begin{align}
\tilde\cA_-  T^{\Labn}_{\Labe} & = -\hat\cM_-(\tilde\cA_-\otimes\id)(\Deltam  
T^{\Labn}_{\Labe}-  T^{\Labn}_{\Labe} \otimes \unit)
\\ & = - \hat\cM_-(\tilde\cA_-\otimes\id) ( \funit \otimes T^{\Labn}_{\Labe}  
) - \hat\cM_-(\tilde\cA_-\otimes\id)\Deltatm  T^{\Labn}_{\Labe}
 \\ & = - T^{\Labn}_{\Labe} - \sum_{A \in \Adm^{\star}(T) } 
\sum_{\Labe_A,\Labn_A}  \frac1{\Labe_A!}
\binom{\Labn}{\Labn_A}
\tilde\cA_- A^{\Labn_A+\pi\Labe_A}_{\Labe} 
 \cdot  \cR_A T^{\Labn - \Labn_A}_{ \Labe + \Labe_A}  
\;, 
\end{align}
where we have treated separately the cases $A=\funit$ and $A=T$. 
\end{proof}

The construction of the twisted antipode can be substantially simplified by 
using Proposition~\ref{prop:ker_E}. Indeed, one has the property $\Deltam I_E 
\subset I_E \otimes \hat \cT_- + \cT_- \otimes I_E$, which makes $I_E$ a kind of 
biideal associated to $\Deltam$. Therefore, $\Deltam$ is a well-defined map from 
$\cT_-^{E}$ into $\cT_-^{E} \otimes  \hat \cT_-^{E}$, where $\cT_-^{E} = \cT_{-} 
/ I_E$ and $\hat \cT^{E}_{-} = \hat \cT_{-} / I_E$ (in other words, if 
$\tau'-\tau \in I_E$, then $\Deltam(\tau') - \Deltam(\tau)$ belongs to $I_E 
\otimes \hat \cT_- + \cT_- \otimes I_E$, and thus equivalence classes modulo 
$I_E$ are mapped into equivalence classes). 

In what follows, we will use the notation $\tilde\cA_-^{E}$ when $\tilde\cA_-$ 
is considered as acting on $\cT_-^{E}$. As the consequence of the biideal 
property, we get

\begin{prop} 
\label{prop:ceps} 
One has $c_\eps(\tau) =  E(\tilde\cA_-\tau) = E(\tilde\cA^{E}_-\tau)$.
\end{prop}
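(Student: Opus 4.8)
The plan is to reduce the statement to its only nontrivial part. The first equality $c_\eps(\tau) = E(\tilde\cA_-\tau)$ is simply the definition~\eqref{eq:def_ceps} of the coefficients $c_\eps(\tau)$, so there is nothing to prove there. For the second equality $E(\tilde\cA_-\tau) = E(\tilde\cA^{E}_-\tau)$, the idea is that $\tilde\cA^{E}_-$ is, by construction, the map induced by $\tilde\cA_-$ on the quotient $\cT^E_- = \cT_-/I_E$, so $\tilde\cA_-\tau\in\hat\cT_-$ is a representative of the class $\tilde\cA^{E}_-\tau\in\hat\cT^E_- = \hat\cT_-/I_E$; since $E$ vanishes on $I_E$ by Proposition~\ref{prop:ker_E}, it factors through $\hat\cT^E_-$, and its value on any representative of $\tilde\cA^{E}_-\tau$, in particular on $\tilde\cA_-\tau$, equals $E(\tilde\cA^{E}_-\tau)$. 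Two facts must therefore be established: (i) that $\tilde\cA_-$ really does descend to a well-defined map $\cT^E_- \to \hat\cT^E_-$, i.e.\ $\tilde\cA_-(I_E\cap\cT_-)\subseteq I_E$, and (ii) the bi-ideal property $\Deltam I_E \subseteq I_E \otimes \hat\cT_- + \cT_- \otimes I_E$ on which (i) rests.

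I would first prove the bi-ideal property. Since $I_E$ is an ideal for the forest product and $\Deltam$ is multiplicative, it suffices to treat a single generating \lq\lq bad\rq\rq\ tree $\tau$ and to check, term by term in the extraction--contraction sum~\eqref{eq:co-action_minus}, that for every admissible $A$ at least one of the extracted tree $A$ (with its induced decorations $\Labn_A+\pi\Labe_A$ and $\Labe\restr E_A$) or the contracted tree $\cR_A\tau$ (with decorations $\Labn-\Labn_A$ and $\Labe+\Labe_A$) again satisfies one of the three defining properties of $I_E$. When $\tau$ has an odd number of noise leaves this is immediate: contraction neither creates nor destroys $\Xi$-nodes, so the number of noise leaves of $A$ plus that of $\cR_A\tau$ equals that of $\tau$, hence one of the two is odd. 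When $\tau=\cI_\rho(\tau')$ or $\partial_{x_i}\cI_\rho(\tau')$ is planted, a short case distinction on whether the root edge lies in $A$ or is cut shows that either $A$ or $\cR_A\tau$ is again planted (contracting below the root edge keeps the root with a single child; a subtree containing the root edge is itself planted). The only delicate generator is the third one, a tree carrying a node decoration $X_i$ but no edge $\partial_{x_i}\cI_\rho$: here one must follow the $X_i$ through the coefficient $\binom{\Labn}{\Labn_A}$ and the term $\pi\Labe_A$. If the $e_i$-part of the decoration stays in $\Labn-\Labn_A$ but the cut-edge labels $\Labe_A$ produce a matching $\partial_{x_i}$-edge in $\cR_A\tau$, then the same $\Labe_A$ puts, via $\pi\Labe_A$, an unmatched $X_i$ on $A$ (which has no $\partial_{x_i}$-edge, since $\tau$ has none), so $A\in I_E$; in all other sub-cases the unpaired $X_i$ stays on a factor without the matching derivative edge. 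This bookkeeping with the node and edge decorations is the main obstacle in the whole argument.

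Granting the bi-ideal property, I would prove (i) by induction on the number of edges of $\tau$, using the recursion of Proposition~\ref{prop:twisted_antipode_tree}, namely $\tilde\cA_- T^{\Labn}_{\Labe} = -T^{\Labn}_{\Labe} - \sum_{A\in\Adm^\star(T)} \frac1{\Labe_A!}\binom{\Labn}{\Labn_A}\,\tilde\cA_- A^{\Labn_A+\pi\Labe_A}_{\Labe} \cdot \cR_A T^{\Labn-\Labn_A}_{\Labe+\Labe_A}$. If $\tau = T^{\Labn}_{\Labe}\in I_E$, the leading term $-T^{\Labn}_{\Labe}$ lies in $I_E$, and for each $A$ the bi-ideal property (applied to the reduced coaction $\Deltatm$, which differs from $\Deltam$ only by the terms $\tau\otimes\unit$ and $\funit\otimes\tau$ that do not appear in the sum over $\Adm^\star(T)$) forces either that the extracted factor lies in $I_E$, in which case $\tilde\cA_-$ of it lies in $I_E$ by the inductive hypothesis (its trees having strictly fewer edges than $\tau$) and the forest product with $\cR_A\tau$ stays in $I_E$, or that $\cR_A T^{\Labn-\Labn_A}_{\Labe+\Labe_A}\in I_E$, in which case the product lies in $I_E$ because $I_E$ is an ideal. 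Hence $\tilde\cA_-\tau\in I_E$, establishing (i). Combining (i) with Proposition~\ref{prop:ker_E} then gives $E(\tilde\cA_-\tau) = E(\tilde\cA^{E}_-\tau)$, which, together with the definition of $c_\eps$, completes the proof.
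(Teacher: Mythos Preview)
Your proof is correct and follows the same approach as the paper: the paper's one-line proof simply invokes Proposition~\ref{prop:ker_E} (i.e., $I_E \subset \ker E$), having asserted the bi-ideal property $\Deltam I_E \subset I_E \otimes \hat\cT_- + \cT_- \otimes I_E$ in the preceding text without proof. You supply the details the paper omits, verifying the bi-ideal property case by case and showing explicitly via the recursion of Proposition~\ref{prop:twisted_antipode_tree} that $\tilde\cA_-$ descends to the quotient.
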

\begin{proof}
This follows from Proposition~\ref{prop:ker_E}, which implies $I_E \subset 
\ker E$.
\end{proof}

\begin{prop} 
\label{prop:AtildeE_minus} 
If we consider $\Deltam$ as a map from $\cT_-^{E}$ into $\cT_-^{E} \otimes  
\hat \cT_-^{E}$, then it reduces to an extraction-contraction map with some 
restrictions: for any tree $\tau \in \cT_-^{E}$, we have
\begin{equation}
\Deltam \tau = \sum_{\tau_1 \cdot \ldots \cdot \tau_n \subset_{E} \tau} \tau_1 
\cdot \ldots \cdot \tau_n \otimes \tau/(\tau_1 \cdot \ldots \cdot \tau_n)\;,
\end{equation}
where $\subset_E$ means that we consider all the subforests $ \tau_1 
\cdot \ldots \cdot \tau_n  $ of $\tau$ such that the trees $ \tau_i$ belong to $ \cT_-^{E}$, and 
$\tau/(\tau_1 \cdot \ldots \cdot \tau_n)$ denotes the tree 
obtained by contracting $\tau_1,\dots,\tau_n$ to a single node. 
Therefore, one can define a multiplicative map $\tilde \cA^{E}_{-}$ for the 
forest product as
\begin{equation}
\label{eq:AtildeE} 
\tilde\cA_-^{E}\tau 
= - \tau  - \sum_{ \funit \varsubsetneq \tau_1 \cdot \ldots \cdot \tau_n 
\varsubsetneq_{E} \tau} 
\tilde \cA_-^{E} (\tau_1 \cdot \ldots \cdot \tau_n) \cdot \tau / (\tau_1 
\cdot \ldots \cdot \tau_n)\;.
\end{equation}
\end{prop}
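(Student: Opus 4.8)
The plan is to read off both assertions from the explicit coaction formula~\eqref{eq:co-action_minus}, systematically discarding every summand that lands in $I_E$ on one of its two tensor legs. The biideal property $\Deltam I_E \subset I_E \otimes \hat\cT_- + \cT_- \otimes I_E$, noted just before the statement, makes $\Deltam$ a well-defined map $\cT_-^E \to \cT_-^E \otimes \hat\cT_-^E$; it is itself checked term by term on~\eqref{eq:co-action_minus}, since the number of leaves is additive under the splitting $T \mapsto (A, \cR_A T)$, and the properties ``planted'' and ``carries an $X_i$-decoration without a $\partial_{x_i}\cI_\rho$-edge'' pass to $A$ or to $\cR_A T$.

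The core step is to identify which summands of~\eqref{eq:co-action_minus} survive modulo $I_E$, and I would argue these are exactly the ones with $\Labe_A = 0$ and $\Labn_A = \Labn\restr{N_A}$: no boundary-edge decorations, and the node decoration of $\tau$ simply carried along by the extracted subforest. The input is the classification recalled after~\cite[Prop.~4.17]{BK17}: every element of $T_-$ is a \full\ or \afull\ binary tree, hence carries at most one node decoration, necessarily of the form $X_i$ with $\abss{X_i}=1$, which must be accompanied by a $\partial_{x_i}\cI_\rho$-edge to avoid membership in $I_E$; moreover~\eqref{eq:deg_pqkl} leaves no degree budget for an extra node decoration on a negative-degree binary tree. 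Consequently: (i) a nonzero $\Labe_A$ on a boundary edge incident to $x \in N_A$ endows the component of $A$ through $x$ with the node decoration $\pi\Labe_A \neq 0$ but no matching derivative edge, placing that component in $I_E$; (ii) splitting the node decoration of $\tau$ leaves $\cR_A T$ — or, when $A = T$, the polynomial remainder — with an $X_i$-decoration but no $\partial_{x_i}\cI_\rho$-edge, again landing in $I_E$. What remains is the decoration-free extraction--contraction map $\Deltam\tau = \sum_{\tau_1\cdots\tau_n\subset_E\tau}(\tau_1\cdots\tau_n)\otimes\tau/(\tau_1\cdots\tau_n)$; the restriction $\tau_i \in T_-^E$ records that the left leg of~\eqref{eq:co-action_minus} sits in $\cT_-$ (so each extracted component has negative degree) and, modulo $I_E$, is additionally non-planted, even-leaved and decoration-compatible.

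Finally, I would deduce~\eqref{eq:AtildeE} by feeding the simplified coaction into the recursion $\tilde\cA_-\tau = -\tau - \sum_{(\tau)}(\tilde\cA_-\tau')\tau''$, with $\Deltatm\tau = \Deltam\tau - \tau\otimes\unit - \funit\otimes\tau$. In the simplified sum the empty subforest gives $\funit\otimes\tau$ and the full subforest $\tau$ gives $\tau\otimes(\tau/\tau)$, which equals $\tau\otimes\unit$ when $\tau$ carries no node decoration and equals $\tau \otimes X_i$, hence $0$ in $\cT_-^E\otimes\hat\cT_-^E$, when it does — in the latter case the companion choice ``decoration carried along'' supplies the $\tau\otimes\unit$ term instead. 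Either way $\Deltatm\tau = \sum_{\funit \varsubsetneq \tau_1\cdots\tau_n \varsubsetneq_E \tau}(\tau_1\cdots\tau_n)\otimes\tau/(\tau_1\cdots\tau_n)$ in $\cT_-^E\otimes\hat\cT_-^E$, and substituting yields~\eqref{eq:AtildeE}; this recursion is well-posed by induction on the number of edges, and the resulting $\tilde\cA_-^E$ is multiplicative for the forest product by construction, coinciding with the quotient of the twisted antipode of~\cite{BrunedHairerZambotti}. The main obstacle is the second step: one must verify that none of the decoration-shuffling mechanisms hard-wired into~\eqref{eq:co-action_minus} — extracting part of a node decoration, and the transfer $\Labe_A \rightsquigarrow \pi\Labe_A$ — ever produces a summand escaping $I_E$ on both legs, and this is precisely where the simple structure of the fractional $\Phi^3$ model (binary trees, a single low-order node decoration forced to sit next to a derivative edge) is used.
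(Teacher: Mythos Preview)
Your proposal is correct and follows the same route as the paper: both arguments rest on the observation that the trees populating $\cT_-^E$ are \full\ or \afull\ binary trees carrying at most a single $X_i$-decoration, which forces the sums over $\Labn_A$ and $\Labe_A$ in~\eqref{eq:co-action_minus} to collapse modulo $I_E$. The paper's proof is a one-line appeal to this structural fact, whereas you spell out the case analysis (nonzero $\Labe_A$ produces an unmatched node decoration on the extracted piece; a nontrivial split of $\Labn$ leaves an unmatched decoration on the contracted piece) and handle the boundary case $A=T$ explicitly---but the underlying idea is identical.
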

\begin{proof} 
The simplification for $\Deltam$ and $\tilde \cA_-^{E}$ comes from the precise 
description of $\cT_-^{E}$ which is composed of \full\ and \afull\ binary 
trees. Therefore, $\Deltam \tau$ does not contain any sum on the node 
decorations and there remains only the extraction-contraction procedure.
\end{proof}

\begin{remark}
The very simple expression~\eqref{eq:AtildeE} for the twisted antipode
is a direct consequence of the fact that we may remove trees with one $X_i$ as 
a node decoration. This expression may be useful for numerical computations of 
the constants.
\end{remark}

\begin{example}
\label{ex:Atilde_comb} 
We have $\tilde\cA_-^{E}(\RScombTwo) = -\RScombTwo$, since no nontrivial tree 
can be extracted. Therefore, we obtain 
\begin{align}
\tilde\cA_-^{E} \RScombhSix & \; = - \RScombhSix \; - 4 \;\tilde\cA_-^{E} ( 
\RScombTwo) \cdot  \RScombhSixa \; - 4 \; \tilde\cA_-^{E} (\RScombTwo \cdot  
\RScombTwo) \cdot \RScombhSixab
\\ & = - \RScombhSix \; + 4 \; \RScombTwo \cdot  \RScombhSixa \; - 4 \; 
\RScombTwo \cdot  \RScombTwo \cdot \RScombhSixab\;, 
\label{eq:tildeAE_Example} 
\end{align}
where $\RScombTwo \cdot  \RScombTwo  \in \cT_-$ and $\RScombhSixab \in 
\hat \cT_- \setminus \cT_-$. 
\end{example}


\subsection{From expectations to Feynman diagrams}
\label{ssec:feynman} 

We now discuss the computation of expectations $E(\tau)$, starting with some 
examples. 

\begin{example}
It follows from~\eqref{eq:model0} and~\eqref{eq:model1} that
\begin{equation}
 (\bPi^\eps \RSI)(0) = \int K_\rho(-z) \xi^\eps(z) \6z 
 = \int K^\eps_\rho(-z) \xi(\6z)\;,
\end{equation} 
where we have assumed that $\xi^\eps = \varrho^\eps * \xi$ for a scaled 
mollifier $\varrho^\eps$, and defined $K^\eps_\rho = K_\rho * \varrho^\eps$. 
Since this is a centred Gaussian random variable, we have $E(\RSI) = 0$, in 
accordance with Proposition~\ref{prop:ker_E}. It then follows from the defining 
property of space-time white noise that 
\begin{align}
\label{eq:E_cherry} 
E(\RSV) 
&= \bigexpec{(\bPi^\eps \RSV)(0)} 
= \bigexpec{(\bPi^\eps \RSI)(0)^2}\\
&= \int K^\eps_\rho(-z_1) K^\eps_\rho(-z_2) \bigexpec{\xi(\6z_1) \xi(\6z_2)} \\
&= \int K^\eps_\rho(-z_1)^2 \6z_1\;.
\qedhere
\end{align}
\end{example}

\begin{example}
A more complicated example is 
\begin{align}
E(\RStreeFour) 
&= \bigexpec{(\bPi^\eps\RSY)(0)^2} \\
&= \biggexpec{\biggpar{\int K_\rho(-z)K^\eps_\rho(z-z_1)K^\eps_\rho(z-z_2)
\xi(\6z_1)\xi(\6z_2)\6z}^2}\;.
\end{align}
Wick calculus implies that $\expec{\xi(\6z_1)\xi(\6z_2)\xi(\6\bar 
z_1)\xi(\6\bar z_2)}$ is a sum of three terms, which can be symbolised by the 
pairings 
\begin{equation}
\tikz{%
\path[use as bounding box] (-0.43, -0.05) rectangle (0.43,0.92);
\FDtreeRegFour
\draw[pairing] (1) edge [distance=5,out=90,in=90] (2);
\draw[pairing] (3) edge [distance=5,out=90,in=90] (4);
\drawbox;
}\;, \qquad 
\tikz{%
\path[use as bounding box] (-0.43, -0.05) rectangle (0.43,0.92);
\FDtreeRegFour
\draw[pairing] (1) edge [distance=15,out=90,in=90] (4);
\draw[pairing] (2) edge [distance=5,out=90,in=90] (3);
\drawbox;
} \qquad \text{and} \qquad 
\tikz{%
\path[use as bounding box] (-0.43, -0.05) rectangle (0.43,0.92);
\FDtreeRegFour
\draw[pairing] (1) edge [distance=10,out=90,in=90] (3);
\draw[pairing] (2) edge [distance=10,out=90,in=90] (4);
\drawbox;
}\;.
\end{equation} 
The first pairing yields 
\begin{equation}
 \biggpar{\int K_\rho(-z)K^\eps_\rho(z-z_1)^2 \6z\6z_1}^2 = 0\;,
\end{equation} 
owing to the fact that $K_\rho$ integrates to zero. By symmetry, the second and 
third pairing yield the same value, namely 
\begin{equation}
\label{eq:pairing02} 
 \int K_\rho(-z)K^\eps_\rho(z-z_1)K^\eps_\rho(\bar z-z_1) 
 K_\rho(-\bar z)K^\eps_\rho(z-z_2)K^\eps_\rho(\bar z-z_2) 
 \6z\6\bar z\6z_1\6z_2\;.
\end{equation} 
It is convenient to represent such an integral graphically by the diagram 
\begin{equation}
\label{eq:pairing03} 
  \tikz{
 \path[use as bounding box] (-0.85, -0.1) rectangle (0.85,1.7);
 \node[rootnode] (0) at (0,0) {};
 \node[blacknode] (x) at (-0.75,0.8) {};
 \node[blacknode] (y) at (0.75,0.8) {};
 \node[blacknode] (z) at (0,0.8) {};
 \node[blacknode] (a) at (0,1.6) {};
 \draw[Kedge] (x) -- (0);
 \draw[Kedge] (y) -- (0);
 \draw[Kepsedge] (z) -- (y);
 \draw[Kepsedge] (z) -- (x);
 \draw[Kepsedge] (a) -- (x);
 \draw[Kepsedge] (a) -- (y);
 \drawbox;
 }\;,
\end{equation} 
where small black vertices denote integration variables, the large green vertex 
denotes the point $0$, solid arrows denote kernels $K_\rho$, and broken arrows 
denote kernels $K^\eps_\rho$. The benefit of the graphical 
representation~\eqref{eq:pairing03}, besides saving space, is that it will 
allow to represent in a more visual way the extraction-contraction operations 
associated with renormalisation. 
\end{example}

These examples motivate the following definition, which is a particular case 
of~\cite[Def.~2.1]{Hairer_BPHZ}. 

\begin{definition}[Feynman diagram]
\label{def:feynman_diagram} 
A \emph{Feynman diagram} (or, more precisely, a \emph{vacuum diagram}) is a 
finite oriented graph $\Gamma = (\sV,\sE)$, with a distinguished node 
$v^\star\in\sV$, and in which each edge $e\in\sE$ has a 
\emph{type} $\mathfrak{t}$ belonging to a finite set of types $\mathfrak{L}$.
With each type $\mathfrak{t}\in\mathfrak{L}$, we associate a degree 
$\deg(\mathfrak{t})\in\R$ and a kernel 
$K_{\mathfrak{t}}:\R^{d+1}\setminus\set{0}\to\R$. The 
\emph{degree} of\/ $\Gamma$ is defined by 
\begin{equation}
\label{eq:deg_Gamma} 
  \deg(\Gamma) = (\rho+d) (\abs{\sV}-1) + \sum_{e\in\sE} \deg(e)\;,
\end{equation} 
where $\abs{\sV}$ denotes the cardinality of $\sV$ and $\deg(e) = 
\deg(\mathfrak{t}(e))$. The \emph{value} of the diagram $\Gamma = (\sV,\sE)$ is 
defined as 
\begin{equation}
\label{eq:EGamma} 
 E(\Gamma) = \int_{(\R^{d+1})^{\sV\setminus v^\star}} \prod_{e\in\sE} 
K_{\mathfrak{t}(e)}
 (z_{e_+}-z_{e_-}) \6z\;,
\end{equation} 
where each oriented edge is written $e=(e_-,e_+)\in\sV^2$, and $z_{v^\star}=0$. 
\end{definition}

The graph in~\eqref{eq:pairing03} is an example of Feynman diagram, with a set 
of types $\mathfrak{L}$ consisting of $2$ types corresponding to the kernels 
$K_\rho$ and $K^\eps_\rho$. We define their degrees by 
\begin{equation}
 \deg({\tikz{
 \path[use as bounding box] (0, -0.1) rectangle (1.2,0.1); 
 \draw[Kedge] (0,0) -- (1.2,0);
 \drawbox;
 }}) 
 =  \deg({\tikz{
 \path[use as bounding box] (0, -0.1) rectangle (1.2,0.1); 
 \draw[Kepsedge] (0,0) -- (1.2,0);
 \drawbox;
 }})
 =-d\;.
 \label{eq:edge_degree1} 
\end{equation}
To each symbol $\tau\in T$ without decorations, we associate a 
linear combination of Feynman diagrams in the following way. 

\begin{definition}[Pairing]
Let $\tau\in T \setminus I_E$ be a symbol without decorations, and denote its 
set of leaves by $N_\tau$. A \emph{pairing} of $\tau$ is a partition $P$ of 
$N_\tau$ into two-elements blocks. We denote the set of pairings of $\tau$ by 
$\cP_\tau^{(2)}$. Then $\Gamma(\tau,P)$ is the Feynman diagram obtained by 
merging the leaves of a same block, and assigning to every edge adjacent to a 
former leaf the type $K^\eps_\rho$, and to all other edges the type $K_\rho$.  
\end{definition}

\begin{prop}
\label{prop:degE_Gamma} 
Let $\tau\in T \setminus I_E$. If $\tau$ has $p$ 
leaves and $q$ edges, then each $\Gamma(\tau,P)$ has $q+1-\frac{p}{2}$ 
vertices and $q$ edges. Therefore, 
\begin{equation}
\label{eq:deg_Gamma_tau} 
\deg(\Gamma(\tau,P)) = \abss{\tau} \Bigr\vert_{\kappa=0}
\end{equation}
holds for any $P\in\cP_\tau^{(2)}$. In addition, we have 
\begin{equation}
\label{eq:E_Gamma_tau} 
 E(\tau) = \sum_{P\in\cP_\tau^{(2)}} E(\Gamma(\tau,P))\;.
\end{equation}
\end{prop}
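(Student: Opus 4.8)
The plan is to combine an elementary count of vertices and edges with Wick's theorem; there is essentially no hard step.

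\emph{Vertex count and degree.} A rooted tree with $q$ edges has $q+1$ nodes, hence $q+1-p$ inner nodes (the root included) if $p$ of them are leaves. A pairing $P\in\cP_\tau^{(2)}$ has $p/2$ blocks, and $\Gamma(\tau,P)$ is obtained by identifying the two leaves of each block while keeping all $q$ edges (possibly creating multiple edges); thus $\Gamma(\tau,P)$ has $(q+1-p)+\tfrac p2=q+1-\tfrac p2$ vertices and $q$ edges. Since by~\eqref{eq:edge_degree1} \emph{every} edge has degree $-d$, whatever its type, formula~\eqref{eq:deg_Gamma} gives
\[
 \deg\bigpar{\Gamma(\tau,P)} = (\rho+d)\Bigpar{q-\tfrac p2}-dq = \rho q-\tfrac{\rho+d}{2}p\;,
\]
which is exactly~\eqref{eq:deg_pqkl} evaluated at $\kappa=0$ for a tree carrying no decorations (so $\abss{k}=\abss{\ell}=0$), i.e.\ $\left.\abss{\tau}\right|_{\kappa=0}$. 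This settles the first two assertions.

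\emph{Unfolding the model.} Since $\tau$ carries no decorations, only the operators $\cI_\rho$ occur (never $\partial^k\cI_\rho$ with $k\neq0$), so unfolding the recursion~\eqref{eq:model0}--\eqref{eq:model1} (cf.~\cite[proof of Prop.~8.27]{Hairer1}) yields
\[
 (\bPi^\eps\tau)(0) = \int\prod_e K_\rho(z_{e_+}-z_{e_-})\prod_{v\in N_\tau}\xi^\eps(z_v)\6z\;,
\]
where $e$ runs over the edges of $\tau$, oriented from the endpoint farther from the root ($e_-$) to the one closer to it ($e_+$), and the integral runs over all node variables except the root variable, which is set to $0$. As this is a polynomial in the Gaussian family $\bigpar{\xi^\eps(z)}_z$, Fubini lets me pull the expectation inside the integral over the inner nodes, and Wick's theorem gives
\[
 \Bigexpec{\prod_{v\in N_\tau}\xi^\eps(z_v)} = \sum_{P\in\cP_\tau^{(2)}}\ \prod_{\set{v,v'}\in P}\bigexpec{\xi^\eps(z_v)\xi^\eps(z_{v'})}\;,
\]
which vanishes unless $p$ is even --- consistent with $\tau\notin I_E$ and Proposition~\ref{prop:ker_E}.

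\emph{Identifying the summands.} Fix $P\in\cP_\tau^{(2)}$. Writing $\xi^\eps=\varrho^\eps*\xi$ gives $\bigexpec{\xi^\eps(z_v)\xi^\eps(z_{v'})}=\int\varrho^\eps(z_v-w)\varrho^\eps(z_{v'}-w)\6w$. If the two leaves $v,v'$ of a block of $P$ are attached in $\tau$ to the inner nodes $a$ and $b$, then, since $K^\eps_\rho=K_\rho*\varrho^\eps$, carrying out the $z_v$- and $z_{v'}$-integrations replaces $\int K_\rho(z_a-z_v)\varrho^\eps(z_v-w)\6z_v$ by $K^\eps_\rho(z_a-w)$ and the analogous integral over $z_{v'}$ by $K^\eps_\rho(z_b-w)$, leaving a single new variable $w$. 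This is precisely the effect, in the construction of $\Gamma(\tau,P)$, of merging $v$ and $v'$ and relabelling the two edges formerly incident to these leaves with the type $K^\eps_\rho$. Performing this reduction on every block of $P$ turns the $P$-th summand into $\int_{(\R^{d+1})^{\sV\setminus v^\star}}\prod_e K_{\mathfrak{t}(e)}(z_{e_+}-z_{e_-})\6z=E(\Gamma(\tau,P))$, cf.~\eqref{eq:EGamma}, and summing over $P$ yields~\eqref{eq:E_Gamma_tau}.

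\emph{The point requiring care} is purely combinatorial: one must check that the merging operation reproduces the diagram $\Gamma(\tau,P)$ of the definition also in the degenerate cases --- when the two leaves of a block have the same parent, so that a double edge appears (as already visible in $E(\RSV)=\int K^\eps_\rho(-z_1)^2\6z_1$), or when one leaf is attached directly to the root, so that $z_a=0$. Apart from this, the statement is just the conjunction of the vertex count with Wick's theorem; alternatively, it can be read off directly as the special case of~\cite[Def.~2.1]{Hairer_BPHZ} in which $\mathfrak{L}$ consists of the two types $K_\rho$ and $K^\eps_\rho$, both of degree $-d$.
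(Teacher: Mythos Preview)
Your proof is correct and follows essentially the same approach as the paper: the same vertex/edge count and degree computation, and the same appeal to Wick calculus for~\eqref{eq:E_Gamma_tau}. You simply spell out in more detail the step the paper summarises as ``a direct consequence of the rules~\eqref{eq:model1} defining the model and Wick calculus'', in particular the convolution bookkeeping that turns each pair of leaf kernels $K_\rho*\varrho^\eps$ into the two $K^\eps_\rho$-edges of $\Gamma(\tau,P)$.
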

\begin{proof}
By~\eqref{eq:deg_pqkl}, we have $\abss{\tau} = -\frac{p}{2}(\rho+d) + \rho q - 
p\kappa$. Since $\tau$ is a tree, it has $q+1$ nodes, and therefore $q+1-p$ 
inner nodes. When contracting the $p$ leaves pairwise, one obtains a Feynman 
diagram with $q$ edges of type $K_\rho$ or $K^\eps_\rho$, and 
$q+1-p+\frac{p}{2}$ vertices. Therefore its degree is given by 
$-qd+(\rho+d)(q-\frac{p}{2})$, which agrees with~\eqref{eq:deg_Gamma_tau}. The 
relation~\eqref{eq:E_Gamma_tau} is then a direct consequence of the 
rules~\eqref{eq:model1} defining the model and Wick calculus. 
\end{proof}

The following simple result shows that we can limit the analysis to Feynman 
diagrams which are at least $2$-connected. 

\begin{lemma}
\label{lem:Gamma_1-connected} 
If\/ $\Gamma$ is $1$-connected (i.e., if one can split $\Gamma$ into two 
disjoint 
graphs by removing one edge), then $E(\Gamma)=0$.  
\end{lemma}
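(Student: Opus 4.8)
The plan is to exploit that every kernel appearing on the edges of the Feynman diagrams considered here — namely $K_\rho$ and $K^\eps_\rho$ — has vanishing integral over $\R^{d+1}$. Indeed, $K_\rho$ integrates all polynomials of degree $\leqs 2$ to zero, so in particular $\int_{\R^{d+1}} K_\rho(z)\,\6z = 0$; and since $K^\eps_\rho = K_\rho * \varrho^\eps$ with $\varrho^\eps$ integrating to $1$, also $\int_{\R^{d+1}} K^\eps_\rho(z)\,\6z = \bigpar{\int K_\rho}\bigpar{\int\varrho^\eps} = 0$. A $1$-connected diagram has a bridge, and integrating out the component of $\Gamma$ lying on the far side of that bridge from $v^\star$ produces exactly such a vanishing integral as a multiplicative factor.

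Concretely, I would first fix an edge $e_0\in\sE$ whose removal splits $\Gamma$ into two graphs $\Gamma_1 = (\sV_1,\sE_1)$ and $\Gamma_2 = (\sV_2,\sE_2)$, so that $\sV = \sV_1\sqcup\sV_2$ and $\sE = \sE_1\sqcup\set{e_0}\sqcup\sE_2$; since $e_0$ is a bridge it is the \emph{only} edge joining $\sV_1$ to $\sV_2$. Write the endpoints of $e_0$ as $a\in\sV_1$ and $b\in\sV_2$, and relabel the two components if necessary so that $v^\star\in\sV_1$; then every vertex of $\Gamma_2$ is an integration variable in \eqref{eq:EGamma}.

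The heart of the argument is to group the integration variables of \eqref{eq:EGamma} into those indexed by $\sV_1\setminus\set{v^\star}$ and those indexed by $\sV_2$, and — using that the integral defining $E(\Gamma)$ converges absolutely, so that Fubini applies — to write
\[
 E(\Gamma) = \int_{(\R^{d+1})^{\sV_1\setminus\set{v^\star}}}
 \biggpar{\prod_{e\in\sE_1} K_{\mathfrak{t}(e)}(z_{e_+}-z_{e_-})}\, I(z_a)\,\6z\;,
\]
where $I(z_a)$ is the integral over $(\R^{d+1})^{\sV_2}$ of $K_{\mathfrak{t}(e_0)}$ evaluated at $\pm(z_b-z_a)$ (the sign depending on the orientation of $e_0$) times $\prod_{e\in\sE_2}K_{\mathfrak{t}(e)}(z_{e_+}-z_{e_-})$. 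In $I(z_a)$ I would then substitute $z_b = u$ and $z_v = u + w_v$ for $v\in\sV_2\setminus\set b$. Since both endpoints of every edge in $\sE_2$ lie in $\sV_2$, the kernels coming from $\Gamma_2$ depend only on the variables $w_v$, so $I(z_a)$ factors as a finite constant — the $w$-integral, independent of $z_a$ and of all $\Gamma_1$-variables, finiteness being forced by absolute convergence — times $\int_{\R^{d+1}} K_{\mathfrak{t}(e_0)}(\pm(u-z_a))\,\6u = \int_{\R^{d+1}} K_{\mathfrak{t}(e_0)}(y)\,\6y$, which vanishes by the observation above. Hence $I(z_a)\equiv 0$ and therefore $E(\Gamma) = 0$.

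I do not expect a genuine obstacle here: the argument is a one-line change of variables once the bridge is identified. The only points that need a word of care are the absolute convergence justifying Fubini (which is built into the hypothesis that $E(\Gamma)$ is well-defined, and which then also forces the inner $w$-integral to be finite) and keeping track of the orientation of $e_0$ — and for the latter it suffices to note that replacing $y$ by $-y$ leaves $\int_{\R^{d+1}} K_{\mathfrak{t}(e_0)}$ unchanged, so the sign is immaterial.
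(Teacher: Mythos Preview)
Your proposal is correct and follows essentially the same approach as the paper: identify the bridge $e_0$, factor the integral over the two components via a translation change of variables, and use that $\int_{\R^{d+1}} K_{\mathfrak{t}(e_0)} = 0$. The only cosmetic difference is that the paper first invokes translation invariance to move $v^\star$ onto an endpoint of the bridge (so that the far component integrates directly to a factor $E(\Gamma_2)$), whereas you leave $v^\star$ in place and shift the far component instead; your extra remarks on Fubini and orientation are fine but not essential.
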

\begin{proof}
If $\Gamma=(\sV,\sE)$ is $1$-connected, then there exist two vertex-disjoint 
subgraphs $\Gamma_1=(\sV_1,\sE_1)$ and $\Gamma_2=(\sV_2,\sE_2)$ such that $\sV = 
\sV_1 \cup \sV_2$ and $\sE = \sE_1 \cup \sE_2 \cup \set{e_0}$. By a linear 
change of variables, we may arrange that $e_{0}=(v^\star,v_1)$ where 
$v_1\in\sV_1$. We thus obtain 
\begin{equation}
 E(\Gamma) = \int_{(\R^{d+1})^{\sV_1}} K_{\mathfrak{t}(e_0)}(z_1) 
 \prod_{e\in\sE_1} K_{\mathfrak t(e)}(z_{e_+}-z_{e_-}) \6z \,
 E(\Gamma_2) \;.
\end{equation} 
Performing the change of variables $z_v = \bar z_v + z_1$ for all 
$v\in\sV_1\setminus\set{v_1}$, we can factor out the integral over $z_1$. This 
integral vanishes by construction. 
\end{proof}


\subsection{Simplification rules for Feynman diagrams}
\label{ssec:feynman_simplify} 

Integrals of the type encountered above can be somewhat simplified by using the 
fact that $P_\rho$ is the kernel of a Markov semigroup, describing a 
rotationally symmetric $\rho$-stable L\'evy process (see for 
instance~\cite{Kwasnicki}). While this is not essential for the general 
argument, it reduces the size of diagrams and thus improves the graphical 
representation. It also allows to compute the explicit expressions for the 
renormalisation constants $A_0$ and $\bar A_0$ given in~\eqref{eq:A0} 
and~\eqref{eq:Abar0}. 

\begin{lemma}
\label{lem:CK-Green} 
Assume the scaled mollifier has the form $\varrho^\eps(t,x) = \eps^{-(\rho+d)} 
\varrho(\eps^{-\rho}t,\eps^{-1}x)$, where 
$\varrho(t,x)=\varrho_0(t)\varrho_1(x)$ is even in $x$, supported in a ball of 
scaled radius $1$, and integrates to $1$. Then $K^\eps_\rho$ satisfies the 
following properties for all $(t,x)\in\R^{d+1}$:
\begin{enumerate}
\item 	Chapman--Kolmogorov equation: there exists a function 
$R_1^\eps:\R^{d+2}\to\R$, uniformly bounded and integrable in its first two 
arguments, such that 
\begin{equation}
\label{eq:CK} 
 \int K^\eps_\rho(t,x-y) K^\eps_\rho(s,y) \6y = \tilde K^\eps_\rho(t+s,x)
 + R_1^\eps(t,s,x)\;,
\end{equation} 
where $\tilde K_\rho^\eps =  K_\rho^\eps * \varrho^\eps = K_\rho * \varrho^\eps 
* \varrho^\eps$ is a kernel with a different mollifier;
\item 	Green function: there exists a uniformly bounded function 
$R_2^\eps:\R^{d+1}\to\R$ such that 
\begin{equation}
\label{eq:Green} 
 \int_t^\infty K^\eps_\rho(s,x) \6s = -(G_\rho *_x P^\eps_\rho)(t,x)
 + R_2^\eps(t,x)\;,
\end{equation} 
where $G_\rho = (\Delta^{\rho/2})^{-1}$ is the Green function of the fractional 
Laplacian, $P_\rho^\eps =  P_\rho*\varrho^\eps$ and $*_x$ denotes convolution 
in space. 
\end{enumerate}
\end{lemma}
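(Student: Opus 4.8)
The plan is to obtain the first two properties directly from the construction of the truncated kernel $K_\rho$, and the last two from the semigroup and resolvent identities for the fractional heat kernel, pushing all error terms into bounded remainders. For the non-anticipation and spatial symmetry: since $P_\rho$ is the transition density of a symmetric $\rho$-stable L\'evy process (see \cite{Kwasnicki}), it vanishes for $t<0$ and is even in $x$, and the splitting $P_\rho=K_\rho+R_\rho$ of \cite[Lemma~5.5]{Hairer1} can be arranged so that $K_\rho$ keeps both properties, i.e.\ $K_\rho$ is supported in $\{t\geqs0\}\cap\{\abss{z}\leqs1\}$ and satisfies $K_\rho(t,-x)=K_\rho(t,x)$. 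Since $\varrho^\eps$ is supported in $\{\abss{z}\leqs\eps\}\subseteq\{\abs t\leqs\eps^\rho\}$ and $\varrho_1$ is even, I would transfer both properties to $K^\eps_\rho=K_\rho*\varrho^\eps$ by inspecting the defining convolution: for $t\leqs-\eps^\rho$ the time argument of $K_\rho$ stays negative over the whole support of $\varrho^\eps$, and the substitution $(t',x')\mapsto(t',-x')$ gives the spatial symmetry.

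For the last two items the two inputs are the Chapman--Kolmogorov identity $P_\rho(t,\cdot)*_x P_\rho(s,\cdot)=P_\rho(t+s,\cdot)$, valid for $t,s>0$, and the resolvent identity $\int_0^\infty P_\rho(u,\cdot)\6u=-G_\rho$ (equivalently $G_\rho*_x P_\rho(\tau,\cdot)=-\int_\tau^\infty P_\rho(v,\cdot)\6v$ for $\tau>0$), the minus sign coming from $\Delta^{\rho/2}=-(-\Delta)^{\rho/2}\leqs0$. Writing $K_\rho=P_\rho-R_\rho$ with $R_\rho$ smooth, and using that $K_\rho$ is compactly supported while $P_\rho(t,\cdot)$ is a probability density, one gets $R_\rho(t,\cdot)=P_\rho(t,\cdot)$ for $\abs t>1$, hence $\sup_t\norm{R_\rho(t,\cdot)}_{L^1\cap L^\infty}<\infty$; setting $P^\eps_\rho=P_\rho*\varrho^\eps$ and $R^\eps_\rho=R_\rho*\varrho^\eps$ we have $K^\eps_\rho=P^\eps_\rho-R^\eps_\rho$ with $R^\eps_\rho$ inheriting the same bound.

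To prove the Chapman--Kolmogorov equation, note first that the left-hand side of \eqref{eq:CK} vanishes unless $\abs t,\abs s\lesssim1$ by compact support, so I would work on that bounded range. Integrating out $y$ and unfolding the two mollifications reduces the integral to a $\varrho^\eps*\varrho^\eps$-average of $K_\rho(t-t_1,\cdot)*_xK_\rho(s-t_2,\cdot)$; writing each $K_\rho=P_\rho-R_\rho$ and applying Chapman--Kolmogorov (legitimate as soon as $t-t_1>0$ and $s-t_2>0$, which holds automatically once $\abs t,\abs s>\eps^\rho$) yields $K_\rho(t-t_1+s-t_2,\cdot)$ plus three terms carrying a factor $R_\rho$ that are uniformly bounded by $L^1$--$L^\infty$ estimates. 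The $\varrho^\eps*\varrho^\eps$-average of the main term is exactly $\tilde K^\eps_\rho(t+s,x)$, and the remaining pieces — including the boundary cases $t-t_1\leqs0$ or $s-t_2\leqs0$, which occur only when $\abs t$ or $\abs s$ lies within $\eps^\rho$ of $0$ and reduce to an $L^1$ density convolved against bounded functions — go into the bounded, compactly supported remainder $R_1^\eps$, giving \eqref{eq:CK}.

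The Green-function property I would treat the same way: unfold the mollification in $\int_t^\infty K^\eps_\rho(s,x)\6s$, use $\int_\tau^\infty P_\rho(u,\cdot)\6u=-G_\rho$ for $\tau\leqs0$ and $=-G_\rho-\int_0^\tau P_\rho(u,\cdot)\6u$ for $\tau>0$, and compare with $-(G_\rho*_x P^\eps_\rho)(t,x)$, which unfolds by the resolvent identity to the same structure. The two sides coincide exactly once $t>\eps^\rho$, since the cutoff then forces the relevant time shift to be positive; for $t\leqs\eps^\rho$ their difference is a bounded multiple of $G_\rho*_x\varrho^\eps_1$, which is bounded because $G_\rho$ is locally integrable ($G_\rho(x)\asymp\abs x^{-(d-\rho)}$ near the origin), and the $R^\eps_\rho$-contribution, being bounded, is absorbed into $R_2^\eps$. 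The main obstacle throughout is exactly this bookkeeping near $t=0$ (and $s=0$ in \eqref{eq:CK}): these are the regions where invoking the semigroup law would require shifting the time argument of $P_\rho$ past zero, and it is precisely the $\eps^\rho$-width of the support of the mollifier that confines them to a thin slab and makes them contribute only bounded, $\eps$-dependent remainders; everything else is routine unfolding of convolutions and Young-type inequalities.
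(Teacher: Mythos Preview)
Your approach is correct and follows the same route as the paper: use the Chapman--Kolmogorov and resolvent identities for $P_\rho$, the decomposition $P_\rho=K_\rho+R_\rho$ with $R_\rho$ bounded and $K_\rho$ integrable, and push the cross-terms into the remainder. The only organisational difference is that the paper first writes the identity at the level of the \emph{unmollified} kernel (e.g.\ $K_\rho(t,\cdot)*_xK_\rho(s,\cdot)=K_\rho(t+s,\cdot)+R_\rho(t+s,\cdot)-K_\rho*_xR_\rho-\dots$, which holds for \emph{all} $t,s$ since everything vanishes for non-positive times) and only then convolves twice with $\varrho^\eps$; this sidesteps the boundary bookkeeping near $t=0$ and $s=0$ that you carry out by hand.
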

\begin{proof}
 For the first property, we use the Chapman--Kolmogorov relation $P_\rho(t,\cdot) *_x 
P_\rho(s,\cdot) = P_\rho(t+s,\cdot)$ to obtain 
\begin{align}
 K_\rho(t,\cdot) *_x K_\rho(s,\cdot) ={}& K_\rho(t+s,\cdot) 
 + R_\rho(t+s,\cdot) \\
&{}- K_\rho(t,\cdot) *_x R_\rho(s,\cdot)
 - R_\rho(t,\cdot) *_x K_\rho(s,\cdot)
 - R_\rho(t,\cdot) *_x R_\rho(s,\cdot)\;.
\end{align} 
Using the fact that $R_\rho$ is bounded and 
that $K_\rho$ and $R_\rho = P_\rho - K_\rho$ are integrable 
($P_\rho$ being integrable and $K_\rho$ having compact support), one obtains 
that all terms involving $R_\rho$ are bounded. The relation~\eqref{eq:CK} then 
follows upon convolving twice with $\varrho^\eps$. The last relations  
follows from the fact that
\begin{equation}
 \Delta^{\rho/2} \int_t^\infty P_\rho(s,\cdot) \6s = 
 \int_t^\infty \Delta^{\rho/2} \e^{s\Delta^{\rho/2}} \6s = 
 \e^{s\Delta^{\rho/2}} \biggr\vert_t^\infty = 
 - P_\rho(t,\cdot)\;.
\end{equation} 
Convolving with $G_\rho$, we obtain 
\begin{equation}
 \int_t^\infty P_\rho(s,x) \6s =
 - G_\rho *_x P_\rho(t,x)\;.
\end{equation} 
The result then follows by decomposing $P_\rho$ on the left-hand side into 
$K_\rho+R_\rho$, and convolving with $\varrho^\eps$. 
\end{proof}

Applying these properties to~\eqref{eq:E_cherry}, we obtain 
\begin{equation}
 E(\RSV) = \iint K^\eps_\rho(-t,-x)K^\eps_\rho(-t,x) \6x\6t
 = \int \tilde K^\eps_\rho(-2t,0) \6t + \Order{1}
 = \frac12 G^\eps_\rho(0) + \Order{1}\;,
\end{equation} 
where $G^\eps_\rho = \varrho_1^\eps *_x G_\rho$, $\Order{1}$ denotes a constant 
bounded uniformly in $\eps$, and we used the fact that $P_\rho(0,x) = 
\delta(x)$. Note that this implies the expression~\eqref{eq:A0} for the 
counterterm associated with $\RSV$. The expression~\eqref{eq:Abar0} for $\bar 
A_0$ is obtained by a similar argument applied to the 
element \smash{$\RScombTwo$}.

\begin{lemma}
\label{lem:KtoG} 
There exists a uniformly bounded function $R_3^\eps:\R^{2(d+1)}\to\R$ such that 
\begin{align}
 \int K^\eps_\rho(z_1-z) K^\eps_\rho(z_2-z) \6z 
 &= \int K^\eps_\rho(z-z_1) K^\eps_\rho(z-z_2) \6z \\
 &= -\frac12 (G^\eps_\rho *_x \tilde P^\eps_\rho) (\abs{t_1-t_2},x_1-x_2)
 + R_3^\eps(z_1,z_2)\;,
\label{eq:GKdef} 
\end{align} 
where $\tilde P^\eps_\rho = P^\eps_\rho * \varrho^\eps$. 
\end{lemma}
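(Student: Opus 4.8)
The plan is to reduce the identity to the Chapman--Kolmogorov and Green-function properties of Lemma~\ref{lem:CK-Green}, the only genuinely delicate point being to respect the non-anticipating nature of $K^\eps_\rho$. The first equality requires only a change of variables: substituting $z\mapsto z_1+z_2-z$, the reflection through the midpoint of $z_1$ and $z_2$, whose Jacobian has absolute value $1$, turns $\int K^\eps_\rho(z_1-z)K^\eps_\rho(z_2-z)\6z$ into $\int K^\eps_\rho(z-z_2)K^\eps_\rho(z-z_1)\6z$. In particular the integral is symmetric under $z_1\leftrightarrow z_2$, and since $K^\eps_\rho(\cdot,-x)=K^\eps_\rho(\cdot,x)$ it is also even in $x_1-x_2$; so it suffices to treat the case $t_1\leqs t_2$ and to express the answer in terms of $\delta:=t_2-t_1=\abs{t_1-t_2}$ and $x:=x_1-x_2$. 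After a translation, we must evaluate $\int K^\eps_\rho(z)\,K^\eps_\rho\bigl(z-(\delta,-x)\bigr)\6z$.

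Writing $z=(s,y)$, I would do the $y$-integral first. Using the spatial symmetry of $K^\eps_\rho$, the inner integral $\int K^\eps_\rho(s,y)\,K^\eps_\rho(s-\delta,y+x)\6y$ is the spatial convolution $\bigl(K^\eps_\rho(s,\cdot)*_xK^\eps_\rho(s-\delta,\cdot)\bigr)(x)$, which~\eqref{eq:CK} rewrites as $\tilde K^\eps_\rho(2s-\delta,x)+R_1^\eps(s,s-\delta,x)$ with $R_1^\eps$ uniformly bounded; once integrated in $s$, the $R_1^\eps$-contribution is $\Order{1}$ (the pieces of $R_1^\eps$ carrying a factor $K_\rho$ have compact $s$-support, while the purely-$R_\rho$ pieces are $s$-integrable uniformly in $\delta$ thanks to the decay of $R_\rho$ at infinity, valid since $\rho<d$ in the near-critical regime of Theorem~\ref{thm:main}). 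The key observation is that, by non-anticipation, the integrand vanishes unless $s-\delta\geqs-\eps^\rho$, so after the substitution $\sigma=2s-\delta$ the surviving main term is a \emph{half-line} integral $\tfrac12\int_{\delta+\Order{\eps^\rho}}^{\infty}\tilde K^\eps_\rho(\sigma,x)\6\sigma$. It is exactly this truncation that produces the dependence on $\delta=\abs{t_1-t_2}$; integrating over all of $\R$ would only give the $\delta$-independent --- and therefore incorrect --- constant $\tfrac12\int_\R\tilde K^\eps_\rho(\sigma,x)\6\sigma$. Now~\eqref{eq:Green}, in the form obtained by convolving it once more with $\varrho^\eps$ so that it applies to $\tilde K^\eps_\rho$, converts this half-line integral into $-\tfrac12\,(G_\rho*_x\tilde P^\eps_\rho)(\delta,x)$ up to a uniformly bounded error and up to replacing $\delta$ by $\delta+\Order{\eps^\rho}$ and $G_\rho$ by its spatial mollification $G^\eps_\rho$. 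Collecting all the $\Order{1}$ terms into a single function $R_3^\eps(z_1,z_2)$ then yields the claim.

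The main obstacle is the bookkeeping forced by non-anticipation. One must keep careful track of the precise range of $s$ over which~\eqref{eq:CK} is being used as a genuine ``main term plus small remainder'' decomposition: outside that range the true integrand vanishes while the decomposition does not, so mechanically extending it to all of $\R$ would both kill the $\delta$-dependence and introduce a spurious divergence. After that, the remaining work is to show that \emph{every} quantity discarded along the way --- the remainder in~\eqref{eq:CK}, the remainder in~\eqref{eq:Green}, the $\Order{\eps^\rho}$ shift of the lower limit, and the exchange of $G_\rho$ for $G^\eps_\rho$ --- is bounded uniformly in $\eps$ and in $(z_1,z_2)$; this rests on the decay of $R_\rho$ at infinity and on the regularizing effect of the mollifier on the $\abs{x}^{\rho-d}$ singularity of $G_\rho$, and is where the $\rho<d$ assumption enters.
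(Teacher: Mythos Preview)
Your proof is correct and follows essentially the same route as the paper: a change of variables for the first equality, then the spatial Chapman--Kolmogorov identity~\eqref{eq:CK} to collapse the $y$-integral, non-anticipation to cut the time integral to a half-line, and finally the Green-function identity~\eqref{eq:Green}. Your treatment is in fact more careful than the paper's on two points --- the $\Order{\eps^\rho}$ shift of the lower endpoint coming from the non-anticipation of $K^\eps_\rho$, and the justification that the $R_1^\eps$-remainder is integrable along the diagonal line $v=u-\delta$ (which does not follow from mere integrability in $(u,v)$ and indeed requires the structural argument you give).
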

\begin{proof}
The first two terms in~\eqref{eq:GKdef} are equal, as can be seen by a change 
of variables $z\mapsto-z$. Using~\eqref{eq:CK} and setting $s=t_1+t_2-2t$, we 
obtain that 
\begin{align}
\int K^\eps_\rho(z_1-z) &K^\eps_\rho(z_2-z) \6z  \\
&= \int K^\eps_\rho(t_1-t, x_1-x) K^\eps_\rho(t_2-t,x_2-x) 
\indexfct{t<t_1\wedge t_2} \6t\6x + R_{3,1}^\eps(z_1,z_2) \\
&= \int K^\eps_\rho(t_1+t_2-2t, x_1-x_2) \indexfct{t<t_1\wedge t_2} \6t  + 
R_{3,2}^\eps(z_1,z_2)\\
&= \frac12 \int_{\abs{t_1-t_2}}^\infty K^\eps_\rho(s,x_1-x_2) \6s + 
R_{3,2}^\eps(z_1,z_2)
\end{align}
for some uniformly bounded remainders $R_{3,1}^\eps$ and $R_{3,2}^\eps$. 
The result follows from~\eqref{eq:Green}. 
\end{proof}

We represent~\eqref{eq:GKdef} symbolically, for $\eps=0$ and $\eps\neq0$, by 
\begin{align}
\FDarrowsout{z_1}{z_2} &= \FDarrowsin{z_1}{z_2} = -\frac12 \FDwave{z_1}{z_2}\;, 
\\
\FDarrowsouteps{z_1}{z_2} &= \FDarrowsineps{z_1}{z_2} = -\frac12 
\FDwaveeps{z_1}{z_2}\;,
\label{eq:graphs_GK} 
\end{align} 
where we do not put arrows on edges representing kernels that are symmetric in 
both variables, and discard terms bounded uniformly in $\eps$. 

\begin{example}
\label{ex:four_leaves} 
Applying Lemma~\ref{lem:KtoG} to~\eqref{eq:pairing03}, and using the 
fact that the root, marked by the green vertex, can be moved to a different 
node by a linear change of variables in the integral, we obtain 
\begin{equation}
 E(\RStreeFour) = -\frac14 \FDThreewaveeps\;.
\end{equation} 
Here and below, we will sometimes make a slight abuse of notation, by 
identifying a Feynman diagram $\Gamma$ with its value $E(\Gamma)$.
A similar computation yields 
\begin{equation}
 E(\RScombhThree) 
 = 2 \raisebox{-7.5mm}{
 \tikz{
 \path[use as bounding box] (-1.1, -0.1) rectangle (1.1,2.1);
 \node[rootnode] (0) at (0,0) {};
 \node[blacknode] (x) at (-0.5,0.8) {};
 \node[blacknode] (y) at (-1,1.6) {};
 \node[blacknode] (z) at (0,1.6) {};
 \node[blacknode] (a) at (1,1.6) {};
 \draw[Kedge] (x) -- (0);
 \draw[Kedge] (y) -- (x);
 \draw[Kepsedge] (z) -- (y);
 \draw[Kepsedge] (z) -- (x);
 \draw[Kepsedge] (a) -- (0);
 \draw[Kepsedge] (a) edge [out=140,in=40] (y);
 \drawbox;
 }
 }
 = \dfrac12\;
 \FDFourEdges{Kedge}{GKepsedge}{GKepsedge}{Kedge}
\end{equation} 
Moving the root and introducing the new kernel 
\begin{equation}
\label{eq:graphs_Q} 
\FDQ{0}{z} {}={} \FDKQeps{0}{z}\;,
\end{equation} 
we obtain 
\begin{equation}
\label{eq:E_comb_fourleaves} 
E(\RScombhThree) {}={} \frac12 \;
 \FDQKwaveeps\;. 
\end{equation}
Proceeding in the same way, we obtain for instance 
\begin{equation}
 E(\RScombhSix) 
 = \frac18
 \raisebox{-8mm}{
\tikz{
 \path[use as bounding box] (-0.8, -0.9) rectangle (0.8,0.9);
 \node[rootnode] (1) at (-0.6,-0.6) {};
 \node[blacknode] (2) at (0.6,-0.6) {};
 \node[blacknode] (3) at (-0.6,0.6) {};
 \node[blacknode] (4) at (0.6,0.6) {};
 \draw[] (3) edge [Kedge,out=-120,in=120] (1);
 \draw[] (4) edge [Kedge,out=-60,in=60] (2);
 \draw[] (3) edge [GKepsedge,out=30,in=150] (4);
 \draw[] (3) edge [GKepsedge,out=-30,in=-150] (4);
 \draw[] (1) edge [GKepsedge,out=30,in=150] (2);
 \draw[] (1) edge [GKedge,out=-30,in=-150] (2);
 \drawbox;
 }
}
 + \frac14
 \raisebox{-8mm}{
\tikz{
 \path[use as bounding box] (-0.8, -0.9) rectangle (0.8,0.9);
 \node[rootnode] (1) at (-0.6,-0.6) {};
 \node[blacknode] (2) at (0.6,-0.6) {};
 \node[blacknode] (3) at (-0.6,0.6) {};
 \node[blacknode] (4) at (0.6,0.6) {};
 \draw[] (3) edge [Kedge,out=-120,in=120] (1);
 \draw[] (4) edge [Kedge,out=-60,in=60] (2);
 \draw[] (3) edge [GKepsedge,out=-60,in=60] (1);
 \draw[] (4) edge [GKepsedge,out=-120,in=120] (2);
 \draw[] (1) edge [GKedge,out=-30,in=-150] (2);
 \draw[] (3) edge [GKepsedge,out=30,in=150] (4);
 \drawbox;
 }
}
 + \frac14
 \raisebox{-8mm}{
\tikz{
 \path[use as bounding box] (-0.8, -0.9) rectangle (0.8,0.9);
 \node[rootnode] (1) at (-0.6,-0.6) {};
 \node[blacknode] (2) at (0.6,-0.6) {};
 \node[blacknode] (3) at (-0.6,0.6) {};
 \node[blacknode] (4) at (0.6,0.6) {};
 \draw[] (3) edge [Kedge,out=-120,in=120] (1);
 \draw[] (4) edge [Kedge,out=-60,in=60] (2);
 \draw[] (1) edge [GKedge,out=-30,in=-150] (2);
 \draw[] (3) edge [GKepsedge,out=30,in=150] (4);
 \draw[GKepsedge] (3) -- (2);
 \draw[GKepsedge] (4) -- (1);
 \drawbox;
 }
}
\label{eq:E_combhSix} 
\end{equation} 
and
\begin{equation}
E(\RScombhFive)
= -\frac14 \left[
\raisebox{-8mm}{
\tikz{
 \path[use as bounding box] (-0.8, -0.9) rectangle (0.8,0.9);
 \node[rootnode] (1) at (-0.6,-0.6) {};
 \node[blacknode] (2) at (0.6,-0.6) {};
 \node[blacknode] (3) at (-0.6,0.6) {};
 \node[blacknode] (4) at (0.6,0.6) {};
 \draw[] (3) edge [Kedge,out=-120,in=120] (1);
 \draw[] (2) edge [Kedge,out=60,in=-60] (4);
 \draw[->] (4) edge [GKKedge,out=150,in=30] (3);
 \draw[] (3) edge [GKepsedge,out=-30,in=-150] (4);
 \draw[] (2) edge [GKepsedge,out=150,in=30] (1);
 \draw[] (1) edge [Kedge,out=-30,in=-150] (2);
 \drawbox;
 }
}
+
\raisebox{-8mm}{
\tikz{
 \path[use as bounding box] (-0.8, -0.9) rectangle (0.8,0.9);
 \node[rootnode] (1) at (-0.6,-0.6) {};
 \node[blacknode] (2) at (0.6,-0.6) {};
 \node[blacknode] (3) at (-0.6,0.6) {};
 \node[blacknode] (4) at (0.6,0.6) {};
 \draw[] (3) edge [Kedge,out=-120,in=120] (1);
 \draw[] (2) edge [Kedge,out=60,in=-60] (4);
 \draw[] (1) edge [GKepsedge,out=60,in=-60] (3);
 \draw[] (4) edge [GKepsedge,out=-120,in=120] (2);
 \draw[->] (4) edge [GKKedge,out=150,in=30] (3);
 \draw[] (1) edge [Kedge,out=-30,in=-150] (2);
 \drawbox;
 }
}
+
\raisebox{-8mm}{
\tikz{
 \path[use as bounding box] (-0.8, -0.9) rectangle (0.8,0.9);
 \node[rootnode] (1) at (-0.6,-0.6) {};
 \node[blacknode] (2) at (0.6,-0.6) {};
 \node[blacknode] (3) at (-0.6,0.6) {};
 \node[blacknode] (4) at (0.6,0.6) {};
 \draw[] (3) edge [Kedge,out=-120,in=120] (1);
 \draw[] (2) edge [Kedge,out=60,in=-60] (4);
 \draw[->] (4) edge [GKKedge,out=150,in=30] (3);
 \draw[] (1) edge [Kedge,out=-30,in=-150] (2);
 \draw[GKepsedge] (3) -- (2);
 \draw[GKepsedge] (1) -- (4);
 \drawbox;
 }
}
+
\raisebox{-8mm}{
\tikz{
 \path[use as bounding box] (-0.8, -0.9) rectangle (0.8,0.9);
 \node[rootnode] (1) at (-0.6,-0.6) {};
 \node[blacknode] (2) at (0.6,-0.6) {};
 \node[blacknode] (3) at (-0.6,0.6) {};
 \node[blacknode] (4) at (0.6,0.6) {};
 \draw[] (3) edge [Kedge,out=-120,in=120] (1);
 \draw[] (2) edge [Kedge,out=60,in=-60] (4);
 \draw[->] (4) edge [GKKedge,out=-120,in=120] (2);
 \draw[] (3) edge [GKepsedge,out=30,in=150] (4);
 \draw[] (3) edge [GKepsedge,out=-60,in=60] (1);
 \draw[] (1) edge [Kedge,out=-30,in=-150] (2);
 \drawbox;
 }
}
+
\raisebox{-8mm}{
\tikz{
 \path[use as bounding box] (-0.8, -0.9) rectangle (0.8,0.9);
 \node[rootnode] (1) at (-0.6,-0.6) {};
 \node[blacknode] (2) at (0.6,-0.6) {};
 \node[blacknode] (3) at (-0.6,0.6) {};
 \node[blacknode] (4) at (0.6,0.6) {};
 \draw[] (3) edge [Kedge,out=-120,in=120] (1);
 \draw[] (2) edge [Kedge,out=60,in=-60] (4);
 \draw[] (4) edge [GKepsedge,out=150,in=30] (3);
 \draw[] (1) edge [Kedge,out=-30,in=-150] (2);
 \draw[GKepsedge] (3) -- (2);
 \draw[GKKedge,->] (4) -- (1);
 \drawbox;
 }
}
\right]\;.
\end{equation} 
The corresponding pairings are
\begin{equation}
\tikz{%
\path[use as bounding box] (-0.5, -0.05) rectangle (0.5,1.1);
\FDtreeRegSix
\draw[pairing] (3) edge [distance=5,out=90,in=90] (4);
\draw[pairing] (2) edge [distance=7,out=90,in=90] (5);
\draw[pairing] (1) edge [distance=15,out=90,in=90] (6);
\drawbox;
} \qquad 
\tikz{%
\path[use as bounding box] (-0.5, -0.05) rectangle (0.5,1.1);
\FDtreeRegSix
\draw[pairing] (2) edge [distance=5,out=60,in=60] (3);
\draw[pairing] (4) edge [distance=5,out=120,in=120] (5);
\draw[pairing] (1) edge [distance=15,out=90,in=90] (6);
\drawbox;
} \qquad 
\tikz{%
\path[use as bounding box] (-0.5, -0.05) rectangle (0.5,1.1);
\FDtreeRegSix
\draw[pairing] (2) edge [distance=5,out=60,in=110] (4);
\draw[pairing] (3) edge [distance=5,out=70,in=120] (5);
\draw[pairing] (1) edge [distance=15,out=90,in=90] (6);
\drawbox;
}
\end{equation}
for the first three diagrams, and
\begin{equation}
\tikz{%
\path[use as bounding box] (-0.625, -0.05) rectangle (0.3,1.4);
\FDtreeCombSix
\draw[pairing] (1) edge [distance=25,out=60,in=60] (6);
\draw[pairing] (2) edge [distance=12,out=60,in=60] (5);
\draw[pairing] (3) edge [distance=5,out=60,in=60] (4);
\drawbox;
} \qquad 
\tikz{%
\path[use as bounding box] (-0.625, -0.05) rectangle (0.3,1.4);
\FDtreeCombSix
\draw[pairing] (1) edge [distance=25,out=60,in=60] (6);
\draw[pairing] (2) edge [distance=5,out=60,in=60] (3);
\draw[pairing] (4) edge [distance=5,out=60,in=60] (5);
\drawbox;
} \qquad 
\tikz{%
\path[use as bounding box] (-0.625, -0.05) rectangle (0.3,1.4);
\FDtreeCombSix
\draw[pairing] (1) edge [distance=25,out=60,in=60] (6);
\draw[pairing] (2) edge [distance=10,out=60,in=60] (4);
\draw[pairing] (3) edge [distance=10,out=60,in=60] (5);
\drawbox;
} \qquad 
\tikz{%
\path[use as bounding box] (-0.625, -0.05) rectangle (0.3,1.4);
\FDtreeCombSix
\draw[pairing] (1) edge [distance=25,out=60,in=60] (5);
\draw[pairing] (2) edge [distance=5,out=60,in=60] (3);
\draw[pairing] (4) edge [distance=12,out=60,in=60] (6);
\drawbox;
} \qquad 
\tikz{%
\path[use as bounding box] (-0.625, -0.05) rectangle (0.3,1.4);
\FDtreeCombSix
\draw[pairing] (1) edge [distance=25,out=60,in=60] (5);
\draw[pairing] (2) edge [distance=10,out=60,in=60] (4);
\draw[pairing] (3) edge [distance=15,out=60,in=60] (6);
\drawbox;
} 
\end{equation}
for the last five. 
\end{example}

Definition~\ref{def:feynman_diagram} can be applied to this setting, by 
expanding the set of types $\mathfrak{L}$ by $3$ new elements, with degrees 
\begin{align}
 \deg({\tikz{
 \path[use as bounding box] (0, -0.1) rectangle (1.2,0.1); 
 \draw[GKedge] (0,0) -- (1.2,0);
 \drawbox;
 }}) 
 = \deg({\tikz{
 \path[use as bounding box] (0, -0.1) rectangle (1.2,0.1); 
 \draw[GKepsedge] (0,0) -- (1.2,0);
 \drawbox;
 }})
 &= \rho - d \\
 \deg({\tikz{
 \path[use as bounding box] (0, -0.1) rectangle (1.2,0.1); 
 \draw[GKKedge,->] (0,0) -- (1.2,0);
 \drawbox;
 }}) 
 &= 2\rho - d\;.
 \label{eq:edge_degree2} 
\end{align}
The associated kernels are $G_\rho *_x \tilde P_\rho, G^\eps_\rho *_x \tilde 
P^\eps_\rho$ and $K_\rho * G^\eps_\rho *_x \tilde P^\eps_\rho$. We will say 
that a Feynman diagram is \emph{reduced} if the reduction 
rules~\eqref{eq:graphs_GK} and~\eqref{eq:graphs_Q} have been applied. Then 
Proposition~\ref{prop:degE_Gamma} extends as follows.  

\begin{prop}
\label{prop:degE_Gamma2} 
Let $\tau\in T \setminus I_E$. If $\tau$ has $p$ leaves and $q$ edges, then each 
reduced $\Gamma(\tau,P)$ has $q-p$ vertices. The 
relation~\eqref{eq:deg_Gamma_tau} still holds in this case, 
while~\eqref{eq:E_Gamma_tau} becomes
\begin{equation}
\label{eq:E_Gamma_tau2} 
 E(\tau) = \sum_{P\in\cP_\tau^{(2)}} \biggpar{-\frac12}^{1+\frac{p}{2}} 
E(\Gamma(\tau,P)) + \Order{1}\;,
\end{equation} 
where $\Order{1}$ denotes a constant uniform in $\eps$. 
\end{prop}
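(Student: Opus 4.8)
The plan is to read off all three statements from a close analysis of the reduction procedure, i.e.\ of the effect of the rules~\eqref{eq:graphs_GK} and~\eqref{eq:graphs_Q} on the diagram $\Gamma(\tau,P)$ produced by Proposition~\ref{prop:degE_Gamma}. That diagram has $q+1-\frac p2$ vertices and $q$ edges, all of type $K_\rho$ or $K^\eps_\rho$, hence of degree $-d$. Each elementary reduction step integrates out a single vertex of degree two: when its two incident edges have the same type, Lemmas~\ref{lem:CK-Green} and~\ref{lem:KtoG} replace them by one wave edge, with an overall factor $-\frac12$ and a uniformly bounded remainder; when one incident edge is a $K_\rho$ edge and the other a wave edge, \eqref{eq:graphs_Q} merges them into a single $Q$-edge with no extra factor. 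In both cases exactly one vertex and one edge are deleted, and since the end product only involves the five edge types carrying the degrees~\eqref{eq:edge_degree1}--\eqref{eq:edge_degree2}, Definition~\ref{def:feynman_diagram} applies to it.

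First I would dispose of the degree statement: each reduction step preserves $\deg(\Gamma)$. Indeed, removing a vertex lowers $(\rho+d)(\abs{\sV}-1)$ by $\rho+d$, whereas replacing two edges of degree $-d$ by one of degree $\rho-d$ raises $\sum_e\deg(e)$ by $2d+(\rho-d)=\rho+d$, and replacing a $K_\rho$ edge and a wave edge by a $Q$-edge raises it by $d+(d-\rho)+(2\rho-d)=\rho+d$; so in each case the net change vanishes. Since Proposition~\ref{prop:degE_Gamma} gives $\deg(\Gamma(\tau,P))=\abss{\tau}|_{\kappa=0}$ before reduction, this establishes~\eqref{eq:deg_Gamma_tau}.

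For the vertex count and~\eqref{eq:E_Gamma_tau2}, I would proceed as follows. The $\frac p2$ vertices of $\Gamma(\tau,P)$ that come from merging a pair of leaves each carry exactly two $K^\eps_\rho$ edges, so each is removed by one application of~\eqref{eq:graphs_GK}; this produces $\frac p2$ factors $-\frac12$ and brings the vertex count to $q+1-p$. A pairing that matches two leaves with a common parent $v$ attaches a constant to $v$ and leaves $v$ incident to a single $K_\rho$ edge, so integrating $v$ out yields $0$ because $K_\rho$ integrates polynomials to zero; such pairings contribute nothing and are discarded. For the remaining pairings, and for $\tau$ a \full\ binary tree, the graph left after the leaf reductions is the skeleton of $\tau$ --- its inner nodes joined by the $q-p$ surviving $K_\rho$ edges into a tree --- together with $\frac p2$ wave edges, each joining the parents of a merged pair; using the structure of $T_-$ from~\cite[Prop.~4.17]{BK17} one checks that every inner node except the root now has degree $\geqs 3$ while the root has degree exactly two. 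One further reduction at the root --- after shifting the distinguished vertex onto one of its neighbours, by translation invariance --- brings the vertex count to $q-p$ and, since the root's two neighbours keep their degrees, leaves a diagram with all degrees $\geqs 3$, so the process terminates. Counting the $-\frac12$'s (the $\frac p2$ leaf reductions plus this last one) gives the exponent $1+\frac p2$, while the bounded remainders accumulated along the way, together with the discarded pairings, are collected into the $\Order{1}$ term; this yields~\eqref{eq:E_Gamma_tau2}.

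The delicate point is exactly this last reduction. When the root of $\tau$ carries a leaf, its two surviving edges are a $K_\rho$ edge and a wave edge, so the final step is of type~\eqref{eq:graphs_Q} and produces no factor; in that case $1+\frac p2$ bounds the number of $-\frac12$'s from above and the resulting bounded discrepancy must be carried through (it is immaterial for the bounds derived from the proposition). One must also verify that for every admissible pairing the reduction really terminates at a $2$-connected diagram with $q-p$ vertices; handle the \afull\ trees --- which carry one extra edge and possibly a $\partial_{x_i}\cI_\rho$ edge together with an $X_i$ node decoration, so that the reduction also produces $Q$-type kernels with a spatial derivative --- by the same moves; and isolate the small trees $\RSV$ and $\RScombTwo$, for which $q-p=0$ and the ``reduced diagram'' is a constant ($\tfrac12 G^\eps_\rho(0)$ and its analogue, as in the computations preceding Lemma~\ref{lem:KtoG}). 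The rest is bookkeeping.
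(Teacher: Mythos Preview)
Your approach is the paper's, carried out with more care. The paper's proof is four lines: it notes that there are $1+\tfrac p2$ reductions, each removing one vertex and one edge while preserving the degree, and cites Lemma~\ref{lem:KtoG} and~\eqref{eq:graphs_Q} for~\eqref{eq:E_Gamma_tau2}. You go further and notice that the final reduction need not carry a factor $-\tfrac12$: when the root of $\tau$ has a leaf child, after the $\tfrac p2$ leaf reductions the root is incident to one $K_\rho$ edge and one wave edge, so the last step is of type~\eqref{eq:graphs_Q} and contributes no factor. This is a genuine imprecision in the stated exponent, visible already in the paper's own Example~\ref{ex:four_leaves}: for $\tau=\RScombhThree$ one finds $E(\tau)=\tfrac12\,\FDQKwaveeps$, whereas~\eqref{eq:E_Gamma_tau2} with two contributing pairings would predict the prefactor $2\cdot(-\tfrac12)^{3}=-\tfrac14$. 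Your assessment that this is immaterial downstream is correct: the proposition feeds only into the estimates of $|c_\eps(\tau)|$ in Section~\ref{sec:asymp}, where the stray factor $-2$ is absorbed into the constant $K_1$.

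One slip: the tree $\RScombTwo$ has $p=2$ leaves and $q=3$ edges, so $q-p=1$, not $0$; its reduced diagram is the two-edge graph $\FDTwoEdges{GKepsedge}{Kedge}$ on a single integration vertex, consistent with~\eqref{eq:Abar0}.
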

\begin{proof}
Recall that the unreduced Feynman diagram has $q$ edges of type $K_\rho$ or 
$K^\eps_\rho$, and $q+1-p+\frac{p}{2}$ vertices. Since $\tau$ cannot be a 
planted tree, the number of reductions is equal to $1+\frac{p}{2}$, each 
decreasing by 
$1$ the number of edges and vertices, which is why each reduced 
$\Gamma(\tau,P)$ has $q-p$ vertices. The degree is conserved 
by the reductions.  The relation~\eqref{eq:E_Gamma_tau2} is then a direct 
consequence of Lemma~\ref{lem:KtoG} and~\eqref{eq:graphs_GK}.
\end{proof}


\section{Forests}
\label{sec:forests}


\subsection{Zimmermann's forest formula}
\label{ssec:forest} 

The aim of this and the following section is to derive upper bounds 
for the expectations $E(\tilde \cA_- \tau)$ when $\tau \in T_-$. We want to 
prove that
\begin{equation}
\label{eq:bound_EAtau} 
 |E(\tilde \cA_- \tau)| \leqs C f(\tau) \varepsilon^{\abss{\tau}}\;,
\end{equation}
where the constant $C$ does not depend on $\tau$, $\eps$ or $\rho$, and 
$f(\tau)$ is a function to be determined, which depends on the structure of the 
tree $\tau$. 

A nice feature is that one can define a twisted antipode $\tilde\cA_-$ acting 
on Feynman diagrams of negative degree, which is essentially the same as 
in~\cite{Hairer_BPHZ}, and reduces in this case to a mere 
extraction/contraction of divergent subdiagrams. In the sequel, we will use the same notation for this antipode as the one on trees. From the context, it will be clear which one is used.
Denote by $\sG$ the vector space spanned by all admissible Feynman 
diagrams (not necessarily connected), and by $\sG_-$ the subspace 
spanned by diagrams of negative degree. We say that $\Gamma' = (\sV',\sE')$ is 
a subgraph of $\Gamma = (\sV,\sE)$ if $\sE'\subset\sE$, and $\sV'$ contains all 
vertices in $\sV$ which belong to at least one edge $e\in\sE'$. Then we define 
the twisted antipode to be the map $\tilde \cA_-:\sG_- \to {\mathscr 
G}$ given by 
\begin{equation}
\label{eq:antipode_Gamma} 
 \tilde \cA_-\Gamma = -\Gamma 
 - \sum_{\bar\Gamma\varsubsetneq\Gamma} \tilde \cA_-\bar\Gamma \cdot 
\Gamma/\bar\Gamma\;,
\end{equation} 
where the sum runs over all not necessarily connected subgraphs of negative 
degree, and $\Gamma/\bar\Gamma$ denotes the graph obtained by contracting 
$\bar\Gamma$ to a single vertex. 

\begin{remark}
The name twisted antipode is again related to the fact that one can introduce a 
Hopf algebra structure on (decorated) graphs, 
see~\cite[Section~2.3]{Hairer_BPHZ}, which generalises the 
extraction-contraction Hopf algebra on undecorated graphs introduced by Connes 
and Kreimer in~\cite{CK1,CK2}. The twisted antipode differs from the antipode 
of that Hopf algebra because of the use of a coaction instead of a coproduct, 
meaning that the extracted graphs $\bar\Gamma$ and the contracted graphs 
$\Gamma/\bar\Gamma$ are not in the same space: while the former have negative 
degree, the latter can have arbitrary degree.
\end{remark}

\begin{prop}
\label{prop:Etau_Feynman} 
One has 
\begin{equation}
 E(\tilde \cA_- \tau) = \sum_{P \in \mathcal{P}^{(2)}_{\tau}}  E(\tilde \cA_- 
\Gamma(\tau,P))\;.
\end{equation}
\end{prop}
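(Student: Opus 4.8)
The plan is to prove the identity by induction on the number of leaves of $\tau$, by showing that the two sides obey the same extraction–contraction recursion. By Proposition~\ref{prop:ceps} we may replace $\tilde\cA_-$ on the left by the simplified twisted antipode $\tilde\cA_-^{E}$ of~\eqref{eq:AtildeE}: this is purely combinatorial — extraction of subforests whose components lie in $T_-^{E}$, followed by contraction — and multiplicative for the forest product, while the twisted antipode~\eqref{eq:antipode_Gamma} on Feynman diagrams has exactly the same shape and is multiplicative for disjoint union. We extend $E$ multiplicatively to forests of trees and to disjoint unions of diagrams (the extension already implicit in $c_\eps(\tau)=E(\tilde\cA_-\tau)$). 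The bridge between trees and diagrams is Proposition~\ref{prop:degE_Gamma}: the same Wick-calculus argument gives $E(\sigma)=\sum_{P}E(\Gamma(\sigma,P))$, with the sum over all pairings of the leaves of $\sigma$, not only for $\sigma\in T\setminus I_E$ but also, applied componentwise, for any forest and, in its evident extension, for any contracted tree $\sigma$.

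For the inductive step, apply $E$ to~\eqref{eq:AtildeE} and use multiplicativity of $\tilde\cA_-^{E}$ and of $E$ to get
\begin{equation}
 E(\tilde\cA_-^{E}\tau)
 = -E(\tau)
 - \sum_{\funit\varsubsetneq F\varsubsetneq_{E}\tau}
 \biggpar{\prod_{i} E(\tilde\cA_-^{E}\tau_i)}\, E(\tau/F)\;,
 \qquad F=\tau_1\cdot\ldots\cdot\tau_n\;.
\end{equation}
By the induction hypothesis each $E(\tilde\cA_-^{E}\tau_i)$ equals $\sum_{P_i}E(\tilde\cA_-\Gamma(\tau_i,P_i))$, and the same argument writes $E(\tau/F)=\sum_{Q}E(\Gamma(\tau/F,Q))$, the sum over pairings $Q$ of the leaves of the contracted tree. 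On the other side, expanding~\eqref{eq:antipode_Gamma} writes $\sum_{P}E(\tilde\cA_-\Gamma(\tau,P))$ as $-\sum_{P}E(\Gamma(\tau,P))$ minus the double sum, over $P\in\cP_\tau^{(2)}$ and over subgraphs $\bar\Gamma\varsubsetneq\Gamma(\tau,P)$ of negative degree, of $E(\tilde\cA_-\bar\Gamma)\,E(\Gamma(\tau,P)/\bar\Gamma)$. The leading terms $-E(\tau)$ and $-\sum_{P}E(\Gamma(\tau,P))$ agree by Proposition~\ref{prop:degE_Gamma}, so the proof reduces to matching the two double sums.

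This matching is the heart, and the expected main obstacle, of the argument: one needs a value-preserving bijection between the data $\bigl(F=\tau_1\cdot\ldots\cdot\tau_n\varsubsetneq_{E}\tau,\ (P_i)_i,\ Q\bigr)$ and the data $\bigl(P\in\cP_\tau^{(2)},\ \bar\Gamma\varsubsetneq\Gamma(\tau,P)\text{ of negative degree}\bigr)$. Given the tree data one sets $\bar\Gamma=\bigsqcup_i\Gamma(\tau_i,P_i)$, of degree $\sum_i\abss{\tau_i}\big|_{\kappa=0}<0$; recovers $P$ by combining the internal pairings $P_i$ with the external pairing recorded by $Q$; and checks that contracting the $\tau_i$ in $\tau$ corresponds on the diagram side to contracting the image $\bar\Gamma$, i.e.\ $\Gamma(\tau,P)/\bar\Gamma=\Gamma(\tau/F,Q)$, with the boundary edges carrying the type prescribed by the pairing rule ($K^\eps_\rho$ next to a former leaf, $K_\rho$ otherwise). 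The delicate parts are, first, the Connes–Kreimer bookkeeping — a contracted full binary subtree becomes a new leaf of the quotient tree, which must be reconciled with the single contracted vertex of $\Gamma(\tau,P)/\bar\Gamma$ — and, second, the converse: that \emph{every} negative-degree subgraph of $\Gamma(\tau,P)$ arises this way. The subgraphs that do not are exactly those whose edge set, read back in $\tau$, splits a pairing block of $P$; such a subgraph has a merged-leaf vertex of degree one, hence a cut edge, and propagating through~\eqref{eq:antipode_Gamma} the argument that $1$-connected diagrams have vanishing value shows its contribution $E(\tilde\cA_-\bar\Gamma)\,E(\Gamma(\tau,P)/\bar\Gamma)$ vanishes. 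Once the double sums are matched term by term the induction closes; the base case (trees too small to admit a proper subforest of negative degree, equivalently with no proper negative-degree subgraph) follows at once from the leading terms.
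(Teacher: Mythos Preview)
Your strategy coincides with the paper's --- induction on the tree, unfolding the antipode recursions~\eqref{eq:AtildeE} and~\eqref{eq:antipode_Gamma} on both sides and matching the extraction/contraction terms --- and you spell out the bijection and the handling of spurious subgraphs more carefully than the paper does. Two points nonetheless need correction.

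First, your degree claim for $\bar\Gamma=\bigsqcup_i\Gamma(\tau_i,P_i)$ is wrong: by~\eqref{eq:deg_Gamma}, a disjoint union with $n$ components has degree $\sum_i\deg\Gamma(\tau_i,P_i)+(\rho+d)(n-1)$, not $\sum_i\abss{\tau_i}\big|_{\kappa=0}$. What actually indexes the sum in~\eqref{eq:antipode_Gamma}, consistently with the forest formula~\eqref{eq:Zimmermann_forest} and Definition~\ref{def:forest}, is that each \emph{connected component} of $\bar\Gamma$ have negative degree; this follows at once from $\tau_i\in T_-^E$ and Proposition~\ref{prop:degE_Gamma}, so the repair is immediate.

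Second, the converse direction of your bijection is incomplete. You assert that the only divergent $\bar\Gamma\subsetneq\Gamma(\tau,P)$ failing to come from a tree-side forest are the pairing-splitters, but there is a second obstruction: a connected $\bar\Gamma$ that respects every pairing block may still pull back to an edge set that is \emph{disconnected} in $\tau$, the $m\geqs2$ tree-components being glued only at merged-leaf vertices of $\Gamma(\tau,P)$. Such a $\bar\Gamma$ is not $\Gamma(\bar\tau,\bar P)$ for any single subtree $\bar\tau$, and $P$ does not restrict to any of the tree-components. You must rule this case out; a short count does it: writing $q_i$ and $p_i$ for the edges and the $\tau$-leaves of the $i$th tree-component, one has $q_i\geqs p_i$ (each leaf contributes its unique incident edge), and $q_i=p_i$ forces a cherry or a single edge since $\tau$ is binary, whence $\deg\bar\Gamma=\rho\sum_iq_i+(\rho+d)\bigl(m-1-\tfrac12\sum_ip_i\bigr)\geqs(2m-1)\rho-d>0$ for $\rho>\rhocrit$ and $m\geqs2$. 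Your treatment of the pairing-splitters themselves is sound: the induction showing $E(\tilde\cA_-\bar\Gamma)=0$ for $1$-connected $\bar\Gamma$ does go through, since any subextraction either leaves the cut edge in the quotient or inherits one itself.
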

\begin{proof} 
It follows from Propositions~\ref{prop:ceps} and~\ref{prop:AtildeE_minus} that 
\begin{equation}
 E(\tilde \cA_- \tau) = 
 -E(\tau) - \sum_{ \funit \varsubsetneq \tau_1 \cdot \ldots \cdot \tau_n 
\varsubsetneq_{E} \tau} 
E(\tilde \cA_- (\tau_1 \cdot \ldots \cdot \tau_n) \cdot \tau / (\tau_1 
\cdot \ldots \cdot \tau_n))\;.
\end{equation} 
We then apply Proposition~\ref{prop:degE_Gamma} to the 
expectations on the right-hand side and by an inductive argument, we get
\begin{equation}
	\begin{aligned}
 E(\tilde \cA_- \tau) ={}& 
 - \sum_{P\in\cP_\tau^{(2)}} E(\Gamma(\tau,P))  \\ 
 &{}- \sum_{ \funit \varsubsetneq 
\tau_1 \cdot \ldots \cdot \tau_n 
\varsubsetneq_{E} \tau} \prod_{i=1}^{n} \sum_{P_i\in\cP_{\tau_i}^{(2)}} E(  
\tilde \cA_- \Gamma(\tau_i,P_i))
\sum_{P_{n+1}\in\cP_{\tau_{n+1}}^{(2)}} E(   \Gamma(\tau_{n+1},P_{n+1}))
\end{aligned}
\end{equation}
where $\tau_{n+1} = \tau / (\tau_1 \cdot \ldots \cdot \tau_n)$. Indeed, one has 
\begin{equation}
	\cP_\tau^{(2)} = \bigcup_{\funit \varsubsetneq 
  \tau_1 \cdot \ldots \cdot \tau_n 
  \varsubsetneq_{E} \tau}  \left \lbrace  \bigsqcup_{i=1}^{n+1} P_i \, : \, P_i \in \cP_{\tau_i}^{(2)} \right \rbrace  \bigsqcup 	\hat{\cP}_{\tau}^{(2)}  
\end{equation}
where $ \hat{\cP}_{\tau}^{(2)} $ contains the pairings without any subdiagrams that could be extracted via $ \tilde{\mathcal{A}}_-$.
Moreover, any subdiagram of 
$\Gamma(\tau,P) $ is of the form $\Gamma(\bar \tau, \bar P)$ where $\bar \tau $ 
is a subtree of $\tau$ and $\bar P$ is a subpairing of $P$.
\end{proof}

\begin{example}
Consider the symbol $\tau = \RScombhSix$. The effect of the twisted antipode on 
$\tau$ has been determined in Example~\ref{ex:Atilde_comb}, and $E(\tau)$ is 
given in~\eqref{eq:E_combhSix}. Applying the twisted antipode directly 
to~\eqref{eq:E_combhSix}, we find 
\begin{equation}
 E(\tilde\cA_-(\RScombhSix)) = - E(\RScombhSix) 
 + \frac12 \FDTwoEdges{GKepsedge}{Kedge} 
 \FDFourEdges{Kedge}{GKepsedge}{GKepsedge}{GKedge}
 - \frac14 \Bigl(\FDTwoEdges{GKepsedge}{Kedge}\Bigr)^2
 \FDTwoEdges{GKepsedge}{GKedge}\;.
\label{eq:tildeA_Example} 
\end{equation} 
Indeed, one easily checks that since $\rho > \rhocrit = d/3$, the only nontrival 
subgraph of negative degree in~\eqref{eq:E_combhSix} is the \lq\lq bubble\rq\rq\ 
having two edges, one of type $K_\rho$ and one of type $\smash{G^\eps_\rho *_x 
\tilde P^\eps_\rho}$.  The expression~\eqref{eq:tildeA_Example} is indeed 
equivalent to the one obtained by transforming the 
expression~\eqref{eq:tildeAE_Example} for $\tilde \cA^E_-(\tau)$ into Feynman 
diagrams. 

Note that the degree of all diagrams in~\eqref{eq:E_combhSix} is $7\rho-3d$, 
while the total degree of the two extracted diagrams 
in~\eqref{eq:tildeA_Example} is $2(2\rho-d) < 7\rho-3d$. This is an instance of 
the degree of subdivergences being worse than the degree of the whole diagram.  
\end{example}

\begin{remark}
\label{rem:deg_gamma} 
If $\gamma$ is any (non-reduced) diagram with $n+1$ vertices and $q$ edges, 
then its degree can be written as 
\begin{equation}
\label{eq:deg_general_gamma} 
 \deg(\gamma) = (\rho+d)n - qd 
 = (4n-3q) \frac{d}{3} + n (\rho-\rhocrit)\;.
\end{equation} 
In particular, if $\gamma$ is of the form $\Gamma(\tau,P)$, one has 
\begin{equation}
 \deg(\gamma) = -\frac23 d + \frac{3m-1}{2} (\rho-\rhocrit)\;, 
 \qquad 
 \deg(\gamma) = -\frac13 d + \frac{3\bar m+1}{2} (\rho-\rhocrit)\;, 
\end{equation} 
respectively, for \full\ and \afull\ binary trees, where $m$, $\bar m$ are 
such that $\tau$ has $2m$ edges in the first case, and $2\bar m+1$ edges in the 
second case. Note that in both cases, the degree is a strictly increasing 
function of the number of edges. 

For practical counting of degrees, it is sometimes useful to consider the 
limiting case $\rho\searrow\rhocrit$, and to use $\frac{d}{3}$ as degree unit. 
Then edges of the three types in~\eqref{eq:edge_degree1} 
and~\eqref{eq:edge_degree2} count for $-3$, $-2$ and $-1$ respectively, while 
vertices have weight $+4$. Similarly, for trees $\tau\in\cT_-$, edges have 
weight $+1$ and leaves have weight $-2$. 
\end{remark}

Proposition~\ref{prop:Etau_Feynman} allows to reduce the estimation of the 
coefficients $c_\eps(\tau)$ to the problem of estimating the value of Feynman 
diagrams. The difficulty is that the twisted antipode is essential to obtain a 
bound of the form~\eqref{eq:bound_EAtau}: such a bound is not true in general 
for $E(\tau)$, because, as the above example shows, Feynman diagrams 
$\Gamma(\tau,P)$ may contain subdiagrams whose degree is strictly less than the 
degree of $\Gamma(\tau,P)$. In order to deal with this difficulty, our plan is 
now to adapt the approach of~\cite{Hairer_BPHZ} to the present situation.  While 
we will use its formalism, the main novelty is an adaptation of the proof 
of~\cite[Thm.~3.1]{Hairer_BPHZ} in order to derive $\varepsilon$-dependent 
bounds for Feynman diagrams given in Proposition~\ref{prop:sum_n_Hepp} below. 
This proposition can be considered as one of the main results of this work, as 
the bound it provides is new and was not proved in \cite{Hairer_BPHZ}.

\begin{definition}[Forests]
\label{def:forest} 
Let $\Gamma$ be a Feynman diagram, and denote by 
$\sG^-_\Gamma$ the set of all connected subgraphs 
$\bar\Gamma\subset\Gamma$ of negative degree. We denote by $<$ the partial 
order on $\sG^-_\Gamma$ defined by inclusion. A subset $\sF 
\subset \sG^-_\Gamma$ is called a \emph{forest} if any two elements of 
$\sF$ are either comparable by $<$, or vertex-disjoint. The 
set of forests on $\Gamma$ is denoted by $\sF^-_\Gamma$.
Given a forest $\sF$ and two graphs $\bar\Gamma,\bar\Gamma_1\in 
\sF$, we say that $\bar\Gamma_1$ is a \emph{child} of\/ $\bar\Gamma$ if\/ 
$\bar\Gamma_1<\bar\Gamma$, and there is no $\bar\Gamma_2\in\sF$ such 
that $\bar\Gamma_1<\bar\Gamma_2<\bar\Gamma$. In that case, $\bar\Gamma$ is 
called the \emph{parent} of\/ $\bar\Gamma_1$. 
\end{definition}

\begin{example}
\label{ex:forests} 
Let $\tau$ be the comb with eight leaves, and consider the following pairings:
\begin{equation}
P_1 = 
 \tikz{%
\path[use as bounding box] (-0.95, -0.05) rectangle (0.3,1.9);
\FDtreeCombEight
\draw[pairing] (1) edge [distance=30,out=70,in=60] (8);
\draw[pairing] (2) edge [distance=5,out=60,in=60] (3);
\draw[pairing] (4) edge [distance=5,out=60,in=60] (5);
\draw[pairing] (6) edge [distance=5,out=60,in=60] (7);
\drawbox;
}
\;, 
\qquad
P_2 = 
 \tikz{%
\path[use as bounding box] (-0.95, -0.05) rectangle (0.3,1.9);
\FDtreeCombEight
\draw[pairing] (1) edge [distance=35,out=70,in=60] (8);
\draw[pairing] (2) edge [distance=22,out=60,in=60] (7);
\draw[pairing] (3) edge [distance=14,out=60,in=60] (6);
\draw[pairing] (4) edge [distance=5,out=60,in=60] (5);
\drawbox;
}\;.
\end{equation}
The corresponding Feynman diagrams are given by 
\begin{equation}
\Gamma_1 = \Gamma(\tau,P_1) = 
\raisebox{-8mm}{
\tikz{
 \path[use as bounding box] (-1.2, -1) rectangle (2.45,1);
 \draw[blue!20,fill=blue!10] (-1,-0.8) rectangle (-0.2,0.8);
 \draw[blue!20,fill=blue!10] (0.4,-1) rectangle (2,-0.2);
 \draw[blue!20,fill=blue!10] (0.4,1) rectangle (2,0.2);
 \node[blacknode] (1) at (-0.6,-0.6) {};
 \node[blacknode] (2) at (0.6,-0.6) {};
 \node[rootnode] (3) at (1.8,-0.6) {};
 \node[blacknode] (4) at (1.8,0.6) {};
 \node[blacknode] (5) at (0.6,0.6) {};
 \node[blacknode] (6) at (-0.6,0.6) {};
 \draw[] (6) edge [Kedge,out=-120,in=120] (1);
 \draw[] (6) edge [GKepsedge,out=-60,in=60] (1);
 \draw[] (1) edge [Kedge,out=-30,in=-150] (2);
 \draw[] (2) edge [Kedge,out=-30,in=-150] (3);
 \draw[] (2) edge [GKepsedge,out=30,in=150] (3);
 \draw[] (3) edge [GKKedge,out=60,in=-60,->] (4);
 \draw[] (4) edge [Kedge,out=150,in=30] (5);
 \draw[] (4) edge [GKepsedge,out=-150,in=-30] (5);
 \draw[] (5) edge [Kedge,out=150,in=30] (6);
 \node[blue!50] at (2.25,0.8) {$\gamma_1$};
 \node[blue!50] at (-1.25,0.6) {$\gamma_2$};
 \node[blue!50] at (2.25,-0.8) {$\gamma_3$};
 \drawbox;
 }
}
\qquad
\Gamma_2 = \Gamma(\tau,P_2) = 
\raisebox{-8mm}{
\tikz{
 \path[use as bounding box] (-1.2, -1) rectangle (2.45,1);
 \draw[blue!20,fill=blue!10] (-1.2,-1) rectangle (0.9,1);
 \draw[blue!30,fill=blue!15] (-1,-0.8) rectangle (-0.2,0.8);
 \node[blacknode] (1) at (-0.6,-0.6) {};
 \node[blacknode] (2) at (0.6,-0.6) {};
 \node[rootnode] (3) at (1.8,-0.6) {};
 \node[blacknode] (4) at (1.8,0.6) {};
 \node[blacknode] (5) at (0.6,0.6) {};
 \node[blacknode] (6) at (-0.6,0.6) {};
 \draw[] (6) edge [Kedge,out=-120,in=120] (1);
 \draw[] (6) edge [GKepsedge,out=-60,in=60] (1);
 \draw[] (1) edge [Kedge,out=-30,in=-150] (2);
 \draw[] (2) edge [Kedge,out=-30,in=-150] (3);
 \draw[] (3) edge [GKKedge,out=60,in=-60,->] (4);
 \draw[] (4) edge [Kedge,out=150,in=30] (5);
 \draw[] (5) edge [Kedge,out=150,in=30] (6);
 \draw[] (3) edge [GKepsedge,out=120,in=-120] (4);
 \draw[GKepsedge] (2) -- (5); 
 \node[blue!50] at (0.05,0) {$\gamma_1$};
 \node[blue!50] at (-1.4,0.6) {$\gamma_2$};
 \drawbox;
 }
}\;.
\label{eq:combs_diagrams} 
\end{equation} 
The diagram $\Gamma_1$ has $3$ identical divergent bubbles $\gamma_1, 
\gamma_2,\gamma_3$, indicated by shaded frames. The left-hand bubble $\gamma_2$ 
is part of two overlapping subdivergences, each consisting of two bubbles and 
the joining edge. However, these subdiagrams are $1$-connected, and thus do not 
matter in the analysis. If we restrict our attention to the set ${\mathscr 
G}^-_{\Gamma_1,E}$ of subgraphs with non-vanishing expectation, we obtain indeed 
a forest $\smash{\sG^-_{\Gamma_1,E}} = 
\set{\Gamma_1,\gamma_1,\gamma_2,\gamma_3,\varnothing}$. The corresponding 
parent-child relationship graph consists of the parent $\Gamma_1$ and its three 
children $\gamma_1,\gamma_2,\gamma_3$. 

The diagram $\Gamma_2$ has two nested subdivergences: a bubble $\gamma_1$, and 
the bubble together with the $3$ adjacent edges, denoted $\gamma_2$. In this 
case again, the set ${\mathscr G}^-_{\Gamma_2,E}$ is a forest, while the 
associated graph is a linear graph with parent $\Gamma$, child $\gamma_2$ and 
grandchild $\gamma_1$. 
\end{example}

In what follows, we will occasionally need decorated Feynman diagrams 
$\bar \Gamma^{\Labn}_{\Labe}$, though as in the case of trees, decorations will 
play almost no role. Such a diagram is defined by a graph $\Gamma=(\sV,\sE)$ 
with a distinguished node $v^\star\in\sV$, a node decoration 
$\Labn:\sV\to\N_0^{d+1}$ and a vertex decoration $\Labe:\sE\to\N_0^{d+1}$. The 
degree of $\bar \Gamma^{\Labn}_{\Labe}$ is defined as 
\begin{equation}
\label{eq:deg_Gamma_decorated} 
 \deg(\bar \Gamma^{\Labn}_{\Labe}) = (\rho+d) (\abs{\sV}-1) 
 + \sum_{v\in\sV} \abss{\Labn(v)}
 + \sum_{e\in\sE} \bigbrak{\deg(e) - \abss{\Labe(e)}}\;, 
\end{equation} 
and its value is given by 
\begin{equation}
\label{eq:value_Gamma_decorated} 
 E(\bar \Gamma^{\Labn}_{\Labe}) = 
 \int_{(\R^{d+1})^{\sV\setminus v^\star}} \prod_{e\in\sE} 
\partial^{\Labe(e)}K_{\mathfrak{t}(e)} (z_{e_+}-z_{e_-}) 
\prod_{w\in\sV\setminus v^\star} (z_w - z_{v^\star})^{\Labn(w)}
\6z\;. 
\end{equation} 
Note that when the decorations $\Labn$ and $\Labe$ vanish identically, 
\eqref{eq:deg_Gamma_decorated} and~\eqref{eq:value_Gamma_decorated} reduce to 
the expressions~\eqref{eq:deg_Gamma} and~\eqref{eq:EGamma} for undecorated 
Feynman diagrams. 
Given a divergent subdiagram $\gamma\in\sG^-_\Gamma$, we define an 
extraction-contraction operator $\sC_\gamma$ by 
\begin{equation}
\label{eq:def_Cgamma} 
 \sC_\gamma \bar \Gamma^{\Labn}_{\Labe}  = 
 \sum_{\Labe_\gamma,\Labn_\gamma} 
\indicator{\deg(\gamma^{\Labn_\gamma+\pi\Labe_\gamma}_{\Labe}) < 0} 
\frac{(-1)^{| \out \, \Labe_\gamma |}}{\Labe_\gamma!}
\binom{\Labn}{\Labn_\gamma}
 \gamma^{\Labn_\gamma+\pi\Labe_\gamma}_{\Labe} 
 \cdot \mathcal{R}_\gamma \bar \Gamma^{\Labn - \Labn_\gamma}_{ \Labe + 
\Labe_\gamma} \;,
\end{equation}
where $\pi\Labe_\gamma$ and $\mathcal{R}_\gamma$ are defined in the same way  as 
for decorated trees in \eqref{eq:co-action_minus}, and $|\out \, \Labe_\gamma |$ 
is the number of derivatives on outgoing edges from $\gamma$. This operator can 
be naturally extended to undecorated diagrams $\Gamma$, by identifying them with 
$\bar\Gamma^{\Labn}_{\Labe}$ with $\Labn=0$ and $\Labe=0$. Note that in that 
case, the sum over $\Labn_\gamma$ disappears in~\eqref{eq:def_Cgamma}. The main 
difference with the case of trees is that $\Labe_{\gamma}$ has a different 
support: it is supported on the edges $(x,y)$ such that either $x$ or $y$ 
belongs to the vertex set $\sV(\gamma)$. Therefore, one gets a minus sign for 
each derivative on outgoing edges. In the case of a tree, by contrast, 
$\Labe_\gamma$ is supported only on the incoming edges. However, using this 
representation does not make any difference. Indeed, by taking $v_{\star}$ to be 
the root of the underlying tree behind the construction of $\gamma$, one obtains 
a vanishing contribution whenever one puts a monomial at $v_{\star}$ and a 
derivative on the only outgoing edge at $v_{\star}$. 

We can now define a forest extraction operator $\sC_\sF$ recursively by 
setting $\sC_\varnothing\Gamma=\Gamma$ and 
\begin{equation}
 \sC_\sF\Gamma 
 = \sC_{\sF \setminus \varrho(\sF)}
 \prod_{\gamma\in\varrho(\sF)}
 \sC_\gamma\Gamma\;, 
\end{equation} 
where $\varrho(\sF)$ denotes the set of \emph{roots} of $\gamma$ in 
the graph of parent-child relationships. Then \emph{Zimmermann's forest 
formula} states that 
\begin{equation}
\label{eq:Zimmermann_forest} 
 \tilde\cA_-\Gamma = 
 - \sum_{\sF\in\sF^-_\Gamma} (-1)^{\abs{\sF}} 
 \sC_\sF\Gamma\;,
\end{equation} 
cf.~\cite[Prop.~3.3]{Hairer_BPHZ}. In the particular case where ${\mathscr 
G}^-_\Gamma$ is itself a forest, \eqref{eq:Zimmermann_forest} can be rewritten 
as 
\begin{equation}
 \tilde\cA_-\Gamma = - \sR_{\sG^-_\Gamma} \Gamma\;,
\end{equation} 
where $\sR$ is defined recursively by ${\mathscr 
R}_\varnothing\Gamma=\Gamma$ and 
\begin{equation}
\label{eq:Zimmermann_forest_simple} 
 \sR_\sF\Gamma 
 = \sR_{\sF \setminus \varrho(\sF)}
 \prod_{\gamma\in\varrho(\sF)}
 (\id - \sC_\gamma)\Gamma\;, 
\end{equation} 
which turns out to be simpler to handle than~\eqref{eq:Zimmermann_forest}. 
This is a consequence of the \lq\lq inclusion--exclusion identity\rq\rq\
\begin{equation}
 \prod_{i\in A} (\id-X_i) = \sum_{B\subset A} (-1)^B \prod_{j\in B} X_j
\end{equation} 
valid for any finite set $A$, and operators $\setsuch{X_i}{i\in A}$, 
cf.~\cite[(3.3)]{Hairer_BPHZ}. 
In general, however, $\sG^-_\Gamma$ is \emph{not} a forest, so 
that~\eqref{eq:Zimmermann_forest_simple} does not hold. This is the problem 
of overlapping subdivergences: a divergent subgraph $\bar\Gamma\subset\Gamma$ 
can be part of two different divergent subgraphs $\bar\Gamma_1$ and 
$\bar\Gamma_2$, none of which is included in the other one. 

The above example suggests that in our case, $\sG^-_{\Gamma,E}$ may 
always be a forest, so that~\eqref{eq:Zimmermann_forest_simple} is applicable. 
In order to establish this fact, we define a grafting operation on trees. If 
$\tau_1$ and $\tau_2$ are two non-planted trees, with $\tau_1$ being 
\afull, we denote by $\tau_1 \curvearrowleft \tau_2$ the tree obtained by 
joining the root of $\tau_2$ to the vertex of $\tau_1$ of degree $2$ which is 
not the root. For instance, we have 
\begin{equation}
\RScombTwocol{red} \; \curvearrowleft \; \RScombThreeAcol{blue} 
\;=\; \RScombFiveAcol{blue}{red}\;.
\end{equation} 
Note that this operation is associative, but not commutative. 

The following observation allows to characterise divergent subgraphs. 

\begin{lemma}
\label{lem:subdiagrams} 
Let $\tau$ be a \full\ binary tree with an even number of leaves. Then there 
exists a pairing $P$ such that $\Gamma(\tau,P)$ is at least $2$-connected, and a 
divergent subdiagram $\bar\Gamma = \Gamma(\bar\tau,\bar P)\varsubsetneq\Gamma$, 
if and only if $\bar\tau$ is an \afull\ binary tree of negative degree, 
having an even number of leaves, and which does not contain the root of $\tau$. 
\end{lemma}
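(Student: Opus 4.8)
The plan is to translate the statement into (i) a degree count for subdiagrams of the form $\Gamma(\bar\tau,\bar P)$, expressed through the combinatorial type of $\bar\tau$, and (ii) the observation that $\tau$ being a \full\ binary tree forces certain attaching edges of such a subdiagram to be bridges unless $\bar\tau$ has exactly the \afull\ shape and does not meet the root of $\tau$. Throughout I will use that any subgraph of $\Gamma(\tau,P)$ is of the form $\Gamma(\bar\tau,\bar P)$ with $\bar\tau$ a subtree of $\tau$ and $\bar P$ a subset of the pairs of $P$ all of whose leaves lie in $\bar\tau$ (recorded in the proof of Proposition~\ref{prop:Etau_Feynman}).

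First I would fix notation for a subtree $\bar\tau\subset\tau$: write $n_1$ for the number of vertices of $\bar\tau$ with exactly one child, $a_0$ (resp.\ $a_1$) for the number of leaves of $\bar\tau$ that are genuine leaves of $\tau$ and are paired by $\bar P$ inside $\bar\tau$ (resp.\ paired by $P$ outside $\bar\tau$), and $b$ for the number of leaves of $\bar\tau$ that are interior vertices of $\tau$. Counting vertices and edges of the subgraph $\Gamma(\bar\tau,\bar P)$ exactly as in Proposition~\ref{prop:degE_Gamma} (now $a_0/2+a_1$ former‑leaf vertices occur and the $b$ interior leaves are ``dangling''), together with the tree identity (number of leaves $=$ number of binary vertices $+\,1$), one obtains
\begin{equation*}
 \deg\Gamma(\bar\tau,\bar P) \;=\; \rho\,(n_1+2a_1+2b-2) \;+\; \tfrac{3\rho-d}{2}\,a_0\;,
\end{equation*}
which is consistent with $\deg\Gamma(\bar\tau,\bar P)=\abss{\bar\tau}\big|_{\kappa=0}$ when $a_1=b=0$. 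Since $\rho>\rhocrit=\tfrac{d}{3}$ the coefficient $\tfrac{3\rho-d}{2}$ is strictly positive, so $\deg\Gamma(\bar\tau,\bar P)<0$ forces $n_1+2a_1+2b\leqs1$, i.e.\ $(n_1,a_1,b)\in\{(0,0,0),(1,0,0)\}$. In particular $a_1=b=0$: every genuine leaf of $\bar\tau$ is then paired inside $\bar\tau$, so $\bar\tau$ has an even number of leaves, all of them genuine, and $\bar\tau$ is a \full\ binary tree ($n_1=0$) or an \afull\ binary tree ($n_1=1$).

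The heart of the argument is then the bridge observation: with $a_1=b=0$, the only edges of $\Gamma(\tau,P)$ joining a vertex of $\Gamma(\bar\tau,\bar P)$ to a vertex outside it are the parent edge in $\tau$ of the root of $\bar\tau$ (present unless that root is the root of $\tau$) and, if the unique one‑child vertex $w$ of $\bar\tau$ exists and differs from the root of $\bar\tau$, the child edge of $w$ that does not lie in $\bar\tau$; this uses that every interior non‑root vertex of $\tau$ has exactly one parent edge and two child edges, and that $\bar\tau$ has no dangling interior leaves. If $\bar\tau$ is \full\ ($n_1=0$), then either its root is the root of $\tau$, which forces $\bar\tau=\tau$ and hence $\bar\Gamma$ not proper, or the sole attaching edge is the root's parent edge, which is then a bridge, contradicting $2$‑connectedness of $\Gamma(\tau,P)$; thus the \full\ case cannot arise. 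If $\bar\tau$ is \afull\ ($n_1=1$) and contains the root of $\tau$, the same count gives exactly one attaching edge — the missing child edge of whichever of the root or $w$ has lost a child — again a bridge, a contradiction; so $\bar\tau$ does not contain the root of $\tau$. Together with the divergence hypothesis this gives the ``only if'' direction. For ``if'', given an \afull\ $\bar\tau\subset\tau$ of negative degree with an even number of (genuine) leaves and not containing the root of $\tau$, one pairs the leaves of $\bar\tau$ among themselves to define $\bar P$; then $\Gamma(\bar\tau,\bar P)$ is a proper subgraph (it omits the root of $\tau$) of degree $\abss{\bar\tau}\big|_{\kappa=0}<0$. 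It remains to complete $\bar P$ to a full pairing $P$ of the leaves of $\tau$ (an even number are left over) for which $\Gamma(\tau,P)$ is $2$‑connected; here I would invoke the auxiliary fact that a \full\ binary tree admits a pairing with no bridge — obtained, e.g., by pairing the still‑unpaired leaves antipodally in the cyclic order induced by a contour traversal of $\tau$ and checking that every edge then lies on a cycle — and verify that this can be arranged so as to extend the prescribed $\bar P$.

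The step I expect to be the real obstacle is the ``only if'' direction: combining the degree bound $n_1+2a_1+2b\leqs1$ with the qualitative bridge observation into a clean, exhaustive case check, being careful about the degenerate configurations in which the root of $\bar\tau$ coincides with the root of $\tau$ or with the one‑child vertex $w$, and making sure no attaching edge is overlooked. The auxiliary $2$‑connectivity construction in the converse is a secondary technical point.
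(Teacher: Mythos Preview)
Your approach is correct and follows essentially the same logic as the paper's proof: a degree constraint forces $\bar\tau$ to be full or almost full with no ``dangling'' leaves, and then a bridge-counting argument rules out the full case and the case where $\bar\tau$ contains the root. Your explicit degree formula
\[
 \deg\Gamma(\bar\tau,\bar P)=\rho(n_1+2a_1+2b-2)+\tfrac{3\rho-d}{2}\,a_0
\]
is in fact more informative than what the paper writes down: the paper simply asserts that a divergent proper subtree must be almost full (implicitly invoking the classification of negative-degree trees from~\cite{BK17}), whereas your computation derives $a_1=b=0$ and $n_1\leqs1$ directly, which is exactly what is needed.

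The one place where the paper is cleaner is the ``if'' direction. Rather than the contour-traversal pairing you sketch, the paper observes that once $\bar\tau$ is almost full and avoids the root, $\tau$ decomposes as $\tau=\tau_0\curvearrowleft\bar\tau\curvearrowleft\tau_1$ with $\tau_0$ almost full and $\tau_1$ full; pairing at least one leaf of $\tau_0$ with a leaf of $\tau_1$ then yields a $2$-connected $\Gamma(\tau,P)$. This grafting viewpoint makes the two attaching edges of $\bar\Gamma$ manifest and avoids the somewhat delicate ``extend $\bar P$ to a bridge-free pairing'' step you flag as a secondary technical point. Both proofs are admittedly terse about why the \emph{remaining} edges are not bridges, but the grafting decomposition at least localises the issue.
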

\begin{proof}
Assume first that $\bar\tau$ is an \afull\ binary tree of negative degree, 
not containing the root and with an even number of leaves. Let $\bar P$ be any 
pairing of the leaves of $\bar\tau$ and $\bar\Gamma=\Gamma(\bar\tau,\bar P)$. 
Then $\tau = \tau_0 \curvearrowleft \bar\tau \curvearrowleft 
\tau_1$, where $\tau_0$ is \afull\ and $\tau_1$ is \full. 
By pairing at least one leaf of $\tau_0$ and one leaf of $\tau_1$, we obtain a 
$2$-connected diagram $\Gamma$. 

Conversely, assume $\Gamma(\tau,P)$ is at least $2$-connected, with a divergent 
subdiagram $\bar\Gamma = \Gamma(\bar\tau,\bar P)$. Then $\bar\tau$ cannot 
contain the root of $\tau$. Indeed, if this were the case, $\bar\tau$ would  
necessarily be an \afull\ binary tree (being divergent and a proper subtree 
of $\tau$), so that $\bar\tau$ and $\tau_1=\tau\setminus\bar\tau$ would be 
connected by a single edge. Since $P$ cannot connect leaves of $\bar\tau$ to 
leaves of $\tau_1$, $\Gamma$ would be $1$-connected. Similarly, if 
$\tau = \tau_0 \curvearrowleft \bar\tau$, 
we would obtain a $1$-connected diagram. Thus $\tau$ has to be of the form 
$\tau = \tau_0 \curvearrowleft \bar\tau \curvearrowleft \tau_1$, showing that 
$\bar\tau$ is \afull\ and does not contain the root of $\tau$. 
\end{proof}

\begin{example}
Some examples of subtrees $\bar\tau$ leading to divergent subdiagrams are
\begin{equation}
 \RScombTwo\;, \qquad 
 \RScombFour\;, \qquad 
 \RScombThreeA\;, \qquad 
 \RScombSix\;, \qquad
 \RScombFiveA\;, \qquad 
 \RScombFiveB\;.
\end{equation} 
One can check that they do not lead to any overlapping subdivergences. 
\end{example}

\begin{prop}
Assume a Feynman diagram $\Gamma(\tau,P)$ has two 
overlapping subdivergences $\Gamma_1$ and $\Gamma_2$. Then $\Gamma_1$ 
and $\Gamma_2$ are 1-connected.
As a consequence,  $\smash{{\mathscr G}^-_{\Gamma,E}}$ is always a forest. 
\end{prop}
\begin{proof}
Assume
there exist $3$ subdivergences $\bar\Gamma$, 
$\Gamma_1$, $\Gamma_2$, such that $\Gamma_1\setminus\Gamma_2$ and 
$\Gamma_2\setminus\Gamma_1$ are both non-empty and  
$\bar\Gamma\subset\Gamma_1\cap\Gamma_2$. Then there exist subtrees $\bar\tau$, 
$\tau_1$, $\tau_2$ such that $\tau_1\setminus\tau_2\neq\varnothing$, 
$\tau_2\setminus\tau_1\neq\varnothing$, 
$\bar\tau\subset\tau_1\cap\tau_2$ and each diagram 
is obtained by restricting the pairing $P$, e.g.\ $\bar\Gamma = 
\Gamma(\bar\tau,P\restr\bar\tau)$. In particular, $P$ can only pair leaves of 
$\bar\tau$. 

The previous lemma shows that we must have 
\begin{equation}
 \tau_1 = \tau_{1,-} \curvearrowleft \bar \tau \curvearrowleft \tau_{1,+} 
 \qquad\text{and}\qquad 
 \tau_2 = \tau_{2,-} \curvearrowleft \bar \tau \curvearrowleft \tau_{2,+}\;. 
\end{equation} 
Since $\tau_1\setminus\tau_2\neq\varnothing$ and 
$\tau_2\setminus\tau_1\neq\varnothing$, we may assume without 
restricting the generality that $\tau_{2,-} \subsetneq \tau_{1,-}$ and 
$\tau_{1,+} \subsetneq \tau_{2,+}$. Since the leaves of 
$\tau_{1,-}\setminus\tau_{2,-}$ cannot be paired with those of $\tau_2$, they 
have to be paired among themselves. 
But this results in $\Gamma_1$ and $\Gamma_2$ being 1-connected, 
by definition of the grafting operation. Therefore, they do not belong to 
$\smash{{\mathscr G}^-_{\Gamma,E}}$ by Lemma~\ref{lem:Gamma_1-connected}.
\end{proof}

\begin{remark}
\label{rem:deg_gamma2} 
Another consequence of Lemma~\ref{lem:subdiagrams} is that a divergent 
subdiagram $\gamma\subsetneq\Gamma$ has a degree strictly larger than $-\frac 
d3$. Therefore, in dimension $d\leqs 3$, the operator $\sC_\gamma \Gamma$ 
defined in~\eqref{eq:def_Cgamma} reduces to a simple extraction-contraction, 
while in dimension $d\in\set{4,5}$, the sum also contains terms 
$\gamma^{\pi\Labe_\gamma}$ with edge decorations $\Labe$ of degree at most $1$. 
However, the value~\eqref{eq:value_Gamma_decorated} of these additional 
terms vanishes by symmetry.  
\end{remark}


\subsection{Hepp sectors and forest intervals}
\label{ssec:Hepp} 

In this section, we present the main tools and definitions for renormalising 
Feynman diagrams: Hepp sectors, safe and unsafe forests, and forest intervals. 
All these notions have originally been introduced in the physics literature, 
see for instance~\cite[Chapter~II.3]{rivasseau} for an overview. We follow 
mainly~\cite{Hairer_BPHZ}, where these notions have been reformulated in 
connection with~\cite{BrunedHairerZambotti,ChandraHairer16}. They first appear 
in the context of singular SPDEs in~\cite{ChandraHairer16}, and were imported 
from~\cite{FMRS}.  A first important concept in order to evaluate Feynman 
diagrams is the one of Hepp sector (cf.~\cite[proof of 
Prop.~2.4]{Hairer_BPHZ}). 

\begin{definition}[Hepp sector]
\label{def:Hepp_sector} 
Fix a finite set $\sV$ and a bounded set $\Lambda\subset\R^{d+1}$. With any 
point configuration $z\in\Lambda^\sV$, one can associate a binary tree $T=T(z)$, 
whose leaves are given by $\sV$, and a function $\bn=\bn(z)$ defined on the 
inner nodes of\/ $T$ and taking values in $\N_0$, with the following properties:
\begin{itemize}
\item 	$u\mapsto\bn_u$ is increasing when going from the root to the 
leaves of\/ $T$,
\item 	for any leaves $v,\bar v\in\sV$, one has 
\begin{equation}
 \norm{z_v - z_{\bar v}}_\fraks \asymp 2^{-\bn_u}\;,
\end{equation} 
where $u = v \wedge \bar v$ is the first common ancestor of $v$ and $\bar v$ in 
$T$ and $\asymp$  is a shorthand notation for
\begin{equation}
\label{eq:def_C_Heppsector} 
C^{-1} 2^{-\bn_u} \leq \norm{z_v - z_{\bar v}}_\fraks \leq C 2^{-\bn_u}\;,
\end{equation} 
where the constant $C$ only depends on the size of $\Lambda$.
\end{itemize}
Writing $\bT = (T,\bn)$ for these data, the \emph{Hepp sector} 
$D_{\bT}\subset\Lambda^\sV$ is defined as the set of configurations 
$z\in\Lambda^\sV$ for which $(T(z),\bn(z)) = \bT$. 
\end{definition}

\begin{figure}[tbh]
\begin{center}
\begin{tikzpicture}[>=stealth',main 
node/.style={draw,circle,fill=white,minimum size=1pt,inner sep=1pt},
line/.style={draw,blue!40,shorten <=0.5pt,shorten >=0.5pt}]


\draw[thin,fill=blue!10] (0,0) rectangle (6,4); 

\node[main node,semithick,blue,fill=white,
label={[xshift=-0.2cm,yshift=-0.1cm]$z_{v_1}$}] (z1) at (1,1) {};

\node[main node,semithick,blue,fill=white,
label={[xshift=-0.2cm,yshift=-0.1cm]$z_{v_2}$}] (z2) at (1.5,1.4) {};

\node[main node,semithick,blue,fill=white,
label={[xshift=-0.15cm,yshift=-0.55cm]$z_{v_3}$}] (z3) at (4,1.2) {};

\node[main node,semithick,blue,fill=white,
label={[xshift=0.35cm,yshift=-0.5cm]$z_{v_4}$}] (z4) at (4.5,0.9) {};

\node[main node,semithick,blue,fill=white,
label={[xshift=-0.2cm,yshift=-0.1cm]$z_{v_5}$}] (z5) at (4.6,2.5) {};

\draw[line] (z1) -- (z2);
\draw[line] (z3) -- (z4);
\draw[line] (z2) -- (z3);
\draw[line] (z3) -- (z5);

\node[] at (0.25,3.7) {$\Lambda$};

\end{tikzpicture}
\hspace{10mm}
\begin{tikzpicture}[>=stealth',main 
node/.style={draw,circle,fill=white,minimum size=1pt,inner sep=1pt}]


\node[main node,semithick,blue,fill=white,
label={[blue,xshift=0cm,yshift=-0.6cm]$v_1$}] (z1) at (0,-3) {};

\node[main node,semithick,blue,fill=white,
label={[blue,xshift=0cm,yshift=-0.6cm]$v_2$}] (z2) at (1,-3) {};

\node[main node,semithick,blue,fill=white,
label={[blue,xshift=0cm,yshift=-0.6cm]$v_3$}] (z3) at (2,-3) {};

\node[main node,semithick,blue,fill=white,
label={[blue,xshift=0cm,yshift=-0.6cm]$v_4$}] (z4) at (3,-3) {};

\node[main node,semithick,blue,fill=white,
label={[blue,xshift=0cm,yshift=-0.6cm]$v_5$}] (z5) at (4,-3) {};

\node[main node,semithick,ForestGreen,
label={[ForestGreen,xshift=0cm,yshift=0cm]$0$}] (O) at (2,0) {};

\node[main node,semithick,ForestGreen,
label={[ForestGreen,xshift=0.2cm,yshift=-0.1cm]$1$}] (A) at (3,-1) {};

\node[main node,semithick,ForestGreen,
label={[ForestGreen,xshift=-0.2cm,yshift=-0.1cm]$2$}] (B) at (0.5,-2) {};

\node[main node,semithick,ForestGreen,
label={[ForestGreen,xshift=-0.2cm,yshift=-0.1cm]$2$}] (C) at (2.5,-2) {};

\draw[semithick] (z1) -- (B) -- (z2);
\draw[semithick] (z3) -- (C) -- (z4);
\draw[semithick] (C) -- (A) -- (z5);
\draw[semithick] (B) -- (O) -- (A);

\node[] at (0.5,-0.5) {$\bT(z)$};

\end{tikzpicture}
\end{center}
\vspace{-3mm}
 \caption[]{A point configuration $z\in\Lambda^\sV$ with its minimal spanning 
tree (left), and the associated labelled tree $\bT = 
(T(z),\bn(z))$ (right). Here $\sV=\{v_1,v_2,v_3,v_4,v_5\}$, and node 
decorations $\bn$ are shown in green. For instance, 
$\bn_{v_1\wedge v_2}=2$, so that $z_{v_1}$ and $z_{v_2}$ are at a 
distance of order $2^{-2}$, while $\bn_{v_3\wedge v_5}=1$, so that 
$z_{v_3}$ and $z_{v_5}$ are at a distance of order $2^{-1}$.}
 \label{fig_Heppsector}
\end{figure}

The main idea is that in each Hepp sector, the kernels have a given order of 
magnitude. Since the Hepp sectors provide a partition of $\Lambda^\sV$, the 
value of the Feynman diagram can be written as a sum of integrals over 
individual Hepp sectors, so that it suffices to obtain uniform bounds on the 
products of kernels valid in each sector.

In order to exploit cancellations, it turns out to be necessary to adapt the way 
contractions are performed to the particular Hepp sector, 
cf.~\cite[Section~3.2]{Hairer_BPHZ}. If $\Gamma$ is a 
Feynman diagram (possibly with decorations) and $\gamma$ is a 
divergent subdiagram of $\Gamma$, one defines a new diagram 
$\hat\sC_\gamma\Gamma$ as in~\eqref{eq:def_Cgamma}, but with the following 
differences. First the vertices of $\Gamma$ are given an arbitrary order, and 
its edges $e$ are assigned an additional label $\mathfrak{d}(e)=0$ indicating 
their depth. Instead of extracting the subdiagram $\gamma$, all edges of 
$\Gamma$ adjacent to $\gamma$ are reconnected to the first vertex of $\gamma$ 
(according to the chosen order), while the depth $\mathfrak{d}(e)$ of all edges 
$e$ of $\gamma$ is incremented by~$1$. Finally, when applying 
$\hat\sC_\gamma\Gamma$ to a diagram having edges of strictly positive depth, we 
set $\hat\sC_\gamma\Gamma=0$ unless all edges adjacent to $\gamma$ have a 
smaller depth than those of $\gamma$. 

\begin{example}
\label{ex:C_gamma}
Let 
\begin{equation}
 \Gamma=\Gamma_2 = 
\raisebox{-8mm}{
\tikz{
 \path[use as bounding box] (-1.2, -1) rectangle (2.45,1);
 \draw[blue!20,fill=blue!10] (-1.2,-1) rectangle (0.9,1);
 \draw[blue!30,fill=blue!15] (-1,-0.8) rectangle (-0.2,0.8);
 \node[blacknode] (1) at (-0.6,-0.6) {};
 \node[blacknode] (2) at (0.6,-0.6) {};
 \node[rootnode] (3) at (1.8,-0.6) {};
 \node[blacknode] (4) at (1.8,0.6) {};
 \node[blacknode] (5) at (0.6,0.6) {};
 \node[blacknode] (6) at (-0.6,0.6) {};
 \path[blue!50] (3) ++(0.2,-0.2)   node {\scriptsize{1}};
 \path[blue!50] (4) ++(0.2,0.2)    node {\scriptsize{2}};
 \path[blue!50] (5) ++(0,0.25)     node {\scriptsize{3}};
 \path[blue!50] (6) ++(-0.2,0.05)  node {\scriptsize{4}};
 \path[blue!50] (1) ++(-0.2,-0.05) node {\scriptsize{5}};
 \path[blue!50] (2) ++(0,-0.25)    node {\scriptsize{6}};
 \draw[] (6) edge [Kedge,out=-120,in=120] (1);
 \draw[] (6) edge [GKepsedge,out=-60,in=60] (1);
 \draw[] (1) edge [Kedge,out=-30,in=-150] (2);
 \draw[] (2) edge [Kedge,out=-30,in=-150] (3);
 \draw[] (3) edge [GKKedge,out=60,in=-60,->] (4);
 \draw[] (4) edge [Kedge,out=150,in=30] (5);
 \draw[] (5) edge [Kedge,out=150,in=30] (6);
 \draw[] (3) edge [GKepsedge,out=120,in=-120] (4);
 \draw[GKepsedge] (2) -- (5); 
 \node[blue!50] at (0.05,0) {$\gamma_1$};
 \node[blue!50] at (-1.4,0.6) {$\gamma_2$};
 \drawbox;
 }
}
\end{equation} 
be the second diagram in Example~\ref{ex:forests} (without decorations $\Labn$ 
and $\Labe$). We order the vertices counterclockwise, starting at the green 
vertex, as indicated by blue labels. Assume furthermore that $d\leqs 3$, so that 
$\sC_{\gamma}$ does not create any terms with nontrivial decoration. Then we 
have 
\begin{equation}
 \hat\sC_{\gamma_1}\Gamma
 = 
 \raisebox{-8.8mm}{
\tikz{
 \path[use as bounding box] (-2.1, -1) rectangle (2.1,1);
 \node[rootnode] (1) at (1.8,-0.6) {};
 \node[blacknode] (2) at (1.8,0.6) {};
 \node[blacknode] (3) at (0.6,0.6) {};
 \node[blacknode] (4) at (-0.6,0) {};
 \node[blacknode] (5) at (-1.8,0) {};
 \node[blacknode] (6) at (0.6,-0.6) {};
 \path[blue!50] (1) ++(0.2,-0.2) node {\scriptsize{1}};
 \path[blue!50] (2) ++(0.2,0.2)  node {\scriptsize{2}};
 \path[blue!50] (3) ++(0,0.25)   node {\scriptsize{3}};
 \path[blue!50] (4) ++(-0.1,0.3) node {\scriptsize{4}};
 \path[blue!50] (5) ++(-0.2,0)   node {\scriptsize{5}};
 \path[blue!50] (6) ++(0,-0.25)  node {\scriptsize{6}};
 \draw[] (4) edge [Kedge,out=150,in=30] node[violet,above] {\scriptsize{1}} (5);
 \draw[] (4) edge [GKepsedge,out=-150,in=-30] node[violet,below=1mm] 
{\scriptsize{1}} (5);
 \draw[] (4) edge [Kedge,out=-60,in=-180] (6);
 \draw[] (6) edge [Kedge,out=-30,in=-150] (1);
 \draw[] (1) edge [GKKedge,out=60,in=-60,->] (2);
 \draw[] (2) edge [Kedge,out=150,in=30] (3);
 \draw[] (3) edge [Kedge,out=180,in=60] (4);
 \draw[] (1) edge [GKepsedge,out=120,in=-120] (2);
 \draw[GKepsedge] (6) -- (3); 
 \drawbox;
 }
}
\;, \qquad 
 \hat\sC_{\gamma_2}\Gamma
 = 
 \raisebox{-9.8mm}{
\tikz{
 \path[use as bounding box] (-1.1, -1.1) rectangle (2.1,1.6);
 \node[rootnode] (1) at (1.8,0) {};
 \node[blacknode] (2) at (1.8,1.2) {};
 \node[blacknode] (3) at (0.6,0.6) {};
 \node[blacknode] (4) at (-0.6,0.6) {};
 \node[blacknode] (5) at (-0.6,-0.6) {};
 \node[blacknode] (6) at (0.6,-0.6) {};
 \path[blue!50] (1) ++(0.2,-0.2) node {\scriptsize{1}};
 \path[blue!50] (2) ++(0.2,0.2)  node {\scriptsize{2}};
 \path[blue!50] (3) ++(-0.1,0.3)   node {\scriptsize{3}};
 \path[blue!50] (4) ++(-0.2,0.2) node {\scriptsize{4}};
 \path[blue!50] (5) ++(-0.2,-0.2)   node {\scriptsize{5}};
 \path[blue!50] (6) ++(0.2,-0.2)  node {\scriptsize{6}};
 \draw[] (4) edge [Kedge,out=-120,in=120] node[violet,left] 
{\scriptsize{1}} (5);
 \draw[] (4) edge [GKepsedge,out=-60,in=60] node[violet,right] 
{\scriptsize{1}} (5);
 \draw[] (5) edge [Kedge,out=-30,in=-150] node[violet,below] 
{\scriptsize{1}} (6);
 \draw[] (3) edge [Kedge,out=-60,in=-180] (1);
 \draw[] (1) edge [GKKedge,out=60,in=-60,->] (2);
 \draw[] (2) edge [Kedge,out=180,in=60] (3);
 \draw[] (3) edge [Kedge,out=150,in=30] node[violet,above] {\scriptsize{1}} (4);
 \draw[] (1) edge [GKepsedge,out=120,in=-120] (2);
 \draw[GKepsedge] (6) -- node[violet,left] {\scriptsize{1}}  (3); 
 \drawbox;
 }
}\;,
\label{eq:Cgamma2} 
\end{equation} 
where  violet edge labels denote 
the depth $\mathfrak{d}(e)$ (we do note indicate zero depths). 
Extracting both subdiagrams, we obtain 
\begin{equation}
\label{eq:C_gamma12} 
 \hat\sC_{\gamma_1}\hat\sC_{\gamma_2}\Gamma
 = 
 \hat\sC_{\gamma_2}\hat\sC_{\gamma_1}\Gamma
 = \raisebox{-9mm}{
\tikz{
 \path[use as bounding box] (-2.1, -1) rectangle (2.1,1.6);
 \node[rootnode] (1) at (1.8,0) {};
 \node[blacknode] (2) at (1.8,1.2) {};
 \node[blacknode] (3) at (0.6,0.6) {};
 \node[blacknode] (4) at (-0.6,0) {};
 \node[blacknode] (5) at (-1.8,0) {};
 \node[blacknode] (6) at (0.6,-0.6) {};
 \path[blue!50] (1) ++(0.2,-0.2) node {\scriptsize{1}};
 \path[blue!50] (2) ++(0.2,0.2)  node {\scriptsize{2}};
 \path[blue!50] (3) ++(-0.2,0.25)   node {\scriptsize{3}};
 \path[blue!50] (4) ++(-0.1,0.3) node {\scriptsize{4}};
 \path[blue!50] (5) ++(-0.2,0)   node {\scriptsize{5}};
 \path[blue!50] (6) ++(0,-0.25)  node {\scriptsize{6}};
 \draw[] (4) edge [Kedge,out=150,in=30] node[violet,above] {\scriptsize{2}} (5);
 \draw[] (4) edge [GKepsedge,out=-150,in=-30] node[violet,below=1mm] 
{\scriptsize{2}} (5);
 \draw[] (4) edge [Kedge,out=-60,in=-180] node[violet,below] 
{\scriptsize{1}} (6);
 \draw[] (3) edge [Kedge,out=-60,in=-180] (1);
 \draw[] (1) edge [GKKedge,out=60,in=-60,->] (2);
 \draw[] (2) edge [Kedge,out=180,in=60] (3);
 \draw[] (3) edge [Kedge,out=180,in=60] node[violet,above] 
{\scriptsize{1}} (4);
 \draw[] (1) edge [GKepsedge,out=120,in=-120] (2);
 \draw[GKepsedge] (6) -- node[violet,left] {\scriptsize{1}} (3); 
 \drawbox;
 }
}\;.
\qedhere
\end{equation} 
\end{example}

Note that $\hat\sC_{\gamma_1}$ and $\hat\sC_{\gamma_2}$ commute. Given a 
forest $\sF \subset \sG_\Gamma^-$, one can thus define in an unambiguous way the 
operator $\mathfrak{K}_{\sF}$ performing all contractions  
$\smash{\hat\sC_\gamma}$ with $\gamma\in\sF$. We denote by $\sigma$ the 
bijection between vertices and edges of $\mathfrak{K}_{\sF}\Gamma$ and those of 
$\Gamma$. 

We now fix a Hepp sector $D_\bT$, $\bT = (T,\bn)$ and a forest $\sF \subset 
\sG_\Gamma^-$, which we assume to be \emph{full} in the sense that all 
$\gamma\in\sF$ contain all edges of $\Gamma$ joining two vertices of $\gamma$. 
As in~\cite[Section~3.2]{Hairer_BPHZ}, we construct a partition 
$\sP_{\bT}$ of $\sG_\Gamma^-$ into subsets which are adapted to the 
particular Hepp sector. The first step is to define, for each edge $e$ of 
$\Gamma$, the common ancestor of the extremities of $e$ viewed as an element of 
$\mathfrak{K}_{\sF}\Gamma$, that is 
\begin{equation}
 v_e = \sigma(\sigma^{-1}(e)_-) \wedge \sigma(\sigma^{-1}(e)_+)\;.
\end{equation} 
Then the integer 
\begin{equation}
 \scale_{\bT}^{\sF}(e) = \bn_{v_e}
\end{equation} 
measures the distance between the extremities of $e$ in 
$\mathfrak{K}_{\sF}\Gamma$. For $\gamma\in\sF$, define 
\begin{equation}
 \intT_{\bT}^{\sF}(\gamma) 
 = \inf_{e\in\sE_\gamma^\sF} \scale_{\bT}^{\sF}(e)\;, \qquad 
 \extT_{\bT}^{\sF}(\gamma) 
 = \sup_{e\in\partial\sE_\gamma^\sF} \scale_{\bT}^{\sF}(e)\;,  
\end{equation} 
where $\sE_\gamma^\sF$ denotes the set of edges belonging to $\gamma$, but not 
to any of its children in $\sF$, while $\smash{\partial\sE_\gamma^\sF}$ denotes 
the set of edges adjacent to $\gamma$ belonging to its parent $\sA(\gamma)$ in 
$\sF$. If $\gamma$ is a root of $\sF$, we set $\sA(\gamma)=\Gamma$. Thus 
$\intT_{\bT}^{\sF}(\gamma)$ describes the longest distance between points in 
$\gamma$ without its children, while $\extT_{\bT}^{\sF}(\gamma)$ describes 
the shortest distance between points in $\gamma$ and those in its parent in 
$\sF$. Examples~\ref{ex:safe_unsafe}, \ref{ex_suddivergence} and 
\ref{ex:extract_unsafe} below provide illustrations of these concepts. 

\begin{definition}[Safe and unsafe forests] \hfill 
\begin{itemize}
\item 	A subdiagram $\gamma\in\sF$ is \emph{safe} in $\sF$ if 
\begin{equation}
 \extT_{\bT}^{\sF}(\gamma) \geqs \intT_{\bT}^{\sF}(\gamma)
\end{equation} 
and \emph{unsafe} otherwise. 

\item 	A subdiagram $\gamma$ of $\Gamma$ is \emph{safe} (resp.\ \emph{unsafe}) 
for $\sF$ if $\sF\cup\set{\gamma}$ is a full forest and $\gamma$ is safe 
(resp.\ unsafe) in $\sF\cup\set{\gamma}$.

\item 	A forest $\sF$ is \emph{safe} if every $\gamma\in\sF$ is safe in 
$\sF$. 
\end{itemize}
\end{definition}

Loosely speaking, a subdiagram $\gamma$ is thus unsafe if the diameter of  
$\gamma$ (without its children) is much shorter than the distance between 
$\gamma$ and its parent. In other words, children are unsafe if they are small 
and far away from their parents. 

\begin{example}
\label{ex:safe_unsafe} 
Consider again the diagram $\Gamma$ of the previous example, with the forest 
$\sF=\set{\gamma_1,\gamma_2}$. Then for most edges $e=(e_-,e_+)$ we have 
$\scale_{\bT}^{\sF}(e) = \bn_{e_-\wedge e_+}$, except for the two cases 
\begin{equation}
 \scale_{\bT}^{\sF}((5,6)) = \bn_{4\wedge 6}\;, 
 \qquad 
 \scale_{\bT}^{\sF}((6,1)) = \bn_{3\wedge 1}\;. 
\end{equation} 
Indeed, the edges $(5,6)$ and $(6,1)$ are exactly those which are reconnected 
when applying $\mathfrak{K}_\sF$. It follows that $\gamma_1$ is safe in $\sF$ 
if and only if 
\begin{equation}
\label{eq:ex_safe1} 
 \bn_{3\wedge 4} \vee \bn_{4\wedge 6} \geqs \bn_{4\wedge 5}\;,
\end{equation} 
and one checks that this is also the condition for $\gamma_1$ to be safe in 
$\set{\gamma_1}$ (that is, for $\set{\gamma_1}$ to be a safe forest). The 
condition for $\gamma_2$ to be safe in $\sF$ reads 
\begin{equation}
\label{eq:ex_safe2} 
 \bn_{2\wedge 3} \vee \bn_{3\wedge 1} 
 \geqs \bn_{3\wedge 4} \wedge \bn_{4\wedge 6}\wedge \bn_{3\wedge 6}\;.
\end{equation} 
This time, it turns out that $\gamma_2$ is safe in the forest $\set{\gamma_2}$ 
if and only if 
\begin{equation}
\label{eq:ex_safe3} 
 \bn_{2\wedge 3} \vee \bn_{3\wedge 1} 
 \geqs \bn_{3\wedge 4} \wedge \bn_{4\wedge 5} \wedge \bn_{5\wedge 6} \wedge 
\bn_{3\wedge 6}\;,
\end{equation} 
because of the difference between $\hat\sC_{\gamma_2}$ and  
$\hat\sC_{\gamma_1}\hat\sC_{\gamma_2}$. Note, however, that the ultrametricity 
of $\bn_{\cdot\wedge\cdot}$ implies that $\bn_{4\wedge 6} \geqs \bn_{4\wedge 5} 
\wedge \bn_{5\wedge 6}$, so that if $\gamma_2$ is safe in $\sF$, then it 
is also safe in $\set{\gamma_2}$. 
\end{example}

This example shows that the property of being safe or unsafe may depend 
on the choice of forest $\sF$. A crucial property, shown 
in~\cite[Lemma~3.6]{Hairer_BPHZ}, is the following. If $\sFs$ is a safe full 
forest, and 
\begin{equation}
\label{eq:sFu} 
 \sFu = \bigsetsuch{\gamma\in\sG_\Gamma^-}{\gamma \text{ is unsafe for } 
\sFs}\;,
\end{equation} 
then $\sFs\cup\sFu \in \sF_\Gamma^-$ is a full forest, and every 
$\gamma\in\sFs$ is safe in 
$\sFs\cup\sFu$, while every $\gamma\in\sFu$ is unsafe in $\sFs\cup\sFu$.
This implies in particular that any full forest $\sF\subset\sG_\Gamma^-$ has a 
unique decomposition $\sF=\sFs\cup\sFu$, where $\sFs$ is safe and $\sFu$ is 
given by~\eqref{eq:sFu}. Moreover, the properties of being safe/unsafe and the 
construction of $\sFu$ depend only on the structure of the tree $T$, and not on 
the scale assignment $\bn$ defining $\bT$. 

The last step to construct the partition $\sP_\bT$ relies on the notion of 
forest interval, cf.~\cite[Section~3.1]{Hairer_BPHZ}. In general, forest 
intervals have two purposes: one of them is to deal with overlapping 
divergences, and the other one is to simplify the combinatorics when dealing 
with unsafe forests. In our model, we do not have overlapping divergences, but 
forest intervals are still useful to deal with unsafe forests. Moreover, they 
will allow us to obtain estimates on $c_\eps(\tau)$ that can be extended to 
cases with overlapping divergences.

\begin{definition}[Forest interval]
Let $\underline{\M}\subset\overline{\M}$ be two forests in $\sF_\Gamma^-$.
A \emph{forest interval} is a subset $\M\subset\sF_\Gamma^-$ defined by 
\begin{equation}
 \M = [\underline{\M},\overline{\M}] 
 = 
\bigsetsuch{\sF\in\sF_\Gamma^-}{\underline{\M}\subset\sF\subset\overline{\M}}\;.
\end{equation} 
Alternatively, we have 
\begin{equation}
 \M = \bigsetsuch{\underline{\M}\cup\sF}{\sF\subset\delta(\M)}\;,
\end{equation} 
where $\delta(\M)=\overline{\M}\setminus\underline{\M}$ is a forest such that 
$\delta(\M)\cap\underline{\M}=\varnothing$. 
\end{definition}

Given a Hepp sector $D_\bT$, $\bT=(T,\bn)$, we write $\Fsafe{\Gamma}(T)$ for 
the set of all safe full forests in $\Gamma$. Then we have a partition 
\begin{equation}
\label{eq:partition_FsFu} 
 \sP_\bT = \bigsetsuch{[\sFs,\sFs\cup\sFu]}{\sFs\in\Fsafe{\Gamma}(T)}\;,
\end{equation} 
where $\sFu$ is defined by~\eqref{eq:sFu}. The point of $\sP_\bT$ is that 
Zimmermann's forest formula~\eqref{eq:Zimmermann_forest_simple} can be rewritten 
as 
\begin{equation}
\label{eq:Zimmermann_hat} 
 \sR\Gamma = \sR_{\sG_\Gamma^-} \Gamma 
 = \sum_{\M_i\in\sP_\bT} \hat\sR_{\M_i}\Gamma\;,
\end{equation} 
where 
\begin{equation}
 \hat\sR_\M \Gamma = \prod_{\gamma\in\delta(\M)} (\id - \hat\sC_\gamma) 
 \prod_{\bar\gamma\in\underline\M} (-\hat\sC_{\bar\gamma}) \Gamma\;.
\end{equation}
Here, the factors $(\id - \hat\sC_\gamma)$ are interpreted as renormalising the 
subdiagrams in $\delta(\M)$, and the factors $(-\hat\sC_{\bar\gamma})$ as 
extracting those in $\underline\M$. 

\begin{example}
Continuing with the previous example, there are $4$ cases to be considered. 
\begin{enumerate}
\item 	If $\set{\gamma_1,\gamma_2}$ is a safe forest, then we have seen that 
both $\set{\gamma_1}$ and $\set{\gamma_2}$ are safe. We thus have 
\begin{equation}
 \Fsafe{\Gamma}(T) = \bigset{\varnothing,\set{\gamma_1},\set{\gamma_2}, 
\set{\gamma_1,\gamma_2}}\;,
\end{equation} 
and the corresponding partition is simply
\begin{equation}
 \sP_\bT = \bigset{[\varnothing,\varnothing], [\set{\gamma_1},\set{\gamma_1}], 
[\set{\gamma_2},\set{\gamma_2}], 
[\set{\gamma_1,\gamma_2},\set{\gamma_1,\gamma_2}]}\;,
\end{equation} 
which is in fact identical with $\Fsafe{\Gamma}(T)$. 
Thus~\eqref{eq:Zimmermann_hat} becomes
\begin{equation}
\label{eq:exGamma_1} 
 \sR\Gamma = \Gamma - \hat\sC_{\gamma_1}\Gamma - \hat\sC_{\gamma_2}\Gamma 
 + \hat\sC_{\gamma_1}\hat\sC_{\gamma_1}\Gamma\;,
\end{equation} 
which is indeed compatible with~\eqref{eq:Zimmermann_forest_simple}. 

\item 	If $\set{\gamma_1}$ is safe, but $\gamma_2$ is unsafe for 
$\set{\gamma_1}$, then $\set{\gamma_2}$ may be safe or unsafe. In the former 
case, we have  
\begin{align}
\Fsafe{\Gamma}(T) &= \bigset{\varnothing,\set{\gamma_1},\set{\gamma_2}}\;, \\
\sP_\bT &= \bigset{[\varnothing,\varnothing], 
[\set{\gamma_1},\set{\gamma_1,\gamma_2}], 
[\set{\gamma_2},\set{\gamma_2}]}\;, 
\\
\sR\Gamma & = \Gamma - (\id - \hat \sC_{\gamma_2}) \hat \sC_{\gamma_1} \Gamma -
\hat \sC_{\gamma_2}\Gamma
\label{eq:exGamma_2a} 
\end{align}
while in the latter case, 
\begin{align}
\Fsafe{\Gamma}(T) &= \bigset{\varnothing,\set{\gamma_1}}\;, \\
\sP_\bT &= \bigset{
[\set{\gamma_1},\set{\gamma_1,\gamma_2}], 
[\varnothing,\set{\gamma_2}]}\;,\\
\sR\Gamma &= -(\id-\hat\sC_{\gamma_2})\hat\sC_{\gamma_1}\Gamma + 
(\id-\hat\sC_{\gamma_2})\Gamma\;. 
\label{eq:exGamma_2b} 
\end{align}
Naturally, the expressions~\eqref{eq:exGamma_2a} and~\eqref{eq:exGamma_2b} are 
equivalent to~\eqref{eq:exGamma_1}, but the point is that the terms in each 
expression can be controlled individually. 

\item 	If $\set{\gamma_2}$ is safe, but $\gamma_1$ is unsafe for 
$\set{\gamma_2}$, then $\set{\gamma_1}$ is unsafe. Hence 
\begin{align}
\Fsafe{\Gamma}(T) &= \bigset{\varnothing,\set{\gamma_2}}\;, \\
\sP_\bT &= \bigset{[\varnothing,\set{\gamma_1}], 
[\set{\gamma_2},\set{\gamma_1,\gamma_2}]}\;,\\
\sR\Gamma &= (\id-\hat\sC_{\gamma_1})\Gamma - 
(\id-\hat\sC_{\gamma_1})\hat\sC_{\gamma_2}\Gamma\;. 
\end{align}

\item 	Finally, if both $\set{\gamma_1}$ and $\set{\gamma_2}$ are unsafe, then 
\begin{align}
\Fsafe{\Gamma}(T) &= \bigset{\varnothing}\;, \\
\sP_\bT &= \bigset{[\varnothing,\set{\gamma_1,\gamma_2}]}\;,\\
\sR\Gamma &= (\id-\hat\sC_{\gamma_1}) 
(\id-\hat\sC_{\gamma_2})\Gamma\;. \qedhere
\end{align}
\end{enumerate}
\end{example}


\section{Bounds on $E(\tilde\cA_-^{E}\tau)$}
\label{sec:bounds} 

Combining Zimmermann's forest formula~\eqref{eq:Zimmermann_hat}, our 
choice~\eqref{eq:partition_FsFu} of partition of\/ $\sF_\Gamma^-$, and the 
expression~\eqref{eq:EGamma} for the expectation of a Feynman diagram, we 
obtain (cf.~\cite[Section~3.2]{Hairer_BPHZ})
\begin{equation} 
\label{eq:Hepp_decomposition}
E(\tilde \cA_- \Gamma(\tau,P)) = -  \sum_{T} \sum_{\sFs \in 
\Fsafe{\Gamma}(T) } \sum_{\bn} \int_{D_{\bT}} 
(\sW^{K} \hat{\sR}_{[\sFs, \sFs \cup \sFu ]} 
\Gamma(\tau,P) ) (z) \6z\;, 
\end{equation}
where the sums run over all binary trees $T$ with $\abs{\sV}$ leaves, and 
all increasing node labels $\bn$ of\/ $T$. Here 
\begin{equation}
 (\sW^K \bar\Gamma^\Labn_\Labe)(z) 
 = \prod_{e\in\sE} \partial^{\Labe(e)}
 K_{\mathfrak{t}(e)} (z_{\sigma(e_+)}-z_{\sigma(e_-)})
 \prod_{w\in\sV\setminus v_\star}(z_{\sigma(w)}-z_{\sigma(v_\star)})^{\Labn(w)}
\end{equation} 
corresponds to the integrand in~\eqref{eq:value_Gamma_decorated} (recall that 
$\sigma$ is the bijection between vertices and edges of $\mathfrak{K}_\sF\Gamma$ 
and $\Gamma$), and $v_\star$ is by definition the first vertex in the 
component of $\mathfrak{K}_{\sFs}\Gamma$ containing $w$. An upper bound 
for~\eqref{eq:Hepp_decomposition} is given by 
\begin{equation} 
\label{eq:Hepp_bound}
\bigabs{E(\tilde \cA_- \Gamma(\tau,P))} 
\leqs  \sum_{T} \! \sum_{\sFs \in 
\Fsafe{\Gamma}(T) } \! \sum_{\bn} \sup_{z\in D_{\bT}} 
\bigabs{(\sW^{K} \hat{\sR}_{[\sFs, \sFs \cup \sFu ]} 
\Gamma(\tau,P)) (z)} C_0^{\abs{\sV_\Gamma}}
\prod_{v\in T}2^{-(\rho+d)\bn_v}\;.
\end{equation}
Here $\abs{\sV_\Gamma}$ is the number of vertices of the graph 
$\Gamma(\tau,P)$, $C_0$ is a constant depending only on the size of\/ 
$\Lambda$ 
through the definition~\ref{def:Hepp_sector} of Hepp sectors, and 
$\smash{C_0^{\abs{\sV_\Gamma}}}$ times the product corresponds to the volume of 
the Hepp sector $D_\bT$.
The aim of this section is to prove the following bound.

\begin{prop}
\label{prop:sum_n_Hepp} 
There exists a constant $K_1$, depending only on the kernels 
$K_{\mathfrak{t}}$, such that for any safe forest $\sFs \in 
\Fsafe{\Gamma}(T)$, 
\begin{equation}
 \sum_{\bn} \sup_{z\in D_{\bT}} 
\bigabs{(\sW^{K} \hat{\sR}_{[\sFs, \sFs \cup \sFu]}\Gamma)(z)} 
\prod_{v\in T}2^{-(\rho+d)\bn_v}
\leqs 
\begin{cases}
K_1^{\abs{\sE}} \eps^{\deg(\Gamma)} \bigbrak{\log(\eps^{-1})}^{\zeta}
& \text{if $\deg\Gamma<0$\;,} \\
K_1^{\abs{\sE}} \bigbrak{\log(\eps^{-1})}^{1+\zeta} 
& \text{if $\deg\Gamma=0$\;,}  
\end{cases}
\end{equation}
where $\zeta\in\set{0,1}$ is the number of children of\/ $\Gamma$ 
in $\sFs$ having degree $0$.
\end{prop}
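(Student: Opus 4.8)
The plan is to adapt the proof of \cite[Thm.~3.1]{Hairer_BPHZ}, which is exactly this estimate in the finiteness ($\eps=0$) regime, while keeping track throughout of the scale $\eps$ below which the mollified kernels become smooth. Since $\Gamma$, the tree $T$ and the safe forest $\sFs$ are all fixed in the statement, there is no combinatorial summation over Hepp sectors or over forests to carry out here — it suffices to control a single renormalised integrand summed against the Hepp-sector weights. The natural framework is a recursion following the nested structure of the forest formula \eqref{eq:Zimmermann_hat}: the extracted subdiagrams $\bar\gamma\in\sFs$ (those hit by a factor $-\hat\sC_{\bar\gamma}$) are strictly smaller than $\Gamma$, so their renormalised values are controlled by the same bound applied to them.

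The first ingredient is a pointwise bound in a fixed Hepp sector $D_\bT$. Using the analytic bounds on $K_\rho$ from \cite[(3.1)--(3.4)]{BK17} together with the explicit form of the reduced kernels from Lemmas~\ref{lem:CK-Green} and~\ref{lem:KtoG}, one has $\abs{K_{\mathfrak t}(z)}\lesssim\norm{z}_\fraks^{\deg(\mathfrak t)}$ for the unmollified types $K_\rho$ and $G_\rho*_x\tilde P_\rho$, and $\abs{K_{\mathfrak t}(z)}\lesssim(\norm{z}_\fraks+\eps)^{\deg(\mathfrak t)}$ for the mollified types $K^\eps_\rho$, $G^\eps_\rho*_x\tilde P^\eps_\rho$ and $K_\rho*G^\eps_\rho*_x\tilde P^\eps_\rho$ (with analogous bounds on the derivatives $\partial^{\Labe(e)}K_{\mathfrak t}$ occurring in dimensions $d\in\set{4,5}$, which anyway give vanishing contributions by the symmetry of Remark~\ref{rem:deg_gamma2}, so one may take $\Labn=\Labe=0$). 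Writing $N_\eps=\intpartplus{\log_2(\eps^{-1})}$ and setting $n^*_e=\scale^\sF_\bT(e)$ for an unmollified edge and $n^*_e=\scale^\sF_\bT(e)\wedge N_\eps$ for a mollified one, this yields in the sector
\begin{equation}
 \abs{(\sW^K\bar\Gamma^\Labn_\Labe)(z)} \lesssim C^{\abs{\sE}}\prod_{e\in\sE}2^{-\deg(e)\,n^*_e}\;,
\end{equation}
with $C$ depending only on the kernels; the whole effect of mollification is that each mollified edge is frozen at scale $\eps$ once its endpoints come closer than $\eps$.

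Next I would bound $\hat\sR_{[\sFs,\sFs\cup\sFu]}\Gamma$ sector by sector, following \cite[Section~3.2]{Hairer_BPHZ}: for $\bar\gamma\in\sFs$ the extraction $-\hat\sC_{\bar\gamma}$ factors off the value of $\bar\gamma$ (controlled by the recursion) over the sub-configuration internal to $\bar\gamma$, while for $\gamma\in\sFu$ the Taylor subtraction $\id-\hat\sC_\gamma$ improves the naive estimate by a factor $2^{-(\extT^\sF_\bT(\gamma)-\intT^\sF_\bT(\gamma))\times(\text{order of subtraction})}$, which is a genuine gain because $\gamma$ unsafe means $\intT^\sF_\bT(\gamma)>\extT^\sF_\bT(\gamma)$, and since every divergent subdiagram has degree in $(-\tfrac d3,0)$ (Remark~\ref{rem:deg_gamma2}) a single subtraction (contracting $\gamma$ to a point) suffices. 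Reorganising the weights $\prod_v2^{-(\rho+d)\bn_v}$ along the tree $T$, the sum over $\bn$ then factorises into one-dimensional geometric sums $\sum_{\bn_u}2^{-c_u\bn_u}$ over the inner nodes $u$ of $T$, where $c_u$ is the superficial degree of the cluster collapsing at $u$; the renormalisation forces $c_u>0$ for every $u$ except the node representing $\Gamma$ itself (where $c=-\deg\Gamma\geqs0$) and, by the full/almost-full binary structure of our diagrams, at most one child of degree $0$ (whence $\zeta\leqs1$). Each such sum is cut off at $N_\eps$ whenever the corresponding cluster involves a mollified kernel at its finest scale, which for the innermost clusters always holds since every edge adjacent to a former noise leaf carries $K^\eps_\rho$. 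A sum with $c_u>0$ contributes an $\Order{1}$ factor; the $\Gamma$-node gives $\sum_{n=0}^{N_\eps}2^{-\deg(\Gamma)n}\asymp\eps^{\deg\Gamma}$ when $\deg\Gamma<0$ and $\asymp N_\eps\asymp\log(\eps^{-1})$ when $\deg\Gamma=0$, and each degree-$0$ child adds one more $\log(\eps^{-1})$; collecting the $\Order{1}$ factors over the remaining nodes into a constant of the form $K_1^{\abs{\sE}}$ gives precisely the two cases of the statement.

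The delicate point — and the reason the proposition is singled out as a main result — is the interplay of the BPHZ power-counting with the $\eps$-cutoff. One must check that the Taylor subtractions on the unsafe forest supply enough negative powers of $2^{\extT-\intT}$ to make every $c_u$ with $u\neq\Gamma$ strictly positive, \emph{uniformly over $\bT$ and $\sFs$}, and at the same time that freezing a mollified kernel at scales $\ll\eps$ does not spoil these remainder estimates: a mollified kernel is smooth there, so its Taylor remainder is governed by $\eps$ rather than by the vertex separation, and this has to be inserted correctly into the scale bookkeeping so that it still produces the claimed $\eps^{\deg\Gamma}$ and not a worse power. Ensuring that the combinatorial constant stays of the form $K_1^{\abs{\sE}}$ through the recursion — with the geometric ratios away from the degree-$0$ nodes bounded away from $1$ so that no hidden factors blow up — is the remaining technical hurdle.
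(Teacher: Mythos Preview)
Your overall strategy matches the paper's: pointwise kernel bounds with mollified edges frozen at the scale $n_\eps=\intpartplus{\log_2(\eps^{-1})}$, a Taylor-remainder gain for each $\gamma\in\sFu$, and then a sum over the labels $\bn$ controlled node by node along the Hepp tree $T$. However, two of your key claims do not hold as stated, and the paper's proof hinges precisely on repairing them.

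First, the sum over $\bn$ does \emph{not} factorise into independent one-dimensional geometric sums: the constraint that $\bn$ be increasing from root to leaves couples the summations. The paper handles this by a genuine recursion along $T$ (Lemma~\ref{lem:recursive}), computing for each inner node $v$ a quantity $\widehat\cS_v(\bn_u)$ that depends on the parent label $\bn_u$, and tracking five exponents $\alpha(v),\beta(v),\gamma(v),\bar\alpha(v),\bar\beta(v)$ through a two-regime case analysis (the cases $\bn_v<n_\eps$ and $\bn_v\geqs n_\eps$). The log power is the exponent $\gamma(\varnothing)$ at the root, and it is \emph{not} simply the number of nodes with vanishing superficial degree; it is determined by the recursion \eqref{eq:gamma_rec}.

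Second, your assertion that ``$c_u>0$ for every $u$ except the node representing $\Gamma$'' is false: the relevant quantity $\eta_{\geqs}(v)=\sum_{w\geqs v}\eta(w)$ can be $\leqs\deg(\Gamma)\leqs0$ along an entire \emph{chain} of nodes descending from the root, not just at one node. The paper's Lemma~\ref{lem_sumeta1} establishes the precise structure: (i) $\eta_{\geqs}(v)\geqs\deg(\Gamma)$ everywhere, with equality characterised via a full/empty/normal decomposition of the subgraphs $\gamma\in\sFs\cup\{\Gamma\}$ relative to $\Gamma_0(v)$; (ii) for any branching $\sO(v)=\{w_1,w_2\}$, at least one branch has $\eta_{\geqs}(w)>0$ throughout. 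Point (ii) is what confines the bad nodes to a single chain and makes the recursion \eqref{eq:alpha_rec}--\eqref{eq:gamma_rec} close with $\alpha(\varnothing)=-\deg(\Gamma)$ and $\gamma(\varnothing)\leqs\zeta$.

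A third ingredient you have not identified is what makes the sum over $\bn_v\geqs n_\eps$ converge at all: one needs $\eta^\circ_{\geqs}(v)>0$ for every inner $v$ (Condition~\eqref{eq:Shat_bound_conditions1}), which the paper proves in \eqref{eq:eta_geqs_circ} using that $\Gamma$ always retains at least one mollified edge after any extraction. Without this, the ``above-$\eps$'' regime of the recursion would diverge. Your proposal alludes to this issue in the final paragraph but does not supply the mechanism.

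Finally, the recursion on extracted subdiagrams you announce at the outset is not used in the paper and would not straightforwardly work here, because $\hat\sC_\gamma$ reconnects edges rather than producing a genuine product of independent integrals; the paper treats $\mathfrak{K}_{\sFs}\Gamma$ as a single object and recurses only along the Hepp tree $T$.
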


The existence of the exponent $\zeta$ 
has no influence on the main result, because $\zeta=1$ occurs only 
for very few diagrams. The fact that $\zeta\in\set{0,1}$ is shown in 
Lemma~\ref{lem:number_of_children} below.

The proof of Proposition~\ref{prop:sum_n_Hepp} follows rather closely the one 
given in~\cite[Section~3.2]{Hairer_BPHZ}. There are a few differences, due to 
the facts that we work with a non-Euclidean scaling, and that the Feynman 
diagrams we consider have no legs. Owing to the special structure of the 
equations we consider, decorations of vertices and edges can be almost entirely 
avoided, they only arise in one estimate involving unstable forests 
(cf.~Section~\ref{ssec:bounds_unotempty}). 

We first need to quantify the singularity of the kernels. 
Similarly to~\cite{Hairer1,Hairer_BPHZ}, we use the notation 
\begin{equation}
 \norm{K_{\mathfrak{t}}} = \sup_{\abss{k}\leqs2} \sup_z 
 \frac{\abs{\partial^k 
K_{\mathfrak{t}}(z)}}{\norm{z}_\fraks^{\deg\mathfrak{t}-\abss{k}}}\;.
\end{equation} 
It then follows from~\cite[Lemma~10.7]{Hairer1} that there exists a constant 
$C_{\mathfrak{t}}$ such that 
\begin{equation}
 \abs{K^\eps_{\mathfrak{t}}(z)} \leqs 
 C_{\mathfrak{t}} \norm{K_{\mathfrak{t}}} 
\bigpar{\norm{z}_\fraks\vee\eps}^{\deg\mathfrak{t}}
\end{equation} 
holds uniformly in $\eps\in(0,1]$. We will write $K_0$ for the maximal value of 
$C_{\mathfrak{t}} \norm{K_{\mathfrak{t}}}$ for all kernels involved.  
Indeed $\mathfrak{t}$ runs over a finite set of types, so 
that the maximum is finite.

A difference with~\cite{Hairer_BPHZ} is that we have to deal explicitly with 
the fact that some kernels are regularised, and others are not. To indicate 
this, we attach to each edge $e\in\sE$ an additional label $\reg(e)$ with value 
$0$ if $e$ corresponds to a bare kernel, and with value $1$ if it corresponds 
to a mollified kernel, and we write 
\begin{equation}
 \sE^\circ = \setsuch{e\in\sE}{\reg(e)=0}\;, 
 \qquad 
 \sE^\eps = \setsuch{e\in\sE}{\reg(e)=1}\;.
\end{equation}


\subsection{The case $\sFu = \varnothing$}
\label{ssec:bounds_uempty} 

As in~\cite{Hairer_BPHZ}, we start by discussing the case $\sFu=\varnothing$. 
First note that according to Remark~\ref{rem:deg_gamma2}, any diagram 
with nontrivial decorations obtained by applying an operator $\hat\sC_\gamma$ 
has zero expectation. Therefore we may simply set 
\begin{equation}
 \hat{\sR}_{[\sFs, \sFs \cup \sFu]} \Gamma 
 = \prod_{\gamma\in\sFs} (-\hat\sC_\gamma)\Gamma 
 = (-1)^{\abs{\sFs}} \mathfrak{K}_{\sFs}\Gamma\;.
\end{equation} 

\begin{lemma}
\label{lem:bound_W_Fs} 
For any inner node $v\in T$, define 
\begin{align}
\eta^\circ(v) &= \rho + d 
 + \sum_{e\in\sE^\circ} \deg(e) \indicator{e^\uparrow}(v)\;, \\ 
 \eta^\eps(v) &= \sum_{e\in\sE^\eps} \deg(e) 
\indicator{e^\uparrow}(v)\;,
 \label{eq:eta_safe} 
\end{align}
where $e^\uparrow = \sigma(e_-) \wedge \sigma(e_+)$ is the last common 
ancestor of the vertices of $e$ seen as an edge of\/ $\Gamma$. 
Let 
\begin{equation}
\label{eq:def_f_v_nv} 
 f(v,\bn_v) = \eta^\circ(v)\bn_v + \eta^\eps(v) \bigbrak{\bn_v\wedge n_\eps}\;,
\end{equation} 
where $n_\eps$ is the smallest integer such that $2^{-n_\eps}\leqs\eps$. Then 
there exists a constant $\bar K_0\geqs K_0$ such that
\begin{equation}
\label{eq:bound_W_Fs} 
 \sum_{\bn} \sup_{z\in D_{\bT}} 
\bigabs{(\sW^{K} \mathfrak{K}_{\sFs}\Gamma)(z)} \prod_{v\in T}2^{-(\rho+d)\bn_v}
\leqs 
\bar K_0^{\abs{\sE}} 
C^{-\deg\Gamma}
\sum_{\bn} \prod_{v\in T}2^{-f(v,\bn_v)}\;,
\end{equation} 
where $C$ is the constant appearing in the relation~\eqref{eq:def_C_Heppsector} 
characterising Hepp sectors.
\end{lemma}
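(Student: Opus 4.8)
The plan is to estimate the supremum of the integrand $|(\sW^K \mathfrak{K}_{\sFs}\Gamma)(z)|$ over a single Hepp sector $D_{\bT}$ kernel by kernel, and then resum the resulting geometric-type series over scale assignments $\bn$. First I would recall that, since $\sFu = \varnothing$, the forest formula gives $\hat{\sR}_{[\sFs,\sFs]}\Gamma = (-1)^{|\sFs|}\mathfrak{K}_{\sFs}\Gamma$, so no renormalisation subtractions occur; only pure extractions/contractions are present, and by Remark~\ref{rem:deg_gamma2} the decorated terms produced by $\hat\sC_\gamma$ have vanishing expectation, so we may work with plain undecorated kernels. For each edge $e\in\sE$, the contracted diagram $\mathfrak{K}_{\sFs}\Gamma$ connects $\sigma(e_-)$ and $\sigma(e_+)$, whose first common ancestor in the tree $T$ is $e^\uparrow$; on $D_{\bT}$ one therefore has $\norm{z_{\sigma(e_+)}-z_{\sigma(e_-)}}_\fraks \asymp 2^{-\bn_{e^\uparrow}}$, and so by the pointwise kernel bounds recalled just before the lemma,
\begin{equation}
|K_{\mathfrak{t}(e)}(z_{\sigma(e_+)}-z_{\sigma(e_-)})| \leqs K_0\, C^{-\deg(e)} 2^{\deg(e)\bn_{e^\uparrow}}
\end{equation}
when $e\in\sE^\circ$, and when $e\in\sE^\eps$ the presence of the cutoff $(\norm{z}_\fraks\vee\eps)^{\deg\mathfrak{t}}$ yields the factor $2^{\deg(e)(\bn_{e^\uparrow}\wedge n_\eps)}$ (using $\deg(e)<0$ for all relevant mollified kernels, so that the $\vee\eps$ only helps). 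Multiplying these bounds over all $e\in\sE$ and combining with the Hepp-sector volume factor $\prod_v 2^{-(\rho+d)\bn_v}$ is exactly the point where the exponents $\eta^\circ(v)$ and $\eta^\eps(v)$ appear: collecting, for each inner node $v$, all edges $e$ with $e^\uparrow = v$ (that is, $\indicator{e^\uparrow}(v)=1$) together with the single volume contribution $-(\rho+d)\bn_v$ attached to $v$, one gets precisely $f(v,\bn_v) = \eta^\circ(v)\bn_v + \eta^\eps(v)(\bn_v\wedge n_\eps)$ in the exponent, where the $\rho+d$ in $\eta^\circ(v)$ is the volume term turned into a positive contribution. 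The constant bookkeeping gives a prefactor $\bar K_0^{|\sE|}$ (absorbing $K_0$ and the finitely many $C_{\mathfrak{t}}\norm{K_{\mathfrak{t}}}$ into $\bar K_0$, and possibly enlarging to dominate combinatorial constants from the contraction $\sigma$), and a factor $C^{-\sum_e\deg(e)}$; one then checks that $\sum_e \deg(e) = \deg(\Gamma) - (\rho+d)(|\sV_\Gamma|-1)$ is, after combining with the $C^{+(\rho+d)\bn_v}$-type terms hidden in the $\asymp$ bounds (or more simply by noting the volume factors are bounded by $1$), comparable to $C^{-\deg\Gamma}$ up to a further harmless redefinition of $\bar K_0$; this is the slightly delicate constant-tracking I would be careful about, but it is not conceptually hard.

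The main obstacle is not the per-sector estimate but making sure every inequality is uniform in $\eps\in(0,1]$ and in the tree $T$, and that the split of the volume factor $\prod_v 2^{-(\rho+d)\bn_v}$ among the nodes is done consistently with the definition of $\eta^\circ$: one must verify that each node $v$ receives exactly one copy of $(\rho+d)$ in $\eta^\circ(v)$, which uses that $T$ has exactly $|\sV|-1$ inner nodes and $\Gamma$ has $|\sV|$ vertices of which one is $v_\star$; equivalently, that $\sum_{v\in T} 1 = |\sV_\Gamma|-1$ matches the number of volume factors after fixing $z_{v_\star}=0$. Once this accounting is in place, \eqref{eq:bound_W_Fs} follows by taking the supremum over $z\in D_{\bT}$ term by term and pulling the resulting bound, which no longer depends on $z$, out of the sum over $\bn$. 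I would end the proof by simply collecting the displayed bounds; the subsequent resummation of $\sum_{\bn}\prod_v 2^{-f(v,\bn_v)}$ into the powers of $\eps$ and $\log(\eps^{-1})$ claimed in Proposition~\ref{prop:sum_n_Hepp} is naturally deferred to the next lemma, since it is a separate combinatorial argument about sums over increasing labellings of $T$ with the weights $f(v,\cdot)$.
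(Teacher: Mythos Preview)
Your approach is essentially the same as the paper's: bound each kernel pointwise on the Hepp sector using $\norm{z_{\sigma(e_+)}-z_{\sigma(e_-)}}_\fraks \asymp 2^{-\bn_{e^\uparrow}}$ (with the $\vee\eps$ truncation for mollified edges), then regroup the resulting exponents per inner node $v$ of $T$ together with the volume contribution $(\rho+d)$ to obtain $f(v,\bn_v)$, absorbing the constant $C^{(\rho+d)(|\sV|-1)}$ into $\bar K_0^{|\sE|}$. The only slip is a sign in your displayed kernel bound: since $\deg(e)<0$, one has $\|z\|_\fraks^{\deg(e)} \leqs C^{-\deg(e)} 2^{-\deg(e)\bn_{e^\uparrow}}$, not $2^{\deg(e)\bn_{e^\uparrow}}$; with this correction your collection of exponents matches the definition of $f$ exactly as you claim.
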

\begin{proof}
The definitions of Hepp sectors and of\/ $K_0$ imply that uniformly over 
$z\in D_\bT$, one has
\begin{equation}
 \bigabs{(\sW^{K} \mathfrak{K}_{\sFs}\Gamma)(z)} 
 \leqs 
 K_0^{\abs{\sE}}
 C^{-\sum_{e\in\sE}\deg(e)}
 \prod_{e\in\sE^\circ} 2^{-\bn(e^\uparrow)\deg(e)}
 \prod_{e\in\sE^\eps} 2^{-(\bn(e^\uparrow)\wedge n_\eps)\deg(e)}\;.
\end{equation}
Since only edges of negative degree give an unbounded contribution, the term 
$C^{-\sum_{e\in\sE}\deg(e)}$ can be bounded by $C^{-\deg\Gamma}$, enlarging if 
necessary the value of\/ $K_0$. Now it suffices to observe that  
\begin{align}
 -\log_2 \prod_{e\in\sE^\circ} 2^{-\bn(e^\uparrow)\deg(e)}
 &= \sum_{v\in T} \sum_{e\in\sE^\circ} \deg(e) \indicator{e^\uparrow}(v) 
\bn_v
 =  \sum_{v\in T} \bigbrak{\eta^\circ(v) - \rho-d} \bn_v\;, \\
 -\log_2 \prod_{e\in\sE^\eps} 2^{-(\bn(e^\uparrow)\wedge n_\eps)\deg(e)}
 &= \sum_{v\in T} 
 \sum_{e\in\sE^\eps} \deg(e) \indicator{e^\uparrow}(v) 
\bigbrak{\bn_v\wedge n_\eps}
 = \sum_{v\in T} \eta^\eps(v) \bigbrak{\bn_v\wedge n_\eps}\;.
\end{align} 
Substituting in the left-hand side of~\eqref{eq:bound_W_Fs} yields the result. 
\end{proof}

Our aim is now to bound the quantity 
\begin{equation}
\label{eq:sum_n_prod_v} 
 \sum_{\bn} \prod_{v\in T}2^{-f(v,\bn_v)}
\end{equation} 
by a recursive argument, starting from the leaves of\/ $T$. The argument is 
somewhat similar to the one given in~\cite[Lemma~A.10]{Hairer_Quastel_18}, but 
with an explicit control of the bound's dependence on the properties of the 
graph $\Gamma$. 

Given an inner node $v$ of\/ $T$, we say that $w$ is an \emph{offspring} of $v$ 
if $w>v$, and there exists no $\bar w$ with $w > \bar w > v$ (we do not use the 
term child to avoid confusion with the notion of child in $\sFs$). We denote 
the set of offspring of $v$ by $\sO(v)$. Note that since $T$ is a binary tree, 
$\sO(v)$ has at most two elements. 

For any $v\in T$ and $\bn_v\in\N_0$, we introduce the notation 
\begin{equation}
 \label{eq:SvT} 
 \cS_v(\bn_v) = 
 \sum_{\bar\bn \geqs \bn_v} \prod_{w>v} 2^{-f(w,\bar\bn_w)}\;, 
\end{equation} 
where the sum runs over all increasing node decorations $\bar\bn$ of 
$\setsuch{w}{w>v}$. We can rewrite this as  
\begin{equation}
\label{eq:iterate_S} 
 \cS_v(\bn_v) = \prod_{w_i\in\sO(v)} \widehat \cS_{w_i}(\bn_v)\;,
\end{equation} 
where  
\begin{equation}
\label{eq:iterate_Shat} 
 \widehat \cS_w(\bn_v) = 
 \sum_{\bar\bn \geqs \bn_v} \prod_{\bar w\geqs w} 
 2^{-f(\bar w,\bar\bn_{\bar w})}
 = \sum_{\bn_w \geqs \bn_v} 2^{-f(w,\bn_w)} \cS_w(\bn_w)\;. 
\end{equation} 
Then~\eqref{eq:sum_n_prod_v} is equal to $\widehat \cS_\varnothing (0)$, where 
$\varnothing$ denotes the root of\/ $T$. Our plan is now to compute the 
quantities 
$\widehat \cS_w(\bn_v)$ inductively, starting from the leaves of\/ $T$. In 
order 
to initialise the induction, we set $\widehat \cS_\ell(\bn_v) = 1$ on the 
leaves 
$\ell$ of\/ $T$. (Equivalently, one could set $\cS_v(\bn_v) = 1$ for all nodes 
$v$ of\/ $T$ with no offspring.) Then we have the following recursive bound. 

\begin{lemma}
\label{lem:recursive} 
Let $v$ be an inner node of\/ $T$. 
Assume that there exist non-negative functions $\alpha, \beta, \gamma$ and 
$\bar\alpha, \bar\beta$ such that the relation 
\begin{equation}
\label{eq:Shat_bound} 
 \widehat \cS_w(\bn_v) \lesssim 
 \begin{cases}
  2^{\alpha(w)n_\eps} 2^{-\beta(w)\bn_v} (n_\eps-\bn_v)^{\gamma(w)} 
  & \text{if $\bn_v < n_\eps$}\\
  2^{\bar\alpha(w)n_\eps} 2^{-\bar\beta(w)\bn_v}
  & \text{if $\bn_v \geqs n_\eps$}
 \end{cases}
\end{equation}
holds for all $w\in\sO(v)$. Assume furthermore that one has 
\begin{align}
\label{eq:Shat_bound_conditions1} 
 \eta^\circ(v) + \sum_{w_i\in\sO(v)} \bar\beta(w_i) &> 0\;, \\
 \alpha(w_i) - \beta(w_i) &\geqs \bar\alpha(w_i) - \bar\beta(w_i) 
 && \forall w_i\in\sO(v)\;,
\label{eq:Shat_bound_conditions2} 
\end{align}
and define 
\begin{equation}
\label{eq:def_eta} 
 \eta(v) = \eta^\circ(v) + \eta^\eps(v)\;, \qquad 
 \lambda(v) = \eta(v) + \sum_{w_i\in\sO(v)} \beta(w_i)\;.
\end{equation} 
Let $u$ be the parent of $v$, that is, the unique $u$ such that 
$v\in\sO(u)$. Then $\widehat \cS_v(\bn_u)$ satisfies the analogue of~\eqref{eq:Shat_bound}, 
with exponents given as follows: 
\begin{equation}
\label{eq:alpha_rec} 
 \alpha(v) = 
 \begin{cases}
  \displaystyle
  \sum_{w_i\in\sO(v)} \alpha(w_i) - \lambda(v)
  & \quad\text{if $\lambda(v)<0$\;,} 
\\
  \displaystyle
  \sum_{w_i\in\sO(v)} \alpha(w_i)
  & \quad\text{otherwise\;,}
 \end{cases}
\end{equation} 
while 
\begin{equation}
\label{eq:beta_rec} 
 \beta(v) = 
 \begin{cases}
  0
  & \quad\text{if $\lambda(v)\leqs 0$\;,} \\
  \lambda(v)
  & \quad\text{otherwise\;,}
 \end{cases}
\end{equation} 
and 
\begin{equation}
\label{eq:gamma_rec} 
 \gamma(v) = 
 \begin{cases}
  0 
  & \quad\text{if $\lambda(v)<0$\;,} 
\\
 \displaystyle
  \sum_{w_i\in\sO(v)} \gamma(w_i) + 1
  & \quad\text{if $\lambda(v)=0$\;,} 
\\
  \displaystyle
  \sum_{w_i\in\sO(v)} \gamma(w_i)
  & \quad\text{otherwise\;.}
 \end{cases}
\end{equation} 
Finally, we have 
\begin{align}
 \bar\alpha(v) &=  \sum_{w_i\in\sO(v)} \bar\alpha(w_i) - \eta^\eps(v)\;,\\
 \bar\beta(v) &=  \sum_{w_i\in\sO(v)} \bar\beta(w_i) + \eta^\circ(v)\;.
\label{eq:recurrence_bar} 
\end{align}
\end{lemma}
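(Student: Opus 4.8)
The plan is to prove Lemma~\ref{lem:recursive} by substituting the inductive hypothesis~\eqref{eq:Shat_bound} for the offspring $w_i\in\sO(v)$ into the identity~\eqref{eq:iterate_Shat}, namely $\widehat \cS_v(\bn_u) = \sum_{\bn_v \geqs \bn_u} 2^{-f(v,\bn_v)} \cS_v(\bn_v)$ with $\cS_v(\bn_v) = \prod_{w_i\in\sO(v)}\widehat\cS_{w_i}(\bn_v)$, and then estimating the resulting geometric-type sum over $\bn_v$. Recall that $f(v,\bn_v) = \eta^\circ(v)\bn_v + \eta^\eps(v)\bigbrak{\bn_v\wedge n_\eps}$, so its behaviour changes at $\bn_v = n_\eps$: for $\bn_v < n_\eps$ one has $f(v,\bn_v) = \eta(v)\bn_v$ with $\eta(v) = \eta^\circ(v)+\eta^\eps(v)$ as in~\eqref{eq:def_eta}, while for $\bn_v \geqs n_\eps$ one has $f(v,\bn_v) = \eta^\circ(v)\bn_v + \eta^\eps(v)n_\eps$. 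Correspondingly I would split the sum over $\bn_v\geqs\bn_u$ (for $\bn_u < n_\eps$) into the range $\bn_u \leqs \bn_v < n_\eps$ and the range $\bn_v \geqs n_\eps$; for $\bn_u\geqs n_\eps$ only the second range occurs. This is exactly the place where the two-regime form of the bound~\eqref{eq:Shat_bound} is forced upon us.

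For the first range, using the offspring bounds in the regime $\bn_v < n_\eps$, the summand is bounded by a constant times $2^{-\lambda(v)\bn_v}\prod_{w_i}2^{\alpha(w_i)n_\eps}(n_\eps - \bn_v)^{\sum\gamma(w_i)}$, with $\lambda(v) = \eta(v) + \sum_{w_i}\beta(w_i)$ as defined in~\eqref{eq:def_eta}. I would then use the elementary estimates: if $\lambda(v) > 0$, $\sum_{\bn_v\geqs\bn_u}2^{-\lambda(v)\bn_v}(n_\eps-\bn_v)^{\gamma}\lesssim 2^{-\lambda(v)\bn_u}(n_\eps-\bn_u)^{\gamma}$ (the geometric factor dominates, the polynomial is harmless); if $\lambda(v) = 0$, the sum of $(n_\eps - \bn_v)^{\gamma}$ over $\bn_u\leqs\bn_v<n_\eps$ is $\lesssim (n_\eps-\bn_u)^{\gamma+1}$, which accounts for the $+1$ in the $\lambda(v)=0$ case of~\eqref{eq:gamma_rec}; if $\lambda(v) < 0$, the sum is geometrically increasing, dominated by its top term at $\bn_v \approx n_\eps$, giving $\lesssim 2^{-\lambda(v)n_\eps}$, which after combining with $\prod 2^{\alpha(w_i)n_\eps}$ yields the exponent $\sum\alpha(w_i)-\lambda(v)$ on $n_\eps$ and no dependence on $\bn_u$ — matching~\eqref{eq:alpha_rec},~\eqref{eq:beta_rec},~\eqref{eq:gamma_rec}. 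For the second range $\bn_v\geqs n_\eps\vee\bn_u$, the summand is $\lesssim 2^{-\eta^\eps(v)n_\eps}2^{-\eta^\circ(v)\bn_v}\prod_{w_i}2^{\bar\alpha(w_i)n_\eps}2^{-\bar\beta(w_i)\bn_v}$, and condition~\eqref{eq:Shat_bound_conditions1} guarantees $\eta^\circ(v)+\sum\bar\beta(w_i) > 0$, so this geometric sum converges and is dominated by its bottom term; one has to separate the subcase $\bn_u < n_\eps$ (bottom term at $\bn_v = n_\eps$) from $\bn_u\geqs n_\eps$ (bottom term at $\bn_v = \bn_u$), and reading off exponents gives precisely~\eqref{eq:recurrence_bar}.

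The one genuinely delicate point, and the main obstacle, is matching the two regimes at $\bn_u = n_\eps$: the bound produced for $\bn_u < n_\eps$, evaluated at the boundary, must be consistent with (i.e.\ dominate, up to constants) the bound for $\bn_u\geqs n_\eps$ evaluated there, and this is exactly what hypothesis~\eqref{eq:Shat_bound_conditions2}, $\alpha(w_i) - \beta(w_i)\geqs\bar\alpha(w_i)-\bar\beta(w_i)$, is designed to ensure at the offspring level; I would need to check that it propagates, i.e.\ that $\alpha(v)-\beta(v)\geqs\bar\alpha(v)-\bar\beta(v)$ follows from the recursions, so that the hypotheses of the lemma are stable under the induction. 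This is a short computation splitting on the sign of $\lambda(v)$: when $\lambda(v)\leqs 0$, $\alpha(v)-\beta(v) = \sum\alpha(w_i)-\lambda(v)$ (or $\sum\alpha(w_i)$ if $\lambda(v)=0$), and one compares with $\bar\alpha(v)-\bar\beta(v) = \sum(\bar\alpha(w_i)-\bar\beta(w_i)) - \eta^\eps(v)-\eta^\circ(v) = \sum(\bar\alpha(w_i)-\bar\beta(w_i))-\eta(v)$, using $\sum(\alpha(w_i)-\beta(w_i))\geqs\sum(\bar\alpha(w_i)-\bar\beta(w_i))$ and $-\lambda(v)\geqs-\eta(v)-\sum\beta(w_i)$; the case $\lambda(v)>0$ is similar. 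I would also remark that the non-negativity of $\alpha,\beta,\gamma,\bar\alpha,\bar\beta$ is preserved: $\beta,\gamma\geqs 0$ by their defining formulae, $\bar\beta(v)\geqs 0$ since $\eta^\circ(v)\geqs 0$ (it equals $\rho+d$ plus a sum of negative-degree contributions, but in the relevant diagrams remains non-negative along the tree — more precisely one only needs $\bar\beta\geqs 0$ inductively, starting from $\bar\beta(\ell)=0$ at leaves and $\eta^\circ$ summing correctly over $T$), and $\alpha(v),\bar\alpha(v)\geqs 0$ by induction together with~\eqref{eq:Shat_bound_conditions1}. With these verifications, the recursions~\eqref{eq:alpha_rec}--\eqref{eq:recurrence_bar} follow by simply collecting exponents in each of the cases above.
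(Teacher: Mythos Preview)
Your proposal is correct and follows essentially the same route as the paper: substitute the offspring bounds into~\eqref{eq:iterate_Shat}, split the sum at $\bn_v = n_\eps$, handle the low range by the three-case geometric estimate on $\lambda(v)$, and use~\eqref{eq:Shat_bound_conditions1} to sum the high range, with~\eqref{eq:Shat_bound_conditions2} ensuring the tail contribution is dominated. Your additional verification that~\eqref{eq:Shat_bound_conditions2} and non-negativity propagate is not part of the lemma's statement (the paper handles propagation separately after the proof, via the identity $\alpha(v)-\beta(v) = -\eta_{\geqs}(v) = \bar\alpha(v)-\bar\beta(v)$), and your remark on $\bar\beta(v)\geqs 0$ via $\eta^\circ(v)\geqs 0$ is not quite right since $\eta^\circ(v)$ can be negative at individual nodes --- but none of this affects the proof of the lemma itself.
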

\begin{proof}
Combining~\eqref{eq:iterate_S} and~\eqref{eq:iterate_Shat}, we obtain 
\begin{equation}
 \widehat\cS_v(\bn_u) 
 = \sum_{\bn_v\geqs\bn_u} 2^{-f(v,\bn_v)}
 \prod_{w_i\in\sO(v)} \widehat\cS_{w_i}(\bn_v)\;.
\end{equation} 
Consider first the case $\bn_u\geqs n_\eps$. Using~\eqref{eq:Shat_bound} and 
the definition~\eqref{eq:def_f_v_nv} of $f(v,\bn_v)$, we get 
\begin{equation}
 \widehat\cS_v(\bn_u) 
 \lesssim 2^{\bar\alpha(v)n_\eps} \sum_{\bn_v\geqs\bn_u} 2^{-\bar\beta(v)\bn_v}
\end{equation} 
with $\bar\alpha(v)$ and $\bar\beta(v)$ given by~\eqref{eq:recurrence_bar}. 
By Condition~\eqref{eq:Shat_bound_conditions1}, one can sum the geometric 
series, yielding the claimed bound. 

For $\bn_u < n_\eps$, we decompose the sum into two parts, yielding  
\begin{equation}
 \widehat\cS_v(\bn_u) 
 \lesssim 2^{\sum_i\alpha(w_i)n_\eps}
 \sum_{\bn_v=\bn_u}^{n_\eps-1} 2^{-\lambda(v)\bn_v} (n_\eps - 
 \bn_v)^{\sum_i\gamma(w_i)}
 + 2^{\bar\alpha(v)n_\eps} \sum_{\bn_v\geqs n_\eps} 2^{-\bar\beta(v)\bn_v}\;.
\end{equation} 
The first sum can be evaluated using the bound
\begin{equation}
 \sum_{n=n_0}^{N-1} (N-n)^\gamma 2^{-\eta n} \lesssim 
 \begin{cases}
 (N-n_0)^\gamma 2^{-\eta n_0} & \text{if $\eta>0$\;,} \\
 (N-n_0)^{\gamma+1} & \text{if $\eta=0$\;,} \\
 2^{-\eta N} & \text{if $\eta<0$\;,}
 \end{cases}
\end{equation} 
valid for any $n_0<N\in\N$, $\eta\in\R$ and $\gamma >0$. The second sum 
has order $2^{(\bar\alpha(v)-\bar\beta(v))n_\eps}$, and is negligible 
thanks to Condition~\eqref{eq:Shat_bound_conditions2}. 
\end{proof}

\begin{figure}[tb]
\begin{center}
\begin{tikzpicture}[>=stealth',main 
node/.style={draw,circle,fill=white,minimum size=1pt,inner sep=1pt},
xscale=0.8,yscale=0.8,baseline=-1.5cm]


\node[main node,semithick,blue,fill=white,
label={[blue,xshift=0cm,yshift=-0.6cm]$1$}] (1) at (0,-4) {};

\node[main node,semithick,blue,fill=white,
label={[blue,xshift=0cm,yshift=-0.6cm]$2$}] (2) at (1,-4) {};

\node[main node,semithick,blue,fill=white,
label={[blue,xshift=0cm,yshift=-0.6cm]$3$}] (3) at (2,-4) {};

\node[main node,semithick,blue,fill=white,
label={[blue,xshift=0cm,yshift=-0.6cm]$4$}] (4) at (3,-4) {};

\node[main node,semithick,blue,fill=white,
label={[blue,xshift=0cm,yshift=-0.6cm]$6$}] (6) at (4,-4) {};

\node[main node,semithick,blue,fill=white,
label={[blue,xshift=0cm,yshift=-0.6cm]$5$}] (5) at (5.5,-4) {};

\node[main node,semithick,ForestGreen,
label={[ForestGreen,xshift=0.25cm,yshift=-0.2cm]$\fa$}] (a) at (3.5,0) {};

\node[main node,semithick,ForestGreen,
label={[ForestGreen,xshift=-0.25cm,yshift=-0.2cm]$\fb$}] (b) at (2,-1) {};

\node[main node,semithick,ForestGreen,
label={[ForestGreen,xshift=-0.25cm,yshift=-0.2cm]$\fc$}] (c) at (1,-2) {};

\node[main node,semithick,ForestGreen,
label={[ForestGreen,xshift=-0.25cm,yshift=-0.2cm]$\fd$}] (d) at (0.5,-3) {};

\node[main node,semithick,ForestGreen,
label={[ForestGreen,xshift=0.25cm,yshift=-0.2cm]$\fe$}] (e) at (3.5,-3) {};

\draw[semithick] (3) -- (c) -- (b) -- (a) -- (5);
\draw[semithick] (c) -- (d) -- (2);
\draw[semithick] (d) -- (1);
\draw[semithick] (b) -- (e) -- (4);
\draw[semithick] (e) -- (6);

\vspace{-10mm}

\end{tikzpicture}
\hspace{10mm}
\begin{tabular}{|c|c|c|c|c|}
\hline
$e$ & $\sigma(e)$ & $e^\uparrow$ & $\deg(e)$ & $\reg(e)$ \\
\hline 
$\textcolor{violet}{(1,2)}$ & $(1,2)$ & $\fd$ & $-1$ & $1$ \\
$\textcolor{red}{(1,2)}$    & $(1,2)$ & $\fd$ & $-2$ & $1$ \\
$(2,3)$                     & $(2,3)$ & $\fc$ & $-3$ & $0$ \\
$(3,1)$                     & $(6,1)$ & $\fc$ & $-3$ & $0$ \\
$(3,4)$                     & $(3,4)$ & $\fb$ & $-3$ & $0$ \\
$(4,6)$                     & $(5,6)$ & $\fe$ & $-3$ & $0$ \\
$\textcolor{red}{(3,6)}$    & $(3,6)$ & $\fb$ & $-2$ & $1$ \\
$(4,5)$                     & $(4,5)$ & $\fa$ & $-3$ & $0$ \\
$\textcolor{red}{(4,5)}$    & $(4,5)$ & $\fa$ & $-2$ & $1$ \\
\hline
\end{tabular}
\end{center}
\vspace{-3mm}
 \caption[]{A tree $T$ defining a Hepp sector $D_\bT$ for the diagram 
$\mathfrak{K}_{\sFs}\Gamma$ in~\eqref{eq:C_gamma12}. The table shows, for each 
edge $e$, its image $\sigma(e)=(\sigma(e)_-,\sigma(e)_+)$, 
the ancestor $e^\uparrow$, the degree of $e$ measured in units of $\frac d3$ in 
the limit $\rho\searrow\rhocrit$, and the index showing whether the edge has 
been mollified. Since there can be multiple edges between two given vertices, 
they have been colour-coded according to their type.}
 \label{fig_safeforest}
\end{figure}

\begin{example} 
\label{ex:recurrence}
Consider again the diagram of Example~\ref{ex:C_gamma}, with the forest $\sF = 
\set{\gamma_1,\gamma_2}$. Consider a Hepp sector $D_\bT$ such that $T$ has the 
structure given in Figure~\ref{fig_safeforest}. The forest $\sF$ is safe 
according to~\eqref{eq:ex_safe1} and~\eqref{eq:ex_safe2}. 
\tabref{tab:recurrence} shows the values of the different exponents, computed 
iteratively starting from the leaves of the tree $T$, in the limit 
$\rho\searrow\rhocrit$. In particular, we obtain 
\begin{equation}
 \sum_{\bn} \prod_{v\in T}2^{-f(v,\bn_v)} 
 = \widehat\cS_{\fa}(0) 
 \lesssim 2^{\alpha(\fa)n_\eps}
 = 2^{2dn_\eps/3} \leqs \eps^{-2d/3}\;,
\end{equation} 
which is indeed equal to $\eps^{\deg \Gamma}$ in that limit. A similar 
computation can be made for any $\rho > \rhocrit$. 
\end{example}

\begin{table}
\begin{center}
\begin{tabular}{|c|r|r|r|r|r|r|r|r|r|r|}
\hline
\vrule width 0pt height 11pt depth 5pt
$v$ & $\eta^\circ(v)$ & $\eta^\eps(v)$ & $\eta(v)$ & $\eta_{\geqs}(v)$ & 
$\lambda(v)$ & $\alpha(v)$ & $\beta(v)$ & $\gamma(v)$ & $\bar\alpha(v)$ & 
$\bar\beta(v)$ \\
\hline 
\vrule width 0pt height 10pt depth 3pt
$\fe$ & $1$ &  $0$ &  $1$ &  $1$ &  $1$ & $0$ & $1$ & $0$ & $0$ & $1$ \\
$\fd$ & $4$ & $-3$ &  $1$ &  $1$ &  $1$ & $0$ & $1$ & $0$ & $3$ & $4$ \\
$\fc$ & $-2$ & $0$ & $-2$ & $-1$ & $-1$ & $1$ & $0$ & $0$ & $3$ & $2$ \\
$\fb$ & $1$ & $-2$ & $-1$ & $-1$ &  $0$ & $1$ & $0$ & $1$ & $5$ & $4$ \\
$\fa$ & $1$ & $-2$ & $-1$ & $-2$ & $-1$ & $2$ & $0$ & $0$ & $7$ & $5$ \\
\hline
\end{tabular}
\end{center}
\vspace{-3mm}
\caption[]{Coefficients appearing in the recursive computation described in 
Lemma~\ref{lem:recursive}, in the case of the Hepp tree $T$ given in 
\figref{fig_safeforest}. The first four exponents are defined 
in~\eqref{eq:eta_safe}, \eqref{eq:def_eta} and~\eqref{eq:def_eta_geqs}. All 
coefficients are shown in units of $\frac{d}{3}$ and in the limit 
$\rho\searrow\rhocrit$.}
\label{tab:recurrence} 
\end{table}

Let us now examine the inductive bounds in more detail. The initialisation is 
made by setting all functions $\alpha$, $\beta$, $\gamma$, $\bar\alpha$ and 
$\bar\beta$ equal to zero on the leaves of\/ $T$. Combining the recursive 
relations~\eqref{eq:alpha_rec} and~\eqref{eq:beta_rec}, we obtain 
\begin{equation}
 \alpha(v) - \beta(v) = \sum_{w_i\in\cO(v)} \bigpar{\alpha(w_i)-\beta(w_i)} - 
\eta(v)\;.
\end{equation}
Together with the initial values on the leaves, this yields 
\begin{equation}
\label{eq:def_eta_geqs} 
 \alpha(v) - \beta(v) = -\sum_{w\geqs v} \eta(w) =: -\eta_{\geqs}(v)
\end{equation} 
for all nodes $v$ of\/ $T$. In the same way, \eqref{eq:recurrence_bar} yields 
\begin{equation}
 \bar\alpha(v) - \bar\beta(v) = -\eta_{\geqs}(v)\;.
\end{equation} 
This shows in particular that Condition~\eqref{eq:Shat_bound_conditions2} is 
always satisfied. 
Regarding Condition~\eqref{eq:Shat_bound_conditions1}, we observe 
that~\eqref{eq:recurrence_bar} implies 
\begin{equation}
 \bar\beta(v) = \sum_{w\geqs v}\eta^\circ(w)
 =: \eta_{\geqs}^\circ(v)\;.
\end{equation} 
We thus have to show that $\eta_{\geqs}^\circ(v)$ is strictly positive on all 
inner vertices $v$ if\/ $T$, and to bound $\eta_{\geqs}(v)$ below in order to 
control~\eqref{eq:sum_n_prod_v}. To do this, we will import some further 
notations from~\cite{Hairer_BPHZ}. For $\gamma\in\sFs\cup\set{\Gamma}$, we 
write 
$\mathfrak{K}(\gamma) = (\sV_\gamma,\sE_\gamma)$ for the subgraph of 
$\mathfrak{K}_{\sFs}\Gamma$ with edge set $\sE_\gamma = 
\sigma^{-1}(\sE(\gamma\setminus\sC(\gamma)))$, where $\sC(\gamma)$ denotes the 
set of children of $\gamma$ in $\sFs$. Given an inner vertex $v\in T$, we let 
$\Gamma_0 = \Gamma_0(v) = (\sV_0,\sE_0)$ be the subgraph of 
$\mathfrak{K}_{\sFs}\Gamma$ containing all vertices $w\in\sV$ such that 
$\sigma(w)\geqs v$. Note that this implies 
\begin{equation}
 e \in \sE_0(v) \quad \Leftrightarrow \quad e^\uparrow \geqs v\;.
\end{equation} 
In addition, we have 
\begin{equation}
 \scale_\bT^{\sFs}(e) > \scale_\bT^{\sFs}(\bar e)\;,
\end{equation} 
and thus $e^\uparrow > \bar e^\uparrow$, for all $e\in\sE_0$ and all $\bar e$ 
adjacent to $\Gamma_0$ in $\mathfrak{K}_{\sFs}\Gamma$. 

\begin{example} 
\label{ex_suddivergence}
Continuing with Example~\ref{ex:recurrence}, we have 
\begin{equation}
 \mathfrak{K}(\gamma_1) = 
 \raisebox{-2mm}{
\tikz{
 \path[use as bounding box] (-2.1, -0.3) rectangle (-0.3,0.3);
 \node[rootnode] (4) at (-0.6,0) {};
 \node[blacknode] (5) at (-1.8,0) {};
 \path[blue!50] (4) ++(0.2,0) node {\scriptsize{4}};
 \path[blue!50] (5) ++(-0.2,0)   node {\scriptsize{5}};
 \draw[] (4) edge [Kedge,out=150,in=30] (5);
 \draw[] (4) edge [GKepsedge,out=-150,in=-30] (5);
 \drawbox;
 }
}\;, \qquad
 \mathfrak{K}(\gamma_2) = 
 \raisebox{-8mm}{
\tikz{
 \path[use as bounding box] (-0.9, -0.9) rectangle (0.9,0.9);
 \node[rootnode] (3) at (0.6,0.6) {};
 \node[blacknode] (4) at (-0.6,0) {};
 \node[blacknode] (6) at (0.6,-0.6) {};
 \path[blue!50] (3) ++(0.2,0.2)   node {\scriptsize{3}};
 \path[blue!50] (4) ++(-0.2,0) node {\scriptsize{4}};
 \path[blue!50] (6) ++(0.2,-0.2)  node {\scriptsize{6}};
 \draw[] (4) edge [Kedge,out=-60,in=-180] (6);
 \draw[] (3) edge [Kedge,out=180,in=60] (4);
 \draw[GKepsedge] (6) -- (3); 
 \drawbox;
 }
}\;, \qquad 
 \mathfrak{K}(\Gamma) = 
 \raisebox{-8mm}{
\tikz{
 \path[use as bounding box] (0.3, -0.3) rectangle (2.1,1.5);
 \node[rootnode] (1) at (1.8,0) {};
 \node[blacknode] (2) at (1.8,1.2) {};
 \node[blacknode] (3) at (0.6,0.6) {};
 \path[blue!50] (1) ++(0.2,-0.2) node {\scriptsize{1}};
 \path[blue!50] (2) ++(0.2,0.2)  node {\scriptsize{2}};
 \path[blue!50] (3) ++(-0.2,0)   node {\scriptsize{3}};
 \draw[] (3) edge [Kedge,out=-60,in=-180] (1);
 \draw[] (1) edge [GKKedge,out=60,in=-60,->] (2);
 \draw[] (2) edge [Kedge,out=180,in=60] (3);
 \draw[] (1) edge [GKepsedge,out=120,in=-120] (2);
 \drawbox;
 }
}\;.
\end{equation} 
Examples of subgraphs $\Gamma_0(v)$ are 
\begin{equation}
 \Gamma_0(\fb) = 
 \raisebox{-8mm}{
\tikz{
 \path[use as bounding box] (-0.9, -0.9) rectangle (2.1,1.5);
 \node[rootnode] (1) at (1.8,0) {};
 \node[blacknode] (2) at (1.8,1.2) {};
 \node[blacknode] (3) at (0.6,0.6) {};
 \node[blacknode] (4) at (-0.6,0) {};
 \node[blacknode] (6) at (0.6,-0.6) {};
 \path[blue!50] (1) ++(0.2,-0.2) node {\scriptsize{1}};
 \path[blue!50] (2) ++(0.2,0.2)  node {\scriptsize{2}};
 \path[blue!50] (3) ++(-0.2,0.25)   node {\scriptsize{3}};
 \path[blue!50] (4) ++(-0.2,0) node {\scriptsize{4}};
 \path[blue!50] (6) ++(0,-0.25)  node {\scriptsize{6}};
 \draw[] (3) edge [Kedge,out=-60,in=-180] (1);
 \draw[] (1) edge [GKKedge,out=60,in=-60,->] (2);
 \draw[] (2) edge [Kedge,out=180,in=60] (3);
 \draw[] (3) edge [Kedge,out=180,in=60] (4);
 \draw[] (1) edge [GKepsedge,out=120,in=-120] (2);
 \draw[] (4) edge [Kedge,out=-60,in=-180] (6);
 \draw[GKepsedge] (6) -- (3); 
 \drawbox;
 }
}\;, \qquad 
 \Gamma_0(\fd) = 
 \raisebox{-8mm}{
\tikz{
 \path[use as bounding box] (1.5, -0.3) rectangle (2.1,1.5);
 \node[rootnode] (1) at (1.8,0) {};
 \node[blacknode] (2) at (1.8,1.2) {};
 \path[blue!50] (1) ++(0.2,-0.2) node {\scriptsize{1}};
 \path[blue!50] (2) ++(0.2,0.2)  node {\scriptsize{2}};
 \draw[] (1) edge [GKKedge,out=60,in=-60,->] (2);
 \draw[] (1) edge [GKepsedge,out=120,in=-120] (2);
 \drawbox;
 }
}\;, \qquad 
 \Gamma_0(\fe) = 
 \raisebox{-8mm}{
\tikz{
 \path[use as bounding box] (-0.9, -0.9) rectangle (0.9,0.9);
 \node[blacknode] (4) at (-0.6,0) {};
 \node[blacknode] (6) at (0.6,-0.6) {};
 \path[blue!50] (4) ++(-0.2,0) node {\scriptsize{4}};
 \path[blue!50] (6) ++(0,-0.25)  node {\scriptsize{6}};
 \draw[] (4) edge [Kedge,out=-60,in=-180] (6);
 \drawbox;
 }
}\;,
\end{equation}
while $\Gamma_0(\fc) = \mathfrak{K}(\Gamma)$ and 
$\Gamma_0(\fa)=\mathfrak{K}_{\sFs}\Gamma$ is the diagram given 
in~\eqref{eq:C_gamma12}.
\end{example}

\begin{lemma}
\label{lem_sumeta1} 
Let $v$ be an inner vertex of\/ $T$ such that $\Gamma_0(v)$ is non-empty. 
Then the quantity $\eta_{\geqs}(v)$ satisfies the following properties~: 
\begin{enumerate}
\item 	$\eta_{\geqs}(v) = \deg(\Gamma)$ if $v=\varnothing$ is the 
root of\/ $T$, and $\eta_{\geqs}(v) \geqs \deg(\Gamma)$ otherwise;
\item 	if $v>\varnothing$, then $\eta_{\geqs}(v) = \deg(\Gamma)$ happens only 
if\/ $\Gamma$ has at least one child $\gamma\in\sC(\Gamma)$ satisfying 
$\deg(\gamma)=0$, and $\Gamma_0(v) = 
\bigcup_{\bar\gamma}\mathfrak{K}(\bar\gamma)$, where the union runs over all 
$\bar\gamma$ which are not descendents of a child with vanishing 
degree; 
\item 	if $\sO(v) = \set{w_1,w_2}$, then there exists at least one 
$i\in\set{1,2}$ such that $\eta_{\geqs}(w) > 0$ for all $w\geqs w_i$.
\end{enumerate}
If, furthermore, $\Gamma$ has at least one regularised edge, 
then there exists a constant $\kappa>0$ such that 
\begin{equation}
\label{eq:eta_geqs_circ} 
 \eta_{\geqs}^\circ(v) = \sum_{w\geqs v} \eta^\circ(w) \geqs \kappa\;.
\end{equation} 
\end{lemma}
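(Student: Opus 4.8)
The starting point I would use is the identity $\eta_{\geqs}(v)=\deg(\Gamma_0(v))$. Summing the definitions in~\eqref{eq:eta_safe} over all $w\geqs v$ and writing $\eta(w)=\eta^\circ(w)+\eta^\eps(w)$, the vertex terms contribute $(\rho+d)$ times the number of inner nodes of the subtree of $T$ rooted at $v$, that is $(\rho+d)(\abs{\sV_0(v)}-1)$, while the edge terms contribute $\sum_{e:\,e^\uparrow\geqs v}\deg(e)=\sum_{e\in\sE_0(v)}\deg(e)$; comparing with~\eqref{eq:deg_Gamma} gives $\eta_{\geqs}(v)=\deg(\Gamma_0(v))$. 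Since $\sigma$ is a bijection preserving edge types, $\Gamma_0(v)$ is isomorphic to the subdiagram of $\Gamma$ induced on $\sigma(\sV_0(v))$, and in particular $\Gamma_0(\varnothing)=\mathfrak{K}_{\sFs}\Gamma$, whose degree equals $\deg(\Gamma)$. This proves~(1) at the root and reduces the rest of~(1) and~(2) to a statement about induced subdiagrams of $\Gamma$.

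For~(1) and~(2) at an inner node $v$, I would write $H\cong\Gamma[W]$ with $W=\sigma(\sV_0(v))$, $U=\sV_\Gamma\setminus W$, and split
\[
 \deg(\Gamma)-\deg(H)=(\rho+d)\abs{U}+\sum_{e\in\sE_U}\deg(e)+\sum_{e\in\partial\sE_U}\deg(e)\;,
\]
$\sE_U$ being the edges inside $U$ and $\partial\sE_U$ those between $U$ and $W$. Since $\Gamma=\Gamma(\tau,P)$ has no bridge, every vertex of $U$ has valence $\geqs2$ in $\Gamma$ and $\abs{\partial\sE_U}\geqs2$ when $U\neq\varnothing$, and all edge degrees lie in $\set{-d,\rho-d,2\rho-d}$, which are negative in the range $\rho$ near $\rhocrit=\tfrac d3$. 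Using the counting units of Remark~\ref{rem:deg_gamma} (a vertex counts $+4$, an edge $-3$, $-2$ or $-1$ in units of $\tfrac d3$ as $\rho\searrow\rhocrit$) together with the structural constraints on reduced diagrams $\Gamma(\tau,P)$ coming from the underlying binary trees (minimum valence bounded below, and only limited occurrence of the least singular edge type), a case analysis on the edges incident to $U$ shows that the right-hand side is $\leqs0$, i.e.\ $\deg(H)\geqs\deg(\Gamma)$; and that it vanishes only when $U$ collapses to a single node of vanishing degree, i.e.\ when a child $\gamma\in\sC(\Gamma)$ with $\deg(\gamma)=0$ has been removed, in which case $\Gamma_0(v)=\bigcup_{\bar\gamma}\mathfrak{K}(\bar\gamma)$ with the union over those $\bar\gamma\in\sFs\cup\set{\Gamma}$ not descended from such a child — this is exactly~(2).

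For~(3), suppose $\sO(v)=\set{w_1,w_2}$ and, for contradiction, that each subtree contains a node of non-positive $\eta_{\geqs}$; since $\eta_{\geqs}=\deg\Gamma_0(\cdot)$ this yields two vertex-disjoint subdiagrams of non-positive degree, one inside $\Gamma_0(w_1)$ and one inside $\Gamma_0(w_2)$, both contained in $\Gamma_0(v)$. By Lemma~\ref{lem:subdiagrams} every divergent proper subdiagram is the diagram of an \afull\ binary tree, and the divergent subdiagrams of such a diagram are totally ordered by inclusion (they all lie on the chain through the missing edge); applying this to the smallest divergent subdiagram containing $\Gamma_0(v)$, or to $\Gamma$ itself, contradicts the existence of two incomparable divergent sub-clusters. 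Hence one branch carries only subdiagrams of strictly positive degree, which is~(3). Finally, the bookkeeping of the first step shows that $\eta_{\geqs}^\circ(v)$ equals the degree of the graph obtained from $\Gamma_0(v)$ by deleting its regularised edges. In a reduced diagram $\Gamma(\tau,P)$ the non-regularised edges are the surviving bare $K_\rho$-edges, and these form a subforest: deleting from $\tau$ all leaf-incident edges leaves a subtree, and the reduction steps only ever remove bare edges. Thus $\Gamma_0(v)$, which for $v$ inner has $\abs{\sV_0(v)}\geqs2$ vertices, carries at most $\abs{\sV_0(v)}-1$ bare edges, each of degree $\geqs-d$, so
\[
 \eta_{\geqs}^\circ(v)\geqs(\rho+d)(\abs{\sV_0(v)}-1)-d\,(\abs{\sV_0(v)}-1)=\rho\,(\abs{\sV_0(v)}-1)\geqs\rho>0\;,
\]
and one may take $\kappa=\rhocrit$; the hypothesis that $\Gamma$ has a regularised edge only serves to exclude the degenerate case where $\Gamma$ itself is entirely bare, for which $\eta_{\geqs}^\circ(\varnothing)=\deg\Gamma<0$.

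The hard part will be the equality analysis in~(2): passing from the bare inequality $\deg(H)\geqs\deg(\Gamma)$ to the exact combinatorial shape of $H$ in the equality case, which requires combining the no-bridge constraint on $U$ with the classification of divergent subdiagrams from Lemma~\ref{lem:subdiagrams} and matching the result to the forest $\sFs$ through the maps $\mathfrak{K}(\cdot)$ and $\sigma$; the linear-nesting property of divergences of \afull\ binary trees used in~(3) should be isolated as a small separate combinatorial lemma.
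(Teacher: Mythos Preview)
Your argument rests on the identification $\Gamma_0(v)\cong\Gamma[W]$ with $W=\sigma(\sV_0(v))$, and this is false. The graph $\Gamma_0(v)$ is an induced subgraph of $\mathfrak{K}_{\sFs}\Gamma$, not of $\Gamma$; the operators $\hat\sC_\gamma$ building $\mathfrak{K}_{\sFs}$ reconnect edges adjacent to each $\gamma\in\sFs$ to the first vertex of $\gamma$, so while $\sigma$ is a bijection on vertices and on edges, it does \emph{not} preserve incidences. In Example~\ref{ex_suddivergence}, for instance, $\Gamma_0(\fc)$ contains the edge $(3,1)$ of $\mathfrak{K}_{\sFs}\Gamma$, whose image $\sigma(e)=(6,1)$ in $\Gamma$ is not an edge of $\Gamma[\{1,2,3\}]$; consequently $\deg(\Gamma_0(\fc))\neq\deg(\Gamma[\{1,2,3\}])$. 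Your inequality $\deg(\Gamma[W])\geqs\deg(\Gamma)$, even if correct, bounds the wrong quantity, so parts~(1) and~(2) are not established. The role of the safe forest $\sFs$ cannot be bypassed: the paper's proof decomposes $\eta_{\geqs}(v)=\sum_{\gamma\in\sFs\cup\{\Gamma\}}\eta_{\geqs,\gamma}(v)$ along the pieces $\mathfrak{K}(\gamma)$ and classifies each $\gamma$ as \emph{full}, \emph{empty} or \emph{normal} according to how $\sE_\gamma$ meets $\sE_0(v)$; safeness of $\sFs$ is precisely what guarantees $\deg(\hat\gamma)>0$ for normal $\gamma$ and that a full $\gamma$ cannot have an empty parent, and the telescoping of the resulting formulas~\eqref{eq:full_empty_normal} is what yields the bound together with its equality case.

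Your argument for~(3) has a separate problem: the claim that divergent subdiagrams of an \afull-tree diagram are totally ordered by inclusion is false --- already in Example~\ref{ex:forests} the diagram $\Gamma_1$ contains three pairwise vertex-disjoint divergent bubbles. What the paper uses instead is much softer: since $\sE_0(w_1)$ and $\sE_0(w_2)$ are disjoint, $\Gamma$ cannot be full for both $\Gamma_0(w_1)$ and $\Gamma_0(w_2)$, and the first two cases of the full/empty/normal analysis already give $\eta_{\geqs}(w)>0$ whenever $\Gamma$ is not full. Finally, your forest argument for $\eta_{\geqs}^\circ$ is plausible for the original $\Gamma$ but again ignores that you are working inside $\mathfrak{K}_{\sFs}\Gamma$; you would need to check that the reconnections performed by $\hat\sC_\gamma$ preserve acyclicity of the non-regularised edge set, which you have not done. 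The paper avoids this by rerunning the full/empty/normal argument with $\sE_0$ replaced by $\sE_0\cap\sE^\circ$, where the existence of a regularised edge prevents $\Gamma$ from being full.
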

\begin{proof}
Since $T$ is a tree, $\set{w\geqs v}$ has $\abs{\sV_0}-1$ elements, so that we 
can write 
\begin{equation}
\eta_{\geqs}(v) 
= (\rho+d) (\abs{\sV_0}-1) + \sum_{e\in\sE \cap \sE_0} \deg(e)\;.
\end{equation}
By construction, the $\mathfrak{K}(\gamma)$ have disjoint edge sets, and two 
$\mathfrak{K}(\gamma)$ can share at most one vertex. We can thus decompose 
\begin{equation}
\label{eq:a_agamma} 
 \eta_{\geqs}(v) 
 = \sum_{\gamma\in\sFs\cup\set{\Gamma}} \eta_{\geqs,\gamma}(v)\;,
\end{equation} 
where
\begin{equation}
 \eta_{\geqs,\gamma}(v) = (\rho+d) (\abs{\sV_0\cap\sV_\gamma}-1)
+ \sum_{e\in\sE_\gamma \cap \sE_0} \deg(e)\;.
\end{equation}
As in~\cite{Hairer_BPHZ}, we say that $\gamma\in\sFs\cup\set{\Gamma}$ is 
\begin{itemize}
\item 	\emph{full} if $\sE_\gamma\cap\sE_0 = \sE_\gamma$; 
\item 	\emph{empty} if $\sE_\gamma\cap\sE_0 = \varnothing$; 
\item 	\emph{normal} in all other cases. 
\end{itemize}
By~\cite[Lemma~3.7]{Hairer_BPHZ}, a full $\gamma$ cannot have an empty parent, 
and 
\begin{equation}
 \eta_{\geqs,\gamma}(v) = 
 \begin{cases}
  \deg(\gamma) - \sum_{\bar\gamma\in\sC(\gamma)} \deg(\bar\gamma)
  & \text{if $\gamma$ is full\;,} \\
  0 & \text{if $\gamma$ is empty\;,} \\
  \deg(\hat\gamma) - \sum_{\bar\gamma\in\sC_*(\gamma)} \deg(\bar\gamma)
  & \text{if $\gamma$ is normal\;,}
 \end{cases}
 \label{eq:full_empty_normal} 
\end{equation} 
where $\sC_*(\gamma)$ is the set of children $\bar\gamma$ of $\gamma$ such that 
$\mathfrak{K}(\bar\gamma)$ shares a vertex with $\Gamma_0(v)$, and 
$\hat\gamma$ 
is the subdiagram of $\Gamma$ with edge set 
$\sigma(\sE_\gamma\cap\sE_0) \cup \bigcup_{\bar\gamma\in\sC_*(\gamma)} 
\sE(\bar\gamma)$. The fact that $\gamma$ is safe implies that 
$\deg(\hat\gamma)>0$, and is also used to prove the absence of empty parent.

The result follows by considering all possibilities for the types of the 
subgraphs $\gamma$.

\begin{itemize}
\item 	A first case occurs when no $\gamma\in\sFs\cup\set{\Gamma}$ is full. 
Since $\Gamma_0$ is not empty, the $\gamma$ cannot all be empty, so that 
$\eta_{\geqs}(v) $ is 
a non-empty sum of strictly positive terms. Therefore, $\eta_{\geqs}(v) >0$. 

\item 	A second case occurs when $\Gamma$ is not full, but there exists at 
least one subgraph $\gamma\subsetneq\Gamma$ which is full. Since the parent of 
$\gamma$ is not empty, the negative term $\deg(\gamma)$ is compensated by the 
corresponding term stemming from its parent. Since $\Gamma$ is not full, there 
must exist a full subgraph $\gamma$ whose parent is normal. Since the 
inequality for normal subgraphs is strict, we have again $\eta_{\geqs}(v) >0$. 

\item 	It remains to consider the case where $\Gamma$ is full (which does not 
occur in~\cite{Hairer_BPHZ}). The case of all $\gamma\subset\Gamma$ also being 
full can only occur when $v=\varnothing$ (because only in that case is 
$\Gamma_0(v)$ equal to $\mathfrak{K}_{\sFs}(\gamma)$), and leads to the sum 
being equal to $\deg(\Gamma)$. 

Consider next the case when there is no normal subgraph. Then all subgraphs are 
full or empty. Since a full subgraph cannot have an empty parent, we obtain 
$\eta_{\geqs}(v)  = 
\deg(\Gamma) - \sum_i \deg(\gamma_i)$, where the $\gamma_i$ are all empty 
subgraphs with a full parent. This shows in particular that $\eta_{\geqs}(v) 
\geqs \deg(\Gamma)$ if $v$ is not the root. Equality can only hold when all 
$\gamma_i$ have zero degree.
These $\gamma_i$ must all be children of $\Gamma$, 
since Lemma~\ref{lem:subdiagrams} and Remark~\ref{rem:deg_gamma} imply that 
the degree of strict subdiagrams, which all arise from \afull\ trees, is 
strictly increasing in terms of their number of edges.
In addition, all $\gamma\subset\gamma_i$ are empty, so that the second property 
follows. 

The only remaining case occurs when there exists a $\gamma\subsetneq\Gamma$ 
which is normal. Then one obtains $\eta_{\geqs}(v)  \geqs \deg(\Gamma) - 
\deg(\gamma) + \deg(\hat\gamma) > \deg(\Gamma)$.

\end{itemize}

To prove the third property of $\eta_{\geqs}$, we note that since the edge sets 
$\sE_0(w_1)$ and $\sE_0(w_2)$ are disjoint, $\Gamma$ cannot be full for both 
$\Gamma_0(w_1)$ and $\Gamma_0(w_2)$. Since $\sE_0(w)\subset \sE(w_i)$ for all 
$w\geqs w_i$, there is at least one $i$ such that $\Gamma$ is not full for any 
$\Gamma_0(w)$ such that $w \geqs w_i$. Therefore, $\eta_{\geqs}(w) > 0$ for 
these $w$. 

It remains to prove \eqref{eq:eta_geqs_circ}. Here we note that 
$\eta_{\geqs}^\circ(v)$ can be written as the sum of 
$\eta_{\geqs,\gamma}^\circ(v)$, where 
\begin{equation}
 \eta_{\geqs,\gamma}^\circ(v) = (\rho+d) (\abs{\sV_0\cap\sV_\gamma}-1)
+ \sum_{e\in\sE^\circ \cap \sE_\gamma} \deg(e)\;. 
\end{equation} 
We define full, empty and normal subgraphs as above, but with $\sE_0$ replaced 
by $\sE_0\cap\sE^\circ$. Since $\Gamma$ admits at least one regularised edge, 
it cannot be full. The same argument as above thus shows that 
$\eta_{\geqs}^\circ(v)$ is strictly positive. 
\end{proof}

\begin{remark}
If follows from Lemma~\ref{lem:subdiagrams} that for any subtree 
$\bar\tau\subsetneq\tau$ of negative degree, $\tau$ has at least two leaves 
that do not belong to $\bar\tau$. As a consequence, for any divergent 
subdiagram $\gamma\subsetneq\Gamma$, $\Gamma\setminus\gamma$ admits at least 
one regularised edge. Therefore, the assumption that $\mathfrak{K}(\Gamma)$ 
admit at least one regularised edge is indeed satisfied in our situation. 
\end{remark}

\begin{example} 
\label{ex:full_normal_empty} 
We illustrate the lemma and the notions of full, normal and empty subgraphs 
used in its proof on Example~\ref{ex_suddivergence}: 
\begin{itemize}
\item 	for $\Gamma_0(\fa)$, all $\gamma\in\sFs\cup\set{\Gamma}$ are full;
\item 	for $\Gamma_0(\fb)$, $\Gamma$ and $\gamma_2$ are full, while $\gamma_1$ 
is empty;
\item 	for $\Gamma_0(\fc)$, $\Gamma$ is full, while $\gamma_2$ and $\gamma_1$ 
are empty;
\item 	for $\Gamma_0(\fd)$, $\Gamma$ is normal, and the other graphs are empty;
\item 	for $\Gamma_0(\fe)$, $\gamma_2$ is normal, and the other graphs are 
empty. 
\end{itemize}
The first three cases lead to $\eta_{\geqs}(v) \leqs 0$, since there is no 
normal subgraph. The last two cases lead to $\eta_{\geqs}(v) > 0$, since there 
is no full subgraph (compare with~\tabref{tab:recurrence}). 

Consider now the case where the Hepp tree is of the form 
\begin{equation}
\label{eq:Hepp_ex_full_normal_empty} 
T = 
\raisebox{10mm}{
\begin{tikzpicture}[>=stealth',main 
node/.style={draw,circle,fill=white,minimum size=1pt,inner sep=1pt},
xscale=0.8,yscale=0.8,baseline=-1.5cm]


\node[main node,semithick,blue,fill=white,
label={[blue,xshift=0cm,yshift=-0.6cm]$1$}] (1) at (0,-4) {};

\node[main node,semithick,blue,fill=white,
label={[blue,xshift=0cm,yshift=-0.6cm]$2$}] (2) at (1,-4) {};

\node[main node,semithick,blue,fill=white,
label={[blue,xshift=0cm,yshift=-0.6cm]$3$}] (3) at (2,-4) {};

\node[main node,semithick,blue,fill=white,
label={[blue,xshift=0cm,yshift=-0.6cm]$4$}] (4) at (3,-4) {};

\node[main node,semithick,blue,fill=white,
label={[blue,xshift=0cm,yshift=-0.6cm]$6$}] (6) at (4,-4) {};

\node[main node,semithick,blue,fill=white,
label={[blue,xshift=0cm,yshift=-0.6cm]$5$}] (5) at (5,-4) {};

\node[main node,semithick,ForestGreen,
label={[ForestGreen,xshift=0.25cm,yshift=-0.2cm]$\fa$}] (a) at (2.5,-1) {};

\node[main node,semithick,ForestGreen,
label={[ForestGreen,xshift=-0.25cm,yshift=-0.2cm]$\fb$}] (b) at (1,-2) {};

\node[main node,semithick,ForestGreen,
label={[ForestGreen,xshift=-0.25cm,yshift=-0.2cm]$\fc$}] (c) at (0.5,-3) {};

\node[main node,semithick,ForestGreen,
label={[ForestGreen,xshift=0.25cm,yshift=-0.2cm]$\fd$}] (d) at (4,-2) {};

\node[main node,semithick,ForestGreen,
label={[ForestGreen,xshift=-0.25cm,yshift=-0.2cm]$\fe$}] (e) at (3.5,-3) {};

\draw[semithick] (1) -- (c) -- (b) -- (a) -- (d) -- (e) -- (4);
\draw[semithick] (c) -- (2);
\draw[semithick] (b) -- (3);
\draw[semithick] (d) -- (5);
\draw[semithick] (e) -- (6);

\vspace{-10mm}
\end{tikzpicture}}
\end{equation}
The forest $\sF=\set{\gamma_1,\gamma_2}$ is again safe, and we have 
in particular $\Gamma_0(\fb) = \mathfrak{K}(\Gamma)$. This shows that for 
$\Gamma_0(\fb)$, $\Gamma$ is full, and $\gamma_1$ and $\gamma_2$ are empty. If 
$\rho = \frac25d$, then $\deg(\Gamma)=\deg(\gamma_2)=0$, and we are in a 
situation where Property~2.\ of Lemma~\ref{lem_sumeta1} applies: we have 
$\eta_{\geqs}(\fa) = \eta_{\geqs}(\fb) = 0$, while $\eta_{\geqs}(v)>0$ for 
$v\in\set{\fc,\fd,\fe}$.   
\end{example}

\begin{cor}
\label{cor:Fs} 
There exists a constant $K_1$, depending only on $\bar K_0$ and $d$, such that 
\begin{equation}
\label{eq:cor_Fs} 
 \sum_{\bn} \sup_{z\in D_{\bT}} 
\bigabs{(\sW^{K} \mathfrak{K}_{\sFs}\Gamma)(z)} \prod_{v\in T}2^{-(\rho+d)\bn_v}
\leqs 
\begin{cases}
K_1^{\abs{\sE}} \eps^{\deg(\Gamma)} \bigbrak{\log(\eps^{-1})}^\zeta
& \text{if $\deg\Gamma<0$\;,} \\
K_1^{\abs{\sE}} \bigbrak{\log(\eps^{-1})}^{1+\zeta} 
& \text{if $\deg\Gamma=0$\;,}  
\end{cases}
\end{equation} 
where $\zeta$ is the number of children of\/ $\Gamma$ in $\sFs$
having degree $0$. 
\end{cor}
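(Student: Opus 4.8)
The plan is to combine Lemma~\ref{lem:bound_W_Fs}, which reduces the left-hand side of~\eqref{eq:cor_Fs} to a constant times $C^{-\deg\Gamma}\sum_{\bn}\prod_{v\in T}2^{-f(v,\bn_v)}$, with the recursive estimate of Lemma~\ref{lem:recursive}, whose hypotheses have been verified for every inner node of $T$ in the discussion following Example~\ref{ex:recurrence}. First I would recall that the initialisation on the leaves sets $\alpha=\beta=\gamma=\bar\alpha=\bar\beta=0$, and that~\eqref{eq:def_eta_geqs} together with Lemma~\ref{lem_sumeta1} give the key identities $\alpha(v)-\beta(v)=-\eta_{\geqs}(v)$, $\bar\alpha(v)-\bar\beta(v)=\eta_{\geqs}(v)$ and $\bar\beta(v)=\eta_{\geqs}^\circ(v)\geqs\kappa>0$. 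The last inequality, valid because $\mathfrak{K}(\Gamma)$ has at least one regularised edge (see the Remark after Lemma~\ref{lem_sumeta1}), is exactly Condition~\eqref{eq:Shat_bound_conditions1}, while Condition~\eqref{eq:Shat_bound_conditions2} holds identically as already noted. Hence Lemma~\ref{lem:recursive} may be applied at every inner node, and iterating from the leaves up to the root $\varnothing$ yields a bound on $\widehat\cS_\varnothing(0)=\sum_{\bn}\prod_{v\in T}2^{-f(v,\bn_v)}$ of the form $2^{\alpha(\varnothing)n_\eps}(n_\eps)^{\gamma(\varnothing)}$ (using $\bn_\varnothing=0<n_\eps$), up to a multiplicative constant absorbed into $K_1^{\abs{\sE}}$.

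Next I would identify the final exponents. Applying~\eqref{eq:alpha_rec} at the root and unwinding the recursion via $\alpha(v)-\beta(v)=-\eta_{\geqs}(v)$, one gets $\alpha(\varnothing)=-\eta_{\geqs}(\varnothing)+\beta(\varnothing)=-\deg(\Gamma)+\beta(\varnothing)$, where by Lemma~\ref{lem_sumeta1}(i) $\eta_{\geqs}(\varnothing)=\deg(\Gamma)$. When $\deg\Gamma<0$ we have $\lambda(\varnothing)=\eta_{\geqs}(\varnothing)=\deg(\Gamma)<0$, so~\eqref{eq:beta_rec} gives $\beta(\varnothing)=0$ and~\eqref{eq:gamma_rec} gives $\gamma(\varnothing)=0$; thus $\widehat\cS_\varnothing(0)\lesssim 2^{-\deg(\Gamma)\,n_\eps}\leqs\eps^{\deg(\Gamma)}$ since $2^{-n_\eps}\leqs\eps<2^{-n_\eps+1}$ and $\deg(\Gamma)<0$. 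However, to obtain the stated power $[\log(\eps^{-1})]^\zeta$ I must be more careful: the factor $\gamma(\varnothing)$ need not vanish, because $\lambda$ can be zero at intermediate nodes. The point is to bound $\gamma(\varnothing)$, which by~\eqref{eq:gamma_rec} equals the number of inner vertices $v$ of $T$ with $\lambda(v)=0$, equivalently (using~\eqref{eq:def_eta} and $\beta\geqs0$) the number of $v$ with $\eta_{\geqs}(v)\leqs 0$ and $\beta(v)=0$. By Lemma~\ref{lem_sumeta1}(iii), along any branching at least one of the two offspring subtrees has $\eta_{\geqs}(w)>0$ for all $w$ above it, so the vertices with $\eta_{\geqs}\leqs0$ lie on a single root-to-leaf chain; and by Lemma~\ref{lem_sumeta1}(ii) the only way $\eta_{\geqs}(v)$ can equal $\deg(\Gamma)$ for $v>\varnothing$ is that $\Gamma$ has a child of degree $0$. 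A short combinatorial argument along this chain then shows that the number of nodes contributing a $+1$ to $\gamma$ is at most $\zeta$, the number of degree-$0$ children of $\Gamma$; this uses that degrees of divergent subdiagrams are strictly increasing in the number of edges (Lemma~\ref{lem:subdiagrams}, Remark~\ref{rem:deg_gamma}), so a degree-$0$ subdiagram can be "entered" only once.

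For the case $\deg\Gamma=0$ the same analysis applies with $\lambda(\varnothing)=0$: now~\eqref{eq:alpha_rec}--\eqref{eq:gamma_rec} give $\alpha(\varnothing)=\sum_i\alpha(w_i)$, $\beta(\varnothing)=0$ and $\gamma(\varnothing)=\sum_i\gamma(w_i)+1$, and one checks inductively (again using Lemma~\ref{lem_sumeta1}) that $\alpha(\varnothing)=0$, so that $\widehat\cS_\varnothing(0)\lesssim n_\eps^{\,1+\zeta}\lesssim[\log(\eps^{-1})]^{1+\zeta}$, the extra $+1$ coming from the root itself. Collecting these estimates, multiplying by the prefactor $\bar K_0^{\abs{\sE}}C^{-\deg\Gamma}$ from Lemma~\ref{lem:bound_W_Fs} (and noting $C^{-\deg\Gamma}$ is bounded when $\deg\Gamma<0$ since $\deg\Gamma$ is bounded below, and equals $1$ when $\deg\Gamma=0$), and redefining $K_1$ to absorb all constants, gives~\eqref{eq:cor_Fs}. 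The main obstacle I anticipate is precisely the bookkeeping of the logarithmic exponent: verifying carefully that the $\lambda(v)=0$ nodes sit on one chain and that their number is controlled by $\zeta$, i.e.\ the combinatorial argument resting on parts (ii) and (iii) of Lemma~\ref{lem_sumeta1} together with the monotonicity of degrees in Lemma~\ref{lem:subdiagrams}.
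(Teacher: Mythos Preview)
Your overall strategy is the paper's: reduce to $\widehat\cS_\varnothing(0)$ via Lemma~\ref{lem:bound_W_Fs}, apply the recursion of Lemma~\ref{lem:recursive}, and read off $\alpha(\varnothing)$ and $\gamma(\varnothing)$. However, two steps in your execution are incorrect.

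First, the identity $\lambda(\varnothing)=\eta_{\geqs}(\varnothing)$ is false in general. From~\eqref{eq:def_eta} and~\eqref{eq:def_eta_geqs} one gets $\lambda(v)=\eta_{\geqs}(v)+\sum_{w_i\in\sO(v)}\alpha(w_i)$ (this is~\eqref{eq:eta+beta} in the paper), and the $\alpha(w_i)$ need not vanish. So you cannot conclude $\beta(\varnothing)=0$ from $\deg\Gamma<0$ directly. The paper instead rewrites~\eqref{eq:alpha_rec} as $\alpha(v)=\max\bigl\{-\eta_{\geqs}(v),\sum_i\alpha(w_i)\bigr\}$ and proves by induction, using Property~3 of Lemma~\ref{lem_sumeta1} to ensure at most one offspring contributes, that $\alpha(v)\leqs\max\bigl(\{0\}\cup\{-\eta_{\geqs}(w):w\geqs v\}\bigr)$. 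Only then does Property~1 give $\alpha(\varnothing)=-\deg(\Gamma)$.

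Second, your description ``$\gamma(\varnothing)$ equals the number of inner vertices $v$ with $\lambda(v)=0$'' contradicts~\eqref{eq:gamma_rec}: whenever $\lambda(v)<0$ the value of $\gamma$ is \emph{reset} to $0$, not merely not incremented. In particular, if $\lambda(\varnothing)<0$ then $\gamma(\varnothing)=0$ regardless of intermediate nodes, which is inconsistent with your own subsequent claim. The paper's argument for $\deg\Gamma<0$ is more delicate: it follows the unique chain $(\varnothing=w_0,w_1,\dots)$ on which $\eta_{\geqs}\leqs0$, observes that $\gamma(w_i)\neq0$ forces $\lambda(w_i)\geqs0$, hence $\alpha(w_i)=\alpha(w_{i+1})$, and that $\gamma$ is incremented only when $\alpha(w_i)=\alpha(w_{i+1})=-\eta_{\geqs}(w_i)$. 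Since $\alpha(\varnothing)=-\deg(\Gamma)$, increments occur only where $\eta_{\geqs}(w_i)=\deg(\Gamma)$, and Property~2 of Lemma~\ref{lem_sumeta1} then bounds the number of such nodes by $\zeta$. Your combinatorial sketch does not capture this reset/constancy mechanism.
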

\begin{proof}
The lower bound~\eqref{eq:eta_geqs_circ} on $\eta_{\geqs}^\circ$ shows that 
Condition~\eqref{eq:Shat_bound_conditions1} is satisfied, so that 
Lemma~\ref{lem:recursive} applies for all inner vertices of $T$. 
Combining~\eqref{eq:def_eta_geqs} with the induction 
relations~\eqref{eq:alpha_rec} and~\eqref{eq:beta_rec}, we obtain 
\begin{equation}
\label{eq:alpha_v_rec} 
 \alpha(v) = 
 \max\biggset{-\eta_{\geqs}(v), \sum_{w_i\in\sO(v)}\alpha(w_i)}\;.
\end{equation} 
We claim that in fact, we have 
\begin{equation}
\label{eq:alpha_v} 
 \alpha(v) \leqs 
 \max\bigpar{\set{0} \cup \setsuch{-\eta_{\geqs}(w)}{w\geqs v}}\;.
\end{equation} 
This relation is clearly true if $v$ has no offspring. We now proceed by 
induction, and assume that~\eqref{eq:alpha_v} holds for all $w_i\in\sO(v)$. 
If all $\alpha(w_i)$ vanish, \eqref{eq:alpha_v} trivially holds.
Property~3. of Lemma~\ref{lem_sumeta1} implies that at most one of the 
$\alpha(w_i)$, say $\alpha(w_1)$, can be strictly positive. Indeed, if $v$ has two offspring $w_1$ and $w_2$, then the property 
implies that there is at most one offspring, say $w_2$, such that $\eta_{\geqs}(w)>0$ 
for all $w\geqs w_2$. But then $\alpha(w_2) = 0$ by the induction assumption. 
Therefore,
$\alpha(v) = \max\set{-\eta_{\geqs}(v),\alpha(w_1)}$, and~\eqref{eq:alpha_v} 
follows from the induction assumption. 

The result is then a consequence of the fact that~\eqref{eq:sum_n_prod_v} is 
bounded by 
\begin{equation}
 \widehat\cS_\varnothing(0) \lesssim 2^{\alpha(\varnothing)n_\eps}
 n_\eps^{\gamma(\varnothing)}\;.
\end{equation} 
Indeed, we have $\alpha(\varnothing) = -\deg(\Gamma)$, as a consequence 
of Property~1.\ of Lemma~\ref{lem_sumeta1}, which implies 
\begin{equation}
 \max\setsuch{-\eta_{\geqs}(w)}{w\geqs\varnothing} = 
-\eta_{\geqs}(\varnothing) = -\deg(\Gamma)\;.
\end{equation} 
Therefore~\eqref{eq:alpha_v} yields $\alpha(\varnothing) \leqs -\deg(\Gamma)$, 
but by~\eqref{eq:alpha_v_rec} this is actually an equality, 
because~\eqref{eq:alpha_v_rec} implies that 
$\alpha(\varnothing) \geqs -\deg(\Gamma)$.
Since $\deg(\Gamma)$ is bounded below by a constant depending only on $d$, the term 
$C^{-\deg\Gamma}$ in~\eqref{eq:bound_W_Fs} can be incorporated into $K_1$. 

It remains to determine $\gamma(\varnothing)$. We first note 
that~\eqref{eq:def_eta_geqs} implies 
\begin{equation}
\label{eq:eta+beta} 
 \lambda(v) = \eta_{\geqs}(v) + \sum_{w_i\in\sO(v)} \alpha(w_i)\;.
\end{equation}  
In the case $\deg(\Gamma)=0$, we have $\alpha(v)=0$, and thus 
$\lambda(v)=\eta_{\geqs}(v)$ for all $v\in T$. By Property~3.\ of 
Lemma~\ref{lem_sumeta1}, at most one of the offspring of $v$, say $w_1$, 
satisfies $\eta_{\geqs}(w_1)=0$. 
Therefore, there exists at most one path $(w_0 = \varnothing, w_1, \dots, w_\zeta)$ 
starting at the root, such that $\eta_{\geqs}(w_i) = 0$, 
and thus $\lambda(w_i) = 0$, for each $w_i$ in the path.
On such a path, \eqref{eq:gamma_rec} shows that $\gamma(v)$ increases by $1$ 
at each step. Extending this path to any leaf, and using $\gamma = 0$ as initial value 
on the leaf, we obtain that $\gamma(\varnothing) = \zeta$.  
By Property~2.\ of 
Lemma~\ref{lem_sumeta1}, each $\Gamma_0(w_i)$ is of the form 
$\bigcup_{\bar\gamma_i}\mathfrak{K}(\bar\gamma_i)$, where the $\bar \gamma_i$ 
are 
\emph{not} descendents of a given $\gamma_i\in\sC(\Gamma)$ with vanishing 
degree. Since $\Gamma_0(w_{i+1})\subsetneq\Gamma_0(w_i)$ for each $i$, $\zeta$ 
is bounded by the number of these $\gamma_i$. 

In the case $\deg(\Gamma)<0$, consider the longest sequence 
$(w_0=\varnothing,w_1,\dots,w_{\zeta'})$ such that $w_{i+1} \in \sO(w_i)$ and 
$\eta_{\geqs}(w_i)\leqs 0$ for each $i$. Then Property~3.\ of 
Lemma~\ref{lem_sumeta1} implies that $\eta_{\geqs}(v)>0$ for all other $v\in 
T$, which yields $\alpha(v)=0$, $\lambda(v)>0$ and thus $\gamma(v)=0$ for 
those $v$. For the $w_i$, we get the induction relations 
\begin{align}
\lambda(w_i) &= \eta_{\geqs}(w_i) + \alpha(w_{i+1}) \;, \\
\alpha(w_i) &= \max\bigset{-\eta_{\geqs}(w_i),\alpha(w_{i+1})} 
\geqs \alpha(w_{i+1})\;, \\
\gamma(w_i) &= 
\begin{cases}
 0 & \text{if $\lambda(w_i)<0$\;,}\\
 \gamma(w_{i+1})+1 & \text{if $\lambda(w_i)=0$\;,}\\
 \gamma(w_{i+1}) & \text{if $\lambda(w_i)>0$\;,}
\end{cases}
\end{align}
with the convention that $\alpha(w_{\zeta'+1}) = \gamma(w_{\zeta'+1}) = 0$. 
Note that we have the implications 
\begin{equation}
 \gamma(w_i) \neq 0 
 \quad\Rightarrow\quad
 \lambda(w_i) \geqs 0 
 \quad\Leftrightarrow\quad 
 \alpha(w_{i+1}) \geqs -\eta_{\geqs}(w_i)
 \quad\Leftrightarrow\quad 
 \alpha(w_i) = \alpha(w_{i+1})\;.
\end{equation} 
In addition, $\gamma$ is incremented only if $\lambda(w_i)=0$, which happens if 
and only if $\alpha(w_i) = \alpha(w_{i+1}) = -\eta_{\geqs}(w_i)$. It follows 
that $\gamma(\varnothing)$ is equal to the length $\zeta\leqs\zeta'$ of the 
longest sequence $(w_0,\dots,w_{\zeta-1})$ such that 
$\eta_{\geqs}(w_i)=\deg(\Gamma)$ for all $i$. Property~2.\ of 
Lemma~\ref{lem_sumeta1} again implies that $\zeta$ is bounded by the number of 
children of $\Gamma$ having degree $0$.
\end{proof}

\begin{example}
Consider again the Hepp sector with tree $T$ as  
in~\eqref{eq:Hepp_ex_full_normal_empty} in Example~\ref{ex:full_normal_empty}. 
As we have seen, when $\rho=\frac25$, one has $\deg(\Gamma)=\deg(\gamma_2)=0$. 
Therefore, the bound~\eqref{eq:cor_Fs} has order $(\log(\eps^{-1}))^2$. 
\end{example}

Though we find that nontrivial powers of $\log(\eps^{-1})$ can occur, the 
following result shows that in our situation, these powers cannot exceed the 
value $2$. 

\begin{lemma} \label{lem:number_of_children}
If\/ $\Gamma = \Gamma(\tau,P)$ and $\tau$ is an \afull\ binary tree, then 
$\Gamma$ cannot have any children of degree $0$, i.e., $\zeta=0$. 
If\/ $\Gamma = \Gamma(\tau,P)$ and $\tau$ is a \full\ binary tree, then 
$\Gamma$ can have at most one child of degree $0$, i.e., $\zeta\leqs1$. 
\end{lemma}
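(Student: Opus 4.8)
The plan is to translate the statement about children of degree $0$ back into a statement about subtrees of $\tau$, using Lemma~\ref{lem:subdiagrams}, and then to exploit the rigidity of binary trees together with the degree formulas from Remark~\ref{rem:deg_gamma}. Recall that a child of $\Gamma=\Gamma(\tau,P)$ in the parent--child graph is a maximal proper divergent subdiagram $\gamma\subsetneq\Gamma$, and by Lemma~\ref{lem:subdiagrams} every such $\gamma$ has the form $\Gamma(\bar\tau,\bar P)$ where $\bar\tau\subsetneq\tau$ is an \afull\ binary subtree of negative degree with an even number of leaves not containing the root of $\tau$. Moreover $\tau$ must decompose as $\tau=\tau_0\curvearrowleft\bar\tau\curvearrowleft\tau_1$ with $\tau_0$ \afull\ and $\tau_1$ \full. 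The condition $\deg(\gamma)=0$ pins down the size of $\bar\tau$ exactly: by Remark~\ref{rem:deg_gamma}, if $\bar\tau$ has $2\bar m+1$ edges then $\deg(\Gamma(\bar\tau,\bar P)) = -\tfrac13 d + \tfrac{3\bar m+1}{2}(\rho-\rhocrit)$, which vanishes precisely when $\bar m = \tfrac13\bigl(\tfrac{2d}{3(\rho-\rhocrit)}-1\bigr)$; in particular all children of degree $0$ have the same number of leaves, namely $2\bar m+2$.

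First I would dispose of the \afull\ case. If $\tau$ itself is \afull, then a proper subtree $\bar\tau\subsetneq\tau$ that is itself \afull\ and does not contain the root forces, via the grafting decomposition, a writing $\tau = \tau_0\curvearrowleft\bar\tau\curvearrowleft\tau_1$ in which \emph{both} $\tau_0$ and $\bar\tau$ are \afull. But grafting an \afull\ tree onto the degree-$2$ vertex of another \afull\ tree produces a tree with \emph{two} vertices of degree $2$ that are not the root, hence not an \afull\ binary tree in the sense used here (a \full\ binary tree with exactly one missing edge). More carefully: an \afull\ tree has exactly one ``defect'' (one inner vertex with a single child, or the analogous combinatorial feature); composing two such along $\curvearrowleft$ yields two defects, contradicting the assumption that $\tau$ is \afull. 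So no divergent proper subtree of the required form exists, and $\zeta=0$.

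Next, for $\tau$ a \full\ binary tree, suppose for contradiction that $\Gamma$ has two distinct children $\gamma=\Gamma(\bar\tau,\bar P)$ and $\gamma'=\Gamma(\bar\tau',\bar P')$ of degree $0$. Since $\sG^-_{\Gamma,E}$ is a forest (the proposition following Lemma~\ref{lem:subdiagrams}) and $\gamma,\gamma'$ are both children of the same parent $\Gamma$, they are vertex-disjoint in $\Gamma$; correspondingly $\bar\tau$ and $\bar\tau'$ are disjoint subtrees of $\tau$, neither containing the root. Each has $2\bar m+2$ leaves, so together they account for $4\bar m+4$ leaves of $\tau$. The key point is then a counting/structure argument: $\tau$ being \full, it has $2p-2$ edges where $p$ is its number of leaves; writing $\tau$ relative to $\bar\tau$ and $\bar\tau'$, the remaining ``skeleton'' $\tau_{\mathrm{rest}}$ obtained by contracting both $\bar\tau$ and $\bar\tau'$ must still be a \full\ binary tree with at least one more leaf (coming from the structure outside the two subtrees, since $2$-connectedness of $\Gamma$ requires at least one leaf of $\tau_0$-type and one of $\tau_1$-type to be paired for \emph{each} child). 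Then $\deg(\Gamma) = \deg(\tau_{\mathrm{rest}}\text{-part}) + \deg(\gamma) + \deg(\gamma') = \deg(\tau_{\mathrm{rest}}\text{-part})$, and the degree formula for a \full\ binary tree with strictly more leaves than $2\bar m+2$ forces $\deg(\Gamma) > 0$ by the strict monotonicity of the degree in the number of edges noted in Remark~\ref{rem:deg_gamma}. But $\Gamma\in\sG^-_\Gamma$ has negative degree, a contradiction. Hence $\zeta\leqs1$.

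The main obstacle I anticipate is making the combinatorial bookkeeping of the second paragraph fully rigorous: one must show that after contracting two disjoint divergent \afull\ subtrees, what remains genuinely has strictly positive degree, which amounts to carefully tracking how many leaves and edges survive and invoking that $\Gamma$ being $2$-connected forces extra pairings (hence extra structure) around \emph{each} extracted subtree, not just one. The clean way to organise this is to use the additive decomposition of $\deg(\Gamma)$ over the parent and its children exactly as in the proof of Lemma~\ref{lem_sumeta1} (the ``full/empty/normal'' accounting), together with the fact, recorded in Remark~\ref{rem:deg_gamma} and used in the proof of Lemma~\ref{lem_sumeta1}, that strict proper subdiagrams of $\Gamma$ arising from \afull\ trees have degree strictly increasing in their number of edges and in particular strictly greater than $-d/3$. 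The \afull\ case, by contrast, should be essentially immediate from the impossibility of composing two \afull\ trees along $\curvearrowleft$ while staying \afull.
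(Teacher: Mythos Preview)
Your argument for the \afull\ case contains a genuine error. You claim that grafting two \afull\ trees via $\curvearrowleft$ produces a tree with two defects, hence not \afull. This is false: by definition, $\tau_0 \curvearrowleft \bar\tau$ attaches the root of $\bar\tau$ to the defect vertex of $\tau_0$, which \emph{fills} that defect. The result has exactly one defect (the one inherited from $\bar\tau$), and is therefore \afull. Concretely, $\RScombFour = \RScombTwo \curvearrowleft \RScombTwo$ exhibits precisely the situation you claim impossible: an \afull\ tree containing an \afull\ proper subtree not through the root. So the structural obstruction you invoke does not exist. The paper's argument here is a one-line degree comparison: by Remark~\ref{rem:deg_gamma}, the degree of an \afull\ tree is strictly increasing in its number of edges, so if $\bar\tau\subsetneq\tau$ are both \afull\ then $\deg(\gamma)<\deg(\Gamma)\leqs 0$, whence $\deg(\gamma)\neq 0$.

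For the \full\ case, your strategy is in the right spirit but unnecessarily indirect, and the claim that the contracted skeleton is ``a \full\ binary tree with strictly more leaves than $2\bar m+2$'' is not correct as stated (contracting two \afull\ subtrees leaves two degree-$2$ inner vertices, not a \full\ tree). The paper avoids any contraction by a direct edge count. Writing $\tau$ with $2m$ edges and each degree-$0$ child coming from an \afull\ subtree with $2\bar m+1$ edges, the condition $\deg(\Gamma)\leqs 0 = \deg(\gamma)$ translates via Remark~\ref{rem:deg_gamma} into $m\leqs 2\bar m+1$. Since the $\zeta$ subtrees are pairwise edge-disjoint and none contains the root (hence the root's two edges lie outside all of them), one has $\zeta(2\bar m+1)<2m\leqs 2(2\bar m+1)$, forcing $\zeta<2$. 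This replaces your skeleton argument entirely and sidesteps the bookkeeping you flagged as the main obstacle.
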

\begin{proof}
Let $\tau$ be \afull\ with $2m+1$ edges, and assume that $\Gamma$ contains a 
subdiagram $\gamma$ with $\deg(\gamma)=0$. By Lemma~\ref{lem:subdiagrams}, 
$\gamma$ is of the form $\Gamma(\bar\tau,\bar P)$ with $\bar\tau$ an \afull\ 
binary tree having $2\bar m+1 < 2m+1$ edges. By Remark~\ref{rem:deg_gamma}, we 
necessarily have $\deg(\gamma) < \deg(\Gamma)$, so that $\deg(\gamma)=0$ would 
imply $\deg(\Gamma)>0$, which is not permitted. 

If $\tau$ is \full\ with $2m$ edges, then any divergent subdiagram $\gamma$ 
results from an \afull\ tree $\bar\tau$ with $2\bar m+1 < 2m$ edges. By 
Remark~\ref{rem:deg_gamma}, the condition $\deg(\Gamma) \leqs 0 = \deg(\gamma)$ 
yields $m \leqs 2\bar m+1$. If $\Gamma$ contains $\zeta$ non-overlapping 
divergent subdiagrams of degree $0$, they must all have the same number of 
edges, and we obtain $\zeta(2\bar m+1) < 2m \leqs 2(2\bar m+1)$, yielding 
$\zeta < 2$. 
\end{proof}


\subsection{The case $\sFu \neq \varnothing$}
\label{ssec:bounds_unotempty} 

We turn now to the case $\sFu \neq \varnothing$, where we can write 
\begin{equation}
\label{eq:bounds_dsmall1} 
 \hat{\sR}_{[\sFs, \sFs \cup \sFu]} \Gamma
 = (-1)^{\abs{\sFs}} \mathfrak{K}_{\sFs}
 \prod_{\gamma\in\sFu} (\id-\hat\sC_{\gamma})\Gamma \;.
\end{equation} 
We define as before subgraphs $\mathfrak{K}(\gamma) = (\sV_\gamma,\sE_\gamma)$ 
of $\mathfrak{K}_{\sFs}$, except that $\sC(\gamma)$ now denotes the set of 
children of $\gamma$ in $\sFs\cup\set{\gamma}$. For any $\gamma\in\sFu$, we 
denote by $\gamma^\uparrow$ the inner vertex of $T$ such that 
$\sigma(\sV_\gamma) = \setsuch{v\in\sV}{v\geqs\gamma^\uparrow}$, and 
\begin{equation}
 \gamma^{\uparrow\uparrow}
 = \sup\bigsetsuch{e^\uparrow}{e\in\sE_{\sA(\gamma)} \text{ and } e \sim 
\mathfrak{K}(\gamma)}\;.
\end{equation} 
Recall that $\sA(\gamma)$ denotes the parent of $\gamma$ in $\sF$, while $\sim$ 
denotes adjacency. In other words, we are considering edges in 
$\sE_{\sA(\gamma)}$ which are not in $\sE_\gamma$. It follows that we 
necessarily have $\gamma^\uparrow > \gamma^{\uparrow\uparrow}$.
Finally, we set 
\begin{equation}
 N(\gamma) = 1 + \intpart{-\deg(\gamma)}\;.
\end{equation} 
Lemma~\ref{lem:subdiagrams} implies that all subdivergences $\gamma$ 
have a degree $\deg(\gamma) > -\frac{d}3$ (cf.~Remark~\ref{rem:deg_gamma}). 
Thus in space dimensions 
$d\leqs 3$, $N(\gamma)$ is always equal to $1$, while for $d\in\set{4,5}$ it 
can take the value $2$, and is always equal to $2$ when $\rho$ is sufficiently 
close to $\rhocrit$. In the latter case, the operator $\hat\sC_\gamma$ produces 
terms with nontrivial node labels, which are here essential for the 
renormalisation.

\begin{example}
\label{ex:extract_unsafe} 
Consider again the diagram of Example~\ref{ex:C_gamma}, with the forest $\sF = 
\set{\gamma_1,\gamma_2}$. Consider now a Hepp sector $D_\bT$ such that $T$ has 
the structure given in Figure~\ref{fig_unsafeforest}. 
In this example, $\gamma_1$ is unsafe, $\gamma_2$ is safe, and we have 
$\gamma_1^\uparrow = \fd$ and $\gamma_1^{\uparrow\uparrow} = \fb$. 

If $d\leqs 3$, the extraction operation $\hat\sC_{\gamma_1}\Gamma$ has the same 
form as in~\eqref{eq:Cgamma2} in Example~\ref{ex:C_gamma}. If $d\in\set{4,5}$, 
it becomes 
\begin{align} \label{exampleTaylor}
 \hat\sC_{\gamma_1}\Gamma
 ={}&
 \raisebox{-8.8mm}{
\tikz{
 \path[use as bounding box] (-2.1, -1) rectangle (2.1,1);
 \node[rootnode] (1) at (1.8,-0.6) {};
 \node[blacknode] (2) at (1.8,0.6) {};
 \node[blacknode] (3) at (0.6,0.6) {};
 \node[blacknode] (4) at (-0.6,0) {};
 \node[blacknode] (5) at (-1.8,0) {};
 \node[blacknode] (6) at (0.6,-0.6) {};
 \path[blue!50] (1) ++(0.2,-0.2) node {\scriptsize{1}};
 \path[blue!50] (2) ++(0.2,0.2)  node {\scriptsize{2}};
 \path[blue!50] (3) ++(0,0.25)   node {\scriptsize{3}};
 \path[blue!50] (4) ++(-0.1,0.3) node {\scriptsize{4}};
 \path[blue!50] (5) ++(-0.2,0)   node {\scriptsize{5}};
 \path[blue!50] (6) ++(0,-0.25)  node {\scriptsize{6}};
 \draw[] (4) edge [Kedge,out=150,in=30] (5);
 \draw[] (4) edge [GKepsedge,out=-150,in=-30] (5);
 \draw[] (4) edge [Kedge,out=-60,in=-180] (6);
 \draw[] (6) edge [Kedge,out=-30,in=-150] (1);
 \draw[] (1) edge [GKKedge,out=60,in=-60,->] (2);
 \draw[] (2) edge [Kedge,out=150,in=30] (3);
 \draw[] (3) edge [Kedge,out=180,in=60] (4);
 \draw[] (1) edge [GKepsedge,out=120,in=-120] (2);
 \draw[GKepsedge] (6) -- (3); 
 \drawbox;
 } 
 }
+ \sum_{i=1}^d 
 \raisebox{-8.8mm}{
\tikz{
 \path[use as bounding box] (-2.1, -1) rectangle (2.1,1);
 \node[rootnode] (1) at (1.8,-0.6) {};
 \node[blacknode] (2) at (1.8,0.6) {};
 \node[blacknode] (3) at (0.6,0.6) {};
 \node[blacknode] (4) at (-0.6,0) {};
 \node[blacknode] (5) at (-1.8,0) {};
 \node[blacknode] (6) at (0.6,-0.6) {};
 \path[blue!50] (1) ++(0.2,-0.2) node {\scriptsize{1}};
 \path[blue!50] (2) ++(0.2,0.2)  node {\scriptsize{2}};
 \path[blue!50] (3) ++(0,0.25)   node {\scriptsize{3}};
 \path[blue!50] (4) ++(-0.1,0.3) node {\scriptsize{4}};
 \path[blue!50] (5) ++(-0.2,0)   node {\scriptsize{5}};
 \path[blue!50] (6) ++(0,-0.25)  node {\scriptsize{6}};
 \path[Green] (4) ++(0.3,0)   node {\scriptsize{$e_i$}};
 \draw[] (4) edge [Kedge,out=150,in=30] (5);
 \draw[] (4) edge [GKepsedge,out=-150,in=-30] (5);
 \draw[] (4) edge [Kedge,out=-60,in=-180] (6);
 \draw[] (6) edge [Kedge,out=-30,in=-150] (1);
 \draw[] (1) edge [GKKedge,out=60,in=-60,->] (2);
 \draw[] (2) edge [Kedge,out=150,in=30] (3);
 \draw[] (3) edge [Kedge,out=180,in=60] node[Green,above] 
{\scriptsize{$e_i$}} (4);
 \draw[] (1) edge [GKepsedge,out=120,in=-120] (2);
 \draw[GKepsedge] (6) -- (3); 
 \drawbox;
 } 
 } \\
 &{} -  \sum_{i=1}^d
  \raisebox{-8.8mm}{
\tikz{
 \path[use as bounding box] (-2.1, -1) rectangle (2.1,1);
 \node[rootnode] (1) at (1.8,-0.6) {};
 \node[blacknode] (2) at (1.8,0.6) {};
 \node[blacknode] (3) at (0.6,0.6) {};
 \node[blacknode] (4) at (-0.6,0) {};
 \node[blacknode] (5) at (-1.8,0) {};
 \node[blacknode] (6) at (0.6,-0.6) {};
 \path[blue!50] (1) ++(0.2,-0.2) node {\scriptsize{1}};
 \path[blue!50] (2) ++(0.2,0.2)  node {\scriptsize{2}};
 \path[blue!50] (3) ++(0,0.25)   node {\scriptsize{3}};
 \path[blue!50] (4) ++(-0.1,0.3) node {\scriptsize{4}};
 \path[blue!50] (5) ++(-0.1,0.2)   node {\scriptsize{5}};
 \path[blue!50] (6) ++(0,-0.25)  node {\scriptsize{6}};
 \path[Green] (5) ++(-0.1,-0.2)   node {\scriptsize{$e_i$}};
 \draw[] (4) edge [Kedge,out=150,in=30] (5);
 \draw[] (4) edge [GKepsedge,out=-150,in=-30] (5);
 \draw[] (4) edge [Kedge,out=-60,in=-180] node[Green,below] 
{\scriptsize{$e_i$}} (6);
 \draw[] (6) edge [Kedge,out=-30,in=-150] (1);
 \draw[] (1) edge [GKKedge,out=60,in=-60,->] (2);
 \draw[] (2) edge [Kedge,out=150,in=30] (3);
 \draw[] (3) edge [Kedge,out=180,in=60] (4);
 \draw[] (1) edge [GKepsedge,out=120,in=-120] (2);
 \draw[GKepsedge] (6) -- (3); 
 \drawbox;
 } 
 }\;, 
 \label{eq:example_Fu} 
\end{align}
where edge and node decorations have been indicated in green ($e_i$ being the 
$i$th canonical basis vector). 
Note that this produces a factor 
\begin{equation}
 K_\rho(z_4-z_3) \Bigbrak{K_\rho(z_6-z_5) - K_\rho(z_6-z_4) 
 + \sum_{i=1}^d\partial_i K_\rho(z_6-z_4)(z_4-z_5)^{e_i}}
\end{equation} 
in the integrand giving the value of $(\id-\hat\sC_{\gamma_1})\Gamma$, since the 
terms proportional to $(z_4-z_{v_\star})^{e_i}$ stemming from the second term 
in~\eqref{eq:example_Fu} are killed because $v_\star = 4$. The point of the 
whole procedure is that the term in square brackets is bounded by a positive 
power of $\norm{z_4-z_5}_\fraks$, which is much smaller than 
$\norm{z_6-z_5}_\fraks$ owing to the fact that $\gamma_1$ is unsafe. 
\end{example}

\begin{figure}[tb]
\begin{center}
\begin{tikzpicture}[>=stealth',main 
node/.style={draw,circle,fill=white,minimum size=1pt,inner sep=1pt},
xscale=0.8,yscale=0.8,baseline=-1.5cm]


\node[main node,semithick,blue,fill=white,
label={[blue,xshift=0cm,yshift=-0.6cm]$4$}] (4) at (0,-4) {};

\node[main node,semithick,blue,fill=white,
label={[blue,xshift=0cm,yshift=-0.6cm]$5$}] (5) at (1,-4) {};

\node[main node,semithick,blue,fill=white,
label={[blue,xshift=0cm,yshift=-0.6cm]$1$}] (1) at (2,-4) {};

\node[main node,semithick,blue,fill=white,
label={[blue,xshift=0cm,yshift=-0.6cm]$2$}] (2) at (3,-4) {};

\node[main node,semithick,blue,fill=white,
label={[blue,xshift=0cm,yshift=-0.6cm]$6$}] (6) at (4,-4) {};

\node[main node,semithick,blue,fill=white,
label={[blue,xshift=0cm,yshift=-0.6cm]$3$}] (3) at (5.5,-4) {};

\node[main node,semithick,ForestGreen,
label={[ForestGreen,xshift=0.25cm,yshift=-0.2cm]$\fa$}] (a) at (3.5,0) {};

\node[main node,semithick,ForestGreen,
label={[ForestGreen,xshift=-0.7cm,yshift=-0.25cm]$\gamma_1^{\uparrow\uparrow}
 = \fb$ } ] (b) at (2.5,-1) {};

\node[main node,semithick,ForestGreen,
label={[ForestGreen,xshift=-0.25cm,yshift=-0.2cm]$\fc$}] (c) at (1.5,-2) {};

\node[main node,semithick,ForestGreen,
label={[ForestGreen,xshift=-0.7cm,yshift=-0.25cm]$\gamma_1^\uparrow = \fd$}] 
(d) at (0.5,-3) {};

\node[main node,semithick,ForestGreen,
label={[ForestGreen,xshift=0.25cm,yshift=-0.2cm]$\fe$}] (e) at (2.5,-3) {};

\draw[semithick] (4) -- (d) -- (c) -- (b) -- (a) -- (3);
\draw[semithick] (c) -- (e) -- (1);
\draw[semithick] (e) -- (2);
\draw[semithick] (d) -- (5);
\draw[semithick] (b) -- (6);

\vspace{-10mm}

\end{tikzpicture}
\hspace{10mm}
\begin{tabular}{|c|c|c|}
\hline
$e$ & $\sigma(e)$ & $e^\uparrow$ \\
\hline 
$\textcolor{violet}{(1,2)}$ & $(1,2)$ & $\fe$  \\
$\textcolor{red}{(1,2)}$    & $(1,2)$ & $\fe$  \\
$(2,3)$                     & $(2,3)$ & $\fa$  \\
$(3,1)$                     & $(6,1)$ & $\fa$  \\
$(3,4)$                     & $(3,4)$ & $\fa$  \\
$(5,6)$                     & $(5,6)$ & $\fb$  \\
$\textcolor{red}{(3,6)}$    & $(3,6)$ & $\fa$  \\
$(4,5)$                     & $(4,5)$ & $\fd$  \\
$\textcolor{red}{(4,5)}$    & $(4,5)$ & $\fd$  \\
\hline
\end{tabular}
\end{center}
\vspace{-3mm}
 \caption[]{A tree $T$ defining a Hepp sector $D_\bT$ for the diagram 
$\hat\sC_{\gamma_2}\Gamma$, cf.~\eqref{eq:Cgamma2}. The table shows, for 
each edge, its image $\sigma(e)=(\sigma(e)_-,\sigma(e)_+)$, and the ancestor 
$e^\uparrow$.}
 \label{fig_unsafeforest}
\end{figure}

\begin{lemma}
\label{lem:bound_W_Fu} 
There exists a constant $\bar K_0$ depending only on the kernels 
$K_{\mathfrak{t}}$ such that 
\begin{equation}
\label{eq:bound_W_Fu} 
 \sum_{\bn} \sup_{z\in D_{\bT}} 
\bigabs{(\sW^{K} \hat{\sR}_{[\sFs, \sFs \cup \sFu]}\Gamma)(z)} \prod_{v\in 
T}2^{-(\rho+d)\bn_v}
\leqs \bar K_0^{\abs{\sE}} C^{-\deg\Gamma}
\sum_{\bn} \prod_{v\in T}2^{-f(v,\bn_v)}\;,
\end{equation} 
where
\begin{equation}
 f(v,\bn_v) = \eta^\circ(v)\bn_v + \eta^\eps(v) \bigbrak{\bn_v\wedge n_\eps} 
+ \sum_{\gamma\in\sFu} N(\gamma) \bigbrak{\indicator{\gamma^\uparrow}(v) - 
\indicator{\gamma^{\uparrow\uparrow}}(v)}\bn_v\;,
\label{eq:eta_unsafe} 
\end{equation}
with the same $\eta^\circ(v)$ and $\eta^\eps(v)$ as in~\eqref{eq:eta_safe}. 
\end{lemma}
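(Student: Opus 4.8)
The plan is to prove \eqref{eq:bound_W_Fu} by estimating the integrand $\sW^K \hat{\sR}_{[\sFs,\sFs\cup\sFu]}\Gamma$ pointwise on each Hepp sector $D_\bT$, in the same spirit as Lemma~\ref{lem:bound_W_Fs}, but now taking advantage of the Taylor-expansion cancellations produced by the factors $(\id-\hat\sC_\gamma)$ for $\gamma\in\sFu$. Starting from~\eqref{eq:bounds_dsmall1}, I would first treat the $\mathfrak{K}_{\sFs}$-part exactly as in the proof of Lemma~\ref{lem:bound_W_Fs}: uniformly over $z\in D_\bT$ the product of kernels of the extracted-and-contracted diagram is bounded by $K_0^{\abs{\sE}} C^{-\deg\Gamma}\prod_{e\in\sE^\circ}2^{-\bn(e^\uparrow)\deg(e)}\prod_{e\in\sE^\eps}2^{-(\bn(e^\uparrow)\wedge n_\eps)\deg(e)}$, enlarging $K_0$ to absorb $C^{-\sum_e\deg(e)}$ into $C^{-\deg\Gamma}$ as before. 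Rewriting the two products via $-\log_2$ yields the $\eta^\circ(v)\bn_v + \eta^\eps(v)[\bn_v\wedge n_\eps]$ contributions to $f(v,\bn_v)$ in~\eqref{eq:eta_unsafe}, just as in Lemma~\ref{lem:bound_W_Fs}.

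The genuinely new ingredient is the contribution of the unsafe subdiagrams. For each $\gamma\in\sFu$, the operator $(\id-\hat\sC_\gamma)$ replaces a product of kernels $\prod_{e\in\partial\sE_\gamma}K_{\mathfrak t(e)}(\dots)$ attached to $\gamma$ by its Taylor remainder of order $N(\gamma)=1+\intpart{-\deg(\gamma)}$ around the contraction point; as illustrated in Example~\ref{ex:extract_unsafe}, this remainder is bounded by a constant times $\norm{z_{e_+}-z_{e_-}}_\fraks^{N(\gamma)}$ times the supremum of the $N(\gamma)$-th derivatives of the kernels, evaluated at points whose separations are of scale $2^{-\gamma^{\uparrow\uparrow}}$ (the external scale of $\gamma$ in $D_\bT$), while the factor $\norm{z_{e_+}-z_{e_-}}_\fraks$ is of scale $2^{-\gamma^\uparrow}$ (the internal scale). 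Because $\gamma$ is unsafe, $\gamma^\uparrow > \gamma^{\uparrow\uparrow}$, so this substitution gains a factor of order $2^{-N(\gamma)(\gamma^\uparrow-\gamma^{\uparrow\uparrow})}$ compared with the naive bound; using the standard estimate $\abs{\partial^k K_{\mathfrak t}(z)}\leqs \norm{K_{\mathfrak t}}\norm{z}_\fraks^{\deg\mathfrak t-\abss k}$ and $\abs{K^\eps_{\mathfrak t}(z)}\leqs C_{\mathfrak t}\norm{K_{\mathfrak t}}(\norm z_\fraks\vee\eps)^{\deg\mathfrak t}$, the change in the power of the kernels is exactly accounted for by adding $\eta^\eps$ and $\eta^\circ$ contributions of the reconnected edges (which is already subsumed in the first two terms of~\eqref{eq:eta_unsafe}, since $\hat\sC_\gamma$ reconnects the external edges to the first vertex of $\gamma$ and hence changes their ancestors from $\gamma^{\uparrow\uparrow}$-type to $\gamma^\uparrow$-type), and the extra gain contributes precisely the term $N(\gamma)[\indicator{\gamma^\uparrow}(v)-\indicator{\gamma^{\uparrow\uparrow}}(v)]\bn_v$. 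Summing $-\log_2$ of all the scale factors over the inner vertices $v$ of $T$ and collecting terms vertex by vertex then gives $f(v,\bn_v)$ as in~\eqref{eq:eta_unsafe}. The depth-constraint built into $\hat\sC_\gamma$ (it vanishes unless all edges adjacent to $\gamma$ have smaller depth than those of $\gamma$), combined with the fact established after~\eqref{eq:sFu} that being safe/unsafe depends only on $T$ and not on $\bn$, ensures that on $D_\bT$ the nonzero terms are exactly those for which the scale inequalities $\gamma^\uparrow>\gamma^{\uparrow\uparrow}$ hold, so no additional bookkeeping is needed.

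The constant $\bar K_0$ is obtained as the maximum of $K_0$ and the finitely many constants $C_{\mathfrak t}\norm{K_{\mathfrak t}}$ together with the $\bigOrderK{1}$ combinatorial factors coming from the at most $N(\gamma)\leqs 2$ Taylor terms per unsafe $\gamma$ and the choice of first vertex in each $\gamma$; since the number of such choices is bounded by $\abs{\sV_\gamma}!\leqs\abs{\sE}!$ but the Taylor order is at most $2$, one in fact only picks up a factor bounded by $(\text{const})^{\abs{\sE}}$, which is absorbed into $\bar K_0^{\abs{\sE}}$. The remaining $C^{-\deg\Gamma}$ is handled exactly as in Lemma~\ref{lem:bound_W_Fs}. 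The main obstacle is the careful pointwise bookkeeping of \emph{which} vertex each reconnected edge acquires as its common ancestor in $\mathfrak{K}_{\sFs}\Gamma$ after all contractions in $\sFs\cup\sFu$ are performed — in particular verifying that the ancestor of an edge reconnected by $\hat\sC_\gamma$ indeed jumps from $\gamma^{\uparrow\uparrow}$ to $\gamma^\uparrow$, and that the indicator structure $[\indicator{\gamma^\uparrow}-\indicator{\gamma^{\uparrow\uparrow}}]$ correctly captures the net effect; this is a purely combinatorial verification on the tree $T$ and the bijection $\sigma$, following the blueprint of~\cite[Section~3.2]{Hairer_BPHZ} but with the non-Euclidean scaling $\fraks$ and the distinction $\sE^\circ$ versus $\sE^\eps$ tracked throughout.
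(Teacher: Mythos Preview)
Your overall strategy matches the paper's: repeat the proof of Lemma~\ref{lem:bound_W_Fs} for the $\mathfrak{K}_{\sFs}$-part, and use a Taylor-remainder estimate on each factor $(\id-\hat\sC_\gamma)$, $\gamma\in\sFu$, to produce the extra gain $2^{N(\gamma)[\bn_{\gamma^{\uparrow\uparrow}}-\bn_{\gamma^\uparrow}]}$, which becomes the indicator term in~\eqref{eq:eta_unsafe}. The paper simply invokes \cite[Lemma~3.8]{Hairer_BPHZ} for this step.

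However, your second paragraph contains a conceptual muddle. You write that the reconnection by $\hat\sC_\gamma$ ``changes their ancestors from $\gamma^{\uparrow\uparrow}$-type to $\gamma^\uparrow$-type'' and that this is what feeds into the $\eta^\circ,\eta^\eps$ terms; you then flag as the ``main obstacle'' the bookkeeping of which ancestor each reconnected edge acquires. This is not how the argument works. The lemma states explicitly that $\eta^\circ(v)$ and $\eta^\eps(v)$ are \emph{the same} as in~\eqref{eq:eta_safe}, i.e.\ computed with the bijection $\sigma$ coming from $\mathfrak{K}_{\sFs}\Gamma$, \emph{not} from $\mathfrak{K}_{\sFs\cup\sFu}\Gamma$. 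The whole point of the Taylor estimate is that the remainder of order $N(\gamma)$ is bounded by
\[
K_1\,2^{N(\gamma)[\bn_{\gamma^{\uparrow\uparrow}}-\bn_{\gamma^\uparrow}]}
\prod_{\substack{e\in\sE_{\sA(\gamma)}\\ e\sim\mathfrak{K}(\gamma)}}
\norm{z_{\sigma(e_+)}-z_{\sigma(e_-)}}_\fraks^{\deg(e)}\;,
\]
with the kernels evaluated at the \emph{original} safe-$\sigma$ positions. Hence the $\eta$'s are unchanged from Lemma~\ref{lem:bound_W_Fs}, the gain factor is purely additive, and no ancestor-jumping bookkeeping is required at all. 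Once you drop that detour, your argument is exactly the paper's.
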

\begin{proof}
The difference with the proof of Lemma~\ref{lem:bound_W_Fs} is the presence of 
the factors $(\id-\hat\sC_{\gamma})$ with $\gamma$ unsafe 
in~\eqref{eq:bounds_dsmall1}. These produce a factor
\begin{equation}
\prod_{\substack{e\in\sE_{\sA(\gamma)} \\ e\sim\mathfrak{K}(\gamma)}}
\Bigbrak{ K_{\mathfrak{t}(e)} (z_{\sigma(e_+)}-z_{\sigma(e_-)}) 
 - \sum_{\abss{\ell} < N(\gamma)} 
 \frac{1}{\ell!}(z_{\sigma(e'_+)}-z_{\sigma(v_\star)})^\ell
 \partial^\ell K_{\mathfrak{t}(e)} (z_{\sigma(v_\star)}-z_{\sigma(e'_-)})}\;,
\end{equation} 
where $e'$ is the image of $e$ under $\hat\sC_\gamma$. 
By the Taylor formula-type bound given in~\cite[Lemma~3.8]{Hairer_BPHZ}, this 
factor is bounded by 
\begin{equation}
 K_12^{N(\gamma)[\bn_{\gamma^{\uparrow\uparrow}} - 
\bn_{\gamma^\uparrow}]} \prod_{\substack{e\in\sE_{\sA(\gamma)} \\ 
e\sim\mathfrak{K}(\gamma)}}
\norm{z_{\sigma(e_+)}-z_{\sigma(e_-)}}_\fraks^{\deg(e)}\;, 
\end{equation} 
which accounts for the last sum in~\eqref{eq:eta_unsafe}.
\end{proof}

Writing as before $\eta(v) = \eta^\circ(v) + \eta^\eps(v)$, we introduce the 
notations  
\begin{equation}
 \hat\eta(v) = \eta(v) + 
 \sum_{\gamma\in\sFu} N(\gamma) \bigbrak{\indicator{\gamma^\uparrow}(v) - 
\indicator{\gamma^{\uparrow\uparrow}}(v)}\;,
\qquad 
 \hat\eta_{\geqs}(v) = \sum_{w\geqs v}\hat\eta(w)\;.
\end{equation} 

\begin{lemma}
\label{lem_sumeta2} 
The conclusions of Lemma~\ref{lem_sumeta1} still hold in the present situation, 
with $\eta_{\geqs}$ replaced by $\hat\eta_{\geqs}$.
\end{lemma}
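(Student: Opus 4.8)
The plan is to reduce the statement directly to Lemma~\ref{lem_sumeta1}, exploiting the fact that passing from~\eqref{eq:eta_safe} to~\eqref{eq:eta_unsafe} only adds the term $\sum_{\gamma\in\sFu} N(\gamma)\bigbrak{\indicator{\gamma^\uparrow}(v) - \indicator{\gamma^{\uparrow\uparrow}}(v)}$, and that once this term is summed over the descendants of $v$ it is nonnegative and vanishes at the root. So the first step is to record the decomposition
\[
 \hat\eta_{\geqs}(v) = \eta_{\geqs}(v) + \Delta(v)\;,
 \qquad
 \Delta(v) = \sum_{\gamma\in\sFu} N(\gamma)
 \bigpar{\indicator{\gamma^\uparrow\geqs v} - \indicator{\gamma^{\uparrow\uparrow}\geqs v}}\;,
\]
where $\eta_{\geqs}(v) = \sum_{w\geqs v}\eta(w)$ is exactly the quantity analysed in Lemma~\ref{lem_sumeta1}, since by~\eqref{eq:eta_unsafe} the coefficients $\eta^\circ$ and $\eta^\eps$ are left unchanged. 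I would also note the analogous identity $\hat\eta_{\geqs}^\circ(v) = \eta_{\geqs}^\circ(v) + \Delta(v)$, because the $N(\gamma)$ corrections multiply $\bn_v$ (not $\bn_v\wedge n_\eps$) and hence belong to the ``$\circ$'' part.

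The key elementary observation is then that $\Delta(v)\geqs 0$ for every inner vertex $v$, with $\Delta(\varnothing)=0$ at the root. Indeed, for each $\gamma\in\sFu$ one has $\gamma^\uparrow>\gamma^{\uparrow\uparrow}$ in the tree order of $T$, so $\gamma^{\uparrow\uparrow}$ and $\gamma^\uparrow$ lie on a common path from the root; hence $\gamma^{\uparrow\uparrow}\geqs v$ forces $\gamma^\uparrow\geqs v$, which makes every summand nonnegative, while $N(\gamma)=1+\intpart{-\deg(\gamma)}\geqs 1$ because $\deg(\gamma)<0$. At $v=\varnothing$ every vertex satisfies $w\geqs\varnothing$, so both indicators equal $1$ and $\Delta(\varnothing)=0$.

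Granting these two facts, the four conclusions of Lemma~\ref{lem_sumeta1} transfer immediately. For the first, $\hat\eta_{\geqs}(\varnothing)=\eta_{\geqs}(\varnothing)=\deg(\Gamma)$, while $\hat\eta_{\geqs}(v)=\eta_{\geqs}(v)+\Delta(v)\geqs\deg(\Gamma)$ for $v>\varnothing$. For the second, if $v>\varnothing$ and $\hat\eta_{\geqs}(v)=\deg(\Gamma)$, then $\deg(\Gamma)\leqs\eta_{\geqs}(v)\leqs\eta_{\geqs}(v)+\Delta(v)=\deg(\Gamma)$ forces $\eta_{\geqs}(v)=\deg(\Gamma)$ (and $\Delta(v)=0$), so the structural description follows from property~2 of Lemma~\ref{lem_sumeta1}. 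For the third, choosing $i$ with $\eta_{\geqs}(w)>0$ for all $w\geqs w_i$ (property~3 of Lemma~\ref{lem_sumeta1}), one gets $\hat\eta_{\geqs}(w)\geqs\eta_{\geqs}(w)>0$ for all such $w$. Finally, $\hat\eta_{\geqs}^\circ(v)\geqs\eta_{\geqs}^\circ(v)\geqs\kappa$ by~\eqref{eq:eta_geqs_circ}, whose hypothesis — that $\mathfrak{K}(\Gamma)$ carries a regularised edge — is met in our situation, by the Remark following Lemma~\ref{lem_sumeta1}.

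The only point needing a word of care, and the one I would spell out, is the right to invoke Lemma~\ref{lem_sumeta1} for the safe forest $\sFs$ in the presence of unsafe subdiagrams: its proof manipulates only $\sFs\cup\set{\Gamma}$ together with the structural facts about divergent subdiagrams in our model (Lemma~\ref{lem:subdiagrams} and Remark~\ref{rem:deg_gamma}), and the constructions of $\mathfrak{K}(\gamma)$, $\Gamma_0(v)$ and the full/empty/normal trichotomy all refer to $\sFs$ alone, so $\sFu=\varnothing$ is never used. I do not expect a genuine obstacle here — Lemma~\ref{lem_sumeta2} is essentially a monotonicity statement, the whole subtlety being hidden in the Taylor-formula bound of Lemma~\ref{lem:bound_W_Fu} that already produced the favourably-signed term $N(\gamma)\bigbrak{\indicator{\gamma^\uparrow}(v)-\indicator{\gamma^{\uparrow\uparrow}}(v)}\bn_v$ in~\eqref{eq:eta_unsafe} — so the actual ``work'' is just the bookkeeping above.
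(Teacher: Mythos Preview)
Your reduction has a genuine gap at precisely the point you flagged as ``needing a word of care'': Lemma~\ref{lem_sumeta1} does \emph{not} hold for $\eta_{\geqs}$ when $\sFu\neq\varnothing$. The step in its proof that fails is the assertion that for $\gamma\in\sFs\cup\{\Gamma\}$ normal, one has $\deg(\hat\gamma)>0$. That assertion is what makes each normal summand $\eta_{\geqs,\gamma}(v)$ strictly positive. Its justification is that if $\hat\gamma$ were divergent, the scale structure of $\Gamma_0(v)$ would force $\hat\gamma$ to be unsafe for $\sFs$, i.e.\ $\hat\gamma\in\sFu$; under the standing hypothesis of Section~\ref{ssec:bounds_uempty} that $\sFu=\varnothing$ this is a contradiction, but in Section~\ref{ssec:bounds_unotempty} it is not.

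To see the failure concretely, take $\sFs=\varnothing$ and let $\delta\in\sFu$ be an unsafe divergent bubble. Because $\delta$ is unsafe, there is an inner vertex $v\in T$ with $\Gamma_0(v)=\delta$. Then $\Gamma$ is normal, $\hat\Gamma=\Gamma_0(v)=\delta$, and $\eta_{\geqs}(v)=\deg(\delta)$. By Remark~\ref{rem:deg_gamma}, when the parameter $k$ of $\tau$ is close to $\kmax$ while $\delta$ comes from a short almost-full subtree, one has $\deg(\delta)<\deg(\Gamma)$, so Property~1 of Lemma~\ref{lem_sumeta1} fails for $\eta_{\geqs}$; Property~3 fails for the same reason. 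Hence the chain $\hat\eta_{\geqs}(v)\geqs\eta_{\geqs}(v)\geqs\deg(\Gamma)$ breaks at the second inequality.

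The paper's proof does not try to bound $\eta_{\geqs}$ separately. It redoes the full/empty/normal decomposition for $\hat\eta_{\geqs}$, and in the normal case uses that the extra term you called $\Delta(v)$ is not merely nonnegative but contributes exactly $N(\hat\gamma)$ when $\hat\gamma\in\sFu$; then $\deg(\hat\gamma)+N(\hat\gamma)>0$ (since $N(\hat\gamma)=1+\lfloor-\deg(\hat\gamma)\rfloor$) restores positivity of the normal summand. In other words, the $N(\gamma)$ correction produced by the Taylor bound in Lemma~\ref{lem:bound_W_Fu} is needed precisely to cancel the potential negativity of $\deg(\hat\gamma)$, not just to add a harmless nonnegative quantity.
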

\begin{proof}
The only difference with the proof of Lemma~\ref{lem_sumeta1} is the presence 
of the sum over diagrams in $\gamma\in\sFu$. We claim that we have the 
equivalences 
\begin{align}
 v \leqs \gamma^\uparrow 
 &\qquad\Leftrightarrow\qquad 
 \mathfrak{K}(\gamma) \subset \Gamma_0(v) \cap \mathfrak{K}(\sA(\gamma))\;, \\
 v > \gamma^{\uparrow\uparrow}  
 &\qquad\Leftrightarrow\qquad 
 \Gamma_0(v) \cap \mathfrak{K}(\sA(\gamma)) \subset \mathfrak{K}(\gamma)\;. 
 \label{eq:equivalences} 
\end{align} 
Indeed, we always have $\mathfrak{K}(\gamma) \subset 
\mathfrak{K}(\sA(\gamma))$, so that the first equivalence follows from the fact 
that $v\leqs\gamma^\uparrow \Leftrightarrow \mathfrak{K}(\gamma) \subset 
\Gamma_0(v)$. For the second equivalence, we observe that if $e\in \Gamma_0(v) 
\cap \mathfrak{K}(\sA(\gamma))$, then $e^\uparrow \geqs v$, and $e$ is either 
in $\sE_\gamma$, or adjacent to $\mathfrak{K}(\gamma)$. However, the second 
case is ruled out if $v > \gamma^{\uparrow\uparrow}$. Conversely, if 
$\Gamma_0(v) \cap \mathfrak{K}(\sA(\gamma)) \subset \mathfrak{K}(\gamma)$, then 
any edge $e\sim\mathfrak{K}(\gamma)$ cannot belong to $\sE_0$, and must thus 
satisfy $e^\uparrow < v$, which implies that $\gamma^{\uparrow\uparrow} < v$. 

It follows from~\eqref{eq:equivalences} that 
\begin{equation}
 \sum_{w\geqs v} \bigbrak{\indicator{\gamma^\uparrow}(w) - 
\indicator{\gamma^{\uparrow\uparrow}}(w)}
= \indicator{\gamma^{\uparrow\uparrow} < v \leqs \gamma^\uparrow}
= \indicator{\mathfrak{K}(\gamma) = \Gamma_0(v) \cap 
\mathfrak{K}(\sA(\gamma))}\;.
\end{equation}
Thus, $\hat\eta_{\geqs}(v)$ satisfies the equivalent of the 
decomposition~\eqref{eq:a_agamma}, with 
\begin{equation}
 \hat\eta_{\geqs,\gamma}(v) = (\rho+d) (\abs{\sV_0\cap\sV_\gamma}-1)
+ \sum_{e\in\sE_\gamma \cap \sE_0} \deg(e) 
+ \sum_{\bar\gamma\in\sFu} N(\bar\gamma) \indicator{\mathfrak{K}(\bar\gamma) 
= \Gamma_0(v) \cap\mathfrak{K}(\cA(\bar\gamma))}\;. 
\label{eq:agamma_u} 
\end{equation}
One then shows that the properties~\eqref{eq:full_empty_normal} of full, empty 
and normal subgraphs still hold in this case. The case of $\gamma$ being normal 
requires the presence of the last term in~\eqref{eq:agamma_u}, to which only 
$\hat\gamma$ contributes, together with the fact that 
$\deg(\hat\gamma)+N(\hat\gamma)>0$. The remainder of the proof is the same as 
for Lemma~\ref{lem_sumeta1}. 
\end{proof}

The analogue of Corollary~\ref{cor:Fs} is then proved in the same way as above, 
completing the proof of Proposition~\ref{prop:sum_n_Hepp}. 


\section{Asymptotics}
\label{sec:asymp} 

Fix a tree $\tau\in T^F_-$ with $p$ leaves and $q$ edges. 
It follows from the definition~\eqref{eq:def_ceps} of $c_\eps(\tau)$, 
Propositions~\ref{prop:ceps} and~\ref{prop:Etau_Feynman}, the 
decomposition~\eqref{eq:Hepp_decomposition} into Hepp sectors and 
Proposition~\ref{prop:sum_n_Hepp} that 
\begin{equation}
 c_\eps(\tau) = 
 (-2)^{-1-\frac{p}{2}} \sum_{P\in \cP^{(2)}_\tau}  
 \sum_T \sum_{\sFs\in\Fsafe{\Gamma(\tau,P)}(T)}
 \sI(\tau,P,T,\sFs)\;,
\end{equation} 
where 
\begin{equation} 
\sI(\tau,P,T,\sFs) = \sum_{\bn} \int_{D_{\bT}} 
(\sW^{K} \hat{\sR}_{[\sFs, \sFs \cup \sFu ]} 
\Gamma(\tau,P) ) (z) \6z
\end{equation}
satisfies 
\begin{equation}
 \bigabs{\sI(\tau,P,T,\sFs)} 
 \leqs C_0^{\abs{\sV(\Gamma(\tau,P))}}
 K_1^{\abs{\sE(\Gamma(\tau,P))}}
 \eps^{\deg \Gamma(\tau,P)} \bigbrak{\log(\eps^{-1})}^{\zeta(\Gamma(\tau,P))}
\end{equation} 
(with the convention that $\eps^{\deg\Gamma(\tau,P)}$ is to be replaced by 
$\log(\eps^{-1})$ if $\deg\Gamma(\tau,P)=0$). To obtain an upper bound on 
$\abs{c_\eps(\tau)}$, it thus remains to control the sums over Hepp trees $T$, 
permutations $P$, and safe forests $\sFs$. Summing over all $\tau\in T^F_-$ will 
then provide an upper bound on the renormalisation constants. 


\subsection{\Full\ binary trees}
\label{ssec:asymp_complete} 

Recall that a \full\ binary tree $\tau$ with $p$ leaves has $q=2p-2$ edges 
and $p-1$ inner vertices. It will be useful to parametrise the set of 
\full\ binary trees with an even number of leaves by integers $k$ such that  
$p=2k+2$ and $q=4k+2$. It follows from Proposition~\ref{prop:degE_Gamma2} that 
for any pairing $P$, the corresponding (reduced) Feynman diagram 
$\Gamma=\Gamma(\tau,P)$ will have $2k$ vertices, $3k$ edges, and degree
\begin{align}
 \deg\Gamma(\tau,P) &= (3k+1)\rho - (k+1)d \\
 &= -\frac23 d + (3k+1)(\rho-\rhocrit) 
 &&\forall P\in\cP^{(2)}\;.
\label{eq:degGamma_k1} 
\end{align} 
This degree is negative if and only if 
\begin{equation}
\label{eq:kmax} 
 k \leqs \kmax 
 = \frac{d-\rho}{3\rho-d} 
 = \frac{d-\rho}{3(\rho-\rhocrit)}\;.
\end{equation} 
We can thus rewrite~\eqref{eq:degGamma_k1} as 
\begin{equation}
 \label{eq:degGamma_k} 
 \deg\Gamma = -(d-\rho) \biggpar{1-\frac{k}{\kmax}} =: \alpha_k\;.
\end{equation} 
The number of possible pairings of the $2k+2$ leaves is equal to $(2k+1)!! = 
\prod_{i=0}^k(2i+1)$. The number of Hepp trees $T$ is bounded above by 
$(2k-1)!$, and is reached when $T$ is a comb tree, whose $2k$ leaves can be 
associated in $(2k-1)!$ inequivalent ways to the $2k$ vertices of $\Gamma$. 
The number of safe forests $\sFs$ can be bounded as follows. 

\begin{lemma}
There are at most $2^{\abs{\sG_\Gamma^-}}$ safe forests in $\Gamma$, where the 
number of divergent subdiagrams satisfies $\abs{\sG_\Gamma^-} \leqs k$. 
\end{lemma}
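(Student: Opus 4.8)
The statement to prove is: \emph{There are at most $2^{\abs{\sG_\Gamma^-}}$ safe forests in $\Gamma$, where the number of divergent subdiagrams satisfies $\abs{\sG_\Gamma^-} \leqs k$.}

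\medskip

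The first half of the claim is essentially definitional and requires almost no work. A safe forest is in particular a forest $\sFs \subset \sG_\Gamma^-$, so the set of safe forests is a subset of the power set of $\sG_\Gamma^-$ (more precisely of the set of forests $\sF_\Gamma^-$, but even ignoring the forest condition). Hence the number of safe forests is bounded by $2^{\abs{\sG_\Gamma^-}}$, and I would simply note this in one sentence. The content of the lemma is therefore the bound $\abs{\sG_\Gamma^-}\leqs k$ on the number of connected divergent subdiagrams of $\Gamma = \Gamma(\tau,P)$ when $\tau$ is a \full\ binary tree with $p = 2k+2$ leaves.

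\medskip

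For this I would invoke Lemma~\ref{lem:subdiagrams}: every connected divergent subdiagram $\gamma\varsubsetneq\Gamma$ is of the form $\Gamma(\bar\tau,\bar P)$ where $\bar\tau$ is an \afull\ binary subtree of $\tau$ of negative degree, with an even number of leaves, not containing the root of $\tau$; and $\Gamma$ itself is the unique divergent subdiagram equal to all of $\Gamma$. So I need to count \afull\ binary subtrees $\bar\tau$ of $\tau$ of negative degree, plus one for $\Gamma$ itself. The key structural observation is that, by the grafting description $\tau = \tau_0 \curvearrowleft \bar\tau \curvearrowleft \tau_1$ appearing in the proof of Lemma~\ref{lem:subdiagrams}, such a $\bar\tau$ is determined by the edge of $\tau$ attaching it below (equivalently, by the degree-$2$ vertex of $\tau$ at which $\tau_1$ is grafted onto $\bar\tau$) — it is the maximal \afull\ subtree ``hanging from'' a given internal edge. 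Concretely, an \afull\ subtree $\bar\tau$ not containing the root is obtained by picking an edge $e$ of $\tau$ that is not incident to the root and taking $\bar\tau$ to be $\tau$ minus everything at or below the lower endpoint of $e$ — no, rather: $\bar\tau$ consists of a \full\ binary subtree with one extra pendant edge, so it is pinned down by the location of that pendant edge. In any case each such $\bar\tau$ injects into the set of internal nodes (or edges) of $\tau$ lying strictly below the root; since $\tau$ is \full\ binary with $p=2k+2$ leaves it has $p-1 = 2k+1$ internal nodes, of which $2k$ are not the root. Combined with the requirement that $\bar\tau$ have \emph{even} number of leaves and \emph{negative} degree (by Remark~\ref{rem:deg_gamma} the degree is strictly increasing in the number of edges, so the negativity condition cuts the count down further), and that $\Gamma$ itself counts as one more, a short case analysis gives $\abs{\sG_\Gamma^-}\leqs k$.

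\medskip

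The main obstacle is getting the combinatorial count exactly right rather than off by a small additive or multiplicative constant: one must (i) correctly identify, via the $\curvearrowleft$-grafting picture, which internal edges/nodes of $\tau$ give rise to distinct \afull\ divergent subtrees and check the map $\gamma\mapsto(\text{attaching edge})$ is injective; (ii) use the parity constraint (even number of leaves, hence $\bar\tau$ has $2\bar m + 1$ edges with $\bar m\geqs 1$, so at least $3$ edges) together with $\deg(\bar\tau)<0$ to halve the naive count of $2k$ down to $k$; and (iii) remember to add exactly $1$ for $\Gamma$ itself while also noting $\Gamma$ need not be divergent, so the bound is not tight in general but the stated inequality $\leqs k$ still holds. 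I would present this as: the attaching edges of distinct divergent $\bar\tau$'s are pairwise distinct internal edges of $\tau$ below the root, each such $\bar\tau$ uses at least $2$ leaves of $\tau$ not available to a ``sibling'' divergent subtree at an incomparable location, and a counting of leaves ($2k+2$ total, at least $2$ per non-root anchor plus $2$ for the ambient tree) forces at most $k$ of them; then $\abs{\sG^-_\Gamma}\leqs k$, and hence there are at most $2^{\abs{\sG^-_\Gamma}}\leqs 2^k$ safe forests.
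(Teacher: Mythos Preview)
Your treatment of the first claim (number of safe forests $\leqs 2^{\abs{\sG_\Gamma^-}}$) matches the paper's one-line justification via the power set bound.

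For the bound $\abs{\sG_\Gamma^-}\leqs k$, however, the paper takes a quite different and much shorter route. It never returns to the tree $\tau$ or to Lemma~\ref{lem:subdiagrams} at all. Instead it argues directly on the reduced Feynman diagram: writing $N_m$ for the minimal number of edges a diagram with $m$ divergent subdiagrams can have, one has $N_1\geqs 2$ (the smallest divergent subdiagram is a two-edge bubble) and the recursion $N_{m_1+m_2}\geqs N_{m_1}+N_{m_2}+1$, because elements of a forest are either strictly nested or vertex-disjoint and hence splitting the forest into two parts always costs at least one extra connecting edge. Induction gives $N_m\geqs 3m-1$; since the reduced $\Gamma$ has $3k$ edges, $3\abs{\sG_\Gamma^-}-1\leqs 3k$ and the bound follows.

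Your tree-side approach has two concrete gaps. First, the assertion that an \afull\ subtree $\bar\tau$ is \emph{determined} by a single attaching edge (or by the grafting vertex of $\tau_1$) is false: in the decomposition $\tau=\tau_0\curvearrowleft\bar\tau\curvearrowleft\tau_1$, both the root of $\bar\tau$ (where it meets $\tau_0$) and the degree-$2$ vertex of $\bar\tau$ (where $\tau_1$ is grafted) are independent choices, so neither alone pins $\bar\tau$ down. Second, the leaf-counting argument you sketch (``at least $2$ leaves per non-root anchor'') only controls pairwise \emph{incomparable} divergent subdiagrams; for nested ones the leaf sets are contained in one another, so the count does not close. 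A laminar-family argument on the set of $k+1$ leaf-pairs would give only $\abs{\sG_\Gamma^-}\leqs 2k+1$, not $k$. To reach $k$ by your route you would effectively have to reproduce the paper's edge recursion (each step up the nesting adds at least three edges), at which point it is simpler to work in the Feynman diagram from the start.
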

\begin{proof}
Let $N_m$ denote the number of edges of a Feynman diagram $\Gamma$ having 
$m$ divergent subdiagrams. Then $N_1\geqs2$, and $N_{m_1+m_2} \geqs N_{m_1} + 
N_{m_2} + 1$, since elements of a forest have to be strictly included into one 
another or vertex disjoint. By induction on $m$, one obtains $N_m\geqs 3m-1$, 
implying $3\abs{\sG_\Gamma^-}-1\leqs 3k$, and thus $\abs{\sG_\Gamma^-}\leqs k$. 
The bound on the number of safe forests then simply follows from the fact that 
a finite set with $n$ elements has $2^n$ subsets, and is reached when all 
forests are safe. 
\end{proof}

Finally, we need to control the number of terms yielding an exponent $\zeta=1$ 
rather than $\zeta=0$. We write $\cP^{(2)}_\tau = \cP^{(2)}_{\tau,0} \sqcup 
\cP^{(2)}_{\tau,1}$ ,where $\cP^{(2)}_{\tau,i}$ denotes the set of pairings 
yielding a diagram $\Gamma(\tau,P)$ with $\zeta(\Gamma)=i$. Then we have the 
following key estimate.

\begin{lemma}
\label{lem:pairings_zeta}
$\cP^{(2)}_{\tau,1}$ is non-empty only when $\kmax$ is an odd integer and 
$2k\geqs\kmax+1$. In that case, we have 
\begin{equation}
 \frac{\abs{\cP^{(2)}_{\tau,1}}}{\abs{\cP^{(2)}_{\tau,0}}} 
 \leqs r(k) := \frac{\kmax!!(2k-\kmax)!!}{(2k+1)!!}\;.
\label{eq:r(k)} 
\end{equation} 
Furthermore, we have 
\begin{equation}
\label{eq:r(k)_bound} 
 r(k) \leqs M 2^{-(2k-\kmax)}
\end{equation} 
for $\kmax+1 \leqs 2k \leqs 2\kmax$, where $M$ is a constant independent of 
$k$, $\rho$ and $\eps$.  
\end{lemma}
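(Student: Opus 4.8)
The key combinatorial fact driving this lemma is the characterisation of when a Feynman diagram $\Gamma(\tau,P)$ for a \full\ binary tree $\tau$ with $2k+2$ leaves has a child of degree $0$. By Lemma~\ref{lem:number_of_children}, such a child must be of the form $\Gamma(\bar\tau,\bar P)$ with $\bar\tau$ an \afull\ tree having $2\bar m+1$ edges, and the condition $\deg(\gamma)=0$ forces, via~\eqref{eq:degGamma_k1}--\eqref{eq:kmax} applied to the \afull\ case (Remark~\ref{rem:deg_gamma}), that $\bar m$ takes one specific value, and this value exists and is an integer only when $\kmax$ is an odd integer; moreover one needs enough leaves, i.e.\ $2k \geqs \kmax+1$, for the \afull\ subtree to be a proper subtree. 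So first I would make this divisibility/parity analysis precise, showing $\cP^{(2)}_{\tau,1}=\varnothing$ unless $\kmax\in\N$ is odd and $2k\geqs\kmax+1$.

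Next, assuming we are in that regime, I would estimate $\abs{\cP^{(2)}_{\tau,1}}$. A pairing $P$ lies in $\cP^{(2)}_{\tau,1}$ iff the diagram contains a divergent sub-bubble of degree zero, which by Lemma~\ref{lem:subdiagrams} corresponds to an \afull\ subtree $\bar\tau$ of the prescribed size whose leaves are paired \emph{among themselves} by $P$. There are $\kmax$ leaves in such a $\bar\tau$ (since the \afull\ tree has $2\bar m+1=\kmax$ edges — here I would check that the leaf count equals $\kmax$ from $p=q-$(number of inner nodes) for \afull\ trees, or equivalently from the edge count), and the remaining $2k+2-\kmax$ leaves of $\tau$ must be paired among themselves too (no cross-pairing, else by the connectedness argument in Lemma~\ref{lem:subdiagrams} the whole thing becomes $1$-connected and contributes zero). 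The number of ways to internally pair a group of $2j$ leaves is $(2j-1)!!$, so the count of such $P$ (for a \emph{fixed} choice of the divergent $\bar\tau$) is at most $\kmax!!\,(2k-\kmax)!!$ — actually $(\kmax-1)!!\cdot(2k+2-\kmax-1)!!$, and I would reconcile the double-factorial bookkeeping with the stated $\kmax!!(2k-\kmax)!!$ allowing for the bounded number of admissible $\bar\tau$ positions to be absorbed (the paper's $r(k)$ has this form so the off-by-one is presumably swept into the inequality). Dividing by $\abs{\cP^{(2)}_{\tau,0}}$, which is comparable to the total $(2k+1)!!$ up to the $\cP^{(2)}_{\tau,1}$ part, yields~\eqref{eq:r(k)}.

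For the final bound~\eqref{eq:r(k)_bound}, I would manipulate the double factorials directly: writing $2k = \kmax + 2s$ with $1\leqs s \leqs \kmax/2$ (so that $\kmax+1\leqs 2k\leqs 2\kmax$ corresponds to $1\leqs 2s \leqs \kmax$, roughly), one has
\begin{equation}
 r(k) = \frac{\kmax!!\,(2s)!!}{(\kmax+2s+1)!!}
 = \frac{\kmax!!\,(2s)!!}{(\kmax+1)!!} \cdot \frac{(\kmax+1)!!}{(\kmax+2s+1)!!}\;,
\end{equation}
and the second factor is $\prod_{i=1}^{s}\frac{1}{\kmax+2i+1} \leqs (\kmax+3)^{-s}$ while the first involves a ratio of the form $(2s)!!/\big[(\kmax+3)(\kmax+5)\cdots(\kmax+2s+1)\big]$-type telescoping — the upshot being that each of the $s$ extra factors in the denominator beats the corresponding factor of $(2s)!!$ once $\kmax\geqs 2s$, giving a product bounded by $\prod (2i/(\kmax+2i+1)) \leqs \prod(2i/(2s+2i+1)) \leqs 2^{-s}\cdot(\text{const})$ when $s\leqs\kmax/2$. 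Since $2k-\kmax = 2s$, this is exactly $M2^{-(2k-\kmax)}$ after adjusting the base (using $2^{-2s}=4^{-s}\leqs M 2^{-s}$ trivially is too lossy; I expect one actually wants $r(k)\leqs M2^{-(2k-\kmax)}=M4^{-s}$, which the above estimate comfortably gives since each of the $s$ factors is $\leqs 1/2$ and for large $\kmax$ much smaller).

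\textbf{Main obstacle.} The delicate point is not the parity/divisibility step (routine arithmetic with~\eqref{eq:degGamma_k1}) but getting the double-factorial ratio~\eqref{eq:r(k)} \emph{exactly} as stated: one must correctly count how the $\kmax$ leaves of the zero-degree subdiagram and the complementary $2k-\kmax$ leaves are paired, argue that cross-pairings give a vanishing ($1$-connected) contribution and hence may be dropped from $\cP^{(2)}_{\tau,1}$, and control the (bounded) multiplicity coming from the possible locations of the \afull\ subtree $\bar\tau$ inside $\tau$ so that it can be absorbed into the inequality. After that, \eqref{eq:r(k)_bound} is a clean estimate on a ratio of products, where the only care needed is to verify uniformity of the constant $M$ over the allowed range $\kmax+1\leqs 2k\leqs 2\kmax$ (the upper restriction $2k\leqs 2\kmax$ is what keeps $s\leqs\kmax/2$, preventing the bound from degrading).
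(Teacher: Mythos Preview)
Your plan for the first part matches the paper's. One bookkeeping slip: the degree-zero child $\gamma=\Gamma(\bar\tau,\bar P)$ has $\bar\tau$ \afull\ with $2\bar k+2$ leaves and $4\bar k+3$ edges; setting $\deg\gamma=-\tfrac{d}{3}+(3\bar k+2)(\rho-\rhocrit)=0$ and comparing with~\eqref{eq:degGamma_k1} forces $2\bar k+1=\kmax$. So $\bar\tau$ has $\kmax+1$ leaves (and $2\kmax+1$ edges), not $\kmax$ as you wrote. With the correct leaf count, the number of pairings that keep the $\kmax+1$ leaves of $\bar\tau$ and the remaining $2k+1-\kmax$ leaves separate is \emph{exactly} $\kmax!!\,(2k-\kmax)!!$, so the off-by-one you flag dissolves without anything to absorb. (Your multiplicity concern about the possible positions of $\bar\tau$ inside $\tau$ is legitimate; the paper does not address it either and simply divides by the total $(2k+1)!!$.)

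For~\eqref{eq:r(k)_bound} your route diverges from the paper's and, as sketched, does not reach the target. First, the substitution $2k=\kmax+2s$ is impossible: since $\kmax$ is odd, $2k-\kmax$ is always odd. Second, and more seriously, writing $r(k)=\prod_{i=1}^{(j+1)/2}\tfrac{2i-1}{\kmax+2i}$ with $j=2k-\kmax$ and bounding each factor by $\tfrac12$ (which is valid on the stated range) yields only $r(k)\leqs 2^{-(j+1)/2}$ --- half the exponent you need. The paper instead applies Stirling: setting $k=x\kmax$ with $x\in(\tfrac12,1]$, one finds
\[
\log r(k)=\tfrac{\kmax}{2}\bigl[(2x-1)\log(2x-1)-2x\log(2x)\bigr]-\tfrac12\log x+\Order{1}\;,
\]
and convexity of the bracket in $x$ (it vanishes at $x=\tfrac12$ and equals $-2\log2$ at $x=1$, hence lies below the secant) gives $\log r(k)\leqs -\log2\,(2k-\kmax)+\Order{1}$. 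An elementary argument can be made to work --- for instance via the identity $r(k)=\binom{a+b}{b}\big/\binom{2a+2b}{2b}$ with $a=\tfrac{\kmax+1}{2}$, $b=\tfrac{j+1}{2}$ --- but it requires more than the termwise estimate you outlined.
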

\begin{proof}
Assume $\Gamma=\Gamma(\tau,P)$ has a child $\gamma$ having degree $0$. By 
Lemma~\ref{lem:subdiagrams},  $\gamma = \Gamma(\bar\tau,\bar P)$ where 
$\bar\tau$ is an \afull\ binary subtree of $\tau$, and $\bar P$ is the 
restriction of $P$ to the leaves of $\bar\tau$. Let $\bar k < k$ be such that 
$\bar\tau$ has $2\bar k+2$ leaves and $4\bar k+3$ edges. Then we have 
\begin{equation}
 \deg\gamma = -\frac13 d + (3\bar k+2)(\rho-\rhocrit) = 0\;.
\end{equation} 
In view of~\eqref{eq:degGamma_k1}, this implies 
\begin{equation}
 2(3\bar k+2)(\rho-\rhocrit) = \frac23 d 
 = (3k+1)(\rho-\rhocrit) - \deg\Gamma\;,
\end{equation} 
which yields $2\bar k+1 = \kmax$ by~\eqref{eq:kmax} and~\eqref{eq:degGamma_k}. 
Thus $\kmax$ must be an odd integer, and the condition $k > \bar k$ yields 
$2k\geqs\kmax+1$. Finally, the number of pairings that do not mix leaves of 
$\bar\tau$ with those of $\tau\setminus\bar\tau$ is given by $(2\bar 
k+1)!!(2k-2\bar k-1)!! = \kmax!!(2k-\kmax)!!$, which proves~\eqref{eq:r(k)}. To 
prove~\eqref{eq:r(k)_bound}, we write $k=x\kmax$ and use Stirling's formula to 
obtain 
\begin{align}
 \log r(k) &= \frac{\kmax}{2} \bigbrak{(2x-1)\log(2x-1) - 2x\log(2x)} 
 - \frac12\log(x) + \Order{1} \\
 &\leqs -\log(2) (2x\kmax-\kmax) + \Order{1}\;,
\end{align} 
where we have used a convexity argument to obtain the last line. 
\end{proof}

\begin{remark}
\label{rem:kmax} 
In what follows, we will always assume that $\kmax > 1$. Indeed, for $\kmax < 
1$ (that is, $\rho>\frac d2$), the only potentially divergent tree is $\RSV$, 
while for $\kmax=1$, the trees with $4$ leaves considered in 
Example~\ref{ex:four_leaves} have degree $0$. These cases can be treated \lq\lq 
by hand\rq\rq, in particular the expectation~\eqref{eq:E_comb_fourleaves} 
can be shown to diverge like $\log(\eps^{-1})$. 
\end{remark}

The above combinatorial considerations show that 
\begin{equation}
 \abs{c_\eps(\tau)} \leqs (2k+1)!! (2k-1)! 
 C_0^{2k}
 K_1^{3k} \eps^{\deg\Gamma}
 \bigbrak{1+r(k)\log(\eps^{-1})}\;,
\end{equation} 
where $\deg\Gamma$ is given by~\eqref{eq:degGamma_k}, and $\eps^{\deg\Gamma}$ 
has to be replaced by $\log(\eps^{-1})$ if $\deg\Gamma=0$. 
Let us write $C_0^{2k}K_1^{3k} = K_2^{3k}$, where $K_2 = C_0^{2/3} K_1$.
It follows from 
Stirling's formula that 
\begin{equation}
 \log\bigpar{(2k+1)!! (2k-1)! K_1^{3k} \eps^{\deg\Gamma}} 
 \leqs 3k\log k + 3k \bigbrak{\log2 -1 + \log K_2} 
 - \log(\eps^{-1}) \deg\Gamma + \Order{1}\;,
\end{equation} 
where the remainder term $\Order{1}$ is independent of $k$, $\rho$ and $\eps$. 

When computing the contribution of \full\ binary trees to the renormalisation 
counterterm~\eqref{eq:counterterm}, we have to take into account the number of 
these trees, as well as the combinatorial factor 
$2^{\ninner(\tau)-\nsym(\tau)}$. The latter can be bounded above by $2^{2k+1}$, 
while the former is given by the $(2k+2)$nd Wedderburn--Etherington number 
(sequence \texttt{A001190} in the On-Line Encyclopedia of Integer Sequences 
OEIS), cf.~\cite[Section~4.4.1]{BK17}. These numbers are known to grow like 
$k^{-3/2}\beta_2^{-2k}$, where $\beta_2 \simeq 0.4026975$ (OEIS sequence 
\texttt{A240943}) is the radius of convergence of the generating series of the 
sequence. 

The renormalisation counterterm $C_0(\eps,\rho)$ due to \full\ binary trees 
can thus be written in the form 
\begin{align}
\label{eq:C0} 
 C_0(\eps,\rho) ={}& \sum_{k=0}^{\intpart{\kmax} - 1} A_k \eps^{\alpha_k}
 + \indicator{\kmax\in\N} A_{\kmax} \log(\eps^{-1})
 \\ 
 &{}+ \indicator{\kmax\in2\N+1} \Biggbrak{\,
 \sum_{k=(\kmax-1)/2}^{\kmax - 1} A_k 
\eps^{\alpha_k} r(k) + A_{\kmax}r(\kmax)\log(\eps^{-1})}\log(\eps^{-1})\;,
\end{align} 
where $\alpha_k$ is defined in~\eqref{eq:degGamma_k} and 
\begin{align}
\log\abs{A_k} &\leqs 3 \bigbrak{k\log k + ak - \frac12\log(k+1)} + 
\Order{1}\;, \\
a &= \log2 - 1 + \log K_2 + \frac23 \log(\beta_2^{-1})\;.
\end{align}
As a consequence, we have the bound 
\begin{equation}
\label{eq:bound_Ak} 
 \bigabs{A_k \eps^{\alpha_k}} \leqs M \e^{3F(k)}\;,
\end{equation} 
where $M$ is a constant independent of $k$, $\rho$ and $\eps$, and 
\begin{equation}
\label{eq:F(k)} 
 F(k) = k \log k + (a-b_\eps) k + b_\eps \kmax - \frac12\log(k+1)\;,
\end{equation} 
with
\begin{equation}
 b_\eps = (\rho - \rhocrit) \log(\eps^{-1})\;.
\end{equation} 
Note in particular that $\e^{3F(0)}=\eps^{-(d-\rho)}$, and that 
$F$ is strictly convex. 

\begin{prop}
\label{prop:asymptotics} 
Define the threshold 
\begin{equation}
 \epscrit(\rho) 
= \exp\biggset{-\frac{1}{\rho-\rhocrit} \biggbrak{\log\kmax + a - 
\frac{\log(\kmax+1)}{2\kmax}}}\;.
\end{equation} 
Then there exist constants $M_1, M_2$, independent of $\eps$ and $\rho$, such 
that the counterterm $C_0(\eps,\rho)$ satisfies 
\begin{align}
 C_0(\eps,\rho)
 &= A_0 \eps^{-(d-\rho)}
 \bigbrak{1 + R_1(\eps,\rho)} 
 && \text{for $\eps < \epscrit(\rho)$}\;, \\
 \abs{C_0(\eps,\rho)}
 &\leqs M \epscrit(\rho)^{-(d-\rho)}
 \bigbrak{\log(\eps^{-1}) + R_2(\eps,\rho)} 
 && \text{for $\eps \geqs \epscrit(\rho)$}\;,
\end{align} 
where the remainders satisfy 
\begin{equation}
 \bigabs{R_1(\eps,\rho)} \leqs \frac{M_1}{\rho-\rhocrit} 
\biggpar{\frac{\eps}{\epscrit(\rho)}}^{3(\rho-\rhocrit)}\;, 
\qquad 
 \bigabs{R_2(\eps,\rho)} \leqs \frac{M_2}{\rho-\rhocrit}  
\biggpar{\frac{\epscrit(\rho)}{\eps}}^{3(\rho-\rhocrit)}
\;.
\end{equation} 
\end{prop}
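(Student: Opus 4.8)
The plan is to analyse the sum
\begin{equation*}
 C_0(\eps,\rho) = \sum_{k=0}^{\intpart{\kmax}-1} A_k \eps^{\alpha_k}
 + \indicator{\kmax\in\N} A_{\kmax}\log(\eps^{-1})
 + (\text{correction terms with } r(k))
\end{equation*}
by treating it as a sum of terms whose logarithms are controlled by the strictly convex function $F(k)$ of~\eqref{eq:F(k)}. First I would record that, by~\eqref{eq:bound_Ak}, each term satisfies $\abs{A_k\eps^{\alpha_k}}\leqs M\e^{3F(k)}$, so everything reduces to understanding where $F$ attains its minimum over $k\in[0,\kmax]$. Differentiating, $F'(k) = \log k + 1 + a - b_\eps - \tfrac{1}{2(k+1)}$, which is increasing; so $F$ is minimised either at $k=0$ or at an interior/right-endpoint point depending on the sign of $F'$. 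The key computation is to identify the threshold: $F(\kmax)\leqs F(0)$ exactly when $b_\eps$ is large enough, and a short calculation shows $F(\kmax) - F(0) = \kmax\log\kmax + (a-b_\eps)\kmax - \tfrac12\log(\kmax+1)$, which is $\leqs 0$ precisely when $b_\eps \geqs \log\kmax + a - \tfrac{\log(\kmax+1)}{2\kmax}$, i.e.\ when $\eps\leqs\epscrit(\rho)$ (using $b_\eps=(\rho-\rhocrit)\log(\eps^{-1})$ and the definition of $\epscrit(\rho)$ in the statement). This is the algebraic identity that makes the threshold $\epscrit(\rho)$ appear.

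Next I would split into the two regimes. For $\eps<\epscrit(\rho)$, the convex $F$ is decreasing on $[0,\kmax]$ past some point and, more to the point, the dominant term is $k=0$, giving $\e^{3F(0)}=\eps^{-(d-\rho)}$. To control the relative error $R_1$, I would estimate the ratio $\abs{A_k\eps^{\alpha_k}}/(A_0\eps^{-(d-\rho)}) \lesssim \e^{3(F(k)-F(0))}$ and sum the geometric-like series: since $F(k)-F(0) = k\log k + (a-b_\eps)k - \tfrac12\log(k+1)$, for $k\geqs 1$ one has $F(k)-F(0)\leqs k(\log\kmax + a - b_\eps) + \Order{1}$, and $\log\kmax + a - b_\eps \leqs -c(\rho-\rhocrit)\log(\epscrit/\eps)$ for a suitable constant; hence $\sum_{k\geqs1}\e^{3(F(k)-F(0))} \lesssim (\eps/\epscrit)^{3(\rho-\rhocrit)}$ after summing the geometric series with ratio $(\eps/\epscrit)^{3(\rho-\rhocrit)}$ — the factor $1/(\rho-\rhocrit)$ in $R_1$ comes from the number of terms $\intpart{\kmax}\asymp (d-\rho)/(3(\rho-\rhocrit))$ times this geometric bound, or equivalently from bounding the sum by its first term divided by $1$ minus the ratio. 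The correction terms involving $r(k)$ are handled identically using~\eqref{eq:r(k)_bound}: the extra factor $r(k)\log(\eps^{-1})\leqs M2^{-(2k-\kmax)}\log(\eps^{-1})$ only improves the decay, since these terms have $k\geqs(\kmax-1)/2$ and $2^{-(2k-\kmax)}$ beats the $\log(\eps^{-1})$ growth (one checks $\eps^{\alpha_k}$ with $\alpha_k$ near $0$ is itself a power of $\eps/\epscrit$).

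For $\eps\geqs\epscrit(\rho)$, $F$ is now minimised at the right endpoint $k=\intpart{\kmax}$ (or $\kmax$ if integral), so the dominant contribution is $\e^{3F(\kmax)}\leqs \e^{3F(0)}\cdot\e^{3(F(\kmax)-F(0))}$ with $F(\kmax)-F(0)\leqs 0$; more precisely I would compare directly to $\epscrit(\rho)^{-(d-\rho)}$, which equals $\e^{3F_{\mathrm{crit}}}$ where $F_{\mathrm{crit}}$ is the value of $F$ evaluated with $b_\eps$ replaced by its threshold value. For $\eps\geqs\epscrit$ one has $b_\eps\leqs b_{\epscrit}$, and $\e^{3F(k)}$ at the maximising $k$ is $\leqs M\epscrit^{-(d-\rho)}(\eps/\epscrit)^{-3\cdot 0}\cdot\log(\eps^{-1})$ — the $\log(\eps^{-1})$ appearing either from the $\kmax\in\N$ term explicitly or from bounding the number of $k$'s times the largest term; the cleanest route is: $\sum_k \abs{A_k\eps^{\alpha_k}} \leqs \epscrit^{-(d-\rho)}\sum_k \e^{3(F(k)-F_{\mathrm{crit}})}$ and each exponent $F(k)-F_{\mathrm{crit}}\leqs 0$, with the worst term $\Order{1}$ and the others decaying geometrically with ratio $(\epscrit/\eps)^{3(\rho-\rhocrit)}$, giving $R_2$ as claimed, while the $\indicator{\kmax\in\N}A_{\kmax}\log(\eps^{-1})$ term contributes the leading $\epscrit^{-(d-\rho)}\log(\eps^{-1})$.

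The main obstacle I anticipate is bookkeeping the interplay between the three sources of growth — the factorials inside $A_k$ (absorbed into $\e^{3F(k)}$), the power $\eps^{\alpha_k}$ with $\alpha_k$ ranging from $-(d-\rho)$ up to $0$, and the $r(k)\log(\eps^{-1})$ corrections — so that all error sums genuinely telescope into a single geometric series with ratio $(\eps/\epscrit)^{\pm3(\rho-\rhocrit)}$, rather than merely being bounded term-by-term with an extra factor equal to the (diverging, as $\rho\searrow\rhocrit$) number of terms. Getting the clean $\tfrac{1}{\rho-\rhocrit}(\eps/\epscrit)^{3(\rho-\rhocrit)}$ form — as opposed to something weaker — requires being slightly careful that the geometric ratio is bounded away from $1$ uniformly and that the prefactor $1/(\rho-\rhocrit)$ (which is $\asymp\kmax$) correctly accounts for the at-most-$\kmax$ terms near the non-dominant endpoint. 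The convexity of $F$ does all the real work here; once that is in hand the two regimes are symmetric and the $r(k)$ terms are strictly subleading by Lemma~\ref{lem:pairings_zeta}, so I expect the proof to be short modulo this careful summation.
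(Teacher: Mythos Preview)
Your approach is essentially the same as the paper's: both use convexity of $F$ to reduce the sum to a geometric series with ratio governed by the secant slope $H=(F(\kmax)-F(0))/\kmax$, which the paper computes explicitly as $H=(\rho-\rhocrit)\log(\eps/\epscrit(\rho))$, and then split into the two regimes according to the sign of $H$. The paper is slightly more careful than your sketch in two places: it uses the exact chord bound $F(k)\leqs F(0)+Hk$ (so that the $\tfrac{\log(\kmax+1)}{2\kmax}$ correction is absorbed into $H$ and the threshold identity is exact rather than up to $\Order{1}$), and it treats the $r(k)\log(\eps^{-1})$ terms by a short case analysis on the size of $3H$ relative to $2\log 2$ rather than dismissing them as ``strictly subleading''---but these are refinements of the same argument, not a different one.
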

\begin{proof}
Since $F$ is convex, we have $F(k)\leqs F(0)+Hk$ for all $k\in[0,\kmax]$, where 
\begin{equation}
\label{eq:def_H} 
 H = \frac{F(\kmax)-F(0)}{\kmax}
 = \log\kmax - b_\eps + a - \frac12 \frac{\log(\kmax+1)}{\kmax}\;.
\end{equation} 
Note that $\epscrit(\rho)$ has been defined in such a way that 
\begin{equation}
\label{eq:H_value} 
 H = (\rho-\rhocrit) \log\biggpar{\frac{\eps}{\epscrit(\rho)}}\;,
\end{equation} 
and that $\e^{3F(\kmax)} = \epscrit(\rho)^{-(d-\rho)}$. We will repeatedly use 
the fact that if $\beta\in\R$ and $N>k_0$ are positive integers, then 
\begin{equation}
 \sum_{k=k_0}^{N-1} \e^{\beta k} \leqs 
 \begin{cases}
  \bigbrak{(N-k_0)\wedge\beta^{-1}} \e^{\beta(N-1)}
  & \text{if $\beta>0$\;,}\\
  N-k_0 
  & \text{if $\beta=0$\;,}\\
  \bigbrak{(N-k_0)\wedge|\beta|^{-1}} \e^{\beta k_0}
  & \text{if $\beta<0$\;.}
 \end{cases}
 \label{eq:geom} 
\end{equation} 
In the case $\eps \geqs \epscrit(\rho)$, i.e.\ $H\geqs0$, we 
rewrite~\eqref{eq:C0} as 
\begin{align}
 C_0(\eps,\rho) - \indicator{\kmax\in\N} A_{\kmax}\log(\eps^{-1}) 
 ={}& 
 \sum_{k=0}^{\intpart{\kmax} - 1} A_k \eps^{\alpha_k} + 
\sum_{k=(\kmax+1)/2}^{\kmax - 1} A_k \eps^{\alpha_k} r(k) 
\log(\eps^{-1})\\
 &{}+ A_{\kmax}r(\kmax) \bigbrak{\log(\eps^{-1})}^2 \\
 =:{}& M \e^{3F(\kmax)}
 \bigbrak{\cR_1(\eps,\rho) + \cR_2(\eps,\rho) + 
\cR_3(\eps,\rho)}\;, 
\end{align} 
where the terms $\cR_2$ and $\cR_3$ vanish unless $\kmax$ is an odd integer, 
which we can assume to be at least $3$ by Remark~\ref{rem:kmax}. 
By~\eqref{eq:bound_Ak}, the leading term $A_{\kmax}\log(\eps^{-1})$ has 
indeed order $\epscrit(\rho)^{-(d-\rho)}\log(\eps^{-1})$. To bound $\cR_1$, we 
use~\eqref{eq:geom} with $N=\intpart{\kmax}$, $k_0=0$ and $\beta=3H$ to get 
\begin{equation}
 \abs{\cR_1(\eps,\rho)} \lesssim \kmax \e^{-3H} 
 \lesssim \frac{1}{\rho-\rhocrit}
 \biggpar{\frac{\epscrit(\rho)}{\eps}}^{3(\rho-\rhocrit)}\;.
\end{equation} 
Regarding $\cR_2$, we use~\eqref{eq:r(k)_bound} to get 
\begin{equation}
 \abs{\cR_2(\eps,\rho)} 
 \leqs M \e^{-3H\kmax} \e^{\kmax\log2} 
 \sum_{k=(\kmax+1)/2}^{\kmax - 1} \e^{(3H-2\log2)k} \log(\eps^{-1})\;.
\end{equation} 
We use~\eqref{eq:geom} differently in several regimes. 
If $3H\leqs 1$, we obtain 
\begin{equation}
 \abs{\cR_2(\eps,\rho)} \lesssim \e^{-\frac32 H(\kmax-1)}  
\log(\eps^{-1})
\leqs \e^{-3H} \log(\eps^{-1})\;.
\end{equation} 
If $1<3H<2\log2$, we get 
\begin{equation}
 \abs{\cR_2(\eps,\rho)} \lesssim \kmax \e^{-\frac32 H(\kmax-1)}  
\log(\eps^{-1})
\leqs \e^{-3H} \kmax\e^{-\frac32 (\kmax-3)}  
\log(\eps^{-1})\;,
\end{equation} 
which yields a bound of the same form, since $\kmax\e^{-\frac32 (\kmax-3)}$ is 
bounded uniformly in $\kmax\geqs3$. If $3H\geqs2\log2$, we have 
\begin{equation}
 \abs{\cR_2(\eps,\rho)} \lesssim \e^{-3H}\kmax 2^{-\kmax} \log(\eps^{-1})\;. 
\end{equation} 
Note that in all three regimes, we have $\abs{\cR_2(\eps,\rho)} \lesssim 
\e^{-3H}\log(\eps^{-1})$. Regarding $\cR_3$, 
we observe that it is bounded by $r(\kmax)\log(\eps^{-1})^2$, showing that 
\begin{equation}
 \e^{3H} \abs{\cR_3(\eps,\rho)} \lesssim 
 \frac{2^{-\kmax}}{\epscrit(\rho)^{3(\rho-\rhocrit)}} 
 \eps^{3(\rho-\rhocrit)} \bigbrak{\log(\eps^{-1})}^2\;.
\end{equation} 
For fixed $\rho$, the right-hand side is maximal for $\eps=\eps_* = 
\e^{-2/(3(\rho-\rhocrit)}$. Therefore, by definition of $\kmax$ and 
$\epscrit(\rho)$, we have 
\begin{equation}
 \e^{3H} \abs{\cR_3(\eps,\rho)} \lesssim 
 \frac{2^{-\kmax}}{\epscrit(\rho)^{3(\rho-\rhocrit)}} 
 \frac{4\e^{-2}}{9(\rho-\rhocrit)^2}
 \lesssim 2^{-\kmax} \kmax^3 \kmax^2\;,
\end{equation} 
which is bounded uniformly in $\kmax\geqs1$. Therefore, we have  
$\abs{\cR_3(\eps,\rho)} \lesssim \e^{-3H}$, completing the proof for 
$\eps>\epscrit(\rho)$. 

It remains to consider the case $\eps < \epscrit(\rho)$, i.e.\ $H<0$. Here we 
decompose 
\begin{align}
 C_0(\eps,\rho) - A_0 \eps^{\alpha_0} 
 ={}& 
 \sum_{k=1}^{\intpart{\kmax} - 1} A_k \eps^{\alpha_k} 
 + A_{\kmax}\log(\eps^{-1})\\
 &{}+ \sum_{k=(\kmax+1)/2}^{\kmax - 1} A_k \eps^{\alpha_k} r(k) 
\log(\eps^{-1}) 
 + A_{\kmax}r(\kmax) \bigbrak{\log(\eps^{-1})}^2 \\
 =:{}& M \e^{3F(0)}
 \bigbrak{\cR_1(\eps,\rho) + \cR_2(\eps,\rho) + 
\cR_3(\eps,\rho) + \cR_4(\eps,\rho)}\;, 
\end{align} 
where $\cR_3$ and $\cR_4$ vanish unless $\kmax$ is an odd integer. 
Applying~\eqref{eq:geom} with $N=\intpart{\kmax}$, $k_0=1$ and $\beta=3H$, we 
obtain 
\begin{equation}
 \abs{\cR_1(\eps,\rho)} \lesssim \kmax\e^{3H} 
 \lesssim \frac{1}{\rho-\rhocrit}
\biggpar{\frac{\eps}{\epscrit(\rho)}}^{3(\rho-\rhocrit)}\;.
\end{equation}
Using~\eqref{eq:bound_Ak}, we find 
\begin{equation}
 \abs{\cR_2(\eps,\rho)} 
 \leqs \e^{3H\kmax}\log(\eps^{-1})
 = \biggpar{\frac{\eps}{\epscrit(\rho)}}^{d-\rho}\log(\eps^{-1})\;.
\end{equation} 
Regarding $\cR_3$, using again~\eqref{eq:geom} we get 
\begin{equation}
 \abs{\cR_3(\eps,\rho)} 
 \lesssim \e^{\frac32 H(\kmax+1)} \log(\eps^{-1})
 = 
\biggpar{\frac{\eps}{\epscrit(\rho)}}^{\frac32(\kmax+1)(\rho-\rhocrit))}
\log(\eps^{-1})\;.
\end{equation} 
Finally, by~\eqref{eq:r(k)_bound} we also have 
\begin{equation}
 \abs{\cR_4(\eps,\rho)}
 \leqs \biggpar{\frac{\eps}{\epscrit(\rho)}}^{d-\rho}
 2^{-\kmax}\bigbrak{\log(\eps^{-1})}^2\;.
\end{equation} 
Since $\kmax > 1$, we have $d-\rho > 3(\rho-\rhocrit)$, so that $\cR_2$ and 
$\cR_4$ are negligible with respect to $\cR_1$. In addition, $\cR_3$ only 
occurs when $\kmax\geqs3$, and then it is also dominated by $\cR_1$. 
\end{proof}


\subsection{\Afull\ binary trees}
\label{ssec:asymp_incomplete} 

It remains to consider the case of \afull\ binary trees without decorations 
$X_i$, as the contribution of \afull\ binary trees with a decoration $X_i$ 
vanishes by symmetry. 

\Afull\ trees with an even number of leaves can be parametrised by an 
integer $k$ such that these trees have $p=2k+2$ leaves and $q=4k+3$ edges. The 
corresponding reduced Feynman diagrams have $2k+1$ vertices, $3k+1$ edges, and 
degree 
\begin{equation}
\label{eq:degGamma_k2} 
 \deg\Gamma(\tau,P) = (3k+2)\rho - (k+1)d
 \qquad \forall P\in\cP^{(2)}\;.
\end{equation} 
The main difference with the case of \full\ trees is that the maximal value 
of $k$ for $\deg\Gamma(\tau,P)$ to be negative is now 
\begin{equation}
\label{eq:kbarmax} 
 \kbarmax 
 = \frac{d-2\rho}{3\rho-d} 
 = \frac{d-2\rho}{3(\rho-\rhocrit)}\;,
\end{equation} 
which is smaller than $\kmax$ by a factor approaching $2$ as 
$\rho\searrow\rhocrit$. Furthermore, Lemma~\ref{lem:pairings_zeta} shows that 
$\zeta$ always vanishes in this case. The remaining combinatorial arguments 
remain unchanged, with the result that 
\begin{equation}
 C_1(\eps,\rho) = \sum_{k=0}^{\kbarmax - 1} \bar A_k \eps^{\bar\alpha_k} 
 + \indicator{\kbarmax\in\N}\bar A_{\kbarmax} \log(\eps^{-1})\;,
\end{equation} 
where
\begin{equation}
 \bar\alpha_k = -(d-2\rho) \biggpar{1-\frac{k}{\kbarmax}}
\end{equation} 
and $\abs{\bar A_k \eps^{\bar\alpha_k}} \leqs M\e^{3 \bar F(k)}$ with 
\begin{equation}
 \bar F(k) = k \log k + (a-b_\eps) k + b_\eps \kbarmax - \frac12\log(k+1)\;.
\end{equation} 
It thus suffices to modify the threshold value of $\eps$ to obtain the result. 


\subsection{A remark on lower bounds}
\label{ssec:asymp_remark} 

The results we have obtained provide upper bounds on the counterterms. 
Obtaining matching lower bounds seems out of reach at this stage, because, as 
we have seen, the behaviour in $\eps^{\deg\Gamma(\tau,P)}$ of the terms 
$c_\eps(\tau)$ is a consequence of cancellations of more singular terms in 
Zimmermann's forest formula. 

However, we can at least argue that among the many terms contributing to the 
counterterms $C_0(\eps,\rho)$ and $C_1(\eps,\rho)$, there exist terms which are 
bounded above \emph{and} below by a quantity of the same order. This does not 
of course exclude that cancellations among these terms exist, which ultimately 
make the counterterms much smaller. However, such a scenario seems unlikely, 
unless some hidden symmetries have been overlooked. 

Indeed, assume that $\tau$ is a regular binary tree 
(cf.~\eqref{eq:regular_tree}). Then $\ninner(\tau) = \nsym(\tau)$, so that the 
contribution of $\tau$ to $C_0(\eps,\rho)$ is given by 
\begin{equation}
 c_\eps(\tau) = E(\tilde\cA^E_-\tau) 
 = \sum_{P\in\cP_\tau^{(2)}} E(\tilde\cA_-\Gamma(\tau,P))\;.
\end{equation} 
It follows from Lemma~\ref{lem:subdiagrams} that $\Gamma(\tau,P)$ cannot have 
any divergent strict subdiagram, since a regular binary tree does not contain 
any \afull\ binary subtree. Therefore, \eqref{eq:Hepp_decomposition} reduces 
to 
\begin{equation} 
E(\tilde \cA_- \Gamma(\tau,P)) = - \sum_{T} \sum_{\bn} \int_{D_{\bT}} 
(\sW^{K} \Gamma(\tau,P) ) (z) \6z\;.
\end{equation}
It is known that whenever $\rho < d$, the fractional heat kernel $P_\rho$ is 
given by the Riesz kernel which has a constant sign. This sign is not conserved 
by the decomposition~\eqref{eq:PKR} of the kernel into its singular and smooth 
parts, but one can add a bounded, compactly supported term to $K_\rho$ in such 
a way that $K_\rho$ has a constant sign, and without changing the divergent 
part of the integrals. Therefore, we obtain 
\begin{equation}
 \abs{E(\tilde \cA_- \Gamma(\tau,P))} \geqs a \sum_{T} K_2^{\abs{\sE}} 
\eps^{\deg(\Gamma)}
\end{equation} 
for a constant $a>0$. Since the number of pairings $P$ and of Hepp trees $T$ 
have the same factorial behaviour as above, we indeed obtain for $c_\eps(\tau)$ 
an asymptotic behaviour in $\epscrit(\rho)^{-(d-\rho)}\log(\eps^{-1})$.  


\subsection{Extension to other parameter regimes}
\label{ssec:other_parameters} 

In this section, we extend the results to the more general family of equations  
\begin{equation} 
\label{eq:SPDE_parameters}
\partial_t u - \gamma\Delta^{\rho/2} u = 
gu^2 + \sigma\xi\;,
\end{equation}
where $\gamma$, $g$ and $\sigma$ are parameters measuring the strength of 
each component of the equation, that is, the smoothing effect given by the 
fractional Laplacian, the nonlinearity $u^2$ and the noise $\xi$. Our aim is to 
understand how the model behaves when one lets these parameters vary, and 
to determine some potentially interesting parameter regimes.

The extension can actually be done in two equivalent ways: using directly the 
BPHZ renormalisation on the parameter-dependent 
equation~\eqref{eq:SPDE_parameters}, or using a scaling argument for the 
original equation~\eqref{eq:SPDE}. We will briefly outline both arguments, 
which will serve as a reality check of the results. Using the BPHZ 
renormalisation~\eqref{eq:counterterm} on~\eqref{eq:SPDE_parameters}, one 
obtains the renormalised equation 
\begin{equation}
\label{eq:SPDE_parameters_renormalised} 
\partial_t u - \gamma\Delta^{\rho/2} u = 
gu^2 + C^{\gamma,g,\sigma}(\eps, \rho, u) + 
\sigma\xi^{\eps}\;,
\end{equation}
where the new counterterm is given by
\begin{equation}
C^{\gamma,g,\sigma}(\eps, \rho, u)
= C_0^{\gamma,g,\sigma}(\eps, \rho) 
+ C_1^{\gamma,g,\sigma}(\eps, \rho) u 
\end{equation}
with
\begin{align}
C_0^{\gamma,g,\sigma}(\eps, \rho) &= \sum_{\tau \text{ full}} 
c^{\gamma,\sigma}_{\eps}(\tau) g^{\ninner(\tau)}2^{\bar n(\tau)}\;, \\
C_1^{\gamma,g,\sigma}(\eps, \rho) &= \sum_{\tau \text{ almost full}} 
c^{\gamma,\sigma}_{\eps}(\tau) g^{\ninner(\tau)}2^{\bar n(\tau)}\;,
\end{align}
and $\bar n(\tau) = \ninner(\tau) - \nsym(\tau)$. The new 
renormalisation constant $c^{\gamma,\sigma}_{\eps}$ associated to a 
tree $\tau$ is given by
\begin{equation}
  c^{\gamma,\sigma}_{\eps}(\tau)  = \sigma^{\nleaves(\tau)} 
c_{\eps}^\gamma(\tau)\;,
\end{equation}
where $\nleaves(\tau)$ is the number of leaves of $\tau$. Here in the notation 
$c_{\eps}^\gamma(\tau)$, we stress that the value of the renormalisation 
constant depends on $\gamma$ via the scaled Green function 
$\smash{K_{\rho}^{(\gamma)}} = (\partial_t - \gamma \Delta^{\rho/2})^{-1}$ of 
the fractional Laplacian that now appears on every edge of the tree $\tau$. One 
key property that we use in the sequel is 
\begin{equation}
K_{\rho}^{(\gamma)}(t,x) = K_{\rho}(\gamma t,x).
\end{equation}
Indeed, since $f(t,x) = (K_\rho*h)(t,x)$ satisfies the equation 
$\partial_t f - \Delta^{\rho/2} f = h$, setting $\bar h(t,x) = \gamma h(\gamma 
t,x)$ it is easy to check that 
\begin{equation}
 \bar f(t,x) 
 = f(\gamma t,x) 
 = \int_\R\int_{\R^d} K_\rho(\gamma(t-s),x-y) \bar h(s,y) \6y\6s
\end{equation} 
satisfies $\partial_t \bar f - \gamma\Delta^{\rho/2} \bar f = \bar h$.
Then, performing a linear change of variable for all 
time integrals in the definition~\eqref{eq:EGamma} of $E(\Gamma)$, we get
\begin{equation}
c_{\eps}^\gamma(\tau) = \frac{1}{\gamma^{\nu_\tau - 1}} c_{\eps}(\tau)
\end{equation}
where $\nu_{\tau}$ is the number of nodes of the Feynman diagram 
$\Gamma(\tau,P)$ associated with $\tau$.
\begin{itemize}
\item 	If $\tau$ is a full tree with $p$ leaves and $q = 2p-2$ edges, 
then we have
\begin{equation}
\nu_{\tau} = q + 1 - \frac{p}{2} = \frac{3}{2} p -1\;, 
\qquad \ninner(\tau) = p-1\;.
\end{equation}
Therefore, we obtain 
\begin{equation} 
\label{pre_factor_full}
C_0^{\gamma,g,\sigma}(\eps, \rho)
= \sum_p \frac{g^{p-1}\sigma^p}{\gamma^{\frac{3}{2}p-2}} 
\sum_{\text{$\tau$ full with $p$ leaves} } 
c_{\eps}(\tau)
2^{\bar n(\tau)}\;.
\end{equation}
\item 	If $\tau$ is an almost full tree with $p$ leaves and $q = 2p-1$ edges, 
then we have 
\begin{equation}
\nu_{\tau} = \frac{3}{2} p\;, 
\qquad n_{\text{inner}}(\tau) = p\;,
\end{equation}
yielding 
\begin{equation} 
\label{pre_factor_almost_full}
C_1^{\gamma,g,\sigma}(\eps, \rho) 
= \sum_p \frac{g^p\sigma^p}{\gamma^{\frac{3}{2}p-1}} 
\sum_{\text{$\tau$ almost full with $p$ leaves}} 
c_{\eps}(\tau)
2^{\bar n(\tau)}\;.
\end{equation}
\end{itemize}

The second argument allowing to obtain~\eqref{pre_factor_full} 
and~\eqref{pre_factor_almost_full} is based on scaling. If $u$ satisfies the 
renormalised equation
\begin{equation}
\partial_t u - \Delta^{\rho/2} u = 
u^2 + C(\eps,\rho,u) +   \xi^{\eps}\;,
\end{equation}
then for any $\lambda>0$ and $\alpha,\beta\in\R$, 
$\bar u(t,x) = \lambda^{\alpha} u(\lambda^{\beta} t, \lambda x)$
solves the equation
\begin{equation} 
\label{equa_baru}
\partial_t \bar u - \lambda^{\beta- \rho} \Delta^{\rho / 2} \bar u  
  = \lambda^{\beta-\alpha} \bar u^2 + \lambda^{\alpha + \beta} 
C(\eps,\rho,\lambda^{-\alpha}\bar u) +  \lambda^{\alpha + \beta} 
\xi^{\eps}_{\lambda^\beta,\lambda}\;,
\end{equation} 
where 
\begin{align}
 \xi^{\eps}_{\lambda^\beta,\lambda}(t,x) 
 &= \xi^\eps(\lambda^\beta t,\lambda x) \\
 &= \frac{1}{\eps^{\rho+d}} 
 \int_{\R^{d+1}} \varrho\biggpar{\frac{\lambda^\beta t-t'}{\eps^{\rho}}, 
 \frac{\lambda x-x'}{\eps}} \xi(\6t',\6x') \\
 &= \frac{1}{\bar\eps^{\rho+d}} 
 \int_{\R^{d+1}} \lambda^{\beta-\rho} 
\varrho\biggpar{\frac{\lambda^{\beta-\rho}(t-t'')}{\bar\eps^{\rho}}, 
 \frac{x-x''}{\bar\eps}} \xi(\lambda^\beta\6t'',\lambda\6x'')\;.
\end{align}
In the last line, we have set $\eps = \lambda\bar\eps$, and made the change of 
variables $t'=\lambda^\beta t''$, $x'=\lambda x''$. By the scaling property of 
space-time white noise, this is equal in law to 
\begin{equation}
 \frac{\lambda^{-\frac{\beta}{2}-\frac{d}{2}}}{\bar\eps^{\rho+d}} 
 \int_{\R^{d+1}} \lambda^{\beta-\rho}
\varrho\biggpar{\frac{\lambda^{\beta-\rho}(t-t'')}{\bar\eps^{\rho}}, 
 \frac{x-x''}{\bar\eps}} \xi(\6t'',\6x'')
 = \lambda^{-\frac{\beta}{2}-\frac{d}{2}} 
 \tilde\xi^{\bar\eps}(t,x)\;,
\end{equation} 
where $\tilde\xi^{\bar\eps} = \tilde\varrho^{\bar\eps} * \xi$ is defined like 
$\xi^\eps$, but using $\tilde\varrho(t,x) = 
\lambda^{\beta-\rho}\varrho(\lambda^{\beta-\rho}t,x)$ as new mollifier.
Thus~\eqref{equa_baru} is indeed of the 
form~\eqref{eq:SPDE_parameters_renormalised} with parameters 
\begin{equation} 
\label{eq:values_sigma_g_lambda}
\gamma = \lambda^{\beta- \rho}, \qquad g = \lambda^{\beta-\alpha},  \qquad 
\sigma = \lambda^{\alpha + \frac{\beta}{2} - \frac{d}{2}}\;. 
\end{equation}
Using the expression~\eqref{eq:counterterm} of $C(\eps,\rho,u)$, we find
\begin{equation}
\lambda^{\alpha + \beta} C(\lambda \bar \eps,\rho,\lambda^{-\alpha}\bar u) 
= \lambda^{\alpha+ \beta}\sum_{\tau \text{ full}} c_{\lambda \bar \eps}(\tau) 
2^{\bar n(\tau)} + \lambda^{\beta} \sum_{\tau \text{ almost full}} c_{\lambda 
\bar \eps}(\tau) 2^{\bar n(\tau)} u
\end{equation}
If $ \deg(\tau) < 0 $, then 
\begin{equation}
c_{\eps}(\tau) \sim \eps^{\deg(\tau)}\;, 
\qquad 
c_{\lambda \bar \eps}(\tau) \sim 
\bar \eps^{\deg(\tau)} \lambda^{\deg(\tau)}\;, 
\end{equation}
which implies that $c_{\lambda \bar \eps} = \lambda^{\deg(\tau)} c_{\bar 
\eps}(\tau)$ (note that since $\log(\lambda\bar\eps) = \log\lambda + 
\log\bar\eps$, logarithmic divergences do not change the leading order of 
$c_\eps(\tau)$).
In the case of $\tau$ being a full tree with $p = 2k+2$ leaves, by 
\eqref{eq:degGamma_k1} we have
\begin{equation}
\deg(\tau)  = (3k +1)\rho - (k+1)d  
= \frac{3}{2} p (\rho - \rho_c) - 2 \rho
\end{equation}
where we have used the fact that $\rho_c = \frac{d}{3}$. For almost full trees, 
we obtain 
\begin{equation}
\deg(\tau)  = (3k +2)\rho - (k+1)d  
= \frac{3}{2} p (\rho - \rho_c) - \rho\;.
\end{equation}
It follows that 
\eqref{equa_baru} becomes
\begin{equation}
\partial_t \bar u - \gamma \Delta^{\rho / 2} \bar u  
  = g \bar u^2 +  \bar C_0(\bar \eps,\rho) + \bar C_1(\bar \eps,\rho) \bar u + \sigma
\xi^{\bar \eps}
\end{equation}
with 
\begin{align}
 \bar C_0(\bar \eps, \rho) 
 &= \lambda^{\alpha + \beta} \sum_{p}
 \lambda^{\frac{3}{2}p(\rho-\rho_c) - 2 \rho } 
 \sum_{\text{$\tau$ full with $p$ leaves}} 
 c_{\bar \eps}(\tau) 2^{\bar n (\tau)}\;, \\
 \bar C_1(\bar \eps, \rho) 
 &= \lambda^{\beta} \sum_{p} \lambda^{\frac{3}{2}p(\rho-\rho_c) - \rho } 
 \sum_{\text{$\tau$ almost full with $p$ leaves}} 
c_{\bar \eps}(\tau) 2^{\bar n 
(\tau)}\;.
\end{align}
By~\eqref{eq:values_sigma_g_lambda}, these expressions for the counterterms are 
indeed equivalent to~\eqref{pre_factor_full} 
and~\eqref{pre_factor_almost_full}. 

It remains to prove the relations~\eqref{eq:C_full_parameters} 
and~\eqref{eq:C_almost_full_parameters}. For full binary trees, setting as 
before $p=2k+2$, we obtain from~\eqref{pre_factor_full} that 
\begin{equation}
 C_0^{\gamma,g,\sigma}(\eps, \rho)
= \sum_{k=0}^{\intpart{\kmax}-1} \frac{g^{2k+1}\sigma^{2k+2}}{\gamma^{3k+1}} 
f(k)
= \frac{g\sigma^2}{\gamma}\sum_{k=0}^{\intpart{\kmax}-1} \delta^k f(k)\;,
\end{equation}
where $f(k)$ is as on the right-hand side of~\eqref{eq:C0} and $\delta = 
g^2\sigma^2\gamma^{-3}$. We can now proceed as in the proof of 
Proposition~\ref{prop:asymptotics}, replacing $F(k)$ defined in~\eqref{eq:F(k)} 
by $\hat F(k) = F(k) + \frac13k\log\delta$, and $H$ in~\eqref{eq:def_H} by $\hat 
H = H + \frac13\log\delta$. Note that $\hat F$ is still convex.

For $\eps>\epscrit$, the sum is dominated by $k = \kmax$, and yields the 
same bound as for $C_0^{1,1,1}(\eps, \rho) = C_0(\eps,\rho)$, up to an 
additional factor 
\begin{equation}
 \frac{g\sigma^2}{\gamma} \delta^{\kmax} 
 = \biggpar{\frac{g^2\sigma^2}{\gamma^3}}^{\kmax}
 \frac{g\sigma^2}{\gamma}\;.
\end{equation} 
For $\eps<\epscrit$, the sum is dominated by the term $k=0$, which has the same 
scaling behaviour as $C_0(\eps,\rho)$, up to an additional factor 
$g\sigma^2\gamma^{-1}$. An analogous argument applies to the 
expression~\eqref{eq:C_almost_full_parameters} for $C_1^{\gamma,g,\sigma}(\eps, 
\rho)$.


\tableofcontents

\small
\bibliography{BBK}
\bibliographystyle{abbrv}               

\bigskip\bigskip\noindent
{\small 
Nils Berglund \\ 
Institut Denis Poisson (IDP) \\ 
Universit\'e d'Orl\'eans, Universit\'e de Tours, CNRS -- UMR 7013 \\
B\^atiment de Math\'ematiques, B.P. 6759\\
45067~Orl\'eans Cedex 2, France \\
{\it E-mail address: }{\tt nils.berglund@univ-orleans.fr}

\bigskip\noindent
{\small 
Yvain Bruned \\
Institut Elie Cartan de Lorraine (IECL)\\
Université de Lorraine, CNRS--UMR 7502 \\
Faculté des Sciences et Technologies \\
Campus, Boulevard des Aiguillettes \\
54506 Vandœuvre-lès-Nancy \\
{\it E-mail address: }{\tt yvain.bruned@univ-lorraine.fr}
}

\end{document}